\documentclass[11pt]{amsart}
\usepackage{bm}
\usepackage{bbm}
\usepackage{wasysym}
\usepackage{cite}
\usepackage{mathrsfs}
\usepackage{cases}
\usepackage{latexsym}
\usepackage[all]{xy}
\usepackage{stmaryrd}
\usepackage{amsfonts,enumitem}
\usepackage{float}
\usepackage{amssymb,amscd,amsthm,dsfont,pb-diagram}
\usepackage{amsmath}
\usepackage{fancyhdr}
\usepackage{amsxtra,ifthen}
\usepackage{verbatim}
\usepackage{tikz-cd}
\usepackage[vcentermath]{youngtab}
\usepackage{xcolor}
\usepackage[hidelinks]{hyperref}
\usepackage{young}
\usepackage{url}
\usepackage{indentfirst}
\usepackage[table]{xcolor}
\usepackage{ytableau} 
\usepackage{tikz} 
\usepackage{multirow,array} 
\usepackage{ragged2e}
\usepackage[utf8]{inputenc} 
\usepackage[dvipsnames]{xcolor} 
\usepackage{subcaption}
\usepackage{float}
\usepackage{dynkin-diagrams}
\usepackage{geometry}
\usetikzlibrary{arrows,decorations.markings}
\tikzstyle{dot}=[draw, fill =black, circle, inner sep=0pt, minimum size=2pt]

\tikzset{
	mycell/.style={
		rectangle, 
		minimum size=1.5cm, 
		inner sep=0pt, 
		font=\Large,
		draw, 
		anchor=south west
	},
	bluecell/.style={mycell, fill=blue!20},
	graycell/.style={mycell, fill=gray!20},
	emptycell/.style={mycell}
}

\theoremstyle{plain}

\numberwithin{equation}{subsection}
\newtheorem{theorem}{Theorem}[section]
\newtheorem{lemma}[theorem]{Lemma}
\newtheorem{proposition}[theorem]{Proposition}
\newtheorem{corollary}[theorem]{Corollary}
\newtheorem{conjecture}[theorem]{Conjecture}

\newtheorem{definition}[theorem]{Definition}
\newtheorem{example}[theorem]{Example}

\newtheorem{remark}[theorem]{Remark}
\newtheorem{remarks}[theorem]{Remarks}

\newcommand{\lam}{\lambda}

\newcommand{\B}{\mathsf{B}}

\def\bs{{\mathbf s}}

\def\blam{{\boldsymbol \lambda}}
\def\bmu{{\boldsymbol \mu}}

\def\bs{{\bf s}}
\def\bu{{\bf u}}

\def\<{\langle}
\def\>{\rangle}

\begin{document}

\title[Core abaci and Diophantine equations I]
{Core abaci and Diophantine equations I: Fundamental weight}

\thanks{Corresponding Author: Shasha Zhu}

\author{Yanbo Li}

\address{Li: School of Mathematics and Statistics, Northeastern
University at Qinhuangdao, Qinhuangdao, 066004, P.R. China}

\email{liyanbo707@163.com}

\author{Jiansheng Zhang}

\address{Zhang: Department of mathematics, School of Science, Northeastern
University, Shenyang, 110819, P.R. China}

\email{786579992@qq.com}

\author{Shasha Zhu}

\address{Zhu: Department of mathematics, School of Science, Northeastern
University, Shenyang, 110819, P.R. China}

\email{zhushasha202510@163.com}

\begin{abstract}
In the light of a series of papers on moving vectors, we define and study core abaci of classical affine types for arbitrary charge.
This greatly extends the concept of cores with charge zero, and make us being able to parameterize the affine Grassmannian $W^j$ 
by core abaci of charge $j$ for arbitrary classical affine types.
By associating a core abacus $(\lam, j)$ to a weight $\Lambda_j-\beta$ and an affine Weyl group element $w_{\lam, j}$, we prove that the height of $\beta$ 
is equal to the atomic length of $w_{\lam, j}$.
This solves a generalized version of the open problem raised by Brunat, Chapelier-Laget and Gerber.
Moreover, Diophantine equations of classical affine types
are established by using the height formula that given by Uglov vector. 
The solutions of certain classes of these Diophantine equations are proved to be completely parameterised by core abaci. 
As another application, closed formulae for computing the number of certain kinds of core abaci are given.
\end{abstract}

\subjclass[2020]{11D09, 05A17, 11P83, 20F55, 51F15.}

\keywords{Diophantine equation; core abacus; Uglov map; affine Weyl group; affine atomic length; affine Grassmannian.}


\maketitle


\tableofcontents

\section{Introduction}

Core partitions play an important role in different fields of mathematics
such as representation theory, enumerative combinatorics, number theory and so on.
We refer the reader to \cite{A, BDF, BCG, F2, GKS, GO, JL} for some details. In \cite{HJ2}, Hanusa and Jones generalized
the notion to types $B$, $C$ and $D$, which are all described by cores of type $A$.

For a core partition $\lam$, the size (number of nodes in the Young diagram $[\lam]$) is undoubtedly essential. 
In \cite{TW} Thiel and Williams established a formula for the size of 
type $A$ cores by using coroot lattice, and the formula was generalized to other classical affine types in \cite{STW}.
In 2024 Chapelier-Laget and Gerber introduced the so-called atomic length that is defined on the (affine) Weyl
group. This recover the constructions in \cite{TW,STW}. In particular, by regarding a core $\lam$ as an affine Grassmannian element 
at the fundamental weight $\Lambda_0$, it was shown that the size of $\lam$
is equal to the atomic length of a representative with minimal length in the affine Grassmannian. 
The quadratic formula for the atomic length was then used to establish certain Diophantine equations 
by Brunat, Chapelier-Laget and Gerber in \cite{BCG}. As a result, the solutions of Diophantine equations 
established could be partially parameterized by the corresponding cores. 
Moreover, using the atomic length associated with the fundamental weight $\Lambda_1$
in type $C_2^{(1)}$, they obtain a description of the solutions of the Diophantine equation
$x^2+y^2= 8N+1$ in terms of extended Grassmannian elements. Inspired by this result, a natural problem that has been proposed
by Brunat, Chapelier-Laget and Gerber is to find a combinatorial model for arbitrary twisted or untwisted affine types in which the size of cores
is given by the $\Lambda_1$ atomic length. This is precisely the starting point of our research.
The key of our study is to extend the notion of cores to arbitrary fundamental weight $\Lambda_i$.

\smallskip

By fixing an integer $j$, called charge, a partition $\lam$ can be expressed by an abacus, which first
appeared in the work of James \cite{J}. Given a pair $(\lam, j)$, one can associate it with a weight $\Lambda-\beta$. 
It is well known that if $\lam$ is a core, then the defect of $\Lambda-\beta$ is zero. In our opinion, the property of 
being defect zero is the essence of cores (of type $A^{(1)}$). We extend the definition of cores to arbitrary classical types of arbitrary charges 
based on this view. It is helpful to point out that so far all the definitions of cores are of charge 0 as far as we know.
Note that each core abacus $(\lam, j)$ can be obtained from the empty abacus 
by the action of a minimal length element $w_{\lam, j}$ in the corresponding affine Weyl group, and consequently
we can parameterize the affine Grassmannian by the collection of $j$-core abaci for arbitrary classical affine types, where $j$ is a charge.
This is the first gain of our generalization of cores.

Thanks to a series of papers \cite{LQ, LQT, HLQ} on the so-called moving vectors, which were introduced 
by Li and Qi \cite{LQ} to study the representation type of blocks of an Ariki-Koike algebra and were extended to
all twisted and untwisted classical affine types by the first author and his collaborators,
we can describe cores by abacus language. This makes it possible to give a formula on the height of a core by the so-called
Uglov vector obtained from Uglov map. To obtain this formula, we prove that the $\mathrm{q}$-vector obtained from 
the semidirect product decomposition of $w_{\lam, j}$ is equal to the sum of an adjustment item and the weighted Uglov vector obtained from general Uglov map.   
Based on the height formula, we find that for a core abacus $(\lam, j)$ the atomic length of $w_{\lam, j}$ is 
equal to the height of $\beta$, which is just the size of $\lam$ in some cases.
The result can be viewed as an answer of the open problem raised by Brunat, Chapelier-Laget and Gerber in \cite{BCG}. 
This is the second gain of our generalization of cores.
The following diagram illustrates the interrelationships among these results.

\[\begin{CD}
{\rm weight}\, \Lambda_j-\beta   @< << {\rm core\, abacus}\,\, (\lam, j) @> >> {\rm weighted\, Uglov\, vector}\, \mathrm{u}\\
@V {\rm Theorem\,\ref{4.5}} VV                  @VV {\rm Lemma\, \ref{3.12}} V                                 @VV {\rm Lemma\, \ref{3.27}} V\\
{\rm atomic\, length}\,\mathscr{L}_{\Lambda_j}(w)         @< <<   {\rm affine\, Grassmannian\, element}\, w @> >> {\rm semidirect\, product}\, t_{\mathrm{q}}\overline{w}
\end{CD}\]

Thirdly, we establish Diophantine equations by using the height formula and develop the relevant theory of classical affine types.
In particular, we study the problem that under what conditions all solution orbits under the action of type $B$ Weyl groups are parameterized by core abaci. 
Along this way, we obtain closed formulas for computing the number of certain $j$-core abaci in some special cases, 
including affine types $C_2^{(1)}$, $D_3^{(2)}$, $B_3^{(1)}$, $D_4^{(2)}$, $B_4^{(1)}$ and $D_4^{(1)}$.

\smallskip

The paper is organized as follows. We begin in Section 2 by reviewing some preliminaries on combinatorics and affine Weyl groups.
Then in Section 3, we define and study core abaci of arbitrary charge for classical twisted and untwisted affine types. 
In particular, we establish a bijection between $j$-core abaci and the alcoves in a generalized Tits cone, which corresponds to an affine Grassmannian.
Moreover we study the compatibility between weighted Uglov vectors and $\mathrm q$-vectors.   
In Section 4, we first establish the height formula and then 
prove that the height of a core abacus $(\lam, j)$ is equal to the atomic length of $w_{\lam, j}$.
Furthermore, we will construct Diophantine equations induced by the height formula.
In Section 5, we consider the relationship between core abaci and the solutions of the Diophantine equations
obtained in Section 4. Particularly, we prove that in some small rank cases, the solutions can be completely parameterized by core abaci.
This makes us being able to give closed formulae on enumerating the number of certain $j$-core abaci in some special cases.

\bigskip


\section{Preliminaries}
In this section, we give a quick review on some preliminaries about Kac-Moody algebras, 
affine Weyl groups, abaci, Uglov map and so on. The main references are \cite{HLQ, Kac}.

\subsection{Common notations}

For $k\in \mathbb{Z}$, we write
$\mathbb{Z}_{<k}:=\{x\in\mathbb{Z}\mid x<k\}$,
and similarly for $\mathbb{Z}_{\leq k}$, $\mathbb{Z}_{>k}$ and $\mathbb{Z}_{\geq k}$.
For brevity, ${\bf 1}:=(1,\dots,1)\in \mathbb{Z}^m$ for certain $m>0$.
Of course, $\bf 1$ depends on $m$, which will always be clear from the context.
Denote by $|I|$ the number of elements in a set $I$.



\subsection{Kac-Moody algebra and affine Weyl group}

Let us fix some notations. Let $A$ be a rank $l$ {\em Cartan matrix} of an affine type.
The vector $(a_0, a_1, \dots, a_l)$ of positive integer with no common factor is defined to be in the kernel of $A$.
Similarly, $(a_0^{\vee}, a_1^{\vee}, \dots, a_l^{\vee})$ is defined by $A^T$. 
Denote by $h=\sum_i a_i$ and $h^\vee=\sum_i a_i^\vee$.
Let $(\mathfrak{h}, \Pi, \check\Pi)$ be a realization of $A$, where $\mathfrak{h}$ is a complex vector space, 
$\Pi=\{\alpha_0, \alpha_1, \dots, \alpha_l\}\subset \mathfrak{h}^*$ 
and $\check\Pi=\{\alpha_0^\vee, \alpha_1^\vee, \dots, \alpha_l^\vee\}\subset \mathfrak{h}$.
The {\em Chevalley generators} of the corresponding derived affine Lie algebra are $e_i$, $f_i$ and $h_i$, $i=0, \dots, l$.
The {\em pairing} $\langle\cdot,\cdot\rangle$ between $\mathfrak{h}$ 
and $\mathfrak{h}^*$ is given by $\langle \alpha_i^\vee, \alpha_j\rangle=a_{ij}$.
The {\em root lattice} is $Q=\oplus_i\mathbb{Z}\alpha_i$ and similarly {\em coroot lattice} $Q^\vee$. 
In particular, $\delta=\sum_ia_i\alpha_i$, $c=\sum_ia_i^\vee\alpha_i^\vee$, and $\theta=\delta-a_0\alpha_0$ is the {\em highest root}.
For each $\alpha=\sum_ik_i\alpha_i\in Q$, the {\em height} ${\rm ht}(\alpha)$ of $\alpha$ is defined to be $\sum_ik_i$. 
Particularly, we sometimes use the notation ${\rm ht}_i(\alpha)$ to denote $k_i$.
Furthermore, we denote by $\alpha>0$ if $k_i\geq 0$ for each $i$.
For $0\leq i\leq l$, define {\em fundamental weight} $\Lambda_i$ by $\langle\Lambda_i, \alpha_j^\vee\rangle=\delta_{ij}$ for $0\leq j\leq l$
and denote $\omega_i=\Lambda_i-\frac{a_i^\vee}{a_0^\vee}\Lambda_0$ for $1\leq i\leq l$. The sum of $\omega_i$ is $\rho$. 
Dually, we have {\em coweights} $\Lambda_0^\vee, \Lambda_1^\vee, \dots, \Lambda_l^\vee$ and $\omega_1^\vee, \dots, \omega_l^\vee$.
The sum of $\omega_i^\vee$ is $\rho^\vee$. The {\em affine weight lattice} is $P=\oplus_i\mathbb{Z}\Lambda_i\oplus \mathbb{Z}\delta$
and the {\em affine dominant weight lattice} is $P^+=\oplus_i\mathbb{N}\Lambda_i\oplus \mathbb{Z}\delta$.

\smallskip

Note that the names and shapes of Dynkin diagrams of affine types are not the same in different books. We now fix them for later use.

\smallskip

\begin{center}
	\begin{tabular}{lclc}
		${A}_{2l-1}^{(1)}$ & \dynkin [edge length=2.75em,extended,reverse arrows,labels={0,1,2,3,l-2,l-1,l}]B{}
		& $A_{2l}^{(2)}$ &\dynkin[edge length=2.75em,extended,labels={0,1,2,3,l-2,l-1,l}] A[2]{even}\\
\end{tabular}
\end{center}
\begin{center}
	\begin{tabular}{lclc}
		${B}_l^{(1)}$ & \dynkin [edge length=2.75em,extended,labels={0,1,2,3,l-2,l-1,l}]B{}
		& ${C}_l^{(1)}$ & \dynkin [edge length=2.75em,extended,labels={0,1,2,l-2,l-1,l}]C{}\\
\end{tabular}
\end{center}
\begin{center}
	\begin{tabular}{lclc}
		${D}_l^{(1)}$ & \dynkin [edge length=2.75em,extended,labels={0,1,2,3,l-3, l-2,l-1,l}]D{}
		& ${D}_{l+1}^{(2)}$ & \dynkin [edge length=2.75em,extended,reverse arrows,labels={0,1,2,l-2,l-1,l}]C{}\\
	\end{tabular}
\end{center}

\smallskip

It is well known that all of the Cartan matrices of affine types are symmetrizable. 
That is, for matrix $A$, there exists a diagonal matrix $D={\rm diag}(d_0, d_1, \dots, d_l)$ such that $C=DA$ is symmetric. 
In this paper, the matrices $D$ are taken as follows.

\smallskip

Type $A_{l}^{(1)}$: ${\rm diag}(1, 1, \dots , 1, 1)$; \,\,Type $A_{2l-1}^{(2)}$: ${\rm diag}(1, 1, \dots , 1, 2)$; 
Type $A_{2l}^{(2)}$: ${\rm diag}(\frac{1}{2}, 1, \dots , 1, 2)$;

\smallskip

Type $B_l^{(1)}$: ${\rm diag}(1, 1, \dots , 1, \frac{1}{2})$;\,
Type $C_l^{(1)}$: ${\rm diag}(1, \frac{1}{2}, \dots , \frac{1}{2}, 1)$;\,\, Type $D_l^{(1)}$: ${\rm diag}(1, 1, \dots , 1, 1)$;

\smallskip

Type $D_{l+1}^{(2)}$: ${\rm diag}(1, 2, \dots , 2, 1)$.

\smallskip

It is necessary to point out that for the reason of compatibility with our study, some of the matrices above are different to those in \cite{HLQ}.
Therefore, we shall use a different version of the results in \cite{HLQ} adjusted by a constant.

A {\em bilinear form} $(\cdot\, , \,\cdot)$ on $\mathfrak{h}^*$ is defined by  
$(\alpha_i, \alpha_j)=c_{ij}$, $(\Lambda_i, \alpha_j)=\delta_{ij}d_i$ for $i, j=0, 1, \dots, l$ and $(\Lambda_0, \Lambda_0)=0$.
For $\Lambda\in P$ and $\beta\in Q$, define 
$$\rm def (\Lambda-\beta)=(\Lambda, \beta)-\frac{1}{2}(\beta, \beta).$$ 

For $0 \leq i \leq l$, the {\em fundamental reflection} $\sigma_i: \mathfrak{h}^*\rightarrow \mathfrak{h}^*$ is defined by 
$$\sigma_i(v)=v-\langle v,\alpha_i^{\vee}\rangle\alpha_i$$ for $v\in \mathfrak{h}^*$. These fundamental reflections 
generate a subgroup $W$ of $\operatorname{GL}(\mathfrak{h}^*)$,
called the {\em affine Weyl group} of a Kac-Moody algebra $\mathfrak{g}(A)$. We shall use $\ell(w)$ to denote the {\em length} of $w\in W$. 
Denote by $W_0$ the finite Weyl group obtained by deleting the generator $\sigma_0$.
It is well known that $W$ can be viewed as a semidirect product of $W_0$ by $T(Q_0^\vee)$, 
where $T(Q_0^\vee)$ is the group of translations $t_{\mathrm q}$ with ${\mathrm q}\in Q_0^\vee$.
Note that any element $w\in W$ decomposes uniquely as $w=t_{\mathrm q}\overline{w}$, 
where ${\mathrm q}\in Q_0^\vee$ and $\overline{w}\in W_0$.

\smallskip

{\em Atomic length} is a new statistic on affine Weyl groups introduced by Chapelier-Laget and Gerber.
\begin{definition}\cite[Definition 4.5]{CG}\label{2.1}
Let $W$ be an affine Weyl group and let $\Lambda$ be a dominant weight. For all $w\in W$,
$$\mathscr{L}_{\Lambda}(w)=\langle\Lambda-w(\Lambda),\rho^{\vee}\rangle$$
\end{definition}

\smallskip


\subsection{$l$-index system}
In this subsection, we define the so-called {\em $l$ index system} for later use. 
For a given affine type  $X$, define 
a set $I_X$ according to the Dynkin diagram of $X$, 
$$I_X=\begin{cases}
        \{\pm 1, \dots, \pm l\}, & \mbox{if }  0 \nLeftarrow 1, l-1 \nRightarrow l;\\
        \{0, \pm 1, \dots, \pm l\}, & \mbox{if } 0 \Leftarrow 1,  l-1 \nRightarrow l;\\
       \{\pm 1, \dots, \pm l, l+1\}, & \mbox{if }  0 \nLeftarrow 1,  l-1 \Rightarrow l;\\
        \{0, \pm 1, \dots, \pm l, l+1\}, & \mbox{if } 0 \Leftarrow 1,  l-1 \Rightarrow l,
      \end{cases}$$
and a map $\iota: \{0, 1, \dots, |I_X|-1\}\rightarrow I_X$:     
$$\iota(x)=\begin{cases}
             x+1, & \mbox{if } 0\leq x \leq  l-1\, {\rm and}\, l+1\notin I_X, \,{\rm or}\\
             &  \,\,\,\,\,\,0\leq x \leq  l\, {\rm and}\, l+1\in I_X;\\
             x-2l, & \mbox{if } l\leq x \leq |I_X|-1\, {\rm and}\, l+1\notin I_X;\\
             x-2l-1, & \mbox{if } l+1\leq x\leq  |I_X|-1 \,{\rm and}\, l+1\in I_X.
           \end{cases}$$     
     
\begin{definition}\label{2.13}
For a given affine type $X$ and an integer $x\in\mathbb{Z}$, suppose that $x=q'|I_X|+r'$ with $r'\in \{0, 1, \dots, |I_X|-1\}$. 
Then the $X$ type $l$ index of $x$ is defined to be a pair $(\iota(r'),q)\in I_X\times \mathbb{Z}$, where $q$ is defined as follows: 
$$q=\begin{cases}
      2q', & \mbox{\rm if } \iota(r')=r'+1; \\
      2q'+1, & \mbox{\rm otherwise}.
    \end{cases}$$
Sometimes we call $\iota(r')$ the $l$ index of $x$ if there is no danger of confusion. 
Particularly, if $(r, k)$ is not an $l$ index of any integer $x$ for arbitrary $k\in \mathbb{Z}$, we say $r$ is not an $l$ index.
\end{definition}

\smallskip

\subsection{$\beta$-set, partition and abacus}
Let $n$ be a nonnegative integer. A {\em partition} $\lam$ of $n$ is a non-increasing sequence of
nonnegative infinite integers $\lam=(\lam_1,\lam_2,\dots)$ such that
$\sum_{i}\lam_i=n$. We write $|\lam|=n$ and call $n$ the {\em size} of $\lam$. 
The partition of 0 is usually denoted by $\varnothing$.
The {\em Young diagram} of a
partition $\lam$ is the set of nodes $[\lam]=\{(i,j)\mid 1\leq i,
1\leq j\leq\lam_{i}\}$.
The {\em conjugate} of $\lam$ is defined to be a partition
$\lam'=(\lam'_1,\lam'_2,\cdots)$, where $\lam'_j$ is equal to the
number of nodes in column $j$ of $[\lam]$ for $j=1,2,\cdots$.

Recall that a {\em $\beta$-set} $\B$ is a subset of $\mathbb{Z}$ such that both $\max(\B)$ and $\min(\mathbb{Z} \setminus \B)$ exist. 
Its charge $s(\B)$ is defined as $$s(\B) := |\B \cap \mathbb{Z}_{\geq 0}| - |\mathbb{Z}_{<0} \setminus \B|.$$ 
Given a horizontal line with positions labelled by $\mathbb{Z}$ in an
ascending order going from left to right, we can obtain the abacus display of 
a $\beta$-set by putting a bead at the position $x$ for each $x \in \B$.
Note that we always draw a dashed vertical line between positions $-1$ and $0$.
A position without a bead will be called empty. Moreover, we say {\em reverse the color} of a position if we replace the bead by empty or vice versa.

Given a partition $\lam$ and $j\in \mathbb{Z}$, one can associate it
to a $\beta$-set defined by $$\beta_j(\lam)=\{\lam_i-i+j\mid i\in\mathbb{N}^+\}.$$
On the other hand, it is clear that a $\beta$-set uniquely determines a partition.
We often write the abacus display of $\beta_j(\lam)$ as a pair $(\lam, j)$.
Let us illustrate an example.

\begin{example}\label{2.2}\rm{
Let $\lam=(7, 5, 4, 1, 1)$ and $j=0$. For each $i\in \beta_0(\lam)$,
we set a bead at the $i$-th position on the horizontal abacus.
Then $(\lam, 0)$ is expressed as below. 
\[
\begin{tikzpicture}
[scale=0.5, bb/.style={draw,circle,fill,minimum size=2.5mm,inner sep=0pt,outer sep=0pt},
wb/.style={draw,circle,fill=white,minimum size=2.5mm,inner sep=0pt,outer sep=0pt}]
\foreach \x in {13,-10}
\foreach \y in {2}
{
\node at (\x,\y) {$\cdots$};}	
\node [wb] at (12,2) {};
\node [wb] at (11,2) {};
\node [wb] at (10,2) {};
\node [wb] at (9,2) {};
\node [bb] at (8,2) {};
\node [wb] at (7,2) {};
\node [wb] at (6,2) {};
\node [bb] at (5,2) {};
\node [wb] at (4,2) {};
\node [bb] at (3,2) {};
\node [wb] at (2,2) {};
\node [wb] at (1,2) {};
\node [wb] at (0,2) {};
\node [bb] at (-1,2) {};
\node [bb] at (-2,2) {};
\node [wb] at (-3,2) {};
\node [bb] at (-4,2) {};
\node [bb] at (-5,2) {};
\node [bb] at (-6,2) {};
\node [bb] at (-7,2) {};
\node [bb] at (-8,2) {};
\node [bb] at (-9,2) {};
\draw[dashed](1.5,1)--node[]{}(1.5,2.5);
\end{tikzpicture}
\]}
\end{example}  

The $\beta$-sets defined above will be called {\bf two-sided $\beta$-sets} 
because we need to generalize the notion of $\beta$-set to include the so-called {\bf one-sided} case. 

\begin{definition}\label{2.4}
Let $k\in \mathbb{Z}$. 
A one-sided $\beta$-set $\B_{\geq k}$ is a subset of $\mathbb{Z}_{\geq k}$ such that $\max(\B_{\geq k})$ exists.
The notation $B_{\geq -\infty}$ denotes a two-sided $\beta$-set, where the subscript ``\,$\geq -\infty$" will be often omitted if no confusion occurs.
\end{definition}

Similarly, one can give the so-called half $k$ abacus display of a one sided $\beta$-set 
in a horizontal ray with positions labelled by integers not less than $k$. 
To keep the notation simple, we usually write a half $k$-abaci as a pair $(\lam, k)$. 
However, the notation $\lam$ can not be treated as a partition in this case.
To distinguish, the abacus corresponding to a two-sided $\beta$-set will be called a whole abacus.
Both whole abaci and half $k$ ones are called abaci. Denote the set of all abaci by $\mathcal{B}$. 
We often do not distinguish abaci and $\beta$-sets if no confusion occurs.

\begin{example}\label{2.5}
Let $\B_{\geq 0}=\{0,3,5,7,8,10\}$. Then its half abacus is as follows.
\[
\begin{tikzpicture}[scale=0.7, every node/.style={transform shape}]
			\begin{scope}[yshift=0cm]
				\fill[black] (0.3,0) circle (0.2cm);
				\draw[black] (0.9,0) circle (0.2cm);
				\draw[black] (1.5,0) circle (0.2cm);
				\fill[black] (2.1,0) circle (0.2cm);
				\draw[black] (2.7,0) circle (0.2cm);
				\fill[black] (3.3,0) circle (0.2cm);
				\draw[black] (3.9,0) circle (0.2cm);
				 \fill[black](4.5,0) circle (0.2cm);
				\fill[black] (5.1,0) circle (0.2cm);
				\draw[black] (5.7,0) circle (0.2cm);
				\fill[black] (6.3,0) circle (0.2cm);
				\draw[black] (6.9,0) circle (0.2cm);
				\draw[black] (7.5,0) circle (0.2cm);
				\node at (8.1,0) {$\dots$};
			\end{scope}
\end{tikzpicture}
\]
\end{example}

In order to connect abaci to affine Kac-Moody algebras, we need to define the type for them by $l$ index system.
\begin{definition}
An abacus is called to be of affine type $X$ if we take the $X$ type $l$ index for each position. 
\end{definition}

\begin{example}\label{2.14} \rm{
Let $(\lam, j)=((5,2,1,1,1,1,1), 1)$. Then the $D_3^{(2)}$ type $l$-index of each position is illustrate in the following diagram.
\[
\begin{tikzpicture}[scale=0.7, every node/.style={transform shape}]
\begin{scope}[yshift=0cm]
\node [] at (9,-0.6) {$0$};
\node [] at (8,-0.6) {$-1$};
\node [] at (7,-0.6) {$-2$};
\node [] at (6,-0.6) {$3$};
\node [] at (5,-0.6) {$2$};
\node [] at (4,-0.6) {$1$};
\node [] at (3,-0.6) {$0$};
\node [] at (10,-0.6) {$1$};
\node [] at (11,-0.6) {$2$};
\node [] at (12,-0.6) {$3$};
\node [] at (13,-0.6) {$-2$};
\node [] at (14,-0.6) {$-1$};
\node [] at (15,-0.6) {$0$};
\node [] at (16,-0.6) {$1$};
\node at (2,0) {$\dots$};
\node at (2,-0.5) {$\dots$};
\fill[black] (3.0,0) circle (0.2cm);
\draw[black](4,0) circle (0.2cm);
\fill[black] (5,0) circle (0.2cm);
\fill[black] (6,0) circle (0.2cm);
\fill[black] (7,0) circle (0.2cm);
\fill[black] (8,0) circle (0.2cm);
\fill[black] (9,0) circle (0.2cm);
\draw[black] (10,0) circle (0.2cm);
\fill[black](11,0) circle (0.2cm);
\draw[black] (12,0) circle (0.2cm);
\draw[black] (13,0) circle (0.2cm);
\draw[black] (14,0) circle (0.2cm);
\fill[black] (15,0) circle (0.2cm);
\draw[black] (16,0) circle (0.2cm);
\draw[dashed] (9.5,-0.8)--(9.5,0.6);
\node at (17,0) {$\dots$};
\node at (17,-0.5) {$\dots$};
\end{scope}
\end{tikzpicture}
\]}
\end{example}

\begin{remark}
By Definition \ref{2.13}, the $l$ index systems of affine type $A^{(2)}_{2l-1}$, $C^{(1)}_l$, $D^{(1)}_l$ are the same.
\end{remark}



\subsection{Actions of $f_i$ and affine Weyl group}
Given an abacus $(\lam, j)$ of affine type $X$, if $(i, 2k)$ has a bead and $(i+1, 2k)$ is empty, 
or if $(-i-1, 2k+1)$ has a bead and $(-i, 2k+1)$ is empty for $0<i<l$ and $k\in \mathbb{Z}$,
then we say moving the bead to the empty position an {\em $f_i$-action} on $(\lam, j)$. 

Moreover, the $f_0$ and $f_l$ actions on a whole abacus are defined in Table 1. 
Note that the affine type $A_l^{(1)}$ is not contained because we almost have no new result about this type.

\begin{table}[H]
\centering
\caption{Action of $f_0$ and $f_l$}
\begin{tabular}{|c|c|c|}
\hline
\textbf{Moving} & \textbf{Type} & \textbf{Action} \\
\hline
\multirow{2}{*}{\((-1, 2k-1)\rightarrow (1, 2k)\)} & \(C_l^{(1)}\) & \(f_0\) \\
\cline{2-3}
& \(A_{2l}^{(2)}, D_{l+1}^{(2)}\) & \(2f_0\) \\
\hline
{\((-1, 2k-1)\rightarrow (2, 2k)\)} & \multirow{2}{*}{\(A_{2l-1}^{(2)}, B_l^{(1)}, D_l^{(1)}\)} & \multirow{2}{*}{\(f_0\)} \\
			\cline{1-1}
			 \((-2, 2k-1)\rightarrow (1, 2k)\) &  & \\
\hline
\multirow{2}{*}{\((l, 2k)\rightarrow (-l, 2k+1)\)} & \(A_{2l-1}^{(2)}, A_{2l}^{(2)}, C_l^{(1)}\) & \(f_l\) \\
\cline{2-3}
& \(B_{l}^{(1)}, D_{l+1}^{(2)}\) & \(2f_l\) \\
\hline
{\((l, 2k)\rightarrow (1-l, 2k+1)\)} & \multirow{2}{*}{\(D_l^{(1)}\)} & \multirow{2}{*}{\(f_l\)} \\
			\cline{1-1}
			 \((l-1, 2k)\rightarrow (-l, 2k+1)\) &  & \\
\hline
		\end{tabular}
\end{table}

For a half $k$ abacus, besides the movings given in the Table 1, the $f_0$ and $f_l$ actions also include 
some special ones defined in Table 2.

\begin{table}[H]
\centering
\caption{Special actions of $f_0$ and $f_l$}
\begin{tabular}{|c|c|c|}
\hline
\textbf{Moving} & \textbf{Type} & \textbf{Action} \\
\hline
\(\bigcirc_{(1, 0)} \rightarrow {\rm bead} \) & \(A_{2l}^{(2)},  D_{l+1}^{(2)}\) & \(f_0\) \\
\hline
 \(\bigcirc_{(1, 0)}, \bigcirc_{(2, 0)}\rightarrow {\rm beads}\) & \(A_{2l-1}^{(2)}, B_{l}^{(1)}, D_l^{(1)}\) & \(f_0\) \\
\hline
\(\bigcirc_{(-l, 1)} \rightarrow {\rm bead} \) & \(B_{l}^{(1)},  D_{l+1}^{(2)}\) & \(f_l\) \\
\hline
 \(\bigcirc_{(-l, 1)}, \bigcirc_{(1-l, 1)}\rightarrow {\rm beads}\) & \(D_l^{(1)}\) & \(f_l\) \\
\hline
\end{tabular}
\end{table}

The affine Weyl group $W$ acts on abaci naturally. The action can be described by using $f_i$.

\begin{definition}\label{2.7}
Given an abacus $(\lam, j)$ of affine type $X$ and $0\leq i\leq l$, 
the abacus $\sigma_i(\lam, j)$ is obtained by doing all possible $f_i$-actions on $(\lam, j)$,
where $\sigma_i$ is a generator of $W$.
\end{definition}

To end this subsection, we connect abaci to the data of Kac-Moody algebras. Define the {\em abacus of weight $\Lambda_j$} of affine type $X$ as follows.
If $X=A_l^{(1)}$ or $C_l^{(1)}$, or the type $X$ satisfies $\frac{a_j^\vee}{a_0^\vee}=2$, then for $0\leq j\leq l$ the abacus of $\Lambda_j$ is the whole abacus $(\varnothing, j)$.
If $\Lambda_0$ is not affine type $A_l^{(1)}$ and $C_l^{(1)}$, then the abacus of $\Lambda_0$ corresponds to the one-sided $\beta$-set $B_{\geq 0}=\varnothing$.
If $\Lambda_1$ is of affine type $A_{2l-1}^{(2)}$, $B_l^{(1)}$ or $D_l^{(1)}$, then the abacus of $\Lambda_1$ corresponds to the one-sided $\beta$-set $B_{\geq 0}=\{0\}$.
If $\Lambda_{l-1}$ is of affine type $D_l^{(1)}$, then the abacus of $\Lambda_{l-1}$ corresponds to the one-sided $\beta$-set $B_{\geq l}=\{l\}$.
If $\Lambda_{l}$ is of affine type $B_l^{(1)}$ or $ D_{l+1}^{(2)}$, then the abacus of $\Lambda_{l}$ corresponds to the one-sided $\beta$-set $B_{\geq l+1}=\varnothing$.

\begin{definition}\label{2.6}
Let $0\leq j\leq l$. If an abacus $(\lam, j)$ is obtained from the abacus of $\Lambda_j$ of affine type $X$ by $f_i$-actions, 
we say $(\lam, j)\in \Lambda_j-\beta$, where the coefficient of $\alpha_i$ in $\beta$ is the total number of 
$f_i$-actions from the abacus of $\Lambda_j$ to $(\lam, j)$.
\end{definition}

\smallskip


\subsection{Shifted Young diagram}
Given a Young diagram $[\lam]$ and an integral vector $v$ with $v_i\geq 0$, the {\em shifted Young diagram} with shift vector $v$ of $[\lam]$ is
obtained by shifting the $i$-th row in $[\lam]$ to the right by $v_i$ squares for all $i \geq 1$. 

\begin{definition}\label{shifted Young diagram}
Given a one-sided $\beta$-set $B_{\geq k}=\{ a_1> a_2> \dots> a_m\}$ with $k\in \mathbb{Z}$ and a vector $v=(v_1, v_2, \dots, v_n)$, 
one can connect it to the shifted Young diagram with shift vector $v$ of $[\lam]$, where 
$$\lam_i= a_i-k+i+v_1-v_i$$ for $1\leq i\leq n$. 
\end{definition}

\begin{example}\label{2.10}{\rm
Take $\B_{\geq 0}=\{0,3,5,7,8,10\}$ which is the same as that in Example \ref{2.5} and $v=(0, 2, 2, 4, 4)$. 
Then $\lam=(11, 8, 8, 5, 4)$ and the corresponding shifted Young diagram is 
\[
	\scalebox{0.35}{  
		\begin{tikzpicture}[x=1.5cm, y=1.5cm]  
			\node[emptycell] at (1,0) {};
			\node[emptycell] at (2,0) {};
			\node[emptycell] at (3,0) {};
			\node[emptycell] at (4,0) {};
			\node[emptycell] at (5,0) {};
			\node[emptycell] at (6,0) {};
			\node[emptycell] at (7,0) {};
			\node[emptycell] at (8,0) {};
			\node[emptycell] at (9,0) {};
			\node[emptycell] at (10,0) {};
			\node[emptycell] at (11,0) {};
			\node[emptycell] at (3,-1) {};
			\node[emptycell] at (4,-1) {};
			\node[emptycell] at (5,-1) {};
			\node[emptycell] at (6,-1) {};
			\node[emptycell] at (7,-1) {};
			\node[emptycell] at (8,-1) {};
			\node[emptycell] at (9,-1) {};
			\node[emptycell] at (10,-1) {};
			\node[emptycell] at (3,-2) {};
			\node[emptycell] at (4,-2) {};
			\node[emptycell] at (5,-2) {};
			\node[emptycell] at (6,-2) {};
			\node[emptycell] at (7,-2) {};
			\node[emptycell] at (8,-2) {};
			\node[emptycell] at (9,-2) {};
			\node[emptycell] at (10,-2) {};
			\node[emptycell] at (5,-3) {};
			\node[emptycell] at (6,-3) {};
			\node[emptycell] at (7,-3) {};
			\node[emptycell] at (8,-3) {};
			\node[emptycell] at (9,-3) {};
			\node[emptycell] at (5,-4) {};
			\node[emptycell] at (6,-4) {};
			\node[emptycell] at (7,-4) {};
			\node[emptycell] at (8,-4) {};	
	\end{tikzpicture}}
\]}
\end{example}

For a one-sided $\beta$-set $\B_{\geq k}$ of affine type $X$, we can also associate a two-sided $\beta$-set $\B$ with it.
\begin{definition}\label{2.8}
Let $\B_{\geq k}$ be a one-sided $\beta$-set of affine type $X$. Then the associated $\beta$-set is defined to be
$$B:=\begin{cases}
	B_{\geq k} \bigcup(\mathbb{Z}_{<k}-\{2k-1-x\mid x\in B_{\geq k} \}), &  \text{\rm if } 0\nLeftarrow 1, k=0,l; \\
	B_{\geq k} \bigcup(\mathbb{Z}_{<k}-\{2k-2-x\mid x\in B_{\geq k} \}),  &  \text{\rm if } 0\Leftarrow 1,k=0; \\
	B_{\geq k} \bigcup(\mathbb{Z}_{<k}-\{2k-2-x\mid x\in B_{\geq k} \}-\{l\}),  &  \text{\rm if } l-1\Rightarrow l,k=l+1,
\end{cases}$$
where ``\,$0\Leftarrow 1$" means that in the Dynkin diagram of $X$, there is a ``$\Leftarrow$" from vertex 1 to vertex 0, 
and the meaning of the others is similar.
\end{definition}

\begin{example}\label{2.9}\rm{
Let $B_{\geq 0}=\{0,3,5,7,8,10\}$ be a $\beta$-set of affine type $D^{(1)}$.  
Then the abacus of the associated two-sided $\beta$-set is as follows.
\[		
\begin{tikzpicture}[scale=0.7, every node/.style={transform shape}]
		\begin{scope}[yshift=0cm]
			\draw[black] (-0.3,0) circle (0.2cm);
			\fill[black] (-0.9,0) circle (0.2cm);
			\fill[black] (-1.5,0) circle (0.2cm);
			\draw[black](-2.1,0) circle (0.2cm);
			\fill[black] (-2.7,0) circle (0.2cm);
			\draw[black](-3.3,0) circle (0.2cm);
			\fill[black] (-3.9,0) circle (0.2cm);
			\draw[black](-4.5,0) circle (0.2cm);
			\draw[black](-5.1,0) circle (0.2cm);
			\fill[black] (-5.7,0) circle (0.2cm);
			\draw[black](-6.3,0) circle (0.2cm);
			\fill[black] (-6.9,0) circle (0.2cm);
			\fill[black] (-7.5,0) circle (0.2cm);
			\node at (-8.1,0) {$\dots$};
			\fill[black] (0.3,0) circle (0.2cm);
			\draw[black] (0.9,0) circle (0.2cm);
			\draw[black] (1.5,0) circle (0.2cm);
			\fill[black] (2.1,0) circle (0.2cm);
			\draw[black] (2.7,0) circle (0.2cm);
			\fill[black] (3.3,0) circle (0.2cm);
			\draw[black] (3.9,0) circle (0.2cm);
			\fill[black](4.5,0) circle (0.2cm);
			\fill[black] (5.1,0) circle (0.2cm);
			\draw[black] (5.7,0) circle (0.2cm);
			\fill[black] (6.3,0) circle (0.2cm);
			\draw[black] (6.9,0) circle (0.2cm);
			\draw[black] (7.5,0) circle (0.2cm);
			\draw[dashed] (0,-0.3)--(0,0.3);
			\node at (8.1,0) {$\dots$};
		\end{scope}
	\end{tikzpicture}
\]}
\end{example}

Given a shifted Young diagram $Y$ of $[\lam]$ obtained from a one sided $\beta$-set $B_{\geq k}$ satisfying 
the second (or third) condition in Definition \ref{2.8} with shift vector $(0, 1, 2, \dots)$ (or $(1, 2, \dots)$), 
define $[\ddot{\lam}]$ to be the Young diagram of the {\em double distinct partition} 
(see \cite[Chapter III]{M} for details) $\ddot{\lam}$ obtained from $\lam$.
If a shifted Young diagram of $[\lam]$ obtained from a one sided $\beta$-set satisfying 
the first condition in Definition \ref{2.8} with shift vector $(0, 2, 2, 4, 4, \dots, 2k, 2k)$, 
define $[\ddot{\lam}]$ to be the Young diagram $$Y\cup \{(i, i)\mid 1\leq i\leq 2k+2\}\cup \{(j, i)\mid (i, j)\in Y\}.$$
We call $[\ddot{\lam}]$ the Young diagram of $B_{\geq k}$.

\begin{example}\label{2.11}
Let $Y$ be the shifted Young diagram in Example \ref{2.10}. Then the corresponding Young diagram is
\[
	\scalebox{0.35}{  
		\begin{tikzpicture}[x=1.5cm, y=1.5cm]  
			\node[emptycell] at (1,0) {};
			\node[emptycell] at (2,0) {};
			\node[emptycell] at (3,0) {};
			\node[emptycell] at (4,0) {};
			\node[emptycell] at (5,0) {};
			\node[emptycell] at (6,0) {};
			\node[emptycell] at (7,0) {};
			\node[emptycell] at (8,0) {};
			\node[emptycell] at (9,0) {};
			\node[emptycell] at (10,0) {};
			\node[emptycell] at (11,0) {};
			\node[graycell] at (1,-1) {};
			\node[graycell] at (2,-1) {};
			\node[emptycell] at (3,-1) {};
			\node[emptycell] at (4,-1) {};
			\node[emptycell] at (5,-1) {};
			\node[emptycell] at (6,-1) {};
			\node[emptycell] at (7,-1) {};
			\node[emptycell] at (8,-1) {};
			\node[emptycell] at (9,-1) {};
			\node[emptycell] at (10,-1) {};
			\node[graycell] at (1,-2) {};
			\node[graycell] at (2,-2) {};
			\node[emptycell] at (3,-2) {};
			\node[emptycell] at (4,-2) {};
			\node[emptycell] at (5,-2) {};
			\node[emptycell] at (6,-2) {};
			\node[emptycell] at (7,-2) {};
			\node[emptycell] at (8,-2) {};
			\node[emptycell] at (9,-2) {};
			\node[emptycell] at (10,-2) {};
			\node[graycell] at (1,-3) {};
			\node[graycell] at (2,-3) {};
			\node[graycell] at (3,-3) {};
			\node[graycell] at (4,-3) {};
			\node[emptycell] at (5,-3) {};
			\node[emptycell] at (6,-3) {};
			\node[emptycell] at (7,-3) {};
			\node[emptycell] at (8,-3) {};
			\node[emptycell] at (9,-3) {};
			\node[graycell] at (1,-4) {};
			\node[graycell] at (2,-4) {};
			\node[graycell] at (3,-4) {};
			\node[graycell] at (4,-4) {};
			\node[emptycell] at (5,-4) {};
			\node[emptycell] at (6,-4) {};
			\node[emptycell] at (7,-4) {};
			\node[emptycell] at (8,-4) {};
			\node[graycell] at (1,-5) {};
			\node[graycell] at (2,-5) {};
			\node[graycell] at (3,-5) {};
			\node[graycell] at (4,-5) {};
			\node[graycell] at (5,-5) {};
			\node[graycell] at (6,-5) {};
			\node[graycell] at (1,-6) {};
			\node[graycell] at (2,-6) {};
			\node[graycell] at (3,-6) {};
			\node[graycell] at (4,-6) {};
			\node[graycell] at (5,-6) {};
			\node[graycell] at (1,-7) {};
			\node[graycell] at (2,-7) {};
			\node[graycell] at (3,-7) {};
			\node[graycell] at (4,-7) {};
			\node[graycell] at (5,-7) {};
			\node[graycell] at (1,-8) {};
			\node[graycell] at (2,-8) {};
			\node[graycell] at (3,-8) {};
			\node[graycell] at (4,-8) {};
			\node[graycell] at (1,-9) {};
			\node[graycell] at (2,-9) {};
			\node[graycell] at (3,-9) {};
			\node[graycell] at (1,-10) {};
	 \draw[red, line width=4pt, -] 
	(1,0) -- (3,0) -- (3,-2) -- (5,-2) -- (5,-4) -- (7,-4);		
	\end{tikzpicture}}
\]
\end{example}

The following lemma reveals the relation between the two-sided $\beta$-set in Definition \ref{2.8} and the partition $\ddot{\lam}$.

\begin{lemma}\label{2.12}
Let $B_{\geq k}$ be a one sided $\beta$-set with $B$ its associated two-sided $\beta$-set and $[\ddot{\lam}]$ its Young diagram.
Then the partition determined by $B$ is $\ddot{\lam}$.
\end{lemma}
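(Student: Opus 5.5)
Throughout, write $B_{\geq k}=\{a_1>\dots>a_m\}$ and let $\mu$ be the partition determined by the associated $\beta$-set $B$ of Definition~\ref{2.8}, so that $\mu_i=b_i+i-s$ when $B=\{b_1>b_2>\cdots\}$ and $s=s(B)$. The plan is to compute $\mu$ from the beads of $B$ and to compare it with $[\ddot{\lam}]$. The natural tool is the Frobenius-type description of a partition: on the abacus, beads to the right of a cut point give the arm data and gaps to the left give the leg data.

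\textbf{Step 1: choose the cut and compute the charge.} I would use as cut the centre of the reflection $x\mapsto 2k-1-x$ (respectively $x\mapsto 2k-2-x$) from Definition~\ref{2.8}, shifted to match the diagonal of the shifted diagram. By construction, the beads of $B$ on the right of the cut are exactly $B_{\geq k}$. The gaps of $B$ on the left of the cut are exactly the mirror images of $B_{\geq k}$, together with the extra point $l$ in the third case and, in the second case, a possible fixed point of the reflection. So $B$ has a self-conjugate-like symmetry about the cut. From this description I would compute the charge $s(B)$ directly.

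\textbf{Step 2: read off the rows of $\mu$.} For the first $m$ rows, the $i$-th largest bead $a_i$ gives $\mu_i=a_i+i-s$, and I would check that this equals the row length of $[\ddot{\lam}]$, namely the shifted row length $\lam_i=a_i-k+i+v_1-v_i$ plus the diagonal and reflected cells to its left. The symmetry of $B$ about the cut shows that the columns of $\mu$ are obtained from its rows by the reflection, up to the diagonal correction. This matches the three constructions of $[\ddot{\lam}]$:
\begin{itemize}
\item the shifted diagram with shift $(0,1,2,\dots)$ (or $(1,2,\dots)$) corresponds to the double distinct partition of \cite[Ch.~III]{M}, whose Frobenius symbol is $(\lam_i \mid \lam_i-1)$ (or with the appropriate shift);
\item the case with shift $(0,2,2,4,4,\dots)$ corresponds to $Y$ together with its transpose and the diagonal cells $(i,i)$, $1\leq i\leq 2k+2$.
\end{itemize}

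\textbf{Step 3: case analysis.} Steps 1 and 2 are then carried out separately for the three cases of Definition~\ref{2.8}, with $k\in\{0,l,l+1\}$. In each case one verifies that the Frobenius coordinates of $\mu$ (arms from beads, legs from gaps) coincide with those of $[\ddot{\lam}]$.

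\textbf{Main obstacle.} The hard part is the first case. There, beads come in pairs of the same $l$-index parity, the shift vector $(0,2,2,4,4,\dots)$ is built from doubled entries, and the diagonal block of size $2k+2$ has to be matched exactly by the charge computation in Step 1. I would first test the bookkeeping on Example~\ref{2.11} against Example~\ref{2.9}, adjusting the offset of the cut until the diagonal length agrees, and only then write the general computation by induction on $m$: adding one bead to $B_{\geq k}$ adds one row to $Y$ and one reflected column to $[\ddot{\lam}]$.
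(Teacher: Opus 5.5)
Your route is the paper's: its proof is only ``case by case by routine calculation'', and reading $\mu$ off in Frobenius coordinates is a good way to organize that check. But your bookkeeping is wrong exactly where the three cases differ, and carried out as written the comparison fails.

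The cut cannot be placed or ``adjusted''. Since $\mu_i=b_i+i-s(B)$, it sits at $s(B)-\tfrac12$. Arms are $b-s(B)$ for beads $b\geq s(B)$, and legs are $s(B)-1-g$ for gaps $g<s(B)$. Here $s(B)=k$ in the first case, $0$ in the second and $l$ in the third, so in cases 2 and 3 the cut is half a step from the centre of the reflection. In case 2 the fixed point $-1$ of $x\mapsto -2-x$ is never removed, so it is a bead, not a gap. In case 3 the removed point $l$ lies to the right of the cut, so it is not a leg.

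With this, $\mu=(a_i-k\mid a_i-k)$ in case 1, and $\mu=(\lam_i\mid\lam_i-1)$ with $\lam_i=a_i-l$ in case 3, as you expect. In case 2, however, $\mu=(a_i\mid a_i+1)=(\lam_i-1\mid\lam_i)$ with $\lam_i=a_i+1$. This is the conjugate of Macdonald's $(\lam_i\mid\lam_i-1)$: for $B_{\geq 0}=\{0\}$ one gets $\mu=(1,1)$, while the doubled partition of $\lam=(1)$ is $(2)$. So case 2 holds only if $[\ddot\lam]$ is formed by keeping $Y$ (whose rows start on the diagonal) in place and adding its reflection strictly below the diagonal. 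You must fix that convention explicitly rather than invoke ``the appropriate shift''. Uniformly, row $i$ of $Y$ ends in column $v_i+\lam_i=a_i-k+i+v_1=\mu_i$, so all the content lies in how the diagonal and the legs are filled in.

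Your ``main obstacle'' is also misplaced. In case 1 the diagonal block has length equal to the Durfee rank, i.e. the number $m$ of beads, which has nothing to do with the charge. The $k$ in $(0,2,2,\dots,2k,2k)$ and $2k+2$ is not the $k$ of $B_{\geq k}$: in Example~\ref{2.11}, $B_{\geq 0}$ has six beads and the diagonal has length six. The paper's recipe, with a shift vector of odd length and a diagonal one longer, is literally right only when $m$ is even and $a_m$ is the leftmost position of the half abacus. For $B_{\geq l}=\{l\}$ it produces $\{(1,1),(2,2)\}$, which is not a Young diagram. So take both $v$ and the diagonal of length $m$.

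Then both sides are self-conjugate of Durfee rank $m$ with rows $a_i-k+i$ for $i\leq m$, and case 1 is the easiest case, not the hardest. No induction on $m$ is needed. The one you sketch is also inaccurate unless the added bead is the smallest: otherwise all later row indices change, and in case 1 so do the alternating shifts $v_i$.
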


\begin{proof}
It can be proved case by case by routine calculation.
\end{proof}

\smallskip


\subsection{Residue}

A whole abacus clearly corresponds to a Young diagram with a charge. 
In Definition \ref{2.8}, we define for a half abacus the associated shifted Young diagram.
Given two pairs $(\lam, j)$ and $(\mu, j)$, where $(\mu, j)$ is obtained from $(\lam, j)$
by an $f_i$ (or $2f_i$) action, then the {\em residues} of nodes $[\mu]-[\lam]$ are defined to be $i$.
In most cases, the residue of a node in a Young diagram or a shifted one is completely determined by 
the node itself. However, this does not holds for branching cases. The main reason is that we can reach 
a node by difference paths. The phenomenon has been considered in \cite{HJ2} by combinatorial method. 
The explain given here is by using Lie theory. Let us illustrate an example below.

\begin{example}\label{2.21}
\rm{Let $((5), 2)$ be a core abacus of type $D_5^{(1)}$. Then the associated affine Weyl group element is $\sigma_5\sigma_4\sigma_3\sigma_2$.
The abacus $\sigma_3\sigma_2((5), 2)$ is

\[
\begin{tikzpicture}[scale=0.7, every node/.style={transform shape}]

\begin{scope}[yshift=0cm]
\node at (3.0,0) {$\dots$};
\fill[black] (4,0) circle (0.2cm);
\fill[black] (5,0) circle (0.2cm);
\fill[black] (6,0) circle (0.2cm);
\fill[black] (7,0) circle (0.2cm);
\draw[black] (8,0) circle (0.2cm);
\draw[black] (9,0) circle (0.2cm);
\fill[black] (10,0) circle (0.2cm);
\draw[black] (11,0) circle (0.2cm);
\draw[dashed] (6.5,-0.3)--(6.5,0.3);
\node at (12,0) {$\dots$};
\end{scope}
\end{tikzpicture}
\]
Next, $\sigma_4\sigma_3\sigma_2((5), 2)$ is
\[
\begin{tikzpicture}[scale=0.7, every node/.style={transform shape}]

\begin{scope}[yshift=0cm]
\node at (3.0,0) {$\dots$};
\fill[black] (4,0) circle (0.2cm);
\fill[black] (5,0) circle (0.2cm);
\fill[black] (6,0) circle (0.2cm);
\fill[black] (7,0) circle (0.2cm);
\draw[black] (8,0) circle (0.2cm);
\draw[black] (9,0) circle (0.2cm);
\draw[black](10,0) circle (0.2cm);
\fill[black] (11,0) circle (0.2cm);
\draw[black](12,0) circle (0.2cm); 
\draw[dashed] (6.5,-0.3)--(6.5,0.3);
\node at (13,0) {$\dots$};
\end{scope}
\end{tikzpicture}
\]
And finally,  $\sigma_5\sigma_4\sigma_3\sigma_2((5), 2)$ is
\[
\begin{tikzpicture}[scale=0.7, every node/.style={transform shape}]			

\begin{scope}[yshift=0cm]
\node at (3.0,0) {$\dots$};
\fill[black] (4,0) circle (0.2cm);
\fill[black] (5,0) circle (0.2cm);
\fill[black] (6,0) circle (0.2cm);
\fill[black] (7,0) circle (0.2cm);
\draw[black] (8,0) circle (0.2cm);
\draw[black] (9,0) circle (0.2cm);
\draw[black](10,0) circle (0.2cm);
\draw[black](11,0) circle (0.2cm);
\draw[black](12,0) circle (0.2cm); 
\fill[black](13,0) circle (0.2cm); 
\draw[black](14,0) circle (0.2cm); 
\draw[dashed] (6.5,-0.3)--(6.5,0.3);
\node at (15,0) {$\dots$};
\end{scope}
\end{tikzpicture}
\]
The corresponding Young diagrams with residues are
$$\begin{array}{ccccc}
  \young(23) & \rightarrow & \young(234) & \rightarrow & \young(23455) 
\end{array}$$
On the other hand,  $\sigma_5\sigma_3\sigma_2((5), 2)$ is
\[
\begin{tikzpicture}[scale=0.7, every node/.style={transform shape}]
			
\begin{scope}[yshift=0cm]
\node at (3.0,0) {$\dots$};
\fill[black] (4,0) circle (0.2cm);
\fill[black] (5,0) circle (0.2cm);
\fill[black] (6,0) circle (0.2cm);
\fill[black] (7,0) circle (0.2cm);
\draw[black] (8,0) circle (0.2cm);
\draw[black] (9,0) circle (0.2cm);
\draw[black](10,0) circle (0.2cm);
\draw[black](11,0) circle (0.2cm);
\fill[black] (12,0) circle (0.2cm); 
\draw[black](13,0) circle (0.2cm); 
\draw[dashed] (6.5,-0.3)--(6.5,0.3);
\node at (14,0) {$\dots$};
\end{scope}
\end{tikzpicture}\\
\]
And then we get the final abacus by acting $\sigma_4$.
Clearly, the Young diagrams with residue are
$$\begin{array}{ccccc}
  \young(23) & \rightarrow & \young(2355) & \rightarrow & \young(23554) 
\end{array}$$

Let us consider another core abacus $((2,1,1,1), 2)$. 
The associated affine Weyl group element is $\sigma_0\sigma_1\sigma_3\sigma_2$.
We first get $\sigma_3\sigma_2((2,1,1,1), 2)$: 
\[
\begin{tikzpicture}[scale=0.7, every node/.style={transform shape}]
\begin{scope}[yshift=0cm]
\node at (3.0,0) {$\dots$};
\fill[black] (4,0) circle (0.2cm);
\fill[black] (5,0) circle (0.2cm);
\fill[black] (6,0) circle (0.2cm);
\fill[black] (7,0) circle (0.2cm);
\draw[black] (8,0) circle (0.2cm);
\draw[black] (9,0) circle (0.2cm);
\fill[black] (10,0) circle (0.2cm);
\draw[black] (11,0) circle (0.2cm);
\draw[dashed] (6.5,-0.3)--(6.5,0.3);
\node at (12,0) {$\dots$};
\end{scope}
\end{tikzpicture}
\]
Then $\sigma_1\sigma_3\sigma_2((2,1,1,1), 2)$ is
\[
\begin{tikzpicture}[scale=0.7, every node/.style={transform shape}]
\begin{scope}[yshift=0cm]
\node at (3.0,0) {$\dots$};
\fill[black] (4,0) circle (0.2cm);
\fill[black] (5,0) circle (0.2cm);
\fill[black] (6,0) circle (0.2cm);
\draw[black](7,0) circle (0.2cm);
\fill[black]  (8,0) circle (0.2cm);
\draw[black] (9,0) circle (0.2cm);
\fill[black] (10,0) circle (0.2cm);
\draw[black] (11,0) circle (0.2cm);
\draw[dashed] (6.5,-0.3)--(6.5,0.3);
\node at (12,0) {$\dots$};
\end{scope}
\end{tikzpicture}
\]
and the final abacus is
\[
\begin{tikzpicture}[scale=0.7, every node/.style={transform shape}]
\begin{scope}[yshift=0cm]
\node at (3.0,0) {$\dots$};
\fill[black] (4,0) circle (0.2cm);
\draw[black](5,0) circle (0.2cm);
\fill[black] (6,0) circle (0.2cm);
\fill[black](7,0) circle (0.2cm);
\fill[black]  (8,0) circle (0.2cm);
\draw[black] (9,0) circle (0.2cm);
\fill[black] (10,0) circle (0.2cm);
\draw[black] (11,0) circle (0.2cm);
\draw[dashed] (6.5,-0.3)--(6.5,0.3);
\node at (12,0) {$\dots$};
\end{scope}
\end{tikzpicture}
\]
The corresponding Young diagrams with residues are
$$\begin{array}{ccccc}
\young(23) & \rightarrow & \young(23,1) & \rightarrow & \young(23,1,0,0) 
\end{array}$$
If we choose another path to obtain the terminal abacus, the corresponding Young diagrams with residues are
$$\begin{array}{ccccc}
\young(23) & \rightarrow & \young(23,0,0) & \rightarrow & \young(23,0,0,1) 
\end{array}$$}
\end{example}

\medskip


\subsection{Uglov map}
In \cite{U}, Uglov defined a map (called Uglov map) ${\rm\bf U}: \mathcal{B}^r\rightarrow \mathcal{B}^e$ for affine type $A_{e-1}^{(1)}$.
Recently, the first named author and his collaborators \cite{HLQ} generalized it to all classical affine types.
The goal of this subsection is to recall the definition of Uglov map, which play a key role in this paper.

The definition of Uglov map is based on set $|I_X|$ and the index system given in Definition \ref{2.13}.

\begin{definition}\label{2.15}
Given an abacus $(\lam, j)$ of affine type $X$, the image ${\rm\bf U}(\lam, j)$ under the Uglov map can be obtained by the following three steps.
\begin{enumerate}
\item [\rm(1)]\,Rotate the negative part toward below such that the position $(i, -k)$ under $(-i, k-1)$, $1\leq i\leq l$ 
and $(l+1, -k)$ (if there exists) under $(l+1, k-2)$, where $k>0$. Then reverse the color of the second row (If the abacus has not negative part, then omit this step).
Denote the resulted diagram by ${\rm\bf U}_1(\lam, j)$.
\item [\rm(2)]\,Cut up ${\rm\bf U}_1(\lam, j)$ into sections with every $|I_X|$ columns 
from left to right 
and then put them to below the first section one by one.
Denote the resulted diagram by ${\rm\bf U}_2(\lam, j)$.
\item [\rm(3)]\,Reverse the color of the $l$ columns on the right in ${\rm\bf U}_2(\lam, j)$ and rotate them toward above such that 
the column labeled by $-i$ above the column labeled by $i$ for $1\leq i\leq l$.
\end{enumerate}
\end{definition}

The best way to understand $\rm\bf U$ is via an example.

\begin{example}\label{2.16}{\rm
Take the pair the same as in Example \ref{2.14}. Then ${\rm\bf U}_1(\lam, j)$ is
\[
\begin{tikzpicture}[scale=0.7, every node/.style={transform shape}]
\begin{scope}[yshift=0cm]
\node [] at (9,-1.3) {$0$};
\node [] at (10,-1.3) {$1$};
\node [] at (11,-1.3) {$2$};
\node [] at (12,-1.3) {$3$};
\node [] at (13,-1.3) {$-2$};
\node [] at (14,-1.3) {$-1$};
\node [] at (15,-1.3) {$0$};
\draw[black] (9,-0.7) circle (0.2cm);
\draw[black] (10,-0.7) circle (0.2cm);
\draw[black] (11,-0.7) circle (0.2cm);
\draw[black] (12,-0.7) circle (0.2cm);
\draw[black] (13,-0.7) circle (0.2cm);
\fill[black] (14,-0.7) circle (0.2cm);
\draw[black] (15,-0.7) circle (0.2cm);
\draw[black] (10,0) circle (0.2cm);
\fill[black](11,0) circle (0.2cm);
\draw[black] (12,0) circle (0.2cm);
\draw[black] (13,0) circle (0.2cm);
\draw[black] (14,0) circle (0.2cm);
\fill[black] (15,0) circle (0.2cm);
\draw[black] (16,0) circle (0.2cm);
\draw[black] (16,-0.7) circle (0.2cm);
\draw[dashed] (9.5,-0.8)--(9.5,0.6);
\draw[](14.5,-1.2)--node[]{}(14.5,0.5);
\node at (17,0) {$\dots$};
\node at (17,-0.7) {$\dots$};
\end{scope}
\end{tikzpicture}
\]
and ${\rm\bf U}_2(\lam, j)$ is
\[
\begin{tikzpicture}[scale=0.7, every node/.style={transform shape}]
\begin{scope}[yshift=0cm]
\node [] at (9,-3.6) {$0$};
\node [] at (10,-3.6) {$1$};
\node [] at (11,-3.6) {$2$};
\node [] at (12,-3.6) {$3$};
\node [] at (13,-3.6) {$-2$};
\node [] at (14,-3.6) {$-1$};
\draw[black] (9,-0.7) circle (0.2cm);
\draw[black] (10,-0.7) circle (0.2cm);
\draw[black] (11,-0.7) circle (0.2cm);
\draw[black] (12,-0.7) circle (0.2cm);
\draw[black] (13,-0.7) circle (0.2cm);
\fill[black] (14,-0.7) circle (0.2cm);
\draw[black] (10,0) circle (0.2cm);
\fill[black](11,0) circle (0.2cm);
\draw[black] (12,0) circle (0.2cm);
\draw[black] (13,0) circle (0.2cm);
\draw[black] (14,0) circle (0.2cm);
\draw[dashed] (9.5,-3.7)--(9.5,0.6);
\draw[](14.5,-3.7)--node[]{}(14.5,0.5);
\fill[black] (9,-1.4) circle (0.2cm);
\draw[black] (10,-1.4) circle (0.2cm);
\draw[black] (11,-1.4) circle (0.2cm);
\draw[black] (12,-1.4) circle (0.2cm);
\draw[black] (13,-1.4) circle (0.2cm);
\draw[black] (14,-1.4) circle (0.2cm);
\draw[black] (9,-2.1) circle (0.2cm);
\draw[black] (10,-2.1) circle (0.2cm);
\draw[black] (11,-2.1) circle (0.2cm);
\draw[black] (12,-2.1) circle (0.2cm);
\draw[black] (13,-2.1) circle (0.2cm);
\draw[black] (14,-2.1) circle (0.2cm);
\node at (9,-2.8) {$\vdots$};
\node at (10,-2.8) {$\vdots$};
\node at (11,-2.8) {$\vdots$};
\node at (12,-2.8) {$\vdots$};
\node at (13,-2.8) {$\vdots$};
\node at (14,-2.8) {$\vdots$};
\end{scope}
\end{tikzpicture}
\]
Finally, we get the image of $(\lam, j)$ under Uglov map
\[
\begin{tikzpicture}[scale=0.7, every node/.style={transform shape}]
\begin{scope}[yshift=0cm]
\node [] at (9,-3.6) {$0$};
\node [] at (10,-3.6) {$1$};
\node [] at (11,-3.6) {$2$};
\node [] at (12,-3.6) {$3$};
\node [] at (8,0.7) {$0$};
\node [] at (8,1.4) {$-1$};
\node [] at (8,2.1) {$-2$};
\node [] at (8,0) {$1$};
\node [] at (8,-0.7) {$2$};
\node [] at (8,-1.4) {$3$};
\node [] at (8,-2.1) {$4$};
\fill[black] (10,0.7) circle (0.2cm);
\fill[black] (11,0.7) circle (0.2cm);
\draw[black] (10,1.4) circle (0.2cm);
\fill[black] (11,1.4) circle (0.2cm);
\fill[black] (10,2.1) circle (0.2cm);
\fill[black] (11,2.1) circle (0.2cm);
\node at (10,2.8) {$\vdots$};
\node at (11,2.8) {$\vdots$};
\draw[black] (9,-0.7) circle (0.2cm);
\draw[black] (10,-0.7) circle (0.2cm);
\draw[black] (11,-0.7) circle (0.2cm);
\draw[black] (12,-0.7) circle (0.2cm);
\draw[black] (10,0) circle (0.2cm);
\fill[black](11,0) circle (0.2cm);
\draw[black] (12,0) circle (0.2cm);
\fill[black] (9,-1.4) circle (0.2cm);
\draw[black] (10,-1.4) circle (0.2cm);
\draw[black] (11,-1.4) circle (0.2cm);
\draw[black] (12,-1.4) circle (0.2cm);
\draw[black] (9,-2.1) circle (0.2cm);
\draw[black] (10,-2.1) circle (0.2cm);
\draw[black] (11,-2.1) circle (0.2cm);
\draw[black] (12,-2.1) circle (0.2cm);
\node at (9,-2.8) {$\vdots$};
\node at (10,-2.8) {$\vdots$};
\node at (11,-2.8) {$\vdots$};
\node at (12,-2.8) {$\vdots$};
\end{scope}
\end{tikzpicture}
\]}
\end{example}

\medskip


\subsection{Operations on abaci} In order to define a core abacus, we need to recall the operations on abaci.
We can understand the operations more easily via Uglov map.

\begin{definition}\label{2.17}
Let $(\lam, j)$ be an abacus. The elementary operations on ${\rm \bf U}(\lam, j)$ is defined as follows.
\begin{enumerate}
  \item [\rm(1)] First kind: If there is a bead at position $(x, y)$ and position $(x-1, y)$ is empty,
then move the bead to position $(x-1, y)$.
  \item [\rm(2)] Second kind: If the topmost position in a half column is occupied by a bead, then remove it.
\end{enumerate}
\end{definition}

Then the operations on an abacus have a simple description as below.

\begin{definition}\label{2.18}
An elementary operation on $(\lam, j)$ is a map $$(\lam, j)\rightarrow{\rm\bf U}^{-1}{(\tau(\rm\bf U}(\lam, j))),$$ 
where $\tau$ is an elementary operation on ${\rm \bf U}(\lam, j)$ defined in Definition \ref{2.17}.
\end{definition}

In order to give a visual description of elementary operations in Definition \ref{2.18}, we introduce the notion of {\em adjoint positions}.

\begin{definition}\label{2.19}
Let $(\lam, j)\in B_{\geq k}$. Two positions $x$ and $y$ are said to be $0$-adjoint 
if $x+y=-1$ and $0$ is not an $l$-index, or $x+y=-2$ and $0$ is an $l$-index.
Two positions are said to be $l$-adjoint if one of the following cases holds
\begin{enumerate}
\item [\rm (1)] $x+y=2l-1$, $k=-\infty$ or $0$ and $l+1$ is not an $l$-index;
\item [\rm (2)] $x+y=2l$, $k=-\infty$ or $0$ and $l+1$ is an $l$-index;
\item [\rm (3)] $x+y=4l-1$, $k=l$ and $0$ and $l+1$ are not $l$-indexes;
\item [\rm (4)] $x+y=4l+1$, $k=l$, $0$ is not an $l$-index and $l+1$ is an $l$-index;;
\item [\rm (5)] $x+y=4l+2$, $k=l+1$, and both $0$ and $l+1$ are $l$-indexes.
\end{enumerate}
\end{definition}

Then an elementary operation in Definition \ref{2.18} on an abacus $(\lam, j)$ of affine type $X$ belongs to one of the following four kinds.
\begin{enumerate}
  \item [\rm (1)] If two $0$-adjoint positions are empty, then set a bead on each position.
  \item [\rm (2)] If two $l$-adjoint positions are occupied by a bead, then remove them together.
  \item [\rm (3)] Move a bead at position $x$ to empty position $x-i$, 
  where $i\in \{l+1, 2l, 2l+1, 2l+2\}$, and determined by the affine type $X$.
  \item [\rm (4)] Remove or set a bead (corresponding to the elementary operations of second kind on ${\rm \bf U}(\lam, j)$).
\end{enumerate}

\begin{example}\label{2.20}{\rm
We can do on ${\rm\bf U}(\lam, j)$ in Example \ref{2.16} two first kind elementary operations and a second kind one. 
After doing these elementary operations, we get
\[
\begin{tikzpicture}[scale=0.7, every node/.style={transform shape}]
\begin{scope}[yshift=0cm]
\node [] at (9,-3.6) {$0$};
\node [] at (10,-3.6) {$1$};
\node [] at (11,-3.6) {$2$};
\node [] at (12,-3.6) {$3$};
\node [] at (8,0.7) {$0$};
\node [] at (8,1.4) {$-1$};
\node [] at (8,2.1) {$-2$};
\node [] at (8,0) {$1$};
\node [] at (8,-0.7) {$2$};
\node [] at (8,-1.4) {$3$};
\node [] at (8,-2.1) {$4$};
\draw[black] (10,0.7) circle (0.2cm);
\fill[black] (11,0.7) circle (0.2cm);
\fill[black] (10,1.4) circle (0.2cm);
\fill[black] (11,1.4) circle (0.2cm);
\fill[black] (10,2.1) circle (0.2cm);
\fill[black] (11,2.1) circle (0.2cm);
\node at (10,2.8) {$\vdots$};
\node at (11,2.8) {$\vdots$};
\draw[black] (9,-0.7) circle (0.2cm);
\draw[black] (10,-0.7) circle (0.2cm);
\draw[black] (11,-0.7) circle (0.2cm);
\draw[black] (12,-0.7) circle (0.2cm);
\draw[black] (10,0) circle (0.2cm);
\fill[black](11,0) circle (0.2cm);
\draw[black] (12,0) circle (0.2cm);
\draw[black] (9,-1.4) circle (0.2cm);
\draw[black] (10,-1.4) circle (0.2cm);
\draw[black] (11,-1.4) circle (0.2cm);
\draw[black] (12,-1.4) circle (0.2cm);
\draw[black] (9,-2.1) circle (0.2cm);
\draw[black] (10,-2.1) circle (0.2cm);
\draw[black] (11,-2.1) circle (0.2cm);
\draw[black] (12,-2.1) circle (0.2cm);
\node at (9,-2.8) {$\vdots$};
\node at (10,-2.8) {$\vdots$};
\node at (11,-2.8) {$\vdots$};
\node at (12,-2.8) {$\vdots$};
\end{scope}
\end{tikzpicture}
\]

Correspondingly, the abacus $(\lam, j)$ becomes to
\[
\begin{tikzpicture}[scale=0.7, every node/.style={transform shape}]
\begin{scope}[yshift=0cm]
\node [] at (9,-0.6) {$0$};
\node [] at (8,-0.6) {$-1$};
\node [] at (7,-0.6) {$-2$};
\node [] at (6,-0.6) {$3$};
\node [] at (5,-0.6) {$2$};
\node [] at (4,-0.6) {$1$};
\node [] at (3,-0.6) {$0$};
\node [] at (10,-0.6) {$1$};
\node [] at (11,-0.6) {$2$};
\node [] at (12,-0.6) {$3$};
\node [] at (13,-0.6) {$-2$};
\node [] at (14,-0.6) {$-1$};
\node [] at (15,-0.6) {$0$};
\node [] at (16,-0.6) {$1$};
\node at (2,0) {$\dots$};
\node at (2,-0.5) {$\dots$};
\fill[black] (3.0,0) circle (0.2cm);
\fill[black](4,0) circle (0.2cm);
\fill[black] (5,0) circle (0.2cm);
\fill[black] (6,0) circle (0.2cm);
\fill[black] (7,0) circle (0.2cm);
\fill[black] (8,0) circle (0.2cm);
\fill[black] (9,0) circle (0.2cm);
\draw[black] (10,0) circle (0.2cm);
\fill[black](11,0) circle (0.2cm);
\draw[black] (12,0) circle (0.2cm);
\draw[black] (13,0) circle (0.2cm);
\fill[black] (14,0) circle (0.2cm);
\draw[black] (15,0) circle (0.2cm);
\draw[black] (16,0) circle (0.2cm);
\draw[dashed] (9.5,-0.8)--(9.5,0.6);
\node at (17,0) {$\dots$};
\node at (17,-0.5) {$\dots$};
\end{scope}
\end{tikzpicture}
\]
}
\end{example}

\bigskip


\section{Core abaci of classical affine types}

The goal of this section is to define and study $j$-core abaci of all classical affine types for arbitrary charge $j$.
It is important to point out that our notion of core abaci is a pair, not just a partition. 
This means that $(\lam, j)$ being a core abaci does not imply $(\lam, i)$ being a core too.
We will prove that the cores defined in \cite{HJ2} coincide with the $0$-cores of untwisted classical types in the sense of our definition.
We will also use the $j$-core abaci to parameterize affine Grassmanian.

\subsection{Core abaci of arbitrary charge}

Let us begin with a result from \cite{HLQ}.
\begin{lemma}\cite[Theorem 6.11, Lemma 6.12]{HLQ}\label{3.1}
Given a pair $(\lam, j)\in \Lambda_j-\beta$ of affine type $X$, the followings are equivalent.
\begin{enumerate}
\item[\rm(1)]\,${\rm def}(\lam, j)=0$.
\item[\rm(2)]\,The pair $(\lam, j)$ is in the same orbit as the abacus of weight $\Lambda_j$ under the $W$ action given in Definition \ref{2.7}.
\item[\rm(3)]\,No elementary operation can be done in the abacus $(\lam, j)$.
\end{enumerate}
\end{lemma}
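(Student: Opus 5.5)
The plan is to prove the cycle (2)$\Rightarrow$(1)$\Rightarrow$(3)$\Rightarrow$(2), using the dictionary between abaci and weights from Definition \ref{2.6}. Under it, an abacus $(\lam,j)\in\Lambda_j-\beta$ is identified with a weight of the basic representation $L(\Lambda_j)$. Each generator $\sigma_i$, acting as in Definition \ref{2.7}, performs all possible $f_i$-moves (or reverses them). Hence it sends the abacus of weight $\Lambda_j-\beta$ to that of $\sigma_i(\Lambda_j-\beta)$. The first step is to check this equivariance type by type. For $0<i<l$ it is immediate from the bead-moving description. For $i=0$ and $i=l$ it follows from Tables 1 and 2, where the factors $2f_0$ and $2f_l$ account for $\langle\alpha_i^\vee,\cdot\rangle$ in the non-simply-laced cases.

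For (2)$\Rightarrow$(1), I would use that $\mathrm{def}(\Lambda-\beta)=\frac12\big((\Lambda,\Lambda)-(\Lambda-\beta,\Lambda-\beta)\big)$. The form $(\cdot,\cdot)$ is $W$-invariant, so $\mathrm{def}$ is constant on $W$-orbits. It vanishes at $\Lambda_j$ itself, where $\beta=0$.

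For (1)$\Rightarrow$(3), I would argue contrapositively. Suppose an elementary operation can be performed. By Definition \ref{2.18} it corresponds to a first or second kind move on ${\rm\bf U}(\lam,j)$. Following \cite{HLQ}, such a move replaces $\beta$ by $\beta-\delta$ (or $\beta-\tfrac12\delta$-type multiples, depending on the type), giving a new abacus $(\mu,j)\in\Lambda_j-\beta'$. Since $(\delta,\delta)=0$ and $(\Lambda_j,\delta)=a_0^\vee$-type constant $>0$, we get $\mathrm{def}(\lam,j)=\mathrm{def}(\mu,j)+c$ with $c>0$. It is standard that the defect is nonnegative on weights of $L(\Lambda_j)$, because every weight is $W$-conjugate to a dominant one $\Lambda_j-\gamma$ and $(\Lambda_j,\gamma)-\frac12(\gamma,\gamma)\ge0$ there. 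Therefore $\mathrm{def}(\lam,j)>0$.

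For (3)$\Rightarrow$(2), let $(\lam,j)$ admit no elementary operation and apply generators $\sigma_i$ that strictly decrease ${\rm ht}(\beta)$. Such a $\sigma_i$ exists exactly when $\langle\Lambda_j-\beta,\alpha_i^\vee\rangle<0$, i.e. when the abacus has more removable than addable $i$-positions. Elementary operations commute with the $W$-action, since in the Uglov picture $W$ acts by permuting and reflecting runners, which preserves the sets of first/second-kind moves. So the property of admitting no elementary operation persists along the way. The process terminates at a dominant weight $\Lambda_j-\gamma$ whose abacus is still free of elementary operations. It then remains to show that a dominant such abacus must be the abacus of $\Lambda_j$. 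I expect this last claim, together with the commuting of elementary operations with $W$, to be the main obstacle. For it, I would read off the Uglov image: dominance forces the runners to be balanced in the sense of the moving vectors of \cite{LQ,HLQ}. With no first-kind moves, every runner is flush, and with no second-kind moves, all charges match those of $\Lambda_j$. This forces $\gamma=0$ after a case check over the types in Table 1.
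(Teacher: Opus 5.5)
The paper does not prove this lemma; it imports it from \cite{HLQ}, so your argument has to stand on its own. It breaks at (1)$\Rightarrow$(3).

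\textbf{The gap.} In (1)$\Rightarrow$(3) you assume that one elementary operation sends $(\lambda,j)\in\Lambda_j-\beta$ to some $(\mu,j)\in\Lambda_j-\beta'$ with $\beta'=\beta-m\delta$, $m>0$. On whole abaci this is false. The operations that fill a pair of $0$-adjoint positions or delete a pair of $l$-adjoint beads change the charge by $\pm2$, while every $f_i$-move of Table~1 preserves the charge. The paper's Example~\ref{2.20} already passes from charge $1$ to charge $2$.

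Here is a reachable example. Take type $D_3^{(2)}$ with $j=1$.
\begin{itemize}
\item The core $\sigma_2\sigma_0\sigma_1(\varnothing,1)$ has beads $\mathbb{Z}_{\le -3}\cup\{-1,0,3\}$ and three addable $1$-positions.
\item Doing only the $f_1$-move $0\to1$ gives $\mathbb{Z}_{\le-3}\cup\{-1,1,3\}\in\Lambda_1-2\delta$.
\item This abacus admits exactly two elementary operations. Filling the $0$-adjoint pair $\{0,-2\}$ gives charge $3$. Deleting the $l$-adjoint pair $\{1,3\}$ gives charge $-1$.
\item Neither result lies in any $\Lambda_1-\beta'$. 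Only their composite, which returns $(\varnothing,1)$, removes $2\delta$.
\end{itemize}
So the identity $\mathrm{def}(\lambda,j)=\mathrm{def}(\mu,j)+m(\Lambda_j,\delta)$ has no meaning in general. Your positivity input is also unsupported as stated: nothing establishes that abacus weights are weights of $L(\Lambda_j)$.

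\textbf{A repair.} Use only tools you already invoke, and route through (2).
\begin{itemize}
\item For (1)$\Rightarrow$(2), run your descent to conjugate $(\lambda,j)$ to an abacus of dominant weight $\mu'=\Lambda_j-\gamma$, with $\gamma=\sum_i c_i\alpha_i\ge0$. By $W$-invariance of the defect,
\[
\mathrm{def}(\lambda,j)=\tfrac12(\Lambda_j+\mu',\gamma)=\tfrac12\sum_i c_i d_i\langle\Lambda_j+\mu',\alpha_i^\vee\rangle\ \ge 0 .
\]
This also gives nonnegativity of the defect for all abaci, without appealing to $L(\Lambda_j)$.
\item If the defect vanishes, then $c_j=0$, because $\langle\Lambda_j+\mu',\alpha_j^\vee\rangle\ge 1$. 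But the abacus of $\Lambda_j$ admits only $f_j$-moves, so every nonempty $f$-path from it has $c_j\ge1$. Hence $\gamma=0$, and the conjugate is the abacus of $\Lambda_j$.
\item Then (1)$\Rightarrow$(3) follows from (1)$\Rightarrow$(2) together with the $W$-invariance of being operation-free, which you already need for (3)$\Rightarrow$(2).
\end{itemize}

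\textbf{A smaller correction in (3)$\Rightarrow$(2).} In the last step, it is dominance, not the absence of second-kind moves, that fixes the runner charges. An operation-free abacus is determined by its Uglov vector $u$. Dominance becomes inequalities such as $\frac{a_j^\vee}{a_0^\vee}\ge u_1\ge\dots\ge u_l\ge0$ in type $C_l^{(1)}$ (compare Lemmas~\ref{4.1}--\ref{4.3}). Together with the parity count, these force $u=u(\varnothing,j)$.
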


Then we can give  the definition of {\em core abaci}. It is a natural generalization of the known concepts.
We will explain this in the third subsection.
\begin{definition}\label{3.2}
A pair $(\lam, j)$ of affine type $X$ satisfying one of the conditions in Lemma \ref{3.1} is called a $j$-core abacus of type $X$.
\end{definition}

Note that by \cite[Proposition 11.4]{Kac}, if a core abacus $(\lam, j)\in \Lambda_j-\beta$, then no other abacus belong to $\Lambda_j-\beta$. 
Based on this fact, we sometimes call $\Lambda_j-\beta$ a core abacus for convenience. We also write ${\rm ht}(\lam, j)$ to mean ${\rm ht}(\beta)$.

\smallskip

Given a core abacus $(\lam, j)$ of type $X$ which corresponds to $\Lambda_j-\beta$, we have $|\lam|={\rm ht}(\beta)$ 
if $X=C_l^{(1)}, D_{l+1}^{(2)}$ or $A_{2l}^{(2)}$. However, if $X=B_l^{(1)}$ or $A_{2l-1}^{(2)}$  and the coefficient of 
$\alpha_0$ in $\beta$ is not zero, and $X=D_l^{(1)}$  and the coefficient of 
$\alpha_0$ or $\alpha_l$ in $\beta$ is not zero, then $|\lam|\neq{\rm ht}(\beta)$.
This is the reason why we establish Diophantine equations by using the height formula in Section 4.

\begin{example}{\rm
Let $(\lam, 0)$ be a core abacus of type $D_5^{(1)}$, where $\lam=(11, 8, 8, 5, 4)$ (see Example \ref{2.5} for the abacus display) and so  $|\lam|=36$. 
Then the corresponding root $\beta=4\alpha_0+2\alpha_1+7\alpha_2+8\alpha_3+3\alpha_4+4\alpha_5$.
Therefore, ${\rm ht}(\beta)=28$. It is not equal to $|\lam|$.}
\end{example}

Given an abacus $(\lam, j)$, denote by ${\rm\bf s}({\rm\bf U}(\lam, j))$ the vector obtained 
by treating each runner of ${\rm\bf U}(\lam, j)$ as an abacus and then taking charges in order.
Then we can define the so-called Uglov vector, which will be used to compute the height of $(\lam, j)$.

\begin{definition}\label{3.3}
Given a core abacus $(\lam, j)\in B_{\geq k}$ of affine type $X$, define the Uglov vector of $(\lam, j)$ to be
$$u(\lam, j)=\begin{cases}
        {\rm\bf s}({\rm\bf U}(\lam, j))-\frac{3}{2}{\bf 1}, & \mbox{\rm if } k=l\, {\rm or}\, l+1; \\
        {\rm\bf s}({\rm\bf U}(\lam, j))-{\bf 1}, & \mbox{\rm otherwise}.
      \end{cases}$$
\end{definition}

\begin{example}\label{3.4}{\rm
Let $(\lam, j)=((4,2,1,1,1,1,1), 1)$ be an abacus of affine type $D^{(2)}_3$. The following is the abacus display.
\[
\begin{tikzpicture}[scale=0.7, every node/.style={transform shape}]
\begin{scope}[yshift=0cm]
\node [] at (9,-0.6) {$0$};
\node [] at (8,-0.6) {$-1$};
\node [] at (7,-0.6) {$-2$};
\node [] at (6,-0.6) {$3$};
\node [] at (5,-0.6) {$2$};
\node [] at (4,-0.6) {$1$};
\node [] at (3,-0.6) {$0$};
\node [] at (10,-0.6) {$1$};
\node [] at (11,-0.6) {$2$};
\node [] at (12,-0.6) {$3$};
\node [] at (13,-0.6) {$-2$};
\node [] at (14,-0.6) {$-1$};
\node [] at (15,-0.6) {$0$};
\node [] at (16,-0.6) {$1$};
\node at (2,0) {$\dots$};
\node at (2,-0.5) {$\dots$};
\fill[black] (3.0,0) circle (0.2cm);
\draw[black](4,0) circle (0.2cm);
\fill[black] (5,0) circle (0.2cm);
\fill[black] (6,0) circle (0.2cm);
\fill[black] (7,0) circle (0.2cm);
\fill[black] (8,0) circle (0.2cm);
\fill[black] (9,0) circle (0.2cm);
\draw[black] (10,0) circle (0.2cm);
\fill[black](11,0) circle (0.2cm);
\draw[black] (12,0) circle (0.2cm);
\draw[black] (13,0) circle (0.2cm);
\fill[black] (14,0) circle (0.2cm);
\draw[black] (15,0) circle (0.2cm);
\draw[black] (16,0) circle (0.2cm);
\draw[dashed] (9.5,-0.8)--(9.5,0.6);
\node at (17,0) {$\dots$};
\node at (17,-0.5) {$\dots$};
\end{scope}
\end{tikzpicture}
\]
Then ${\rm\bf U}(\lam, 1)$ is 
\[
\begin{tikzpicture}[scale=0.7, every node/.style={transform shape}]
\begin{scope}[yshift=0cm]
\node [] at (9,-3.6) {$0$};
\node [] at (10,-3.6) {$1$};
\node [] at (11,-3.6) {$2$};
\node [] at (12,-3.6) {$3$};
\node [] at (8,0.7) {$0$};
\node [] at (8,1.4) {$-1$};
\node [] at (8,2.1) {$-2$};
\node [] at (8,0) {$1$};
\node [] at (8,-0.7) {$2$};
\node [] at (8,-1.4) {$3$};
\node [] at (8,-2.1) {$4$};
\draw[black] (10,0.7) circle (0.2cm);
\fill[black] (11,0.7) circle (0.2cm);
\draw[black] (10,1.4) circle (0.2cm);
\fill[black] (11,1.4) circle (0.2cm);
\fill[black] (10,2.1) circle (0.2cm);
\fill[black] (11,2.1) circle (0.2cm);
\node at (10,2.8) {$\vdots$};
\node at (11,2.8) {$\vdots$};
\draw[black] (9,-0.7) circle (0.2cm);
\draw[black] (10,-0.7) circle (0.2cm);
\draw[black] (11,-0.7) circle (0.2cm);
\draw[black] (12,-0.7) circle (0.2cm);
\draw[black] (10,0) circle (0.2cm);
\fill[black](11,0) circle (0.2cm);
\draw[black] (12,0) circle (0.2cm);
\draw[black] (9,-1.4) circle (0.2cm);
\draw[black] (10,-1.4) circle (0.2cm);
\draw[black] (11,-1.4) circle (0.2cm);
\draw[black] (12,-1.4) circle (0.2cm);
\draw[black] (9,-2.1) circle (0.2cm);
\draw[black] (10,-2.1) circle (0.2cm);
\draw[black] (11,-2.1) circle (0.2cm);
\draw[black] (12,-2.1) circle (0.2cm);
\node at (9,-2.8) {$\vdots$};
\node at (10,-2.8) {$\vdots$};
\node at (11,-2.8) {$\vdots$};
\node at (12,-2.8) {$\vdots$};
\end{scope}
\end{tikzpicture}
\]
and $u(\lam, 1)=(-2, 1)$.}
\end{example}

We now study the properties of Uglov vectors of core abaci. The following two lemmas play an important role in the whole paper.

\begin{lemma}\label{3.5}
Let $(\lam, j)$ be a core abacus (not half). Then the number of odd components in the Uglov vector $u$ is $j$.
\end{lemma}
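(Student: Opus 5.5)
We argue by induction along the $W$-orbit, using Lemma \ref{3.1}(2). A whole core abacus $(\lam, j)$ lies in the orbit of the abacus of weight $\Lambda_j$, which for whole abaci is $(\varnothing, j)$. So there is a word $\sigma_{i_r}\cdots\sigma_{i_1}$ carrying $(\varnothing,j)$ to $(\lam,j)$. The parity count can then be split into two parts:
\begin{enumerate}
\item[(a)] verify the claim for $(\varnothing, j)$;
\item[(b)] show that each generator $\sigma_i$ preserves the number of odd components of $u$.
\end{enumerate}

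For (a), compute ${\rm\bf U}(\varnothing, j)$ directly from Definition \ref{2.15}. The beads occupy all positions $<j$. After step (1) (folding the negative part under the positive part and reversing the second row), step (2) (cutting into sections of $|I_X|$ columns) and step (3) (reversing and rotating the right $l$ columns), each runner becomes a $\beta$-set of the form $\mathbb{Z}_{<c}$. Its charge $c$ can be read off from how many of the positions $0,\dots,j-1$ land on that runner. Since $0\le j\le l$ and $|I_X|>l$, all of positions $0,\dots,j-1$ lie in the first section and have $l$ indices $1,\dots,j$. Runners $1,\dots,j$ therefore get charge one more than the others. Once we track the normalisation $-{\bf 1}$ in Definition \ref{3.3}, this gives exactly $j$ odd entries: entries $1$ versus $0$, or the appropriate shifted values. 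I would do this as a small bookkeeping computation, split according to whether $0$ and/or $l+1$ belong to $I_X$; these extra runners carry no beads from $0,\dots,j-1$ and so contribute even entries.

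For (b), translate each $f_i$-action into its effect on ${\rm\bf U}$. For $0<i<l$, $\sigma_i$ swaps the contents of positions with $l$ index $i$ and $i+1$ in even rows, and of positions with $l$ index $-i-1$ and $-i$ in odd rows. Under the Uglov map this becomes the exchange of runners $i$ and $i+1$, so $u$ is permuted. For $\sigma_0$ and $\sigma_l$, Table 1 shows that the action flips the sign of a coordinate, shifts it by a fixed even amount, or (in types $B$, $D$, $A_{2l-1}^{(2)}$) composes such an operation with a swap of two runners. Concretely, I expect $\sigma_l$ to act as $u_l\mapsto -u_l$ in types where $l-1\Rightarrow l$ or $l+1\in I_X$, and as $(u_{l-1},u_l)\mapsto(-u_l,-u_{l-1})$ in type $D_l^{(1)}$. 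Similarly, $\sigma_0$ should act as $u_1\mapsto c-u_1$ or $(u_1,u_2)\mapsto(c-u_2,c-u_1)$, with $c$ even. All of these maps preserve the multiset of parities, which gives (b).

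The main obstacle is this last identification for $\sigma_0$ and $\sigma_l$. It requires checking, through the three-step Uglov construction and the colour reversals, that moving beads between adjoint positions changes the relevant runner charges by $u\mapsto c-u$ with $c$ even, and not odd. I would handle it by computing how a single $f_0$ or $f_l$ move changes the runner charges (each move changes a charge by $\pm1$ on one or two runners), summing over all moves in $\sigma_i$, and then comparing with the explicit linear action of $\sigma_0,\sigma_l$ on $Q_0^\vee$ coordinates. The case analysis runs over the six classical types in Table 1.
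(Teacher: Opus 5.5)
Your argument is correct in outline, but it follows a different route from the paper's.

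\textbf{The paper's route.} The paper counts directly on the Uglov picture. After the fold-and-reverse step, a pair of $0$-adjoint positions $(i,k)$, $(-i,-k-1)$ contributes an odd number of beads to its column exactly when both positions are beaded or both are empty. A core has no empty $0$-adjoint pair, so the odd runners come only from beaded pairs. The same fact makes the charge $j$ equal to the number of beaded pairs. Tacitly, the paper also needs that one runner never carries two beaded pairs, which again follows from the core conditions.

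\textbf{Your route.} You induct along the $W$-orbit of $(\varnothing,j)$ given by Lemma \ref{3.1}(2). This is the same induction the paper runs for Lemma \ref{3.27}, reduced mod $2$. Its real input is the generator formulas the paper states without proof as Lemma \ref{3.26}, together with the base value $u(\varnothing,j)=(1,\dots,1,0,\dots,0)$, which you compute correctly.

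\textbf{Comparison.} The direct count is self-contained and says at bead level which runners are odd. Your version reuses machinery that Section 4 needs anyway, and it shows exactly where ``not half'' is used. You should make that explicit: the constant in the $\sigma_0$-formula is $\frac{a_j^\vee}{a_0^\vee}$, or $\frac{2a_j^\vee}{a_0^\vee}$ in type $C_l^{(1)}$. This constant equals $2$ in every whole-abacus case and $1$ for half abaci, where the count of odd entries is indeed not invariant.

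\textbf{Corrections to your plan for (b).}
\begin{enumerate}
\item[(i)] A single bead move does not always shift runner charges by $\pm1$. For $\sigma_0$ in $C_l^{(1)}$, $A_{2l}^{(2)}$, $D_{l+1}^{(2)}$, and for $\sigma_l$ in every type except $D_l^{(1)}$, it shifts one coordinate by $\pm 2$. For $\sigma_0$ in $B_l^{(1)}$, $D_l^{(1)}$, $A_{2l-1}^{(2)}$, and for $\sigma_l$ in $D_l^{(1)}$, it shifts two coordinates by $\pm1$ each, which flips both parities. So parity is not preserved move by move. You really need the closed reflection formula, or equivalently the move counts of Lemmas \ref{4.1} and \ref{4.3} (e.g.\ $c-u_1-u_2$ moves), to conclude.
\item[(ii)] The rule $u_l\mapsto -u_l$ holds in all types other than $D_l^{(1)}$, including $C_l^{(1)}$, $A_{2l-1}^{(2)}$ and $A_{2l}^{(2)}$, not only in the types you listed.
\item[(iii)] The half columns $0$ and $l+1$ do not give coordinates of $u$ at all, so they do not ``contribute even entries''.
\end{enumerate}
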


\begin{proof}
According to the rule of Uglov map, every odd component in $u$ is arising from a pair of beads 
at positions $(i, k)$ and $(-i, -k-1)$ in the abacus $(\lam, j)$, and the number of such pairs is just the charge of $(\lam, j)$.
\end{proof}

Next we concern half core abaci. Here is a direct corollary of Uglov map. It can be checked case by case and we omit the details.

\begin{lemma}\label{3.6}
Let $(\lam, j)$ be a half core abacus. If the Uglov vector of $(\lam, j)$ is $u$, 
then the Uglov vector of $(\ddot{\lam}, j)$ is $2u$.
\end{lemma}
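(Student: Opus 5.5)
Unwinding the definitions turns the claim into a direct comparison of runner charges. Let $B_{\geq k}$ be the one-sided $\beta$-set of the half core abacus $(\lam, j)$, and let $B$ be its associated two-sided $\beta$-set (Definition \ref{2.8}). By Lemma \ref{2.12}, the whole abacus $(\ddot{\lam}, j)$ is exactly the abacus display of $B$, so it suffices to compare ${\rm\bf s}({\rm\bf U}(B_{\geq k}))$ with ${\rm\bf s}({\rm\bf U}(B))$. We then subtract the normalising constants of Definition \ref{3.3}: $\tfrac32{\bf 1}$ or ${\bf 1}$ on the half side, ${\bf 1}$ on the whole side.

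The key observation is that the reflection $x\mapsto 2k-1-x$ (resp. $2k-2-x$) in Definition \ref{2.8} is compatible with the $l$ index system of Definition \ref{2.13}. It sends a position with $l$ index $(i,q)$ to one with index $(-i,q')$ for a suitable $q'$, up to the fixed indices $0$ and $l+1$. Moreover, in step (1) of the Uglov map this is precisely the pairing used to rotate the negative part below the positive part. Taking the complement $\mathbb{Z}_{<k}\setminus\{\cdots\}$ is exactly the colour reversal performed in step (1).

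Consequently, in ${\rm\bf U}_1(B)$ the second row coincides with a copy of the first row. I would verify this by writing both rows in terms of the $l$ index pairs. The second row differs from the first only by a bounded shift near position $k$, and for $k=l+1$ by the removed position $\{l\}$. Hence after steps (2) and (3) every runner of ${\rm\bf U}(B)$ receives the contribution of the corresponding runner of ${\rm\bf U}(B_{\geq k})$ twice. This gives
\[
{\rm\bf s}({\rm\bf U}(B)) = 2\,{\rm\bf s}({\rm\bf U}(B_{\geq k})) + c\,{\bf 1},
\]
where the constant $c$ comes from the positions near the boundary $k$. Since the relation is linear in the bead configuration, I would determine $c$ by evaluating both sides on the abacus of the relevant fundamental weight $\Lambda_j$. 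There the Uglov vector should be $0$ on both sides because it is a core with $\beta=0$, so $c$ is forced to be ${\bf 1}$ when $k\neq l, l+1$ and $2{\bf 1}$ otherwise. These values are exactly what is needed for $u(\ddot{\lam},j)=2u(\lam,j)$: for example, $2(s-\tfrac32{\bf 1})=(2s+2{\bf 1})-{\bf 1}$.

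The main obstacle is the case-by-case check that the boundary shift really is uniform across runners. This must be done separately for the three cases of Definition \ref{2.8} and for the types with and without the extra indices $0$ and $l+1$. The case $k=l+1$ needs particular care, because there the missing position $l$ sits on a single runner and could make that runner's charge differ. I would handle this by tracking that runner explicitly through step (3), where its colour reversal should compensate. I would also sanity-check the identity against the $D^{(1)}$ data in Examples \ref{2.9}–\ref{2.11}.
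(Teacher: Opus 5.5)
Your strategy is the natural way to carry out the case-by-case check that the paper omits: read $(\ddot\lam,j)$ as the display of the associated set $B$ via Lemma~\ref{2.12}, show that each row of ${\rm\bf U}(B_{\geq k})$ appears twice in ${\rm\bf U}(B)$, and then fix a boundary constant. However, the step that fixes the constant fails, and that constant, together with the description of the doubling for $k=l,l+1$, is where the content of the lemma lies.

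\emph{The sign is wrong.} By Definition~\ref{3.3}, the claim $u(B)=2u(B_{\geq k})$ is equivalent to ${\rm\bf s}({\rm\bf U}(B))=2\,{\rm\bf s}({\rm\bf U}(B_{\geq k}))+c\,{\bf 1}$ with $c=-1$ for $k=0$ and $c=-2$ for $k=l,l+1$. Your values $c=1$ and $c=2$ would give $u(B)=2u+2\cdot{\bf 1}$ and $u(B)=2u+4\cdot{\bf 1}$ respectively. Your check $2(s-\tfrac32{\bf 1})=(2s+2{\bf 1})-{\bf 1}$ is false: the left side is $2s-3\cdot{\bf 1}$.

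\emph{The premise used to find $c$ is false.} The abacus of $\Lambda_j$ does not have Uglov vector $0$ in general. By Lemma~\ref{3.27} with $w_{\lam,j}=e$ (equivalently Lemma~\ref{4.4} at height $0$) it is $\omega_j$, e.g.\ $(1,0,\dots,0)$ for $B_{\geq 0}=\{0\}$. For $k=l,l+1$, Definition~\ref{3.3} puts every entry of $u$ in $\mathbb{Z}+\tfrac12$, so $u$ can never vanish there, which is exactly the case you single out as delicate. Calibrating on a base abacus would require computing the runner charges of both pictures there directly.

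It is cleaner to compute the charges directly in general. For $k=0$, Definition~\ref{2.8} uses exactly Uglov's pairing $x\leftrightarrow -1-x$ (resp.\ $-2-x$), so the second row of ${\rm\bf U}_1(B)$ equals the first row position by position. Let $P_i$ and $N_i$ be the numbers of beads of $B_{\geq0}$ with $l$ index $i$ and $-i$ respectively. Then runner $i$ of ${\rm\bf U}(B_{\geq 0})$ has charge $1+P_i-N_i$. In ${\rm\bf U}(B)$ rows $2h-1$ and $2h$ carry the same colour $b\in\{0,1\}$, so each row of the half runner is counted twice. The exception is the pair $\{-1,0\}$, which straddles the origin and contributes $2b-1$ instead of $2b$. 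The charge is therefore $1+2(P_i-N_i)$, which is $c=-1$. As a check, the $D_5^{(1)}$ core $B_{\geq0}=\{0,3,5,7,8,10\}$ gives $u=(2,-1,-1,1,-1)$. This agrees with Lemma~\ref{4.13} at $N=28$, and $2u$ agrees with Lemma~\ref{4.12} at $N=|\ddot\lam|=72$.

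For $k=l,l+1$ your description of the doubling is inaccurate. The reflection $x\mapsto 2k-1-x$ (resp.\ $2k-2-x$) is not Uglov's pairing up to a local shift near $k$; it is Uglov's pairing followed by translation by $|I_X|$. Consequently:
\begin{enumerate}
\item[(a)] the second row of ${\rm\bf U}_1(B)$ is the first row shifted by one whole section;
\item[(b)] the positive columns of the first section of $B$ are the colour-reversed negative columns of $B_{\geq k}$, with position $l$ emptied when $k=l+1$.
\end{enumerate}
After steps (2) and (3) the duplicated rows of each runner are $\{2h,2h+1\}$. No pair straddles the origin, so each charge of ${\rm\bf U}(B)$ is exactly twice a half-runner charge. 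The extra $\tfrac12$ in Definition~\ref{3.3} has to absorb this offset to produce the required $c=-2$. That bookkeeping is the actual case check, and your proposal does not carry it out correctly.
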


\smallskip

We conclude this subsection by considering the conjugation of core abaci.
\begin{definition}\label{3.7}
Let $\lam$ be a partition. Then the conjugation of the abacus $(\lam, j)$ is defined to be $(\lam', l-j)$. 
\end{definition}

Let us give a simple result for later use.
\begin{lemma}\label{3.8}
Let $(\lambda, j)$ be a core abacus of type $C_l^{(1)}$, $D_l^{(1)}$ or $D_{l+1}^{(2)}$ with $j$ satisfies 
\begin{equation*}
\begin{cases}
0\leq j\leq l, & \mbox{\rm if} \,\,(\lam, j)\,\, \text{\rm is of type}\,\, C_l^{(1)}; \\
1\leq j\leq l-1, & \mbox{\rm if } (\lam, j) \,\, \text{\rm is of type} \,\,D_{l+1}^{(2)};  \\
2\leq j\leq l-2, & \mbox{\rm if}\,\, (\lam, j) \text{\rm is of type}\,\, D_l^{(1)}.
\end{cases}\eqno(3.2.1)
\end{equation*}
Then there is a bijection between the actions of $f_i$ on $(\lam, j)$ and that of $f_{l-i}$ on $(\lam', l-j)$ for each $i=0,1,\dots,l$.
\end{lemma}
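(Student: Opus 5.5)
The plan is to realize conjugation as an explicit involution on the set of positions together with a colour reversal, and then to check that this involution carries each $f_i$-move to an $f_{l-i}$-move. The basic input is the classical fact that for any partition $\lam$ the sets $\{\lam_a-a\mid a\geq 1\}$ and $\{-(\lam'_b-b)-1\mid b\geq 1\}$ partition $\mathbb{Z}$. Shifting by the charges gives, for every $x\in\mathbb{Z}$,
$$x\in\beta_j(\lam)\iff l-1-x\notin\beta_{l-j}(\lam').$$
This holds because $y=l-1-x$ satisfies $y-(l-j)=j-1-x$. So, with $\varphi(x)=l-1-x$, the abacus $(\lam',l-j)$ is obtained from $(\lam,j)$ by applying $\varphi$ to positions and reversing the colour of every position.

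First I will observe that the hypothesis (3.2.1) guarantees that the abacus of $\Lambda_j$, and the abacus of $\Lambda_{l-j}$, are whole abaci in each of the three types. Indeed, the half abaci occur only for the excluded charges $j\in\{0,1,l-1,l\}$ in types $D_l^{(1)}$ and $D_{l+1}^{(2)}$, and never in type $C_l^{(1)}$. Hence only the moves of Table 1 and the generic $f_i$-moves are relevant, and the special moves of Table 2 never occur. Since $j\mapsto l-j$ preserves the range (3.2.1), the same holds for $(\lam',l-j)$.

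Next I compute how $\varphi$ acts on $l$-indices, using Definition \ref{2.13}. In type $C_l^{(1)}$, and in $D_l^{(1)}$, which has the same index system, the period is $2l$. A position $x$ with $l$-index $(i,2k)$, $1\leq i\leq l$, has residue $r'=i-1$. Then $l-1-x$ has residue $l-i$ and index $(l-i+1,\ast)$. On the negative half the indices $-i$ are exchanged with $-(l-i+1)$ in the same way. The key observation is that the composite of $\varphi$ with colour reversal turns the pair (bead at $x$, empty at $x+1$) into the pair (bead at $l-2-x$, empty at $l-1-x$). So a generic $f_i$-move $x\to x+1$ with $x$ of index $(i,2k)$ becomes a move from the position of index $(l-i,\ast)$ to its right neighbour, which is exactly an $f_{l-i}$-move. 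The odd moves $(-i-1,2k+1)\to(-i,2k+1)$ are handled identically. For $D_{l+1}^{(2)}$ the index set contains $0$ and $l+1$ and the period is $2l+2$. There I expect the correct reflection to be $\varphi(x)=l-1-x$ up to a shift by a multiple of the period, which I will determine by the same residue computation. Because the whole abacus is a disjoint union of periods, such a shift changes nothing.

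Then I go through the rows of Table 1 for the boundary generators. I must check that $\varphi$ with colour reversal sends the $f_0$-moves $(-1,2k-1)\to(1,2k)$ in type $C$ to the $f_l$-moves $(l,2k')\to(-l,2k'+1)$, and vice versa. In type $D_{l+1}^{(2)}$ the $2f_0$ pattern should likewise go to the $2f_l$ pattern. In type $D_l^{(1)}$ the two-option $f_0$-moves $(-1)\to(2)$ and $(-2)\to(1)$ should go to the two $f_l$-moves $(l)\to(1-l)$ and $(l-1)\to(-l)$. Since the correspondence is an involution on moves, compatible with the involution $(\lam,j)\mapsto(\lam',l-j)$, it is a bijection.

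I expect the main obstacle to be this boundary bookkeeping, particularly in type $D_l^{(1)}$. There each of $f_0$ and $f_l$ has two competing moves, and I must ensure that $\varphi$ matches them pairwise rather than mixing them. The issue is that the index $q$ (even/odd) must be tracked carefully under $x\mapsto l-1-x$. I would first verify this on small explicit examples, such as $D_5^{(1)}$ with $j=2$ as in Example \ref{2.21}, and then write the general residue computation.
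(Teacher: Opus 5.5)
Your proposal is correct. There is nothing in the paper to compare it with: Lemma~\ref{3.8} is stated as a ``simple result'' with no proof, and it is only used, in equivariant form, in Lemma~\ref{3.9}. Your argument therefore supplies the missing proof. You realize conjugation as the reflection $\varphi(x)=l-1-x$ combined with colour reversal, derived from the complementarity of $\{\lam_a-a\}$ and $\{b-1-\lam'_b\}$, and then check residues. In fact you prove more than is stated. The core hypothesis is never used, so the statement holds for every whole abacus of these types. Moreover, $(\mu,j)\mapsto(\mu',l-j)$ carries each $f_i$-move to an $f_{l-i}$-move, and together with $(\varnothing,j)\mapsto(\varnothing,l-j)$ (the abacus of $\Lambda_{l-j}$ under (3.2.1)) this is exactly what Lemma~\ref{3.9} needs.

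Three small corrections.

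\emph{No period shift in $D_{l+1}^{(2)}$.} Your complement identity forces $\varphi(x)=l-1-x$ exactly. Only the verification depends on $x$ modulo the period; a genuine shift would not produce $(\lam',l-j)$. Modulo $2l+2$ the map $l-1-x$ itself works:
\begin{itemize}
\item it sends label $\pm i$ to $\pm(l+1-i)$ for $1\le i\le l$ and swaps labels $0$ and $l+1$;
\item a move with midpoint $m$ goes to a move with midpoint $l-1-m$;
\item the $2f_0$-jump $x\equiv -2\to x+2$ over the label-$0$ position goes to the $2f_l$-jump $y\equiv l-1\to y+2$ over the label-$(l+1)$ position.
\end{itemize}
Both jumps carry multiplicity $2$, so $\beta'=\sum_i k_{l-i}\alpha_i$ as Lemma~\ref{3.9} requires.

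\emph{The $D_l^{(1)}$ boundary is not an obstacle.} Here $f_0$ is $x\to x+2$ with $x\equiv -1,-2 \pmod{2l}$, and $f_l$ is $y\to y+2$ with $y\equiv l-2,l-1$. The map $y=l-3-x$ sends $(-1)\to(2)$ to $(l-1)\to(-l)$, and $(-2)\to(1)$ to $(l)\to(1-l)$. In any case the lemma only asks for a bijection between the set of $f_0$-moves and the set of $f_l$-moves.

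\emph{Half abaci in $D_{l+1}^{(2)}$.} They occur only for $j\in\{0,l\}$, so $j=1$ and $j=l-1$ are admissible there and give whole abaci. Your conclusion that (3.2.1) ensures whole abaci on both sides still stands.
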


Based on Lemma \ref{3.8}, we can prove that the conjugation of a core abacus is a core abacus too for certain affine types.

\begin{lemma}\label{3.9}
Let $(\lam, j)$ be a core abacus of type $C_l^{(1)}$, $D_l^{(1)}$ or $D_{l+1}^{(2)}$ with $j$ satisfies {\rm (3.2.1)}.
Then $(\lam', l-s)$ is also a core abacus.
\end{lemma}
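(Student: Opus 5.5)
We plan to use characterization (2) of Lemma \ref{3.1}: $(\lam, j)$ is a $j$-core abacus exactly when it lies in the $W$-orbit of the abacus of weight $\Lambda_j$. (The statement's ``$l-s$'' is read as $l-j$.) For the types and charges in (3.2.1), the abacus of $\Lambda_j$ is the whole abacus $(\varnothing, j)$. Its conjugate is $(\varnothing', l-j)=(\varnothing, l-j)$, and $l-j$ again satisfies (3.2.1) because each of those ranges is symmetric under $j\mapsto l-j$. So $(\varnothing, l-j)$ is the abacus of $\Lambda_{l-j}$.

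By hypothesis, $(\lam, j)=w(\varnothing, j)$ for some $w=\sigma_{i_1}\cdots\sigma_{i_r}\in W$. We will show by induction on $r$ that
$$(\lam', l-j)=\overline{w}(\varnothing, l-j), \qquad \overline{w}:=\sigma_{l-i_1}\cdots\sigma_{l-i_r}.$$
In these three types the map $i\mapsto l-i$ is a diagram automorphism of the affine Dynkin diagram (including the $D_l^{(1)}$ fork at both ends), so $\overline{w}\in W$.

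The key step is to check the following for any abacus $(\mu,k)$ with $k$ in the range (3.2.1):
$$\sigma_{l-i}\big((\mu',l-k)\big)=\big(\sigma_i(\mu,k)\big)'.$$
By Definition \ref{2.7}, $\sigma_i$ performs all possible $f_i$-actions simultaneously. Lemma \ref{3.8} gives a bijection between the $f_i$-actions on $(\mu,k)$ and the $f_{l-i}$-actions on $(\mu',l-k)$. We still need that this bijection intertwines with conjugation on the level of results. Concretely, if an $f_i$-action adds a set of nodes to $[\mu]$, the matching $f_{l-i}$-action should add the transposed nodes to $[\mu']$. Performing all of them at once then yields the conjugate abacus.

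We expect this compatibility to be the main obstacle. Lemma \ref{3.8} as stated only asserts a bijection, so we would go back to how it is built. Conjugation sends position $x$ of $\beta_k(\mu)$ to a position $y$ with $x+y$ fixed, after complementing (the usual $\beta$-set duality $\beta_{l-k}(\mu')=\{\,c-1-x : x\notin \beta_k(\mu)\,\}$ for a suitable constant $c$). This shifted reflection reverses the $l$-index system as $i\mapsto l-i$ on labels, swapping the roles of the moves at $0$ and $l$ in Table 1. A bead moving from a position of index $i$ to index $i+1$ thus becomes a hole moving the opposite way, which is a bead move of index $l-i$. This is a type-by-type check using Table 1, including the doubled moves $2f_0$ and $2f_l$ in $D_{l+1}^{(2)}$ and the two branch moves in $D_l^{(1)}$.

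Once the key step is established, induction on $r$ gives $(\lam',l-j)\in W\cdot(\varnothing,l-j)$. Lemma \ref{3.1}(2) then shows that $(\lam', l-j)$ is an $(l-j)$-core abacus. As a consistency check, the same argument identifies the weight as $\Lambda_{l-j}-\overline{\beta}$, where $\overline{\beta}$ is $\beta$ with $\alpha_i$ replaced by $\alpha_{l-i}$; this agrees with the defect being preserved under the diagram automorphism.
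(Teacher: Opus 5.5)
Your argument is correct, but it finishes through characterization (2) of Lemma~\ref{3.1} (the $W$-orbit), whereas the paper uses characterization (1) (defect zero).

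The paper takes $(\lambda,j)\in\Lambda_j-\beta$ with $\beta=\sum_i k_i\alpha_i$. It invokes Lemma~\ref{3.8} to get $(\lambda',l-j)\in\Lambda_{l-j}-\beta'$ with $\beta'=\sum_i k_{l-i}\alpha_i$. It then observes that $(\Lambda_j,\beta)=(\Lambda_{l-j},\beta')$ and $(\beta,\beta)=(\beta',\beta')$, since the chosen $D$ and the symmetrized Cartan matrix are invariant under $i\mapsto l-i$. Hence $\mathrm{def}(\Lambda_{l-j}-\beta')=\mathrm{def}(\Lambda_j-\beta)=0$.

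The paper's step $(\lambda',l-j)\in\Lambda_{l-j}-\beta'$ tacitly uses exactly the compatibility you isolate as your key step. The bijection of Lemma~\ref{3.8} must send each $f_i$-step $(\mu,j)\to(\nu,j)$ to an $f_{l-i}$-step $(\mu',l-j)\to(\nu',l-j)$, starting from $(\varnothing,j)\mapsto(\varnothing,l-j)$. So both proofs rest on the same combinatorial input. The paper packages it in Lemma~\ref{3.8}, which is stated without proof and only as a bijection, and then closes with a two-line bilinear-form computation. You close by transporting a word for $w_{\lambda,j}$. That needs no bilinear form, and it gives extra information: the word obtained by replacing each $\sigma_i$ with $\sigma_{l-i}$ carries $(\varnothing,l-j)$ to $(\lambda',l-j)$.

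Your plan for verifying the key step does go through. Using $\beta_{l-k}(\mu')=\{l-1-x : x\notin\beta_k(\mu)\}$, the map $x\mapsto l-1-x$ sends position index $\pm m$ to $\pm(l+1-m)$. For $D_{l+1}^{(2)}$ it also swaps indices $0$ and $l+1$, which are never moved. Consequently every move of residue $i$ in Table~1 becomes a move of residue $l-i$. This includes the $2f_0\leftrightarrow 2f_l$ jumps of $D_{l+1}^{(2)}$ and the two branch moves at each end of $D_l^{(1)}$. One point of precision: it is the residues of the moves, not the position labels, that transform by $i\mapsto l-i$.
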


\begin{proof}
Let $(\lam, j)\in \Lambda_j - \beta$ and $\beta=\sum_{i=0}^{l} k_i\alpha_i$.
It follows from Lemma \ref{3.8} that $(\lam', l-j)\in \Lambda_{l-j} -\beta'$, where $\beta'=\sum_{i=0}^{l} k_{l-i}\alpha_i$. 
As a result, $(\Lambda_j, \beta)=k_j=(\Lambda_{l-j}, \beta')$
and $(\beta, \beta)=(\beta', \beta')$. That is, ${\rm def}(\Lambda_{l-j} - \beta')={\rm def}(\Lambda_j - \beta)=0$, or $(\lam', l-s)$ is also a core.
\end{proof}

It is helpful to point out that Lemma \ref{3.9} does not hold for other types. Let us give an example.

\begin{example}\label{3.10}
{\rm
We consider core abaci of type $B_5^{(1)}$. Take $\lam=(5, 1, 1)$. Clearly $(\lam, 2)$ is a core abacus 
and the conjugation $(\lam', 3)$ is not a core. Interestingly, $(\lam', 4)$ is a core abacus. Moreover,
take $\lam=(9,1)$. Then $(\lam, 2)$ is a core abacus. Surprisingly, $(\lam', i)$ is not a core abacus for any $1< i< 5$.
However, Lemma \ref{3.9} holds for certain core abacus. For instance, $((5, 1), 2)$.}
\end{example}

\smallskip


\subsection{Parametrization of affine Grassmannian}
The aim of this subsection is to parameterize the affine Grassmannian $W^j$ 
by the collection of $j$-core abaci for arbitrary classical affine types.
Let us realize an affine Weyl groups by Coxeter complex first. The main reference is \cite{H}.

Let $V$ be an euclidean space with $\dim V=l$ and $\Phi$ a root system. For each root $\alpha\in \Phi$ and $k\in \mathbb{Z}$, 
define the associated {\em affine hyperplane} 
$$H_{\alpha, k}:=\left\{v\in V \mid (v,\alpha)=k \right\}.$$
The corresponding affine reflection is $\sigma_{\alpha, k}(v):=v-((v,\alpha)-k)\alpha^{\vee}$.
Write $\sigma_{\alpha_i, 0}$ as $\sigma_i$ $(i\neq 0)$ and write $\sigma_{\theta, 1}$ as $\sigma_0$, where $\theta$ is the highest root. 
Then $S=\{\sigma_0, \sigma_1, \dots\}$ generate an affine Weyl group.
Denote by $I_i=S-\sigma_i$. For $\sigma_j\in I_0$, define $$A_{j}^+=\{v\in V\mid (v, \alpha_j)>0\}$$ and
$$A_{0}^+=\{v\in V\mid (v, \theta)<1\}.$$
Take the {\em fundamental alcove} $\mathcal{A}_e$ to be $\bigcap_{j} A_{j}^+$, and then for each $w\in W$ 
the corresponding alcove is $\mathcal{A}_w:=w\mathcal{A}_e$. It is well known that $w\longrightarrow \mathcal{A}_w$
is a bijection from the affine Weyl group $W$ to the collection of alcoves.

The {\em right affine Grassmannian} is $$W^i:=\{w\in W\mid \ell(w\sigma)>\ell(w)\,\, \text{for each}\,\, \sigma\in I_i\},$$ 
in which the elements having no reduced expression ends with a letter not being $i$. Similarly, we can define the
{\em left affine Grassmannian} $$^i W:=\{w\in W\mid \ell(\sigma w)>\ell(w)\,\, \text{for each}\,\, \sigma\in I_i\}.$$
Note that $W^i=(^i W)^{-1}$.

Based on the definition of generalized Tits cone $\mathcal{C}_i:=\bigcap_{j\neq i} A_{j}^+$, one can get a subset of $W$: 
$$\mathcal{C}_i(W):=\{w\in W\mid \mathcal{A}_w\subset \mathcal{C}_i\}.$$ In \cite{H}, Humphreys proved a result about affine Grassmannian as follows.

\begin{lemma}\cite[Lemma 5.13]{H}\label{3.11}
Keep notations as above. Then $^i W=\mathcal{C}_i(W)$.
\end{lemma}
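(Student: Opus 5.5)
The plan is to reduce the statement to the standard geometric description of left descents: $\sigma_j$ is a left descent of $w$ exactly when the wall $H_j$ of the fundamental alcove separates $\mathcal{A}_e$ from $\mathcal{A}_w$. Here $H_j$ denotes $H_{\alpha_j,0}$ for $j\neq 0$ and $H_{\theta,1}$ for $j=0$, and $A_j^+$ is the open half-space bounded by $H_j$ that contains $\mathcal{A}_e$. Once this criterion is established, the lemma follows by intersecting over $j\neq i$.

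\emph{Step 1.} I will use the hyperplane-counting formula for length: $\ell(w)$ equals the number of affine hyperplanes $H_{\alpha,k}$ separating $\mathcal{A}_e$ from $\mathcal{A}_w=w\mathcal{A}_e$. This is the standard fact about the Coxeter complex in \cite{H}, and I take it as known. I also use two elementary facts.
\begin{itemize}
\item Alcoves are connected components of the complement of the union of all $H_{\alpha,k}$. Hence each alcove lies entirely in one open side of every such hyperplane.
\item $W$ permutes the set of hyperplanes.
\end{itemize}

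\emph{Step 2.} Fix $j$ and compare $\ell(\sigma_j w)$ with $\ell(w)$. Since $\mathcal{A}_{\sigma_j w}=\sigma_j\mathcal{A}_w$, the hyperplanes separating $\mathcal{A}_e$ from $\sigma_j\mathcal{A}_w$ are in bijection, via $\sigma_j$, with those separating $\sigma_j\mathcal{A}_e$ from $\mathcal{A}_w$. The alcoves $\mathcal{A}_e$ and $\sigma_j\mathcal{A}_e$ are adjacent across $H_j$, and $H_j$ is the only hyperplane separating them. So for every hyperplane $H\neq H_j$, the alcove $\mathcal{A}_w$ is separated from $\mathcal{A}_e$ by $H$ if and only if it is separated from $\sigma_j\mathcal{A}_e$ by $H$. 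Only $H_j$ changes status. Therefore
$$\ell(\sigma_j w)=\begin{cases}\ell(w)+1, & \text{if } \mathcal{A}_w\subset A_j^+,\\ \ell(w)-1, & \text{otherwise.}\end{cases}$$
In other words, $\ell(\sigma_j w)>\ell(w)$ if and only if $\mathcal{A}_w\subset A_j^+$.

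\emph{Step 3.} By definition, $w\in {}^iW$ means $\ell(\sigma_j w)>\ell(w)$ for every $\sigma_j\in I_i$, that is, for all $j\neq i$. By Step 2 this holds if and only if $\mathcal{A}_w\subset A_j^+$ for all $j\neq i$. That is exactly the condition $\mathcal{A}_w\subset\bigcap_{j\neq i}A_j^+=\mathcal{C}_i$, i.e.\ $w\in\mathcal{C}_i(W)$.

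The only delicate point is Step 2: one must check that $H_j$ is the unique hyperplane separating $\mathcal{A}_e$ from $\sigma_j\mathcal{A}_e$. This holds because the two alcoves share a codimension-one face lying in $H_j$, and that face meets no other hyperplane in its relative interior. I expect this to be the main obstacle, though it is routine. The case $j=0$ needs no separate treatment, since $\sigma_0=\sigma_{\theta,1}$ is handled identically with $H_0=H_{\theta,1}$ and $A_0^+=\{v\in V\mid (v,\theta)<1\}$.
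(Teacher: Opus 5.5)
Your proof is correct, but it takes a different route from the paper. The paper gives no argument of its own and simply cites Humphreys. There the underlying fact is Tits's descent criterion, $\ell(sw)>\ell(w)$ if and only if $w(C)\subset A_s$. Humphreys proves it for an arbitrary Coxeter system in its contragredient (Tits cone) representation, by induction on $\ell(w)$ with a reduction to the dihedral parabolic subgroups $W_{\{s,s'\}}$.

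You instead prove the same criterion, $\ell(\sigma_j w)>\ell(w)$ if and only if $\mathcal{A}_w\subset A_j^+$, directly in the Euclidean alcove model. You use the formula expressing $\ell(w)$ as the number of hyperplanes $H_{\alpha,k}$ separating $\mathcal{A}_e$ from $\mathcal{A}_w$, and then intersect over $j\neq i$.

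Your route has the advantage of working in exactly the realization the paper sets up: affine hyperplanes $H_{\alpha,k}$, $\sigma_0=\sigma_{\theta,1}$, and $A_0^+=\{v\mid (v,\theta)<1\}$. It therefore avoids the tacit passage from the Tits cone of the affine Coxeter group to alcoves in $V$ that the citation relies on. The price is that your argument rests on the hyperplane-counting theorem, a substantial result in its own right, whereas Tits's argument is uniform over all Coxeter systems.

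One simplification: the step you call the main obstacle needs no facet analysis. Step 1 applied to $w=\sigma_j$ gives $\ell(\sigma_j)=1$, so exactly one hyperplane separates $\mathcal{A}_e$ from $\sigma_j\mathcal{A}_e$. That hyperplane must be $H_j$, because $\sigma_j$ is the reflection in $H_j$.
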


By the definition of core abaci, each $w\in W^j$ clearly determines a core abacus $w(\varnothing, j)$. 
On the other hand, we have the following result.

\begin{lemma}\label{3.12}
Let $(\lam, j)$ be a core abacus. Then there exists a unique element $w\in W^j$ such that $w(\varnothing, j)=(\lam, j)$. 
\end{lemma}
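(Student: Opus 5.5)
Existence will come from Lemma \ref{3.1}, and uniqueness from identifying the $W$-action on abaci with the action on weights and then using that the stabiliser of $\Lambda_j$ is the parabolic subgroup $W_{I_j}=\langle \sigma_i\mid i\neq j\rangle$.

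\emph{Existence.} Since $(\lam, j)$ is a core abacus, condition (2) of Lemma \ref{3.1} gives some $v\in W$ with $v(\varnothing, j)=(\lam, j)$. Here $(\varnothing, j)$ denotes the abacus of weight $\Lambda_j$, whole or half as in the definition preceding Definition \ref{2.6}. Consider the coset $vW_{I_j}$. Standard Coxeter theory gives a unique minimal-length representative $w\in vW_{I_j}$, and $w\in W^j$; this is the content of Lemma \ref{3.11} after passing to inverses via $W^j=({}^jW)^{-1}$. Write $v=wu$ with $u\in W_{I_j}$.

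The key check is that each $\sigma_i$ with $i\neq j$ fixes the abacus of $\Lambda_j$, i.e.\ that no $f_i$-action is possible on it. I would verify this directly from Tables 1 and 2 and the $l$-index system, case by case over the listed starting abaci. For instance, on $(\varnothing, j)$ the only movable bead sits at the position with $l$ index $j$ (next to the dashed line). For the half cases, such as $B_{\geq 0}=\{0\}$ for $\Lambda_1$, one checks that the special actions in Table 2 are available only for the index $j$. Granting this, $u(\varnothing, j)=(\varnothing, j)$, and hence $w(\varnothing,j)=v(\varnothing,j)=(\lam,j)$.

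\emph{Uniqueness.} Suppose $w,w'\in W^j$ both send the abacus of $\Lambda_j$ to $(\lam,j)$. I would use the compatibility of the abacus action with weights: if $(\mu,j)\in\Lambda_j-\gamma$, then $\sigma_i(\mu,j)\in\sigma_i(\Lambda_j-\gamma)$. This follows from Definitions \ref{2.7} and \ref{2.6}, since the number of $f_i$-actions performed (counting $2f_i$ moves with multiplicity 2) equals $\langle\Lambda_j-\gamma,\alpha_i^\vee\rangle$; this identity is the input underlying \cite{HLQ} and Lemma \ref{3.1}. Therefore $w(\Lambda_j)=w'(\Lambda_j)=\Lambda_j-\beta$, where $\beta$ is determined by $(\lam,j)$ via Definition \ref{2.6} and the remark after Definition \ref{3.2}. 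It follows that $w^{-1}w'$ lies in the stabiliser of $\Lambda_j$, which by \cite[Proposition 3.12]{Kac} equals $W_{I_j}$. So $w$ and $w'$ lie in the same coset $wW_{I_j}$. Both are minimal-length representatives of that coset, so $w=w'$.

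\emph{Main obstacle.} I expect the hardest step to be the compatibility statement between the combinatorial $\sigma_i$-action and the linear action on weights, in particular the claim that the abacus of $\Lambda_j$ is fixed by all $\sigma_i$ with $i\neq j$. This must hold uniformly across the branched types, with the doubled actions $2f_0$ and $2f_l$ and the special half-abacus moves of Table 2. I would first check it carefully for $D_l^{(1)}$ with $j\in\{0,1,l-1,l\}$, since those are the cases with two-bead moves. The general statement I would then take from \cite{HLQ}, where the $W$-action of Definition \ref{2.7} is constructed so as to realise the Weyl group action on the weights of the Fock space.
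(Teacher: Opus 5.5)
Your proof is correct and follows the paper's argument. The orbit of the $\Lambda_j$-abacus is identified with the left cosets of the stabilizer $W_{I_j}=\langle\sigma_i\mid i\neq j\rangle$, and each coset has a unique minimal-length representative, these representatives forming $W^j$. (The paper cites \cite[Corollary 2.4.5]{BB} for this last fact; Lemma~\ref{3.11} is a geometric description of ${}^jW$, not the right reference.) The one refinement is that the paper only says the stabilizer is ``clearly'' $W_{I_j}$, whereas you justify the inclusion of the stabilizer in $W_{I_j}$ by passing to weights and invoking \cite[Proposition 3.12]{Kac}, which is a sensible way to fill that step.
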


\begin{proof}
According to Definition \ref{3.2}, $(\lam, j)$ and $(\varnothing, j)$ are in the same orbit denoted by ${\rm O}_{\varnothing, j}$ under the action of $W$.
Note that there is a bijection between ${\rm O}_{\varnothing, j}$ and the left coset of the stabilizer subgroup of $(\varnothing, j)$.
Clearly, the stabilizer of $(\varnothing, j)$ is the subgroup $W_j$ of $W$, which is generated by $\{\sigma_i\mid i\neq j\}$.
Then by \cite[Corollary 2.4.5]{BB}, each left coset  $gW_j$ has a unique representative $w$ of minimal length and the 
system of such minimal coset representative is just $W^j$. 
\end{proof}

The element in $W$ corresponding to the core abacus $(\lam, j)$ will be denoted by $w_{\lam, j}$.
We have established a bijection from the collection of $j$-core abaci to the affine Grassmannian $W^j$ now.
This implies that we can fill the alcoves $\mathcal{A}_w$ in the generalized Tits cone $\mathcal{C}_j$ by $j$-cores $w(\varnothing, j)$.
By abusing notations, we also denote the set of $j$-cores by $\mathcal{C}_j$.

Note that each core abacus (not half ones) determines a unique partition whenever the charge is fixed.
Then we can fill the alcoves in the generalized Tits cone $\mathcal{C}_j$ by partitions recursively as follows:
\begin{enumerate}
  \item [\rm (1)] \,Put $\varnothing$ inside the fundamental alcove.
  \item [\rm (2)] \,Fill in all other alcoves in the Tits cone $\mathcal{C}_j$ partitions recursively by adding all possible
$i$-nodes each time a new $i$-wall is crossed.
\end{enumerate}
This is a natural generalization of the charge 0 cores that are already known (for example, see \cite[Fig 5]{BCG}).
Let us give an example.
\begin{example}\label{3.13}
{\rm
The following figure gives a bijection between 1-core abaci ({\bf here 1 means charge 1}) 
and alcoves in the generalized Tits cone $\mathcal{C}_1$ of affine type $C_2^{(1)}.$}
\begin{figure}[h!]
\[
	\begin{tikzpicture}[scale=1.25]
		\clip (-5.25,0) -- (0,0) -- (0,5.25) -- cycle;
		
		\foreach \i in {-5,...,0}
		{
			\foreach \j in {0,...,5}
			{
				
			\draw[black,thick] (0,\j) -- + (90:5);
			\draw[black,thick] (-2,\j) -- + (90:5);
			\draw[black,thick] (-4,\j) -- + (90:5);
			\draw[ForestGreen, thick] (-1,\j) -- + (90:5);
			\draw[ForestGreen, thick] (-3,\j) -- + (90:5);
			\draw[ForestGreen, thick] (-5,\j) -- + (90:5);
			
			\draw[ForestGreen, thick] (\i,0) -- + (180:5);
			\draw[ForestGreen, thick] (\i,2) -- + (180:5);
			\draw[ForestGreen, thick] (\i,4) -- + (180:5);
			\draw[black,thick] (\i,1) -- + (180:5);
			\draw[black,thick] (\i,3) -- + (180:5);
			\draw[black,thick] (\i,5) -- + (180:5);
			
			\draw[purple!80] (-1,0) -- + (45:{sqrt(2)});
			\draw[purple!80] (-3,0) -- + (45:{3*sqrt(2)});
			\draw[purple!80] (-5,0) -- + (45:{5*sqrt(2)});
			\draw[purple!80] (-3,2) -- + (-45:{2*sqrt(2)});
			\draw[purple!80] (-4,1) -- + (-45:{sqrt(2)});
			\draw[purple!80] (-2,3) -- + (-45:{2*sqrt(2)});
			\draw[purple!80] (-1,4) -- + (-45:{sqrt(2)});
			
			}
		}
		\node[anchor = mid, scale=0.9] at ( -0.25, 0.3 ) {$\varnothing$};
		\node[anchor = mid, scale=0.3] at ( -0.7, 0.65 ) {$\yng(1)$};
		\node[anchor = mid, scale=0.3] at ( -1.35, 0.65) {$\yng(2)$};
		\node[anchor = mid, scale=0.3] at ( -0.7, 1.25 ) {$\yng(1,1)$};
		\node[anchor = mid, scale=0.3] at ( -1.35, 1.25 ) {$\yng(2,1)$};
		\node[anchor = mid, scale=0.3] at ( -0.25, 1.55 ) {$\yng(1,1,1)$};
		\node[anchor = mid, scale=0.3] at ( -0.25, 2.25 ) {$\yng(2,1,1,1)$};
		\node[anchor = mid, scale=0.25] at ( -0.7, 2.65 ) {$\yng(3,2,1,1,1)$};
		\node[anchor = mid, scale=0.2] at ( -1.65, 0.3 ) {$\yng(3)$};
		\node[anchor = mid, scale=0.2] at ( -1.7, 1.55 ) {$\yng(3,2,1)$};
		\node[anchor = mid, scale=0.2] at ( -1.7, 2.2 ) {$\yng(3,3,1,1)$};
		\node[anchor = mid, scale=0.2] at ( -2.3, 2.25 ) {$\yng(4,3,2,1)$};
		\node[anchor = mid, scale=0.2] at ( -2.25, 1.6 ) {$\yng(4,2,2)$};
		\node[anchor = mid, scale=0.2] at ( -2.3, 0.3 ) {$\yng(4,1)$};
		\node[anchor = mid, scale=0.2] at ( -2.7, 0.65 ) {$\yng(5,2,1)$};
		\node[anchor = mid, scale=0.2] at ( -2.7, 1.2 ) {$\yng(5,2,2)$};
		\node[anchor = mid, scale=0.2] at ( -1.35, 2.65 ) {$\yng(3,3,1,1,1)$};
		\node[anchor = mid, scale=0.2] at ( -0.725, 3.25 ) {$\yng(4,2,2,1,1,1)$};
		\node[anchor = mid, scale=0.2] at ( -0.225, 3.65 ) {$\yng(5,2,2,2,1,1,1)$};
		\node[anchor = mid, scale=0.2] at ( -0.25, 4.27 ) {$\yng(6,3,2,2,2,1,1,1)$};
		\node[anchor = mid, scale=0.2] at ( -3.3, 0.69 ) {$\yng(6,3,1,1)$};
		\node[anchor = mid, scale=0.2] at ( -3.70, 0.225 ) {$\yng(7,4,1,1,1)$};
		\node[anchor = mid, scale=0.2] at ( -4.3, 0.225 ) {$\yng(8,5,2,1,1,1)$};
		\node[anchor = mid, scale=0.2] at ( -3.3, 1.3 ) {$\yng(6,3,2,1)$};
		\node[anchor = mid, scale=0.2] at ( -1.3, 3.25 ) {$\yng(4,3,2,1,1,1)$};		
\end{tikzpicture}
\]
\end{figure}
\end{example}

\smallskip


\subsection{Comparison with existing concepts}

Next we study the relationship between the definition of cores appeared in \cite{BCG, HJ2} with our's. 
We only need to consider types $\tilde{C}/C$ and $\tilde{D}/D$ by \cite[Table 4]{HJ2}.
It is helpful to point out that all of them are certain special partitions. 
This implies that all the existing concepts are of charge 0 cases. 
We will also describe the 0-core abaci of each classical affine type by cores of type $A^{(1)}$.

\subsubsection{Type $C_l^{(1)}$} We will prove that $(\lam, 0)$ is a core abacus of type $C_l^{(1)}$ 
if and only if $(\lam, 0)$ is a self-conjugate $0$-core of type $A_{2l-1}^{(1)}$, which coincides with the $\tilde{C}/C$ cores defined in \cite{HJ2}. 
We first prove that a core of type $C_l^{(1)}$ is a core of type $A_{2l-1}^{(1)}$.
\begin{lemma}\label{3.14}
Let $(\lam, j)$ be a core abacus of type $C_l^{(1)}$. Then
\begin{enumerate}
\item[{\rm (1)}] if position $i$ has a bead, then position $i-2l$ has a bead too; 
\item[{\rm (2)}] if position $i$ is empty, then position $i+2l$ is empty too. 
\end{enumerate}
\end{lemma}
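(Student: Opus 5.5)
We plan to use the characterization of core abaci in Lemma \ref{3.1}(3): a core abacus admits no elementary operation. For type $C_l^{(1)}$ we have $I_X=\{\pm1,\dots,\pm l\}$, so $|I_X|=2l$. The runners of ${\rm\bf U}(\lam,j)$ are the residue classes of positions modulo $2l$, and the step-(3) elementary operation on the abacus is ``move a bead at position $x$ to an empty position $x-i$'' with $i=2l$. We first check this from the definition of the Uglov map. In step (2) the abacus is cut into sections of $2l$ consecutive positions stacked vertically, so positions $x$ and $x-2l$ lie on the same runner in adjacent rows. The first-kind operation of Definition \ref{2.17} moves a bead one row up on a runner. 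Pulling it back through ${\rm\bf U}^{-1}$ gives the move $x\mapsto x-2l$ for positions on the nonnegative side. For the rotated negative part, the color reversal in step (1) or step (3) turns it into the same move in the reversed direction, and this is again a move by $2l$ on the original line.

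Step one is to record carefully which pairs $(x,x-2l)$ correspond to first-kind operations on ${\rm\bf U}(\lam,j)$. This must cover positions in the positive part, positions in the negative part (rotated and color-reversed in step (1)), and positions in the $l$ right columns (color-reversed and flipped in step (3)). In each case we show the following: a bead at $x$ with $x-2l$ empty produces, in ${\rm\bf U}(\lam,j)$, a bead directly below an empty position on the same half-runner. Sometimes it is the mirrored configuration, and reversal of color converts it back to a bead below an empty spot. Either way, a first-kind operation is available.

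Step two is the contradiction. Suppose position $i$ has a bead and $i-2l$ is empty. By step one an elementary operation of the third kind applies, so $(\lam,j)$ is not a core by Lemma \ref{3.1}, a contradiction. This proves (1). Part (2) is the contrapositive of (1) applied to $i+2l$: if $i+2l$ had a bead, then $i$ would have one. So (2) needs no separate argument.

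The main obstacle is the bookkeeping at the boundary. The pair $(x,x-2l)$ may straddle the dashed line between $-1$ and $0$, or may fall in the reversed columns. There we must check that the rotation $(i,-k)\mapsto$ under $(-i,k-1)$ really places the two positions on one runner in consecutive rows. We must also check that the color reversal does not turn the configuration into a second-kind operation, nor into one not available at all. We would handle this by writing each position via its $l$ index $(r,q)$ and noting that $x-2l$ has $l$ index $(r,q-2)$. A short case split on the sign of $r$ and the parity of $q$ then shows the two positions are vertically adjacent on one runner of ${\rm\bf U}(\lam,j)$, with bead and emptiness either preserved or simultaneously reversed.
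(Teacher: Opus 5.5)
\textbf{Gap.} Your Step one relies on a wrong picture of $\mathbf{U}(\lam,j)$. Positions $x$ and $x-2l$ are \emph{not} vertically adjacent on a runner. So a bead at $x$ with $x-2l$ empty does not by itself give a first-kind operation moving that bead by $2l$.

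In step (1) of Definition~\ref{2.15}, for type $C_l^{(1)}$ the rule that $(i,-k)$ goes under $(-i,k-1)$ places under each $y\geq 0$ the color-reversed position $-1-y$. Every section cut in step (2) is therefore two rows high, and stacking the sections interleaves the positive row with the reversed negative row. Example~\ref{2.16} shows this: the second row of $\mathbf{U}_2$ is the reversed negative part, and the third row is the next positive section. After step (3) there are only $l$ runners, not $2l$; runner $r$ carries the two classes $r-1$ and $-r$ modulo $2l$. For $x\equiv r-1 \pmod{2l}$ the runner reads, top to bottom: $x-2l$, then $2l-1-x$ (reversed), then $x$, then $-1-x$ (reversed), then $x+2l$, and so on.

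Consequently, vertically adjacent entries are always an $l$-adjoint pair (sum $2l-1$) or a $0$-adjoint pair (sum $-1$). A single first-kind move is exactly one of two things: removing two occupied positions with sum $2l-1$, or filling two empty positions with sum $-1$. The shift $x\mapsto x-2l$ is a composite of two such moves, not one. Your planned case split on $(r,q)$ versus $(r,q-2)$ would in fact put the two positions two rows apart. The informal list after Definition~\ref{2.19} does mention moves by $2l$, but you chose to justify that move from the Uglov map, and that justification is what fails.

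\textbf{Fix.} Look at the entry between them; this is exactly the paper's proof. Suppose $i$ has a bead and $i-2l$ is empty.
\begin{itemize}
\item If $2l-1-i$ is occupied, then $\{i,\,2l-1-i\}$ is an occupied $l$-adjoint pair, which can be removed.
\item If $2l-1-i$ is empty, then $\{2l-1-i,\,i-2l\}$ is an empty $0$-adjoint pair (its sum is $-1$), which can be filled.
\end{itemize}
Either way Lemma~\ref{3.1}(3) is violated. Your derivation of (2) as the contrapositive of (1) at position $i+2l$ is fine.
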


\begin{proof}
(1) Since $(\lam, j)$ is a core abacus of type $C_l^{(1)}$, by Definition \ref{3.2}, none elementary operations can be done on $(\lam, j)$. 
Consequently, if position $i$ has a bead, then position $2l-1-i$ is empty. By the same reason, position $i-2l$ has a bead.

Part (2) can be proved similarly.
\end{proof}
\begin{corollary}\label{3.15}
Let $(\lam, j)$ be a core abacus of type $C_l^{(1)}$. Then 
\begin{enumerate}
\item[{\rm (1)}] if position $i$ has bead, then there does not exists $j<i$ such that positions $j, j-1, \dots, j+1-2l$ are all empty;
\item[{\rm (2)}] if position $i$ is empty, then there does not exists $j>i$ such that all positions $j, j+1, \dots, j-1+2l$ have a bead.
\end{enumerate}
\end{corollary}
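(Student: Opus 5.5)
The corollary follows from Lemma \ref{3.14} by iterating its two parts. The point is that Lemma \ref{3.14} makes the bead configuration of a core abacus of type $C_l^{(1)}$ ``monotone along each residue class modulo $2l$''. Beads propagate leftward in steps of $2l$, and empty positions propagate rightward in steps of $2l$.

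For (1), suppose position $i$ has a bead. Assume for contradiction that some $j<i$ has all of the $2l$ consecutive positions $j, j-1, \dots, j+1-2l$ empty. These $2l$ positions form a complete system of residues modulo $2l$. Hence there is exactly one of them, call it $p$, with $p\equiv i \pmod{2l}$. Since $p\le j<i$, we can write $p=i-2lm$ for some integer $m\ge 1$. Applying Lemma \ref{3.14}(1) $m$ times, starting at $i$, shows that $i-2l, i-4l, \dots, i-2lm=p$ all carry beads. This contradicts $p$ being empty.

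For (2) the argument is symmetric. Suppose position $i$ is empty and some $j>i$ has $j, j+1, \dots, j-1+2l$ all occupied. Pick $p$ in this window with $p\equiv i\pmod{2l}$. Then $p\ge j>i$, so $p=i+2lm$ with $m\ge1$. Iterating Lemma \ref{3.14}(2) shows that $p$ is empty, again a contradiction. Alternatively, (2) is the contrapositive form of (1) applied to the empty position and the full window.

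There is no real obstacle here. The only point that needs care is checking that the residue representative $p$ lies strictly on the correct side of $i$, so that $m\ge1$ and the lemma is actually applied at least once. This holds because $j<i$ (respectively $j>i$) and the window extends away from $i$.
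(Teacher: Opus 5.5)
Your argument is correct and is the same as the paper's: pick the position in the window that is congruent to $i$ modulo $2l$ and iterate Lemma~\ref{3.14}. Your check that $m\ge 1$ just makes explicit what the paper leaves implicit. The one flaw is the aside that (2) is the contrapositive of (1), which is wrong. The contrapositive of (1) says an all-empty window forces every position to its right to be empty, whereas (2) is the dual statement with beads and empty positions, and left and right, interchanged; for arbitrary bead configurations neither implies the other. Drop that remark and keep your direct proof of (2).
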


\begin{proof}
(1) Clearly, there exists some $k\in \mathbb{Z}$ such that $i-2kl\in\{j, j-1, \dots, j+1-2l\}$. By Lemma \ref{3.14}, position $i-2kl$ has a bead.

(2) It is proved similarly as (1).
\end{proof}

\begin{lemma}\label{3.16}
Each $0$-core abacus of type $C_l^{(1)}$ is self-conjugate.
\end{lemma}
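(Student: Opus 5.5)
The plan is to translate self-conjugacy into a symmetry condition on the $\beta$-set and then obtain that symmetry from the ``no elementary operation'' characterization of cores in Lemma \ref{3.1}(3), together with the charge being $0$.

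First I will record the standard fact for charge $0$: if $\B=\beta_0(\lam)$, then $\beta_0(\lam')=\{-1-x\mid x\in\mathbb{Z}\setminus\B\}$. Consequently $\lam=\lam'$ if and only if, for every $x\in\mathbb{Z}$, exactly one of the positions $x$ and $-1-x$ carries a bead. So it suffices to show that each pair $\{x,-1-x\}$ with $x\geq 0$ contains exactly one bead.

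Next I will use the core condition. For type $C_l^{(1)}$ the $l$ index system coincides with that of $A_{2l-1}^{(2)}$ and $D_l^{(1)}$ (the Remark after Definition \ref{2.13}), so $0$ is not an $l$-index. By Definition \ref{2.19}, positions $x$ and $y$ are therefore $0$-adjoint exactly when $x+y=-1$. Since $(\lam,0)$ is a core abacus, Lemma \ref{3.1}(3) forbids the elementary operation of the first kind listed after Definition \ref{2.18}: two empty $0$-adjoint positions cannot both receive a bead. Hence no pair $\{x,-1-x\}$ is entirely empty. It remains to exclude pairs in which both positions carry a bead; this is the step where the charge must enter.

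Finally I will do a counting argument with $s(\B)=0$. I split the pairs $\{x,-1-x\}$, $x\geq 0$, into three classes: $N$ pairs with both positions occupied, $P$ pairs with only $x$ occupied, and $M$ pairs with only $-1-x$ occupied. Then
\[
|\B\cap\mathbb{Z}_{\geq 0}|=N+P \quad\text{and}\quad |\mathbb{Z}_{<0}\setminus\B|=P,
\]
the latter because a negative position is empty only when its partner $x$ is the unique bead of the pair. All these counts are finite because $\B$ is a $\beta$-set. Therefore $0=s(\B)=N$, so every pair contains exactly one bead, and hence $\lam=\lam'$.

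The main obstacle is justifying that the ``set two beads on $0$-adjoint empty positions'' move applies in type $C_l^{(1)}$ at charge $0$ with $0$-adjoint meaning $x+y=-1$. To settle this I would check it directly through the Uglov map of Definition \ref{2.15}: an empty top position of the reversed (negative) half column corresponds to such an empty pair, and a second-kind operation of Definition \ref{2.17} is then available there.
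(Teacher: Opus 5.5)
Your proof is correct and follows the paper's argument. The core condition rules out two empty $0$-adjoint positions $x,-1-x$, and charge $0$ rules out two occupied ones. Hence exactly one of $x,-1-x$ carries a bead, which by $\beta_0(\lam')=\{-1-x\mid x\notin\B\}$ is exactly $\lam=\lam'$.

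Your count $0=s(\B)=N$ is a rigorous version of the paper's terse ``from the charge being 0'' step. One small slip: only $N$ and $P$ are finite, while $M$ is infinite, but $M$ never enters the computation. The $0$-adjoint move you flag as the main obstacle needs no Uglov-map check: the paper reads it directly off Definition \ref{3.2} and the list of elementary operations after Definition \ref{2.18}.
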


\begin{proof}
Suppose that $(\lambda, 0)$ is a core abacus of type $C_l^{(1)}$. If in $(\lambda, 0)$ the $j$-th position is empty, 
then the $(-1-j)$-th position has a bead by Definition \ref{3.2}. Consequently, the $j$-th position in $(\lambda', 0)$ is empty.
	
If the $j$-th position in $(\lambda, 0)$ has a bead, then we have from the charge being 0 that the $(-1-j)$-th position is empty. 
This implies that the $j$-th position in $(\lambda', 0)$ has a bead.
	
In a word, we have proved that the $j$-th position in $(\lambda, 0)$ is empty if and only if the $j$-th position in $(\lambda', 0)$ is empty. That is, $\lambda = \lambda'$.
\end{proof}

We have realized a half of our aim proposed at the beginning, and now we consider the other half.

\begin{lemma}\label{3.17}
Let $\lam$ be a self-conjugate $0$-core of type $A_{2l-1}^{(1)}$. Then $(\lam, 0)$ is a core abacus of type $C_l^{(1)}$.
\end{lemma}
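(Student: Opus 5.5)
We use criterion (3) of Lemma \ref{3.1}: $(\lam,0)$ is a core abacus of type $C_l^{(1)}$ exactly when no elementary operation can be performed on it. Type $C_l^{(1)}$ has $I_X=\{\pm1,\dots,\pm l\}$, so $|I_X|=2l$. The abacus $(\lam,0)$ is whole, so by Definition \ref{2.19} the $0$-adjoint pairs are $\{x,-1-x\}$ and the $l$-adjoint pairs are $\{x,2l-1-x\}$. The elementary operations are then of three kinds:
\begin{enumerate}
\item[(a)] filling two empty $0$-adjoint positions $x,-1-x$;
\item[(b)] removing beads from two occupied $l$-adjoint positions $x,2l-1-x$;
\item[(c)] sliding a bead from $x$ to an empty position $x-2l$.
\end{enumerate}
Operations of the fourth kind listed after Definition \ref{2.19} correspond, via ${\rm\bf U}$, to top-of-half-runner moves. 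For a whole abacus these reduce to (a) and (b), and we will verify this directly from Definition \ref{2.15} in the $C_l^{(1)}$ case. So the plan is to show that none of (a), (b), (c) is possible.

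For (c), we use that $\lam$ is a $2l$-core of type $A_{2l-1}^{(1)}$ with charge $0$. In the usual abacus language, $\beta_0(\lam)$ is closed under $x\mapsto x-2l$: no bead can slide down its runner. This rules out (c) immediately.

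For (a) and (b) we use self-conjugacy. The standard fact is $\beta_0(\lam')=\{-1-x\mid x\notin\beta_0(\lam)\}$. Since $\lam=\lam'$, for every $x$ exactly one of $x$ and $-1-x$ carries a bead. This is precisely the computation in Lemma \ref{3.16}, run in reverse. Consequently two $0$-adjoint positions are never both empty, which excludes (a). For (b), suppose $x$ and $2l-1-x$ both carry beads. By (c)-closure, $x-2l$ also carries a bead. But $x-2l$ and $2l-1-x$ are $0$-adjoint, since $(x-2l)+(2l-1-x)=-1$. By self-conjugacy they cannot both be occupied, a contradiction. This excludes (b).

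The main obstacle is making sure that the list (a)--(c), together with the kind-(4) moves, really exhausts the elementary operations from Definition \ref{2.18} for type $C_l^{(1)}$ at charge $0$. In particular, we must check that a second-kind operation on ${\rm\bf U}(\lam,0)$, namely removing the top bead of a half runner, translates to an (a) or (b) move on the abacus, or else to a violation of the $2l$-periodicity. We would check this by tracing the three steps of Definition \ref{2.15}: the $l$ columns that are rotated upward pair positions $x$ and $-1-x$ with a colour reversal. Thus the top position of a half runner is either a boundary pair of the form (a)/(b) or a runner endpoint, and $2l$-closure controls the latter.

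Alternatively, should this bookkeeping become awkward, we could use criterion (1): compute ${\rm def}(\Lambda_0-\beta)$. The residues of type $C_l^{(1)}$ are obtained by folding the $A_{2l-1}^{(1)}$ residues $r\mapsto\min(r,2l-r)$ appropriately. One would then show that the $C$-defect equals half of the $A^{(1)}_{2l-1}$-defect of $\lam$, which vanishes because $\lam$ is a $2l$-core.
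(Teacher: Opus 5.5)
Your proof is correct and is essentially the paper's argument: two empty $0$-adjoint positions $x,-1-x$ are ruled out by self-conjugacy, and two occupied $l$-adjoint positions are ruled out by combining the $2l$-shift property of an $A^{(1)}_{2l-1}$-core with self-conjugacy. The paper shifts $2l-1-x$ down to $-1-x$ where you shift $x$ down to $x-2l$, which is the same step. Your worry about second-kind moves is unnecessary: in type $C_l^{(1)}$ neither $0$ nor $l+1$ is an $l$-index, so ${\rm\bf U}(\lam,0)$ has no half runners at all, and your separate exclusion of the sliding move (c) is harmless extra care that the paper omits.
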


\begin{proof}
Let $\lam$ be a self-conjugate $0$-core of type $A_{2l-1}^{(1)}$. We give a proof by contradiction. 
If there exist two positions $i$ and $-1-i$ being empty in $(\lam, 0)$, then both positions $i$ and $-1-i$ in $(\lam', 0)$ have a bead.
It is impossible because $\lam$ is self-conjugate. On the other hand,  if there exist two positions $i$ and $2l-1-i$ being occupied by a bead, 
then position $-i-1$ has a bead because $\lam$ is a $0$-core of type $A_{2l-1}^{(1)}$. However, $\lam$ being self-conjugate forces 
the position $-i-1$ to be empty. It is a contradiction and we complete the proof.
\end{proof}

Combining Lemmas \ref{3.16} and \ref{3.17}, we get the following result.

\begin{lemma}\label{3.18}
The pair $(\lam, 0)$ is a core abacus of type $C_l^{(1)}$ if and only if it is a self-conjugate core abacus of type $A_{2l-1}^{(1)}$.
\end{lemma}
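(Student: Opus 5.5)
The plan is to obtain Lemma \ref{3.18} by combining the two one-directional lemmas just proved. The only additional ingredient is that a core abacus of type $C_l^{(1)}$ is automatically a $0$-core of type $A_{2l-1}^{(1)}$.

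For the forward direction, let $(\lam, 0)$ be a core abacus of type $C_l^{(1)}$. Lemma \ref{3.16} gives $\lam=\lam'$ directly. It remains to show that $\lam$ is a $2l$-core, that is, a core of type $A_{2l-1}^{(1)}$. For this I use Lemma \ref{3.14}(1): whenever position $i$ carries a bead, so does position $i-2l$. Equivalently, no bead can be slid down by $2l$ to an empty position, so no $2l$-rim hook can be removed. This is exactly the abacus characterization of a $2l$-core. (Corollary \ref{3.15} states the same fact in terms of runs of $2l$ consecutive positions, if that form is preferred.) Since the charge is $0$, this is precisely a self-conjugate $0$-core of type $A_{2l-1}^{(1)}$.

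For the converse, let $\lam$ be a self-conjugate $0$-core of type $A_{2l-1}^{(1)}$. Lemma \ref{3.17} states directly that $(\lam, 0)$ is a core abacus of type $C_l^{(1)}$.

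The only point needing care is the identification in the forward direction between the abacus condition ``bead at $i$ implies bead at $i-2l$'' and the usual notion of a type $A_{2l-1}^{(1)}$ core. This is the standard James abacus criterion, or equivalently condition (3) of Lemma \ref{3.1} specialized to type $A$, where the only elementary operations are moves of a bead by $2l$. I would state this identification explicitly. After that, the lemma is just the conjunction of Lemmas \ref{3.14}, \ref{3.16} and \ref{3.17}.
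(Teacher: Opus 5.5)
Your proposal is correct and follows essentially the paper's route: the paper likewise gets the forward direction from Lemma \ref{3.14} (bead at $i$ implies bead at $i-2l$, so $\lam$ is a $2l$-core) together with Lemma \ref{3.16} (self-conjugacy), and the converse directly from Lemma \ref{3.17}. Your explicit appeal to the James abacus criterion only spells out the step the paper leaves implicit when it asserts that Lemma \ref{3.14} makes a $C_l^{(1)}$-core an $A_{2l-1}^{(1)}$-core.
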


\subsubsection{Type $D^{(1)}_l$}
We shall connect the cores of $\tilde{D}/D$ type to $0$-core abaci of affine type $D_l^{(1)}$ based on the preparation given in Section 2. 
\begin{definition}\cite[Definition 5.1]{HJ2}\label{3.19}
A partition $\lam$ is said to be even if the number of the diagonal nodes in $[\lam]$ is even.
\end{definition}

It is easy to check the following simple result.
\begin{lemma}\label{3.20}
A partition $\lam$ is even if and only if the number of beads on the right side of the dashed line in $(\lam, 0)$ is even.
\end{lemma}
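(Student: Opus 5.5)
We want: $\lam$ is even iff the number of beads of $(\lam,0)$ at nonnegative positions is even. The plan is to show directly that the number of such beads equals the Durfee (diagonal) number of $\lam$, i.e. the number of diagonal nodes $d=\max\{i\mid \lam_i\geq i\}$. The parity statement then follows at once from Definition \ref{3.19}.

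First, by the definition $\beta_0(\lam)=\{\lam_i-i\mid i\in\mathbb{N}^+\}$, the beads to the right of the dashed line (positions $\geq 0$) are exactly the elements $\lam_i-i\geq 0$, i.e. those $i$ with $\lam_i\geq i$. Since the sequence $\lam_i-i$ is strictly decreasing in $i$, the set $\{i\mid \lam_i-i\geq 0\}$ is an initial segment $\{1,\dots,m\}$ of $\mathbb{N}^+$.

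Second, identify $m$ with the number of diagonal nodes. The node $(i,i)$ lies in $[\lam]$ iff $\lam_i\geq i$. Because $\lam$ is non-increasing, the set of such $i$ is again an initial segment $\{1,\dots,d\}$. By the first step this segment is the same set, so $m=d$.

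Hence the number of beads right of the dashed line equals the number of diagonal nodes, and in particular the two have the same parity. There is no real obstacle here. The only point needing care is that the beads counted are those at positions $\geq 0$ for charge $j=0$, which matches the convention that the dashed line lies between positions $-1$ and $0$. As a sanity check, Example \ref{2.2} with $\lam=(7,5,4,1,1)$ gives beads at $6,3,1$ on the right, and indeed $\lam$ has $3$ diagonal nodes.
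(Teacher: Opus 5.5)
Your proof is correct and is the routine verification the paper leaves to the reader; it states this lemma with only ``it is easy to check'': the beads of $\beta_0(\lambda)$ at positions $\geq 0$ are the distinct values $\lambda_i-i$ with $\lambda_i\geq i$, and these indices $i$ are exactly those with $(i,i)\in[\lambda]$, so the bead count equals the number of diagonal nodes. The initial-segment remarks are not needed, since both counts are the cardinality of the same index set $\{i\mid \lambda_i\geq i\}$, and the injectivity of $i\mapsto\lambda_i-i$ gives the bead count.
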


\begin{lemma}\label{3.21}
Let $(\lam, 0)$ be an abacus of type $D_l^{(1)}$. Then it is a core abacus if and only if 
$(\ddot{\lam}, 2\Lambda_0)$ is an even self-conjugate $0$-core of type $A^{(1)}_{2l-1}$,
or a core of $\tilde{D}/D$ type in the sense of \cite{HJ2}.  
\end{lemma}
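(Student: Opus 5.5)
We prove Lemma \ref{3.21} by passing from the half abacus $(\lam,0)$ of type $D_l^{(1)}$ to its associated two-sided $\beta$-set $B$ from Definition \ref{2.8}. Since $0$ is not an $l$-index and $k=0$, the first case of Definition \ref{2.8} gives
$$B=B_{\geq 0}\cup\bigl(\mathbb{Z}_{<0}-\{-1-x\mid x\in B_{\geq 0}\}\bigr).$$
By Lemma \ref{2.12}, $B$ is the abacus of $\ddot{\lam}$ at charge $0$. This construction makes position $x$ occupied exactly when $-1-x$ is empty, and that is the abacus criterion for self-conjugacy used in the proof of Lemma \ref{3.16}. So $\ddot{\lam}$ is automatically self-conjugate, and it remains to match the core condition and the evenness condition.

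\textbf{Step 1: evenness.} The beads of $B$ at nonnegative positions are exactly the elements of $B_{\geq 0}$. For $D_l^{(1)}$, every abacus in the orbit of the abacus of $\Lambda_0$ (namely $B_{\geq 0}=\varnothing$) has an even number of beads. This holds because $f_0$ acts by the special moves of Table 2, which add beads in pairs at $(1,0),(2,0)$; $f_l$ removes or adds beads in pairs at $(l,2k)\to(1-l,2k+1)$ type positions; and all other $f_i$ only move beads. Hence, by Lemma \ref{3.20}, $\ddot{\lam}$ is even whenever $(\lam,0)$ lies in $\Lambda_0-\beta$. Conversely, given an even self-conjugate $\ddot\lam$, we recover $B_{\geq 0}=B\cap\mathbb{Z}_{\geq 0}$, which has an even number of beads.

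\textbf{Step 2: core condition.} We use characterization (3) of Lemma \ref{3.1}: no elementary operation is possible. The operations on a half abacus with $k=0$ are of four kinds:
\begin{itemize}
\item filling two empty $0$-adjoint positions $x,y\geq 0$ with $x+y=-1$, which is vacuous on the half line, or via the second-kind operation, filling the pair $0,1$ (two beads at the top of the runner);
\item removing two $l$-adjoint beads $x+y=2l-1$;
\item moving a bead from $x$ to $x-2l$;
\item the second-kind removals and insertions.
\end{itemize}
We translate each into a condition on $B$, in the spirit of Lemmas \ref{3.14}--\ref{3.17}. Moving $x\to x-2l$ with both positions nonnegative is exactly an $A_{2l-1}^{(1)}$ hook removal inside $B_{\geq 0}$. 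Removing an $l$-adjoint pair $x,\,2l-1-x$ corresponds in $B$ to the bead at $x$ together with the empty position $-1-(2l-1-x)=x-2l$, again a $2l$-hook. The second-kind operations near the boundary correspond to $2l$-hooks that straddle position $0$. Thus "no operation possible on $(\lam,0)$" is equivalent to "no $2l$-hook removable in $B$", using self-conjugacy of $B$ to reduce hooks with both ends negative to hooks with both ends nonnegative. For the converse, run the argument of Lemma \ref{3.17}: assume an operation exists, produce a removable $2l$-hook in $B$, and obtain a contradiction.

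\textbf{Step 3: comparison with \cite{HJ2}.} This is then immediate from \cite[Table 4]{HJ2}, which defines $\tilde D/D$ cores as even self-conjugate $2l$-cores.

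The main obstacle is Step 2 at the boundary near $0$. There the special $f_0$ move (two beads at once) and the second-kind operations must be matched exactly with $2l$-hooks crossing the dashed line, and one must show that the parity constraint from Step 1 is what rules out the single-bead versions. I would do this by a direct check of the finitely many positions $0,\dots,2l-1$, using the runner picture of ${\rm\bf U}(\lam,0)$ from Definition \ref{2.15}.
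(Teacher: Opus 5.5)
Your two translations in Step 2 are correct. Together with antisymmetry ($x\in B \iff -1-x\notin B$) they already finish that step. $B$ is a $2l$-core iff $x\in B\Rightarrow x-2l\in B$ for every $x$:
\begin{itemize}
\item for $x\geq 2l$ this is the absence of the move $x\to x-2l$;
\item for $0\leq x\leq 2l-1$ it says that $x$ and $2l-1-x$ are not both beads;
\item for $x<0$ antisymmetry reduces it to the first case applied to $2l-1-x\geq 2l$.
\end{itemize}

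What is wrong is the rest of your list of elementary operations. For $D_l^{(1)}$ the index set is $\{\pm1,\dots,\pm l\}$, with neither $0$ nor $l+1$ an $l$-index. So ${\rm\bf U}(\lam,0)$ consists of $l$ full runners and has no half columns, and there are no second-kind operations at all. Filling the empty positions $0,1$ is the special $f_0$-action of Table 2, not an elementary operation. If it were one, the abacus $B_{\geq 0}=\varnothing$ of $\Lambda_0$ would violate condition (3) of Lemma \ref{3.1}. With that ``operation'' included, your claimed equivalence in Step 2 is false (already for $\varnothing$), so it has to be deleted rather than checked. Nothing is left over at the boundary either. The $2l$-hooks of $B$ straddling $0$ (bead at some $x\in\{0,\dots,2l-1\}$, gap at $x-2l$) are exactly the $l$-adjoint pairs $\{x,2l-1-x\}$ you already treated. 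So the ``main obstacle'' you defer does not exist.

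Parity is also misplaced: it plays no role in matching operations with hooks. Its role is to decide which $\Lambda_j-\beta$ the half abacus lies in. For example, $B_{\geq0}=\{0\}$ admits no elementary operation, and its associated $B$ is the self-conjugate $2l$-core $(1)$. But $\{0\}$ is the abacus of $\Lambda_1$, not of $\Lambda_0$, and $(1)$ is not even. So in the converse you must use evenness to see that $(\lam,0)$ is obtained from $\varnothing$ by $f_i$-actions, i.e.\ lies in some $\Lambda_0-\beta$, before Lemma \ref{3.1} can be applied. The paper uses evenness in exactly this way in Lemma \ref{3.24}; in Lemma \ref{3.21} it is tacit in the phrase ``abacus of type $D_l^{(1)}$''. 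A minor point: on a half $0$ abacus, $f_l$ only moves beads two steps to the right. The bead-creating $f_l$ of Table 2 belongs to the half $l$ abacus, though your parity count is unaffected.

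With these repairs your argument is complete, and it is more direct than the paper's. The paper leaves the forward core property as ``not difficult to check''. For the converse it invokes Lemma \ref{3.18} to make $(\ddot\lam,0)$ a $C_l^{(1)}$ $0$-core, then notes that a $D_l^{(1)}$ operation on $(\lam,0)$ would induce a $C_l^{(1)}$ operation on $(\ddot\lam,0)$. Your hook dictionary gives both directions at once, while the paper's route exhibits $D_l^{(1)}$ $0$-cores as the even part of the $C_l^{(1)}$ ones.
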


\begin{proof}
$``\Longrightarrow"$ It is easy to know that $(\ddot{\lam}, 2\Lambda_0)$ is self-conjugate with charge 0 by its construction. 
On the other hand, since $(\lam, 0)$ is an abacus of type $D_l^{(1)}$, all beads are generated from the action of $f_0$. 
Note that each $f_0$-action gives two beads and consequently, the number of beads in $(\lam, 0)$ is even. Therefore, the
number of beads on the right side of the dashed line in $(\ddot{\lam}, 2\Lambda_0)$ is even. That is, $\ddot{\lam}$ is an even partition by Lemma \ref{3.20}. 
Finally, it is not difficult to check that $(\ddot{\lam}, 2\Lambda_0)$ is a $0$-core abacus of type $A^{(1)}_{2l-1}$ by the fact $(\lam, 0)$ being a core.

$``\Longleftarrow"$ Because $(\ddot{\lam}, 2\Lambda_0)$ is an even self-conjugate $0$-core of type $A^{(1)}_{2l-1}$, by Proposition \ref{3.18}, 
$(\ddot{\lam}, 2\Lambda_0)$ is 0-core abacus of type $C_l^{(1)}$.
Consequently, none elementary operations of type $C_l^{(1)}$ and therefore type $D_l^{(1)}$, can be done on $(\lam, 0)$ by Definition \ref{2.18}. 
Then by Definition \ref{3.2}, $(\lam, 0)$ is a core abacus of type $D_l^{(1)}$.
\end{proof}

\begin{remark}{\rm 
Given a core $\mu$ of type $\tilde{D}/D$, then it is easy to check that 
in $(\mu, 0)$ the half on the right side of the dashed line is a core abacus of type $D_l^{(1)}$.
Combining this fact with Lemma \ref{3.21}, we get a bijection between $0$-core abaci of type $D_l^{(1)}$ and cores of type $\tilde{D}/D$.}
\end{remark}

\subsubsection{Type $A_{2l-1}^{(2)}$} 
The following lemma is simple by the definition of core abaci of affine type $A_{2l-1}^{(2)}$.
\begin{lemma}\label{3.23}
Let  $\lam$ be a partition and $0\leq j\leq l$. Let $(\lambda, j)$ be an abacus of type $A_{2l-1}^{(2)}$ with ${\rm def}(\lambda, j)=0$. Then we have
\begin{enumerate}
  \item [\rm (1)]\, if position $i$ has a bead, then position $i-2l$ has a bead too;
  \item [\rm (2)]\, if position $i$ is empty, then position $i+2l$ is empty too.
\end{enumerate}	
\end{lemma}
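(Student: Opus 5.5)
The plan is to copy the proof of Lemma \ref{3.14} for type $C_l^{(1)}$, replacing the type $C$ adjacency data with that of $A_{2l-1}^{(2)}$. Since ${\rm def}(\lam, j)=0$, Lemma \ref{3.1} shows that no elementary operation can be performed on $(\lam, j)$. The whole argument therefore reduces to two consequences of the absence of elementary operations of the first two kinds listed after Definition \ref{2.18}.

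\textbf{Step 1: reading off the adjoint positions.} For type $A_{2l-1}^{(2)}$ the index set is $I_X=\{\pm1,\dots,\pm l\}$, because $0\nLeftarrow 1$ and $l-1\nRightarrow l$. So $|I_X|=2l$, and neither $0$ nor $l+1$ is an $l$-index. By Definition \ref{2.19}, for a whole abacus:
\begin{enumerate}
\item[\rm(a)] two positions $x,y$ are $0$-adjoint exactly when $x+y=-1$;
\item[\rm(b)] they are $l$-adjoint exactly when $x+y=2l-1$.
\end{enumerate}
I will check these values against Table 1 for this type: $f_0$ moves $(-1,2k-1)\to(2,2k)$, and $f_l$ moves $(l,2k)\to(-l,2k+1)$. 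Both are consistent with folding about $-\tfrac12$ and about $l-\tfrac12$.

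\textbf{Step 2: the two forbidden configurations.} Because no operation of kind (1) applies, two $0$-adjoint positions are never both empty. So if $y$ is empty, then $-1-y$ carries a bead. Because no operation of kind (2) applies, two $l$-adjoint positions never both carry beads. So if $x$ has a bead, then $2l-1-x$ is empty.

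\textbf{Step 3: chaining the two facts.} For (1), suppose position $i$ has a bead. By the $l$-adjoint fact, position $2l-1-i$ is empty. By the $0$-adjoint fact applied to $y=2l-1-i$, position $-1-(2l-1-i)=i-2l$ has a bead. For (2), suppose position $i$ is empty. By the $0$-adjoint fact, position $-1-i$ has a bead. By the $l$-adjoint fact applied to $x=-1-i$, position $2l-1-(-1-i)=i+2l$ is empty.

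The main obstacle is Step 1. It requires confirming that for every charge $0\le j\le l$ the abacus of $\Lambda_j$ in type $A_{2l-1}^{(2)}$ is a whole abacus, so that cases (1) and (2) of Definition \ref{2.19} with $k=-\infty$ apply. This can fail for $j=0,1$, where the abacus of $\Lambda_j$ is one-sided. In that case I would pass to the associated two-sided $\beta$-set $B$ of Definition \ref{2.8}, whose complement is built by the reflection $x\mapsto -1-x$. The $0$-adjoint condition then holds automatically on $B$, and the $l$-adjoint condition transfers from the half abacus. After that, the same chaining argument goes through verbatim.
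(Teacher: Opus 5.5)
Your argument matches the paper's: the paper gives no separate proof, calling the lemma simple by the definition of core abaci, and the argument it has in mind is the chain $i \mapsto 2l-1-i \mapsto i-2l$ written out in the proof of Lemma~\ref{3.14}, which carries over verbatim because $A_{2l-1}^{(2)}$ and $C_l^{(1)}$ have the same $l$-index system and hence the same $0$- and $l$-adjoint positions. One fix is needed in your half-abacus remark for $j=0,1$: the $l$-adjoint pairs inside $B_{\geq 0}$ only cover $0\le x\le 2l-1$, so to exclude beads at both $x$ and $2l-1-x$ in the associated $B$ when $x\ge 2l$, you must also use that no bead at $x$ can be moved to an empty position $x-2l$ (an elementary operation of the third kind, which Lemma~\ref{3.1} also excludes, and which gives (1) directly).
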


\begin{lemma}\label{3.24}
Let $(\lambda, 0)$ be an abacus of type $A_{2l-1}^{(2)}$. Then $(\lambda, 0)$ is a core abacus if and only if 
$(\ddot{\lambda}, 2\Lambda_0)$ is an even self-conjugate $0$-core of type $A_{2l-1}^{(1)}$.
\end{lemma}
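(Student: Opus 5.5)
The plan is to mirror the proof of Lemma \ref{3.21}. Type $A_{2l-1}^{(2)}$ has $0\Leftarrow 1$ absent and $l-1\Rightarrow l$ absent in the sense of Definition \ref{2.8}, so its $l$ index system and its $f_0$ and $f_l$ actions coincide with those of $D_l^{(1)}$ at the node $0$. The abacus of $\Lambda_0$ is the one-sided $\beta$-set $B_{\geq 0}=\varnothing$, and $(\lam,0)$ is a half $0$ abacus. By Definition \ref{2.8} (first case, $k=0$) its associated two-sided $\beta$-set is
\[
B=B_{\geq 0}\cup\bigl(\mathbb{Z}_{<0}\setminus\{-1-x\mid x\in B_{\geq 0}\}\bigr).
\]
By Lemma \ref{2.12}, $B$ is the $\beta$-set of $(\ddot{\lam},0)$. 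Hence $x\geq 0$ is a bead of $B$ if and only if $-1-x$ is empty, which is exactly the abacus characterization of self-conjugacy with charge $0$ used in Lemma \ref{3.16}.

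For ``$\Longrightarrow$'', self-conjugacy and charge $0$ come from the construction above. Evenness is argued as in Lemma \ref{3.21}. Every bead of $(\lam,0)$ is produced by $f_0$ and $f_i$ actions from the empty half abacus, and $f_i$ ($i\neq 0$) moves beads without creating them. By Table 2, each special $f_0$ action creates two beads at once, at $(1,0)$ and $(2,0)$, that is, positions $0$ and $1$. So $|B_{\geq 0}|$ is even, and Lemma \ref{3.20} gives that $\ddot{\lam}$ is even.

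It remains to show that $(\ddot{\lam},0)$ is a $0$-core of type $A_{2l-1}^{(1)}$, i.e.\ that the bead at $i$ forces a bead at $i-2l$. I would first invoke Lemma \ref{3.23} directly on the half abacus (its proof via forbidden elementary operations of the third kind, moving a bead from $x$ to $x-2l$, applies verbatim). Then I would transport the property across the dashed line using the symmetry $x\leftrightarrow -1-x$ of $B$, since that symmetry carries the statement ``bead at $i$ implies bead at $i-2l$'' to ``empty at $-1-i$ implies empty at $-1-i+2l$''.

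The main obstacle is the boundary region, namely pairs $i\geq 0>i-2l$. There I must use the absence of the other elementary operations. The $0$-adjoint operation (two empty $0$-adjoint positions $x,y$ with $x+y=-1$) is automatically unavailable by construction of $B$. So the key input is the $l$-adjoint operation (removing beads at $x,y\geq 0$ with $x+y=2l-1$). If $i\in[0,2l-1]$ is a bead, no $l$-adjoint removal means $2l-1-i$ is empty, and therefore $i-2l=-1-(2l-1-i)$ is a bead of $B$, as required.

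For ``$\Longleftarrow$'', I would follow Lemma \ref{3.21}. By Lemma \ref{3.18}, $(\ddot{\lam},0)$ is a $0$-core of type $C_l^{(1)}$, so no elementary operation of type $C_l^{(1)}$ can be done on it. I would then check, by comparing the lists after Definition \ref{2.19}, that every elementary operation of type $A_{2l-1}^{(2)}$ on the half abacus $(\lam,0)$ is obtained as a $C_l^{(1)}$ elementary operation on the associated $(\ddot{\lam},0)$, or a pair of them: bead shifts by $2l$, $l$-adjoint removals with sum $2l-1$, and $0$-adjoint insertions. The only exception is the insertion of the pair at positions $0,1$, which would change the parity of beads to the right of the line, and evenness of $\ddot{\lam}$ rules it out. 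Lemma \ref{3.1} then concludes that $(\lam,0)$ is a core abacus. I expect this matching of operations to be the delicate part, handled by routine case checking.
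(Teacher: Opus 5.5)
Your forward direction is the paper's argument with the details filled in correctly. Self-conjugacy and charge $0$ come from Definition \ref{2.8}, and evenness holds because the special $f_0$ action creates beads in pairs. The $2l$-core property at positions $\geq 2l$ comes from the forbidden shift by $2l$, the window $0\leq i\leq 2l-1$ from the forbidden $l$-adjoint removal, and negative positions follow via $x\leftrightarrow -1-x$.

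The converse has a genuine gap in how evenness is used.

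\emph{The ``exception'' is not an elementary operation.} On a half abacus with $k=0$ there are no $0$-adjoint pairs. So the only elementary operations of type $A_{2l-1}^{(2)}$ are shifts of a bead from $x$ to $x-2l$ and removals of two beads with $x+y=2l-1$. Filling positions $0,1$ is the special $f_0$ action of Table 2, which changes $\beta$ by $\alpha_0$; it is not an elementary operation. Your justification also fails on its own terms: adding two beads does not change parity. And evenness cannot force $0,1$ to be non-empty, since the empty abacus is even, has $0,1$ empty, and is a core. So the ``exception'' clause is wrong, and as written your converse never actually uses evenness.

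\emph{Evenness is needed elsewhere.} Lemma \ref{3.1} applies only to pairs already lying in $\Lambda_0-\beta$; the absence of elementary operations alone is not enough. For example, $B_{\geq 0}=\{0\}$ gives $\ddot{\lam}=(1)$, a self-conjugate $2l$-core, and admits no elementary operation. Yet it is the abacus of $\Lambda_1$, not a $0$-core abacus, and your argument would declare it one.

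\emph{The missing step.} This is what the paper means by ``since $\ddot{\lam}$ is even, $(\lam,0)$ can be viewed as an abacus of type $A_{2l-1}^{(2)}$''. You must show that a half abacus with an even number of beads is reached from the empty one by $|B_{\geq 0}|/2$ special $f_0$ actions, followed by rightward bead moves via $f_0,\dots,f_l$. That gives $(\lam,0)\in\Lambda_0-\beta$, and only then does Lemma \ref{3.1}(3) yield the core property.

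Once membership is established you do not need Lemma \ref{3.18}. The $2l$-core property of $\ddot{\lam}$ directly forbids shifts by $2l$. It also forbids $l$-adjoint removals: a bead at $i\in[0,2l-1]$ forces a bead at $i-2l$, which means $2l-1-i$ is empty.
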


\begin{proof}
``$\Longrightarrow$" According to Definition \ref{3.2}, it is easy to check that $(\ddot{\lambda},2\Lambda_0)$ is self-conjugate with charge $0$. 
Note that all beads are generated from the action of $f_0$ and each $f_0$-action gives two beads. Therefore, the number of beads in $(\lambda, 0)$ is even. 
That is, the number of beads to the right of dashed line in $(\ddot{\lambda},2\Lambda_0)$ is even. 
Moreover, since $\rm def(\lambda, 0)=0$, by the construction of $(\ddot{\lambda},2\Lambda_0)$ and
Lemma \ref{3.23}, $(\ddot{\lambda},2\Lambda_0)$ is a $0$-core of type $A_{2l-1}^{(1)}$.

\smallskip
	
``$\Longleftarrow$" Since $(\ddot{\lambda}, 2\Lambda_0)$ is even, $(\lam, 0)$ can be viewed as an abacus of affine type $A_{2l-1}^{(2)}$.
Clearly,  $(\ddot{\lambda}, 2\Lambda_0)$ is a core of affine type $C_l^{(1)}$ by Lemma \ref{3.18}. 
This implies none elementary operation of affine type $C_l^{(1)}$ can be done in $(\ddot{\lambda}, 2\Lambda_0)$, 
and consequently none elementary operation of type $A^{(2)}_{2l-1}$ can be done in $(\lam, 0)$. 
That is, it is a core abacus.
\end{proof}

The following result is an easy corollary of Lemma \ref{3.23}. We omit the proof here.

\begin{lemma}\label{3.24'}
Let $(\lambda, l)$ be an abacus of type $A_{2l-1}^{(2)}$. Then $(\lambda, l)$ is a core abacus if and only if 
it is a self-conjugate $0$-core of type $A_{2l-1}^{(1)}$.
\end{lemma}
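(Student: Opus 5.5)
The plan is to mirror the type $C_l^{(1)}$ argument of Lemmas \ref{3.16}--\ref{3.18}, with charge $l$ in place of $0$. The first step is to write down, from Definition \ref{2.19} and Table 1, the adjointness relations for a whole abacus of type $A_{2l-1}^{(2)}$. Here $0$ and $l+1$ are not $l$-indices, so two positions $x,y$ are $0$-adjoint when $x+y=-1$ and $l$-adjoint when $x+y=2l-1$. The elementary operations of kind (3) move a bead down by $2l$, as in Lemma \ref{3.23}.

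By Definition \ref{3.2} and Lemma \ref{3.1}(3), $(\lam,l)$ is a core abacus exactly when no elementary operation applies. Concretely this means:
(a) no pair of positions $i$ and $-1-i$ is simultaneously empty;
(b) no pair of positions $i$ and $2l-1-i$ both carry a bead;
(c) the $2l$-periodicity of Lemma \ref{3.23} holds, i.e. the abacus is a $0$-core of type $A_{2l-1}^{(1)}$ in the usual runner sense.
Kind (4) operations are handled by the same periodicity on the runners of ${\rm\bf U}(\lam,l)$.

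For ``$\Longrightarrow$'', the $A^{(1)}_{2l-1}$-core property is Lemma \ref{3.23}. Self-conjugacy should come from combining (a), (b) and periodicity.
\begin{itemize}
\item If position $i$ is empty, then (a) forces position $-1-i$ to carry a bead.
\item If position $i$ carries a bead, then (b) makes $2l-1-i$ empty. Periodicity (Lemma \ref{3.23}(2) applied to $-1-i=(2l-1-i)-2l$) then gives that $-1-i$ is empty. This holds provided the shift is compatible, so I need to check the direction of the implication in Lemma \ref{3.23} carefully here.
\end{itemize}
Thus position $i$ is empty iff position $-1-i$ is filled. This is the reflection symmetry of the abacus about the line between $-1$ and $0$, which is the standard characterisation of conjugation at the appropriate normalised charge. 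The subtle point is the charge: conjugation sends $(\lam,j)$ to $(\lam',-j)$ in standard conventions. The statement's ``$0$-core'' presumably refers to $\lam$ being a core of type $A^{(1)}_{2l-1}$, independently of the charge $l$, and the bead pattern shifted by $l$ is what is symmetric. I would normalise by shifting the abacus by $l$, which commutes with $2l$-periodicity, and then argue exactly as in Lemma \ref{3.16}.

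For ``$\Longleftarrow$'', I argue by contradiction as in Lemma \ref{3.17}.
\begin{itemize}
\item If two $0$-adjoint positions were empty, conjugation would make both filled, contradicting self-conjugacy.
\item If two $l$-adjoint positions $i$ and $2l-1-i$ were filled, the $A^{(1)}_{2l-1}$-core property moves the bead at $2l-1-i$ down to $-1-i$. Then $i$ and $-1-i$ are both filled, which contradicts the symmetry.
\item Operations of kind (3) are excluded by the core property.
\end{itemize}

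The main obstacle is the bookkeeping of the charge shift, together with the precise adjointness sums for a whole abacus at charge $l$. I would settle this first by checking small cases, e.g. $l=2$ with $\lam=\varnothing$ and $\lam=(1)$, and then running the above argument.
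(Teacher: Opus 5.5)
There is a genuine gap, and it is exactly where you hesitated. In ``$\Longrightarrow$'' you derive that position $i$ is empty iff $-1-i$ is filled. This is false at charge $l$: it would force charge $0$, and already $(\varnothing,l)$ has beads at both $0$ and $-1$. The step producing it misuses Lemma~\ref{3.23}. Part (2) pushes emptiness to the right, $i\mapsto i+2l$, so ``$2l-1-i$ empty'' gives ``$4l-1-i$ empty'', not ``$-1-i$ empty''.

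At charge $l$, the condition $\lambda=\lambda'$ means $p\in\beta_l(\lambda)\iff 2l-1-p\notin\beta_l(\lambda)$, i.e.\ each $l$-adjoint pair carries exactly one bead. Your (b) gives ``not both filled''. But ``not both empty'' does not follow from (a), (b), (c) by any local argument, so shifting by $l$ and copying Lemma~\ref{3.16} does not repair it.

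To see this, use runners: a $2l$-core has beads $2lm+r$ for $m<n_r$, and its charge is $\sum_r n_r$. Then (a) says $n_r+n_{2l-1-r}\ge 0$, (b) says $n_r+n_{2l-1-r}\le 1$, and self-conjugacy is $n_r+n_{2l-1-r}=1$. For instance, $(\varnothing,0)$ satisfies (a)--(c) yet has positions $0$ and $2l-1$ both empty.

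The missing ingredient is a global count. The $l$ sums $n_r+n_{2l-1-r}$, $0\le r\le l-1$, lie in $\{0,1\}$ and add up to the charge $l$, hence all equal $1$. Alternatively, the map $B\mapsto\mathbb{Z}\setminus(2l-1-B)$ sends $f_i$-moves to $f_i$-moves for every $i$. It fixes $(\varnothing,l)$ and sends $(\lambda,l)$ to $(\lambda',l)$, so it fixes the whole $W$-orbit.

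Your ``$\Longleftarrow$'' argument rests on the same wrong reflection: ``$i$ and $-1-i$ both filled'' is no contradiction (again $(\varnothing,l)$). With the correct symmetry this direction is easy:
\begin{itemize}
\item Two filled $l$-adjoint positions contradict the symmetry outright.
\item If $i$ and $-1-i$ were both empty, symmetry puts beads at $2l-1-i$ and $2l+i$. The $2l$-core property (a bead at $p$ forces one at $p-2l$) then puts beads at $-1-i$ and $i$, a contradiction.
\end{itemize}
The paper only asserts the lemma as an easy consequence of Lemma~\ref{3.23}, and your overall plan matches that. The two corrections above are what actually make it go through.
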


\subsubsection {Types $A_{2l}^{(2)}$, $B_{l}^{(1)}$ $D_{l+1}^{(2)}$} The core abacus of these three types can be studied similarly. We summarize a lemma without proof.

\begin{lemma} \label{3.25}
Given an abacus $(\lam, 0)$, then
\begin{enumerate}
\item[{\rm (1)}] $(\lambda, 0)$ is a core abacus of type $A_{2l}^{(2)}$ 
if and only if $(\ddot{\lambda},2\Lambda_0)$ is a 0-core of type $A_{2l}^{(1)}$.
\item[{\rm (2)}] $(\lambda, 0)$ is a core abacus of type $B_{l}^{(1)}$ 
if and only if $(\ddot{\lambda},2\Lambda_0)$ is an even self-conjugate $0$-core of type $A_{2l}^{(1)}$;
\item[{\rm (3)}] $(\lambda, 0)$ is a core abacus of type $D_{l+1}^{(2)}$ 
if and only if $ (\ddot{\lambda},2\Lambda_0)$ is a $0$-core of type $A_{2l+1}^{(1)}$.
\end{enumerate}
\end{lemma}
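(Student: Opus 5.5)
Each of the three parts of Lemma \ref{3.25} follows the template of Lemmas \ref{3.21} and \ref{3.24}. We pass from the half abacus $(\lam,0)$, with $B_{\geq 0}$, to the associated two-sided $\beta$-set $B$ of Definition \ref{2.8}. By Lemma \ref{2.12}, $B$ is the $\beta$-set of $(\ddot{\lam},0)$. We then compare the elementary operations of Definition \ref{2.18} on both sides, which is legitimate by Lemma \ref{3.1}(3). Concretely, $B$ is built from $B_{\geq 0}$ by the rule $x\mapsto 2k-1-x$ or $x\mapsto 2k-2-x$. The $0$-adjoint operation ``fill two empty positions $x,y$ with $x+y=-1$ or $-2$'' on $(\lam,0)$ becomes the obstruction to the symmetry of $B$ under $x\mapsto -1-x$ (respectively $-2-x$). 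The type (3) operations, moving a bead from $x$ to an empty $x-i$ with $i=2l+1$ for $A_{2l}^{(2)}$ and $B_l^{(1)}$, and $i=2l+2$ for $D_{l+1}^{(2)}$, translate on $B$ into the absence of moves $x\to x-e$, where $e=2l+1$ or $2l+2$. These are exactly the $A_{e-1}^{(1)}$ hook removals, i.e.\ the $e$-core condition. The $l$-adjoint removals (pairs summing to $2l$ or $2l-1$) correspond under the reflection to the same $e$-move straddling the dashed line, so they are absorbed into the core condition as well.

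For the direction ``$\Longrightarrow$'', I would argue as in Lemma \ref{3.24}. Since $(\lam,0)$ is a core, none of the four kinds of elementary operations applies. Reading this through Definition \ref{2.8} shows that $B$ has no bead $x$ with $x-e$ empty, so $(\ddot\lam,0)$ is an $e$-core of type $A^{(1)}_{e-1}$. Charge zero and, where relevant, self-conjugacy come directly from the construction of $\ddot\lam$. In case (2), evenness follows as in Lemma \ref{3.21}: all beads of the abacus of $\Lambda_0$ arise from $f_0$-actions, which by Table 2 add beads in pairs (positions $(1,0),(2,0)$), so the number of beads right of the dashed line is even and Lemma \ref{3.20} applies.

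For the direction ``$\Longleftarrow$'', take the hypothesised $A^{(1)}_{e-1}$-core (even and self-conjugate in case (2)). Exactly as in the proof of Lemma \ref{3.24}, its half to the right of the dashed line, folded back, is an abacus of the given type. Evenness is needed in type $B_l^{(1)}$ so that the bead count is compatible with the paired $f_0$-action. An elementary operation on $(\lam,0)$ would lift, via the reflection, to an $e$-move in $(\ddot\lam,0)$, contradicting the core property. Hence $(\lam,0)$ is a core by Lemma \ref{3.1}.

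The main obstacle is the bookkeeping at the two ends of the index system for types $A_{2l}^{(2)}$ and $D_{l+1}^{(2)}$. There the index $0$ (and $l+1$) belongs to $I_X$, the reflection is $x\mapsto -2-x$ rather than $-1-x$, and the special single-bead $f_0$- and $f_l$-actions of Table 2 occur. In these types self-conjugacy is not part of the statement, so one must check directly that the image of $B$ automatically has the required symmetry. Equally, one must check that the operations of kind (4) from the Uglov picture, adding or removing a single bead at the top of a half column, correspond to $e$-moves crossing the fixed point of the reflection. I would handle this by a short case check, one type at a time, using the explicit adjointness sums of Definition \ref{2.19}.
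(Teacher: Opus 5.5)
Your proposal is essentially the paper's approach. The paper gives no separate proof of Lemma~\ref{3.25}; it only says these types are handled ``similarly'' to Lemmas~\ref{3.21} and~\ref{3.24}. That means transporting the elementary operations on $(\lam,0)$ through Definition~\ref{2.8} to $e$-moves on the $\beta$-set of $(\ddot\lam,0)$, with evenness in (2) coming from the paired $f_0$-action, which is exactly what you do.

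One small correction: on a $k=0$ half abacus the $0$-adjoint fill never applies, since no two nonnegative positions sum to $-1$ or $-2$, so your remark about an ``obstruction to symmetry'' is vacuous. All the content lies in kinds (2)--(4), which you correctly match with $e$-moves on $B$ that straddle, or start at, the centre of the reflection. Also, as in the paper, your ``$\Longleftarrow$'' direction tacitly assumes that $(\lam,0)$ already lies in some $\Lambda_0-\beta$, so that Lemma~\ref{3.1}(3) can be applied.
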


Note that Lecouvey and Wahiche investigated in \cite{LW} the problem of 
parameterizing the affine Grassmannian elements by self-conjugate cores of type $A_{2l-1}^{(1)}$.
Combining Lemmas \ref{3.18}, \ref{3.21} and \ref{3.25} (2) yields the following proposition.

\begin{proposition}
Affine Grassmannians $W^0$ of all classical types except $A^{(1)}$ 
can be parameterized by {\rm ({\bf certain})} self-conjugate 0-cores of type $A_{2l-1}^{(1)}$ or $A_{2l}^{(1)}$.
\end{proposition}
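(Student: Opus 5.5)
The proposition will follow by composing two bijections for each classical type $X\neq A^{(1)}$. The first is Lemma \ref{3.12}, which identifies $W^0$ with the set of $0$-core abaci of type $X$ via $w\mapsto w(\varnothing,0)$, where the empty abacus is replaced by the abacus of weight $\Lambda_0$ when it is a half abacus. The second identifies $0$-core abaci of type $X$ with a subclass of self-conjugate $0$-cores of type $A^{(1)}_{2l-1}$ or $A^{(1)}_{2l}$, by passing from $(\lam,0)$ to $\lam$ or to $\ddot{\lam}$. So the argument is a case-by-case assembly of the comparison lemmas of this subsection, and the real content is checking that each map is injective and that its image is an explicitly described class of self-conjugate cores.

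The cases split as follows.

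\textbf{Type $C_l^{(1)}$.} Lemma \ref{3.18} gives directly that $(\lam,0)$ is a $0$-core abacus if and only if $\lam$ is a self-conjugate $0$-core of type $A^{(1)}_{2l-1}$.

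\textbf{Types $D_l^{(1)}$ and $A_{2l-1}^{(2)}$.} Lemmas \ref{3.21} and \ref{3.24} characterize core abaci $(\lam,0)$ as those whose $\ddot{\lam}$ is an \emph{even} self-conjugate $0$-core of type $A^{(1)}_{2l-1}$.

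\textbf{Type $B_l^{(1)}$.} Lemma \ref{3.25}(2) gives even self-conjugate $0$-cores of type $A^{(1)}_{2l}$.

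\textbf{Types $A_{2l}^{(2)}$ and $D_{l+1}^{(2)}$.} Lemma \ref{3.25}(1),(3) characterize core abaci via $\ddot{\lam}$ being a $0$-core of type $A^{(1)}_{2l}$ or $A^{(1)}_{2l+1}$. Here I would add the remark that $\ddot{\lam}$ is self-conjugate by construction: it is the double distinct partition, symmetric about the diagonal. This should be visible from Definition \ref{2.8}, since the associated two-sided $\beta$-set is built by complementing reflected positions, which is exactly the abacus criterion for self-conjugacy used in the proof of Lemma \ref{3.16}.

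It remains to see that $\lam\mapsto\ddot{\lam}$ is injective on one-sided $\beta$-sets. By Lemma \ref{2.12}, the two-sided $\beta$-set determined by $\ddot{\lam}$ is the associated set $B$ of $B_{\geq k}$, and $B_{\geq k}=B\cap\mathbb{Z}_{\geq k}$ is recovered from it. Hence in every type the composite
\[
W^0\longrightarrow\{0\text{-core abaci of type }X\}\longrightarrow\{\text{certain self-conjugate }0\text{-cores}\}
\]
is injective, and by the ``if'' directions of the cited lemmas its image is exactly the stated class. The word ``certain'' refers to the evenness condition, to the choice between $A^{(1)}_{2l-1}$ and $A^{(1)}_{2l}$, and, in the twisted cases $A_{2l}^{(2)}$ and $D_{l+1}^{(2)}$, to the doubled-distinct shape, which is itself self-conjugate.

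The main obstacle is the last two twisted types. There Lemma \ref{3.25} does not state self-conjugacy, and $D_{l+1}^{(2)}$ lands in $A^{(1)}_{2l+1}$ rather than in $A^{(1)}_{2l-1}$ or $A^{(1)}_{2l}$. I would first verify self-conjugacy of $\ddot{\lam}$ directly from the second clause of Definition \ref{2.8}, with shift vector $(0,1,2,\dots)$. Since $A^{(1)}_{2l+1}=A^{(1)}_{2(l+1)-1}$, I would then read the statement with the rank of the $A$-type adjusted accordingly, noting that the parenthetical ``certain'' is there to absorb these type-dependent conditions.
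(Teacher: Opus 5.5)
For $C_l^{(1)}$, $D_l^{(1)}$, $B_l^{(1)}$ (and also $A_{2l-1}^{(2)}$ via Lemma~\ref{3.24}), your argument is the paper's. You compose the bijection $W^0\leftrightarrow\{0\text{-core abaci}\}$ of Lemma~\ref{3.12} with Lemmas~\ref{3.18}, \ref{3.21} and \ref{3.25}(2). The paper cites only those three lemmas. That is why the statement mentions only $A^{(1)}_{2l-1}$ and $A^{(1)}_{2l}$, and why ``certain'' refers to the evenness condition.

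The step that fails is your extension to $A_{2l}^{(2)}$ and $D_{l+1}^{(2)}$, because there $\ddot{\lam}$ is \emph{not} self-conjugate.

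\begin{itemize}
\item \emph{Why the criterion fails.} For these types $0\Leftarrow 1$, so the second clause of Definition~\ref{2.8} applies. It reflects by $x\mapsto -2-x$, not $x\mapsto -1-x$. Hence the abacus criterion from the proof of Lemma~\ref{3.16} ($j\in B$ iff $-1-j\notin B$) does not hold.
\item \emph{Partition-level view.} A doubled distinct partition has arm and leg lengths differing by one in every diagonal hook. So it is self-conjugate only when it is empty.
\item \emph{Counterexample.} $\sigma_0\in W^0$ sends the empty half abacus to $B_{\geq 0}=\{0\}$, via the special $f_0$-action of Table~2 at position $0$, whose index is $(1,0)$. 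The associated set is $B=\{0,-1,-3,-4,\dots\}$, so by Lemma~\ref{2.12} $\ddot{\lam}=(1,1)$, which is not self-conjugate.
\end{itemize}

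This is exactly why Lemma~\ref{3.25}(1),(3) do not say ``self-conjugate''. The failure cannot be absorbed into the word ``certain''.

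You have two ways to repair this. Either restrict the proposition to the types actually covered, as the paper's proof does. Or, if you want the twisted types, drop $\ddot{\lam}$ for them and argue through the Coxeter group:
\begin{itemize}
\item $A_{2l}^{(2)}$ and $D_{l+1}^{(2)}$ have the same Coxeter graph $\tilde C_l$ as $C_l^{(1)}$, with $0$ an end node.
\item So their Coxeter systems are isomorphic to that of $C_l^{(1)}$ via $\sigma_i\mapsto\sigma_i$, and their $W^0$ correspond.
\item Hence their $W^0$ is parameterized by self-conjugate $0$-cores of type $A^{(1)}_{2l-1}$, through Lemmas~\ref{3.12} and \ref{3.18} applied to $C_l^{(1)}$.
\end{itemize}
This also removes the rank shift to $A^{(1)}_{2l+1}$ that you had to explain away for $D_{l+1}^{(2)}$.
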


\smallskip


\subsection{Semidirect product and Uglov map}

It is known that for a core abacus the Uglov map is compatible with the semidirect product decomposition in the case of charge zero.
We generalize in this subsection the result to core abaci of arbitrary charge, 
that is, we shall prove that the Uglov map is compatible with the adjusted semidirect product decomposition, 
where the adjustment is determined by the charge.

In order to study the semidirect product structure and give the height formula in the following section, 
we first study the action of an affine Weyl group on the Uglov vector of a core abaci. 
The following result can be obtained by routine analysis on Uglov map. We omit the proof here.

\begin{lemma}\label{3.26}
Let $(\lam, j)$ be an abacus of type $X$ with the associated affine Weyl group $W$.
Assume that the Uglov vector of $(\lam, j)$ is $u(\lam, j)=(u_1, \dots, u_l)$. Then
\begin{enumerate}
\item[\rm(1)] $u(\sigma_i(\lam, j))=(u_1, \dots, u_{i+1}, u_i, \dots, u_l), \quad \text{\rm if} \quad 1\leq i\leq l-1$;
\smallskip
\item[\rm(2)] $u(\sigma_l(\lam, j))=\begin{cases}
(u_1, \dots, u_{l-1}, -u_l), &\text{\rm if $X\neq D_l^{(1)}$};\\
(u_1, \dots, -u_{l}, -u_{l-1}), &\text{\rm if $X= D_l^{(1)}$}.
\end{cases}$
\smallskip
\item[\rm(3)] $u(\sigma_0(\lam, j))=\begin{cases}
\smallskip
(\frac{a_j^{\vee}}{a_0^{\vee}}-u_2, \frac{a_j^{\vee}}{a_0^{\vee}}-u_1, \dots, u_l), &\text{\rm if $X= B_l^{(1)}, A_{2l-1}^{(2)}, D_l^{(1)}$};\\
\smallskip
(\frac{a_j^{\vee}}{a_0^{\vee}}-u_1, \dots, u_{l}), &\text{\rm if $X= D_{l+1}^{(2)}, A_{2l}^{(2)}$};\\
(\frac{2a_j^{\vee}}{a_0^{\vee}}-u_1, \dots, u_{l}), & \text{\rm if $X= C_l^{(1)}$.}
\end{cases}$
\end{enumerate}
\end{lemma}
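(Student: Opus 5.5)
We have to prove Lemma \ref{3.26}: how each generator $\sigma_i$ acts on the Uglov vector. The argument is local: for each generator we describe the abacus move of Definition \ref{2.7} (performing all $f_i$-actions) and track it through the three steps of Definition \ref{2.15}.

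For $1\leq i\leq l-1$ the statement is simple. An $f_i$-action either moves a bead from a position of $l$ index $(i,2k)$ to $(i+1,2k)$, or from $(-i-1,2k+1)$ to $(-i,2k+1)$. In step (1) of the Uglov map, position $(-i,2k+1)$ is rotated under the column of $i$ and $(-i-1,2k+1)$ under the column of $i+1$, and the colour is reversed. So after ${\rm\bf U}_1$ and ${\rm\bf U}_2$, performing all $f_i$-actions, with the corresponding reverse moves on the negative positions, is exactly the exchange of the runners labelled $i$ and $i+1$ in ${\rm\bf U}(\lam,j)$. The rows are not shifted, so runner charges are permuted and $u_i,u_{i+1}$ are swapped. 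The constant $\bf 1$ or $\frac32{\bf 1}$ subtracted in Definition \ref{3.3} is the same on every component, so it does not matter.

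For $\sigma_l$ we use Table 1 and Table 2. The move $(l,2k)\to(-l,2k+1)$ sends a bead on runner $l$ to the positive part of the $-l$ runner. Step (3) of the Uglov map reverses the colours of that column and places it above runner $l$, so runner $l$ is reflected about the boundary between the row-$0$ and row-$1$ blocks. Reflecting with colour reversal turns charge $s$ into $c-s$, where $c$ is a fixed constant determined by the type. The shift by $\bf 1$ (or $\frac32{\bf 1}$ for the half abaci with $k=l,l+1$) is chosen so that $c$ becomes $0$, which gives $u_l\mapsto -u_l$. The special actions in Table 2 fill the top position of a half column and are absorbed into the same reflection.

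For $D_l^{(1)}$ the two moves $(l,2k)\to(1-l,2k+1)$ and $(l-1,2k)\to(-l,2k+1)$ reflect runners $l$ and $l-1$ while also interchanging them, which gives $(-u_l,-u_{l-1})$.

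For $\sigma_0$ the same reflection argument is applied at the left end, using the moves $(-1,2k-1)\to(1,2k)$ (or to $(2,2k)$ and $(-2,2k-1)\to(1,2k)$). This time the reflection passes through the boundary between consecutive sections of ${\rm\bf U}_2$, so the reflected charge is $c'-u_1$. The constant $c'$ depends on the total charge $j$ through the placement of the empty abacus of $\Lambda_j$. We fix $c'$ by computing it once on the abacus of $\Lambda_j$ itself, where every component of $u$ equals $0$ or the appropriate ratio; this gives $\frac{a_j^\vee}{a_0^\vee}$, or $\frac{2a_j^\vee}{a_0^\vee}$ for $C_l^{(1)}$, where the $0$-runner is absent and the period $2l$ doubles the step. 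In the types $B_l^{(1)},A_{2l-1}^{(2)},D_l^{(1)}$ the runners $1$ and $2$ are also interchanged.

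The main obstacle is this $\sigma_0$ case: we have to check, type by type, that the constant is exactly $a_j^\vee/a_0^\vee$. That requires reconciling the half-abacus conventions of Definition \ref{2.8} with the one-sided shifts in Definition \ref{3.3}. We will do this by direct evaluation on the abacus of $\Lambda_j$ and on $\sigma_0$ applied to it. Since the action is affine-linear in $u$ and constant on all other runners, those two evaluations determine the constant.
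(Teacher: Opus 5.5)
Your proposal is correct and is essentially the argument the paper has in mind. The paper omits the proof of Lemma~\ref{3.26}, saying only that it follows from a routine analysis of the Uglov map. Your generator-by-generator tracking of the $f_i$-moves through the steps of Definition~\ref{2.15} is exactly that analysis: a runner swap for $1\le i\le l-1$, and a reflection with colour reversal for $\sigma_l$ and $\sigma_0$, with the constant fixed by the normalisation in Definition~\ref{3.3} and by evaluating on the abacus of $\Lambda_j$. When you write it out, make explicit that at the boundary of a half abacus the special moves of Table 2 shift the reflection constant. The constant shifts because the positions below $k$ stay fixed, and the $\tfrac{3}{2}\mathbf{1}$ normalisation for $k=l,l+1$ absorbs exactly this shift.
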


For the uniformity of description of certain results in the following, we introduce the notion of weighted Uglov vectors.

\begin{definition}
Given a core abacus $(\lam, j)$ of affine type $X$ with Uglov vector $u$, define the weighted Uglov vector $\mathrm{u}$ of $(\lam, j)$ to be
$$\mathrm{u}=\begin{cases}
              \frac{\sqrt{2}}{2}u, & \mbox{\rm if } X=C_l^{(1)}; \\
              \sqrt{2}u, & \mbox{\rm if } X=D_{l+1}^{(2)}; \\
              u, & \mbox{\rm otherwise}.
            \end{cases}$$
\end{definition}

Now we are in a position to give the main result of this subsection.

\begin{lemma}\label{3.27}
Let $0\leq j\leq l$ and let $(\lam, j)$ be a core abacus with $w_{\lam, j}=t_{\mathrm{q}^\lam}\overline{w}$. Then 
$$\mathrm{u}^\lam=\frac{a_j^{\vee}}{a_0^{\vee}}\mathrm{q}^\lam+\overline{w}(\omega_{j}).$$
\end{lemma}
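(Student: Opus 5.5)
Induction on the length of $w_{\lam, j}$ seems the most direct route. Write $c_j:=\frac{a_j^{\vee}}{a_0^{\vee}}$ and identify $V$ with $\mathbb{R}^l$ in the standard way, so that $W_0$ acts by signed permutations and the translation part acts on $\mathrm{q}$. The statement can be recast as saying that the map
$$\Phi:\ w\longmapsto c_j\,\mathrm{q}+\overline{w}(\omega_j),\qquad w=t_{\mathrm q}\overline{w},$$
is the \emph{affine} action of $w$ on the point $\omega_j$, rescaled at level $c_j$. Precisely, for the level-$c_j$ action $t_{\mathrm q}\overline{w}\cdot v=c_j\mathrm q+\overline{w}(v)$ we get $\Phi(w)=w\cdot\omega_j$. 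With this reformulation the claim reads $\mathrm{u}(w(\varnothing,j))=w\cdot \mathrm{u}(\varnothing,j)$. I would therefore check two things: the base case, and equivariance under each generator $\sigma_i$.

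\textbf{Base case.} For $w=e$ one has $\mathrm q=0$, $\overline{w}=e$, so the claim is $\mathrm{u}(\varnothing,j)=\omega_j$. This is checked directly from Definitions \ref{2.15} and \ref{3.3}, separately for the whole abacus $(\varnothing,j)$ and for the half abaci of weight $\Lambda_j$ listed before Definition \ref{2.6}. The mechanism in the whole case is Lemma \ref{3.5}: the first $j$ runners get charge raised by one (after the shift by ${\bf 1}$), giving $(1,\dots,1,0,\dots,0)$, which up to the weighting $\sqrt2^{\pm1}$ is $\omega_j$ in the chosen normalization. The exceptional half cases (e.g. $\Lambda_{l-1},\Lambda_l$ in type $D_l^{(1)}$) produce $\tfrac12$-entries through the $-\tfrac32{\bf 1}$ shift. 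This is where the weighting in the definition of $\mathrm{u}$ and the choice of $D$ must match the standard coordinates for $\omega_j$.

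\textbf{Inductive step.} Take $w\in W^j$ and $\sigma_i$ with $\sigma_i w\in W^j$ and $\ell(\sigma_iw)>\ell(w)$. By Lemma \ref{3.12}, every core abacus arises this way from $(\varnothing,j)$. We have $\sigma_i w(\varnothing,j)=\sigma_i(\lam,j)$, and Lemma \ref{3.26} gives $\mathrm{u}(\sigma_i(\lam,j))$ in terms of $\mathrm{u}(\lam,j)$. For $1\le i\le l$ this is exactly the linear action of the simple reflection $\sigma_i\in W_0$ on $\mathbb{R}^l$ (transposition, sign change, or the $D$-type double sign swap). On the other side, $\sigma_i t_{\mathrm q}\overline{w}=t_{\sigma_i\mathrm q}\,\sigma_i\overline{w}$, so $\Phi(\sigma_iw)=\sigma_i\Phi(w)$ as well. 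For $i=0$, write $\sigma_0=t_{\theta^\vee}s_\theta$ in the realization of \S3.2, where $\sigma_0=\sigma_{\theta,1}$. Then
$$\Phi(\sigma_0w)=c_j\theta^\vee+s_\theta\bigl(\Phi(w)\bigr),$$
i.e.\ the affine reflection in the hyperplane $(v,\theta)=c_j/\!\cdots$ at level $c_j$. The remaining task is to check that this coincides with the formulas of Lemma \ref{3.26}(3). In types $B_l^{(1)},A^{(2)}_{2l-1},D^{(1)}_l$ one has $\theta=e_1+e_2$, and $s_\theta$ together with the translation gives $(c_j-u_2,c_j-u_1,\dots)$. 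In types $D^{(2)}_{l+1},A^{(2)}_{2l}$ one has $\theta$ short, giving $(c_j-u_1,\dots)$. In type $C^{(1)}_l$ one has $\theta=2e_1$, $\theta^\vee=e_1$, and the $\tfrac{\sqrt2}{2}$ weighting converts $2c_j-u_1$ correctly.

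\textbf{Main obstacle.} I expect the hard part to be the normalizations, not the induction. The coroot $\theta^\vee$, the inner product (via the matrices $D$ fixed in \S2.2), and the weights $\sqrt2^{\pm1}$ must all be matched consistently so that the $\sigma_0$-step and the base case $\mathrm{u}(\varnothing,j)=\omega_j$ hold simultaneously in all six types. I would first fix, type by type, explicit coordinates for $\alpha_i^\vee$, $\theta^\vee$ and $\omega_j$ in which the finite reflections are the standard signed permutations. I would then verify $\sigma_0$ and the base case in that table. Once these agree, the induction is formal, and uniqueness of $w_{\lam,j}$ from Lemma \ref{3.12} ensures the result does not depend on the chosen path.
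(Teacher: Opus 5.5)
Your proposal is correct and follows the paper's proof. The paper also inducts on $\ell(w_{\lam,j})$, treats $1\le i\le l$ by linearity of $\sigma_i$, and handles $i=0$ by writing $\sigma_0=t_{\mathrm{q}^{\sigma_0}}\overline{\sigma}_0$ and matching $\frac{a_j^\vee}{a_0^\vee}\mathrm{q}^{\sigma_0}+\overline{\sigma}_0(\mathrm{u}^\mu)$ against Lemma \ref{3.26}(3), which is exactly your level-$c_j$ equivariance check. One normalization to correct when you build your table: in type $C_l^{(1)}$ the $\frac{\sqrt2}{2}$ weighting only works with $\theta^\vee=\mathrm{q}^{\sigma_0}=(\sqrt2,0,\dots,0)$, as in the paper, not with $\theta^\vee=e_1$ as you wrote, since the latter gives first coordinate $c_j-\frac{\sqrt2}{2}u_1\neq\frac{\sqrt2}{2}(2c_j-u_1)$.
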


\begin{proof}
We prove by induction on the length of $w_{\lam, j}$ type by type. 
It is easy to check that the theorem holds if $\ell(w_{\lam, j})=0$. 
Take an arbitrary core abacus $(\lam, j)$ with $\ell(w_{\lam, j})=k>0$. 
It is clear that there exists $(\mu, j)$ such that $w_{\lam, j}=\sigma_iw_{\mu, j}$.

\smallskip

\noindent(1) {\em Type $C_l^{(1)}$}. We divide the proof into two cases according to the value of $i$.

{\em Case 1.} $i=0$. Note that $\sigma_0=t_{\mathrm{q}^{\sigma_0}}\overline{\sigma}_0$, where $\mathrm{q}^{\sigma_0}=(\sqrt{2}, 0, \dots, 0)$ 
and $\overline{\sigma}_0=\sigma_1 \cdots \sigma_{l-1}\sigma_l\sigma_{l-1}\cdots \sigma_1.$ 
Decomposing $w(\lam, j)$ as semidirect product $t_{\mathrm{q}^{\sigma_0}+\overline{\sigma}_0(\mathrm{q}^{\mu})}\overline{\sigma}_0\overline{w^{\mu}}$,
we have
\begin{align*}
\frac{a_j^{\vee}}{a_0^{\vee}}\mathrm{q}^\lam+\overline{w^\lam}(\omega_{j})
&=\frac{a_j^{\vee}}{a_0^{\vee}}(\mathrm{q}^{\sigma_0}+\overline{\sigma}_0(\mathrm{q}^{\mu}))+\overline{\sigma}_0\overline{w^{\mu}}(\omega_{j})\\ &=\frac{a_j^{\vee}}{a_0^{\vee}}\mathrm{q}^{\sigma_0}+\overline{\sigma}_0(\frac{a_j^{\vee}}{a_0^{\vee}}\mathrm{q}^{\mu}+\overline{w^{\mu}}(\omega_{j})) \\
& =\frac{a_j^{\vee}}{a_0^{\vee}}\mathrm{q}^{\sigma_0}+\overline{\sigma}_0(\mathrm{u}^\mu)\,\,\,\,   \text{(by induction hypothesis)}\\
&=(\frac{\sqrt{2}a_j^{\vee}}{a_0^{\vee}},0,...,0)+(-\frac{\sqrt{2}}{2}u_{1}^\mu,...,\frac{\sqrt{2}}{2}u_{l}^\mu) \,\,\,\,   \text{(by Lemma \ref{3.26})}\\
&=(\frac{\sqrt{2}a_j^{\vee}}{a_0^{\vee}}-\frac{\sqrt{2}}{2}u_{1}^\mu,...,\frac{\sqrt{2}}{2}u_{l}^\mu)\\
&=\frac{\sqrt{2}}{2}(\frac{2a_j^{\vee}}{a_0^{\vee}}-u_{1}^\mu,...,u_{l}^\mu)\\
&=\sigma_0\mathrm{u}^{\mu}\,\,\,\,   \text{(by Lemma \ref{3.26})}\\
&=\mathrm{u}^\lam.
\end{align*}

{\em Case 2.} $i\neq 0$. It follows from $w_{\lam, j}=t_{\sigma_i(\mathrm{q}^{\mu})}\sigma_i\overline{w^{\mu}}$ that

\begin{align*}
\frac{a_j^{\vee}}{a_0^{\vee}}\mathrm{q}^\lam+\overline{w^\lam}(\omega_{j})
&=\frac{a_j^{\vee}}{a_0^{\vee}}\sigma_i(\mathrm{q}^{\mu})+\sigma_i\overline{w^{\mu}}(\omega_{j})\\
&=\sigma_i(\frac{a_j^{\vee}}{a_0^{\vee}}\mathrm{q}^{\mu}+\overline{w^{\mu}}(\omega_{j}))\\
&=\sigma_i(\mathrm{u}^\mu)=\mathrm{u}^\lam.
\end{align*}

\noindent (2) {\em Type $D_{l}^{(1)}.$} The proof is similar to Case 2 of Type $C_l^{(1)}$ if $i\neq 0$. So we only consider the case $i=0$. 
Note that $\overline{\sigma}_0=\sigma_2 \cdots \sigma_{l-1}\sigma_1 \cdots \sigma_{l-2}\sigma_l\sigma_{l-1} \cdots \sigma_1\sigma_{l-1} \cdots \sigma_1$
and $\mathrm{q}^{\sigma_0}=(1, 1, 0, \dots, 0)$ if this type. 
Then by decomposing $w_{\lam, j}$ as semidirect product $t_{\mathrm{q}^{\sigma_0}+\overline{\sigma}_0(\mathrm{q}^{\mu})}\overline{\sigma}_0\overline{w^{\mu}}$,
we have
\begin{align*}
\frac{a_j^{\vee}}{a_0^{\vee}}\mathrm{q}^\lam+\overline{w^\lam}(\omega_{j})
&=\frac{a_j^{\vee}}{a_0^{\vee}}\mathrm{q}^{\sigma_0}+\overline{\sigma}_0(\frac{a_j^{\vee}}{a_0^{\vee}}\mathrm{q}^{\mu}+\overline{w^{\mu}}(\omega_{j})) \\
& =\frac{a_j^{\vee}}{a_0^{\vee}}\mathrm{q}^{\sigma_0}+\overline{\sigma}_0(\mathrm{u}^\mu)\,\,\,\,   \text{(by induction hypothesis)}\\
&=(\frac{a_j^{\vee}}{a_0^{\vee}},\frac{a_j^{\vee}}{a_0^{\vee}},0,...,0)+(-u_{2}^\mu,-u_{1}^\mu,...,u_{l}^\mu) \,\,\,\,   \text{(by Lemma \ref{3.26})}\\
&=(\frac{a_j^{\vee}}{a_0^{\vee}}-u_{2}^\mu,\frac{a_j^{\vee}}{a_0^{\vee}}-u_{1}^\mu,...,u_{l}^\mu)\\
&=\sigma_0\mathrm{u}^{\mu}=\mathrm{u}^\lam\,\,\,\,   \text{(by Lemma \ref{3.26})}.
\end{align*}
	
\noindent (3) {\em Type $B_l^{(1)}.$} The proof is similar to type $D_l^{(1)}$ and the details is omitted.  
We only want to point out here that $\mathrm{q}^{\sigma_0}=(1, 1, 0, \dots, 0)$ 
and $$\overline{\sigma}_0=\sigma_1\sigma_2 \cdots \sigma_{l-1}\sigma_l\sigma_{l-1}\cdots \sigma_2
\sigma_1 \cdots \sigma_{l-1}\sigma_l\sigma_{l-1}\cdots \sigma_1.$$ 
	
\noindent (4) The proof of Types $D_{l+1}^{(2)}$ and $A_{2l}^{(2)}$ is similar to that of type $C_l^{(1)}$
and the proof of Type $A_{2l-1}^{(2)}$ is similar to that of type $B_l^{(1)}$.

The proof if completed.
\end{proof}

\begin{example}\rm{
Recall the core abacus $(\lam, 1)$ in Example \ref{3.4}. Note that $w_{\lam, 1}=\sigma_1\sigma_2\sigma_1\sigma_0\sigma_1$. 
It is easy to check by direct computation  that $w_{\lam, 1}=t_{\mathrm{q}}\overline{w}$, 
where $\mathrm{q}=(-\sqrt{2}, 0)$ and $\overline{w}=\sigma_1$. Then
$$\frac{a_j^{\vee}}{a_0^{\vee}}\mathrm{q}+\overline{w}(\omega_{j})=2\mathrm{q}+\sigma_1(\omega_{1})=2(-\sqrt{2},0)+(0,\sqrt{2})=\sqrt{2}(-2,1),$$ 
which is equal to $\mathrm{u}(\lam, 1)$.
}
\end{example}

\bigskip


\section{Diophantine equations arising from core abaci}

The main task of this section is to generalize the height formula established in \cite{STW}
to arbitrary fundamental weight and then connect it to certain Diophantine equations.

\subsection{Height formula of core abaci}

In order to prove the height formula, we consider the height change of a core abacus being acted by $\sigma_i$, 
which is a key step of the proof by induction.
We first study the action of $\sigma_0$, which is divided into three cases according to Lemma \ref{3.26}.

\begin{lemma}\label{4.1}
Let $(\lam, j)$ be a core abacus of type $X$.
Then $${\rm ht}_0(\sigma_{0}(\lambda,j))=
\begin{cases}
\smallskip
{\rm ht}_0(\lambda,j)+(\frac{a_j^{\vee}}{a_0^{\vee}}-u_{1}-u_2), & \text{\rm if $X=B_l^{(1)}, D_l^{(1)}$ or $A_{2l-1}^{(2)}$};\\
\smallskip
{\rm ht}_0(\lambda,j)+(\frac{a_j^{\vee}}{a_0^{\vee}}-2u_{1}), & \text{\rm if $X=A_{2l}^{(2)}$ or $D_{l+1}^{(2)}$};\\
{\rm ht}_0(\lambda,j)+(\frac{a_j^{\vee}}{a_0^{\vee}}-u_{1}), & \text{\rm if $X= C_l^{(1)}$}.
\end{cases}
$$ 
\end{lemma}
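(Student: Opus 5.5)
The approach is to read off the change of ${\rm ht}_0$ from weight theory rather than by counting beads, and then convert it into Uglov coordinates using Lemma \ref{3.26}. Let $(\lam,j)\in\Lambda_j-\beta$ be a core abacus. By Lemma \ref{3.1}(2), $\sigma_0(\lam,j)$ is again a core abacus, and by Definition \ref{2.7} it is obtained by performing all possible $f_0$-actions. Because the action of $W$ on abaci is compatible with its action on weights (this is the content of \cite{HLQ} underlying Lemma \ref{3.1}), $\sigma_0(\lam,j)$ lies in $\sigma_0(\Lambda_j-\beta)=\Lambda_j-\beta-\langle\Lambda_j-\beta,\alpha_0^\vee\rangle\alpha_0$. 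Therefore
$${\rm ht}_0(\sigma_0(\lam,j))-{\rm ht}_0(\lam,j)=\langle\Lambda_j-\beta,\alpha_0^\vee\rangle=\delta_{0j}-\langle\beta,\alpha_0^\vee\rangle .$$
Equivalently, this number equals the signed count of addable minus removable $0$-nodes of $(\lam,j)$. The task is thus to express $\langle\Lambda_j-\beta,\alpha_0^\vee\rangle$ in terms of $u=u(\lam,j)$.

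The second step computes this pairing through the Uglov vector. By Lemma \ref{3.26}(3), $\sigma_0$ acts on $u$ as an affine reflection. For $X=B_l^{(1)},D_l^{(1)},A_{2l-1}^{(2)}$ it is $(u_1,u_2)\mapsto(c-u_2,c-u_1)$ with $c=a_j^\vee/a_0^\vee$, which is the reflection in the hyperplane $u_1+u_2=c$. The displacement along the normal $(1,1)$ therefore has coefficient $c-u_1-u_2$. Similarly, for $A_{2l}^{(2)},D_{l+1}^{(2)}$ the reflection is $u_1\mapsto c-u_1$, and for $C_l^{(1)}$ it is $u_1\mapsto 2c-u_1$. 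In each case I would identify the finite part of $\Lambda_j-\beta$, projected to $\mathfrak h_0^*$ and suitably scaled, with the weighted Uglov vector $\mathrm u$ up to the constant shift. Lemma \ref{3.27} makes this precise: $\mathrm u^\lam=c\,\mathrm q^\lam+\overline w(\omega_j)$, and $w_{\lam,j}(\Lambda_j)$ has finite part $c\,\mathrm q+\overline w(\omega_j)$ (up to the $\Lambda_0$-level normalization). With $\alpha_0^\vee=c'K-\theta^\vee$, the pairing then becomes $\langle\Lambda_j-\beta,\alpha_0^\vee\rangle=(\text{level term})-(\mathrm u,\theta^\vee)$, with $\theta^\vee$ written in the coordinates of Lemma \ref{3.27}. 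The coordinates of $\theta^\vee$ in these bases are $e_1+e_2$ for $B,D,A_{2l-1}^{(2)}$, $2e_1$ for $A_{2l}^{(2)},D_{l+1}^{(2)}$ (after the $\sqrt2$ weighting), and $e_1$ for $C_l^{(1)}$ (after the $\tfrac{\sqrt2}{2}$ weighting). These give exactly $c-u_1-u_2$, $c-2u_1$ and $c-u_1$, respectively.

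As an independent check, and possibly as the written proof, I would argue combinatorially. Each bead move realizing an $f_0$-action, as listed in Table 1 and Table 2, corresponds under ${\rm\bf U}$ to moving beads between the first runner(s) and the mirrored runners. The net number of $f_0$-actions, counted with the multiplicity $2f_0$ where Table 1 requires it, equals the change in the relevant runner charges. By Lemma \ref{3.26}(3), that change is $(c-u_2)-u_1$ per runner for $B,D,A_{2l-1}^{(2)}$, and it is doubled or halved in the other cases. Half abaci are then reduced to whole ones via Lemma \ref{3.6}.

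The main obstacle is the normalization bookkeeping: matching the constant $c=a_j^\vee/a_0^\vee$ and the level term, and handling the factors $2$, $\sqrt2$ and $\tfrac{\sqrt2}{2}$ from the $2f_0$ moves and the weighted Uglov vectors. I would fix these by testing the formula on the empty abacus of $\Lambda_j$, where $u=\omega_j$ in Uglov coordinates, checking it for $j=0$ and for $j\neq0$.
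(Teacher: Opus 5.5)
Your argument is correct, but it takes a different route from the paper's.

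\emph{The paper's route.} The paper stays on the abacus. From Lemma~\ref{3.26}(3) it reads off the change in the number of beads of ${\rm\bf U}(\lam,j)$ under $\sigma_0$, namely $2(c-u_1-u_2)$, $c-2u_1$, resp.\ $2c-2u_1$ with $c=a_j^\vee/a_0^\vee$. It then converts this into a number of $f_0$- or $e_0$-moves by checking how each move in Tables~1 and~2 changes runner charges. That forces case distinctions: two beads per move, the $2f_0$ convention, and the special single-bead move at $(1,0)$ for half abaci with $j=0$, which produces the odd count $1-2u_1$.

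\emph{Your route.} You identify the change with $\langle\Lambda_j-\beta,\alpha_0^\vee\rangle$ and evaluate it via Lemma~\ref{3.27}. The normalization you worry about is exact, so no test on the empty abacus is needed.
The paper's $D$ is Kac's normalized form ($d_i=a_i^\vee/a_i$), $a_0^\vee=1$, and $\Lambda_j$ has level $a_j^\vee=c$. Hence the finite part of $w_{\lam,j}(\Lambda_j)$ is precisely $c\,\mathrm q+\overline w(\omega_j)=\mathrm u$.
Also, $\sum_i a_i^\vee\alpha_i^\vee-\alpha_0^\vee$ corresponds under the form to $\delta-a_0\alpha_0=\theta=\sum_{i\ge1}a_i\alpha_i$. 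So $\langle\Lambda_j-\beta,\alpha_0^\vee\rangle=c-(\mathrm u,\theta)$. In orthonormal coordinates $\epsilon_1,\dots,\epsilon_l$, $(\mathrm u,\theta)$ equals:
$u_1+u_2$ for $B_l^{(1)},D_l^{(1)},A_{2l-1}^{(2)}$, where $\theta=\epsilon_1+\epsilon_2$;
$2u_1$ for $A_{2l}^{(2)}$, where $\theta=2\epsilon_1$;
$2u_1$ for $D_{l+1}^{(2)}$, where $\theta=\sqrt2\epsilon_1$ and $\mathrm u=\sqrt2u$;
$u_1$ for $C_l^{(1)}$, where $\theta=\sqrt2\epsilon_1$ and $\mathrm u=\frac{\sqrt2}{2}u$.
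Strictly, the vector you pair with is $\theta$, not the coroot of the highest root. They differ by a factor $2$ for $A_{2l}^{(2)}$, but the coordinates you list are the right ones.

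\emph{Comparison.} Your route buys uniformity: there is no whole/half or $2f_0$ case analysis. The same computation with $\alpha_i^\vee$ also gives Lemmas~\ref{4.2} and~\ref{4.3} at once.
Its price is weight-equivariance of the abacus model, i.e.\ that $w(\varnothing,j)$ has weight $w(\Lambda_j)$, which the paper's proof never needs. If you do not want to cite \cite{HLQ} for this, it follows by induction along a reduced word of $w\in W^j$ from Lemma~\ref{3.1}. If $\sigma_i$ adds a signed number $m$ of $i$-nodes to a core of weight $\mu$, comparing the two vanishing defects gives $m\bigl(m(\alpha_i,\alpha_i)-2(\mu,\alpha_i)\bigr)=0$. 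So $m=\langle\mu,\alpha_i^\vee\rangle$ as soon as the abacus changes, and it changes at every step by Lemma~\ref{3.12}.

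Finally, your combinatorial ``check'' is essentially the paper's proof, with one exception. The reduction of half abaci via Lemma~\ref{3.6} does not work as stated, because doubling the Uglov vector does not transport $f_0$-counts. Those cases must be handled directly with the special moves of Table~2, as the paper does.
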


\begin{proof}
It follows from Lemma \ref{3.26} that if
$X=B_l^{(1)}, D_l^{(1)}$ or $A_{2l-1}^{(2)}$, then we have $$u(\sigma_0(\lam, j))=
(\frac{a_j^{\vee}}{a_0^{\vee}}-u_2, \frac{a_j^{\vee}}{a_0^{\vee}}-u_1, \dots, u_l)$$ and consequently the 
difference between the number of beads in ${\rm\bf U}(\sigma_0(\lam, j))$ and that in ${\rm\bf U}(\lam, j)$
is $2(\frac{a_j^{\vee}}{a_0^{\vee}}-u_1-u_2)$. This implies that we need to do $\frac{a_j^{\vee}}{a_0^{\vee}}-u_1-u_2$
times of $f_0$ (or $e_0$) actions from $(\lam, j)$ to $\sigma_0(\lam, j)$, which is exactly as desired.

If $X=A_{2l}^{(2)}$ or $D_{l+1}^{(2)}$, then by Lemma \ref{3.26}, $u(\sigma_0(\lam, j))=
(\frac{a_j^{\vee}}{a_0^{\vee}}-u_1, \dots, u_l)$. Hence the difference between the number of beads 
 in ${\rm\bf U}(\sigma_0(\lam, j))$ and that in ${\rm\bf U}(\lam, j)$ is $\frac{a_j^{\vee}}{a_0^{\vee}}-2u_1$.
We divide the proof into three cases according to $j$. 

\begin{enumerate}
 \item [\rm Case 1.] Let $j=0$. Note that in this case $(\lam, 0)$ is a half abacus and 
 $\frac{a_j^{\vee}}{a_0^{\vee}}=\frac{a_0^{\vee}}{a_0^{\vee}}=1$. 
 Without loss of generality, we assume that $u_1\leq 0$. This implies that the leftmost position in $(\lam, 0)$ is empty.
 Then we need to do $1-2u_1$ times $f_0$,
 one of which is putting a bead in the leftmost position in $(\lam, 0)$.
 \item [\rm Case 2.] Let $1\leq j\leq l$. 
 Clearly, we need to do $\frac{a_j^{\vee}}{a_0^{\vee}}-2u_1$ times $f_0$ (or $e_0$) action from $(\lam, j)$ to $\sigma_0(\lam, j)$.
 \item [\rm Case 3.] Let $j=l$. In this case $(\lam, l)$ is a half abacus and 
 $\frac{a_l^{\vee}}{a_0^{\vee}}=1$. 
 Furthermore, the difference of beads $1-2u_1$ is even. Note that each $2f_0$ (or $2e_0$) action on $(\lam, l)$
 makes the number of beads in ${\rm\bf U}(\lam, l)$ increased (or decreased) by two and the proof of this case is complete.
\end{enumerate}

If $X = C_l^{(1)}$, then $u(\sigma_0(\lam, j))=
(\frac{2a_j^{\vee}}{a_0^{\vee}}-u_1, \dots, u_l)$, and the difference between the number of beads 
 in ${\rm\bf U}(\sigma_0(\lam, j))$ and that in ${\rm\bf U}(\lam, j)$ is $\frac{2a_j^{\vee}}{a_0^{\vee}}-2u_1$,
 which corresponds to $\frac{a_j^{\vee}}{a_0^{\vee}}-u_1$ times of $f_0$ (or $e_0$) action on $(\lam, j)$.
\end{proof}

Next we study the effect of $\sigma_i$ on the height of a core abacus for $1\leq i<l$.

\begin{lemma}\label{4.2}
Given a core abacus $(\lam, j)$, then for $1\leq i<l$, 
$${\rm ht}_i(\sigma_{i}(\lambda,j))={\rm ht}_i(\lambda,j)+(u_i-u_{i+1}).$$ 
\end{lemma}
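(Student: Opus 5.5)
We follow the pattern of the proof of Lemma \ref{4.1}: first read off from Lemma \ref{3.26}(1) how $\sigma_i$ changes the Uglov vector, then count the beads that move in ${\rm\bf U}(\lam, j)$ and match them with $f_i$ (or $e_i$) actions on the abacus $(\lam, j)$. For $1\leq i<l$, Lemma \ref{3.26}(1) gives that $u(\sigma_i(\lam, j))$ is $u(\lam, j)$ with $u_i$ and $u_{i+1}$ interchanged. Since $u$ and ${\rm\bf s}({\rm\bf U}(\lam, j))$ differ only by a constant vector (Definition \ref{3.3}), the charges of the runners labelled $i$ and $i+1$ in ${\rm\bf U}(\lam, j)$ are exchanged and all other runners are unchanged.

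The key step is to identify the $f_i$ actions. By Definition 2.7 and the definition of $f_i$ for $0<i<l$, an $f_i$ action moves a bead from a position of $l$ index $(i,2k)$ to $(i+1,2k)$, or from $(-i-1,2k+1)$ to $(-i,2k+1)$. Under the Uglov map (Definition \ref{2.15}), positions with index $(\pm i, \ast)$ and $(\pm(i+1), \ast)$ land on the runners labelled $i$ and $i+1$, in the same row. In the negative case, step (3) reverses colours and rotates the $-i$ column above the $i$ column, so moving a bead from $-i-1$ to $-i$ becomes moving a bead from runner $i$ to runner $i+1$ in the upper half. Hence every $f_i$ action on $(\lam, j)$ is exactly the transfer of one bead from runner $i$ to runner $i+1$ in a common row of ${\rm\bf U}(\lam, j)$, and every $e_i$ action is the reverse transfer.

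Since $(\lam, j)$ is a core, Lemma \ref{3.1}(3) says no first kind elementary operation is possible on ${\rm\bf U}(\lam, j)$. So the two runners are "nested": in every row where exactly one of runners $i$ and $i+1$ has a bead, that bead lies on the same side in all rows. Concretely, if $u_i\geq u_{i+1}$, the rows where runner $i$ is occupied and runner $i+1$ is empty are precisely $u_i-u_{i+1}$ consecutive rows, and there are no rows of the opposite type. Applying $\sigma_i$ performs all possible $f_i$ actions, which moves exactly these $u_i-u_{i+1}$ beads. Thus ${\rm ht}_i$ increases by $u_i-u_{i+1}$. If $u_i<u_{i+1}$, the same argument with $e_i$ actions gives a decrease of $u_{i+1}-u_i$, which is the same formula.

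The main obstacle is the index bookkeeping in the middle step. One must check uniformly across all types, including half abaci where $B_{\geq k}$ has no negative part and step (1) of Definition \ref{2.15} is vacuous, that the shift $-\frac32{\bf 1}$ versus $-{\bf 1}$ plays no role because only the difference $u_i-u_{i+1}$ enters. One must also check that the colour reversals in steps (1) and (3) turn the second kind of $f_i$ move into a same-direction transfer rather than an opposite one. We would verify this first on Example \ref{3.4}, and then argue generally from Definition \ref{2.13}.
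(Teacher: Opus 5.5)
Your proposal is correct and follows the paper's route: Lemma~\ref{3.26}(1) exchanges $u_i$ and $u_{i+1}$, and from this one reads off that $\sigma_i$ performs exactly $u_i-u_{i+1}$ $f_i$-actions (or $u_{i+1}-u_i$ $e_i$-actions when $u_i<u_{i+1}$). Your added steps supply the justification that the paper's one-line proof leaves implicit: each $f_i$-action is a same-row transfer of a bead from runner $i$ to runner $i+1$ in ${\rm\bf U}(\lam, j)$, which holds in all sub-cases including the colour-reversed ones, and the core condition makes every runner flush.
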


\begin{proof}
Without loss of generality, we assume that $u_i>u_{i+1}$.
We have from Lemma \ref{3.26} that $u(\sigma_i(\lam, j))=
(u_1, \dots u_{i+1}, u_i, \dots, u_l)$ if $1\leq i<l$. This implies that we need to do $u_i-u_{i+1}$ times of $f_i$
from $(\lam, j)$ to $\sigma_i(\lam, j)$ and then the proof is complete.
\end{proof}

The last one we need to consider is the action of $\sigma_l$.

\begin{lemma}\label{4.3}
Given a core abacus $(\lam, j)$, we have
$${\rm ht}_l(\sigma_{l}(\lambda,j))=
\begin{cases}
{\rm ht}_l(\lambda,j)+u_l, & \text{\rm if $X = A_{2l}^{(2)}, A_{2l-1}^{(2)}$ or $C_l^{(1)}$};\\
{\rm ht}_l(\lambda,j)+2u_l, & \text{\rm if $X = B_l^{(1)}$ or $D_{l+1}^{(2)}$};\\
{\rm ht}_l(\lambda,j)+(u_{l-1}+u_{l}), & \text{\rm if $X = D_{l}^{(1)}$}.
\end{cases}
$$ 
\end{lemma}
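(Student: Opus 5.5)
We follow the pattern of the proofs of Lemmas \ref{4.1} and \ref{4.2}: compare the Uglov vectors $u(\lam,j)$ and $u(\sigma_l(\lam,j))$ given by Lemma \ref{3.26}(2), translate the change of runner charges in ${\rm\bf U}$ into a count of beads added or removed, and then convert this into the number of $f_l$ (or $e_l$) actions according to Tables 1 and 2. Without loss of generality we assume the relevant quantity ($u_l$, or $u_{l-1}+u_l$ for $D_l^{(1)}$) is nonnegative, so that $\sigma_l$ acts by $f_l$'s only; the opposite sign is symmetric, with $e_l$ replacing $f_l$.

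\emph{Types $A_{2l}^{(2)}$, $A_{2l-1}^{(2)}$, $C_l^{(1)}$.} By Lemma \ref{3.26}(2), $u_l\mapsto -u_l$ and all other components are fixed. In ${\rm\bf U}(\lam,j)$ the $l$-th runner is glued in step (3) of Definition \ref{2.15} from the column labelled $l$ and the colour-reversed column labelled $-l$. A single $f_l$-move $(l,2k)\to(-l,2k+1)$ from Table 1 removes one bead from the upper half and, because of the colour reversal, also one from the lower half. It therefore lowers the charge of runner $l$ by one. Hence exactly $u_l$ such moves are needed, and for these types each move counts as one $f_l$.

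\emph{Types $B_l^{(1)}$, $D_{l+1}^{(2)}$.} Here the same moves are recorded as $2f_l$ by Table 1, so the count doubles and we get $2u_l$. We must also treat the half abaci with $k=l+1$, that is, charge $j=l$. In that case the special action of Table 2 (filling $\bigcirc_{(-l,1)}$) counts as one $f_l$. We will mirror Case 1 and Case 3 of Lemma \ref{4.1}. Assuming $u_l\ge 0$, we check that the leftmost position becomes occupied, and that the shift of $\tfrac32{\bf 1}$ in Definition \ref{3.3} accounts for the parity mismatch. Together these give a total of $2u_l$ once again.

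\emph{Type $D_l^{(1)}$.} Lemma \ref{3.26}(2) gives $(u_{l-1},u_l)\mapsto(-u_l,-u_{l-1})$. The total bead loss across runners $l-1$ and $l$ is $2(u_{l-1}+u_l)$. Each $f_l$-move in Table 1, $(l,2k)\to(1-l,2k+1)$ or $(l-1,2k)\to(-l,2k+1)$, transfers one bead from runner $l$ to runner $l-1$, or the reverse, with a colour flip. Each such move changes the combined charge by $2$, which gives $u_{l-1}+u_l$ moves. For the half abacus of $\Lambda_{l-1}$ (with $k=l$), the double-filling special action of Table 2 is handled as in the $B$ case.

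The main obstacle is the bookkeeping on the half abaci. Specifically, we must confirm that the special actions of Table 2 together with the $-\tfrac32{\bf 1}$ normalisation yield exactly the stated coefficient, with no off-by-one error. We will verify this first on the abacus of weight $\Lambda_j$ itself and then propagate it by the sign argument.
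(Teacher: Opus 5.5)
Your route is the paper's: read off $u(\sigma_l(\lam,j))$ from Lemma \ref{3.26}(2), convert the change of runner charges in ${\rm\bf U}(\lam,j)$ into a number of moves, and weight each move as in Tables 1 and 2.

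However, the count in your first case is wrong as written. You correctly observe that a Table 1 move $(l,2k)\to(-l,2k+1)$ removes one bead from the lower half of runner $l$. Through the colour reversal in step (3) of Definition \ref{2.15}, it also removes one bead from the upper half. That lowers the charge of runner $l$ by two, not by one. The map $u_l\mapsto -u_l$ is a charge drop of $2u_l$, so your stated value of one per move would require $2u_l$ moves. That contradicts the coefficient $u_l$ you then assert. With the correct value of two per move you get exactly $u_l$ moves, which is the paper's statement that the bead difference $-2u_l$ comes from $u_l$ $f_l$-actions. This factor of two is precisely what separates the first two cases of the lemma, so it has to be stated correctly. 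In the $D_l^{(1)}$ case you already use the correct value (the combined charge drops by $2$ per move).

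For the half abaci, which the paper's proof does not treat separately, you only promise a check. The check is simpler than the parity and ``leftmost position'' argument you outline.
\begin{itemize}
\item In types $B_l^{(1)}$ and $D_{l+1}^{(2)}$, the special action of Table 2 fills the single position $(-l,1)$. That position lies in the colour-reversed column $-l$, so the action lowers the charge of runner $l$ by exactly one and counts as one $f_l$. Each Table 1 move lowers the charge by two and counts as $2f_l$. Hence in every case, whole or half, the number of $f_l$-actions equals the total charge drop $2u_l$, and no shift or parity bookkeeping is needed.
\item In type $D_l^{(1)}$, the special action fills one position in each of the reversed columns $-l$ and $1-l$. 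It therefore lowers the combined charge of runners $l-1$ and $l$ by two and counts as one $f_l$, exactly like a Table 1 move, so nothing extra needs handling there.
\end{itemize}
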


\begin{proof}
If $X = D_{l}^{(1)}$, we have $u(\sigma_l(\lam, j))=
(u_1, \dots, -u_{l}, -u_{l-1})$ by Lemma \ref{3.26}. As a result, the difference between the number of beads in 
${\rm\bf U}(\sigma_l(\lam, j))$ and that in ${\rm\bf U}(\lam, j)$ is $-2(u_{l-1}+u_l)$, which derives from $u_{l-1}+u_l$
times of $f_l$ (or $e_l$) action on $(\lam, j)$.

We now assume that $X \neq D_{l}^{(1)}$. Then $u(\sigma_l(\lam, j))=
(u_1, \dots,  u_{l-1}, -u_{l})$ by Lemma \ref{3.26}. We divide the proof into two cases according to
whether $l+1$ is an $l$ index of $(\lam, j)$.
\begin{enumerate}
\item [\rm Case 1.] $X= A_{2l}^{(2)}, A_{2l-1}^{(2)}$ or $C_l^{(1)}$. Clearly, the difference of number of beads is $-2u_l$,
which derives from $u_l$ times of $f_l$ (or $e_l$) action on $(\lam, j)$.
\item [\rm Case 2.] $X =  B_l^{(1)}$ or $D_{l+1}^{(2)}$. Since $l+1$ is an $l$ index, 
it is denoted by $2f_l$ that moving a bead at position $(l, 2k)$ in $(\lam, j)$ to an empty position $(-l, 2k+1)$.
Hence the difference $-2u_l$ of number of beads derives from $2u_{l}$
times of $f_l$ (or $e_l$) action on $(\lam, j)$.
\end{enumerate}
\end{proof}

By using Lemmas \ref{4.1}-\ref{4.3}, we can give a height formula of a core abaci calculated by weighted Uglov vector. 

\begin{lemma}\label{4.4}
Given a core abacus $(\lam, j)$ with the weighted Uglov vector $\mathrm{u}$, then
$${\rm ht}_i(\lambda, j)=(\frac{a_0^{\vee}}{a_j^{\vee}}\frac{a_i}{2}\mathrm{u}-\omega_{i}^\vee, \mathrm{u})-
(\frac{a_0^{\vee}}{a_j^{\vee}}\frac{a_i}{2}\omega_{j}-\omega_{i}^\vee,\omega_{j})$$
and consequently
$${\rm ht}(\lambda, j)=\frac{a_0^{\vee}}{a_j^{\vee}}\frac{h}{2}(|\mathrm{u}|^2-|\omega_{j}|^2)-(\mathrm{u}-\omega_{j}, \rho^{\vee}),$$
where $\rho^{\vee}=\sum_{i=0}^{l}\omega_{i}^{\vee}$.
\end{lemma}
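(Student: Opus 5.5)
We argue by induction on $\ell(w_{\lam, j})$, using Lemma \ref{3.12} to write every core abacus as $w_{\lam,j}(\varnothing, j)$ with $w_{\lam,j}\in W^j$. For each fixed $i$, set
$$F_i(\mathrm{u}):=\Big(\frac{a_0^{\vee}}{a_j^{\vee}}\frac{a_i}{2}\mathrm{u}-\omega_{i}^\vee, \mathrm{u}\Big)-\Big(\frac{a_0^{\vee}}{a_j^{\vee}}\frac{a_i}{2}\omega_{j}-\omega_{i}^\vee,\omega_{j}\Big),$$
a quadratic function of $\mathrm{u}$. The claim is ${\rm ht}_i(\lam,j)=F_i(\mathrm{u}^\lam)$.

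\emph{Base case.} If $\ell(w_{\lam,j})=0$, the abacus is that of $\Lambda_j$. By Lemma \ref{3.27} with $\mathrm{q}=0$ and $\overline{w}=1$, we get $\mathrm{u}=\omega_j$. Hence $F_i(\omega_j)=0={\rm ht}_i(\beta)$, since $\beta=0$.

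\emph{Inductive step.} Write $w_{\lam,j}=\sigma_k w_{\mu,j}$ with $\ell(w_{\mu,j})=\ell(w_{\lam,j})-1$, so $(\lam,j)=\sigma_k(\mu,j)$. For $i\neq k$ the reflection $\sigma_k$ does only $f_k$-actions, so ${\rm ht}_i$ is unchanged. We must therefore show $F_i(\sigma_k\mathrm{u}^\mu)=F_i(\mathrm{u}^\mu)$ for $i\neq k$, and that $F_k(\sigma_k\mathrm{u}^\mu)-F_k(\mathrm{u}^\mu)$ equals the increment of ${\rm ht}_k$ given by Lemmas \ref{4.1}, \ref{4.2} and \ref{4.3}.

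The cleanest route is to identify $\sigma_k$ acting on weighted Uglov vectors (Lemma \ref{3.26}) with an affine reflection of $V$. For $k\ge1$ it is the linear reflection $s_{\alpha_k}$. For $k=0$ it is the reflection in the hyperplane $(v,\theta)=\frac{a_j^\vee}{a_0^\vee}$, suitably normalized; this rescaling is exactly the factor $\frac{a_j^{\vee}}{a_0^{\vee}}$ appearing in Lemma \ref{3.27}. With this identification:
\begin{itemize}
\item[(a)] The quadratic part $\frac{a_0^\vee}{a_j^\vee}\frac{a_i}{2}|\mathrm{u}|^2$ is invariant under linear reflections.
\item[(b)] The linear term $(\omega_i^\vee,\mathrm{u})$ is invariant under $s_{\alpha_k}$ whenever $k\neq i$, because $(\omega_i^\vee,\alpha_k)=0$.
\item[(c)] For $k=i\ge1$, $F_i(s_{\alpha_i}\mathrm{u})-F_i(\mathrm{u})=(\omega_i^\vee,\langle\mathrm{u},\alpha_i^\vee\rangle\alpha_i)$. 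This is a multiple of $\langle \mathrm{u},\alpha_i^\vee\rangle$, which should match $u_i-u_{i+1}$, $u_l$, $2u_l$ or $u_{l-1}+u_l$ once the normalizations of $(\cdot,\cdot)$ and $\omega_i^\vee$ are fixed.
\end{itemize}

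For $k=0$, the reflection is affine, $\mathrm{u}\mapsto \mathrm{u}-\big((\mathrm{u},\theta)-c\big)\theta^\vee$ with $c=\frac{a_j^\vee}{a_0^\vee}$. Here I expand $|\mathrm{u}'|^2-|\mathrm{u}|^2$ and $(\omega_i^\vee,\mathrm{u}'-\mathrm{u})$. The increment in $F_i$ becomes $\big(c-(\mathrm{u},\theta)\big)\big(\frac{a_0^\vee}{a_j^\vee}\frac{a_i}{2}(\theta^\vee,\theta)\cdot(\text{stuff})-(\omega_i^\vee,\theta^\vee)\big)$. Using $(\omega_i^\vee,\theta^\vee)\propto a_i$ (coefficients of the highest root), the contributions for $i\ne0$ should cancel. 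For $i=0$ the increment should reduce to $c-(\mathrm{u},\theta)$, which is $\frac{a_j^\vee}{a_0^\vee}-u_1-u_2$, $\frac{a_j^\vee}{a_0^\vee}-2u_1$ or $\frac{a_j^\vee}{a_0^\vee}-u_1$ in the respective types, as in Lemma \ref{4.1}. Here $\omega_0^\vee=0$ and $a_0=1$ or $2$, and the $a_0=2$ case accounts for the $2u_1$ coefficient in type $A_{2l}^{(2)}$.

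\emph{Main obstacle.} I expect the hardest part to be this $k=0$ bookkeeping together with the normalizations. The weights $\frac{\sqrt2}{2}$ and $\sqrt2$, the matrices $D$, and the half-abacus cases all have to conspire so that the coefficients come out exactly. I would verify this type by type: $C_l^{(1)}$ first, then $D_{l+1}^{(2)}$ and $A_{2l}^{(2)}$, then $B_l^{(1)}$, $D_l^{(1)}$ and $A_{2l-1}^{(2)}$. In each type I would check explicitly the identity $F_i(\sigma_0\mathrm{u})-F_i(\mathrm{u})=\delta_{i0}\,\Delta_0$, where $\Delta_0$ is the ${\rm ht}_0$-increment from Lemma \ref{4.1}.

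\emph{Summing.} Finally, summing over $i$ gives the total height formula. Since $\sum_i a_i=h$ and $\sum_i\omega_i^\vee=\rho^\vee$, the linear terms combine to $-(\mathrm{u},\rho^\vee)+(\omega_j,\rho^\vee)=-(\mathrm{u}-\omega_j,\rho^\vee)$, and the quadratic terms give $\frac{a_0^\vee}{a_j^\vee}\frac h2(|\mathrm{u}|^2-|\omega_j|^2)$.
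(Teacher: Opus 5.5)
Your proposal is correct and is essentially the paper's argument. It inducts on $\ell(w_{\lam,j})$ with $w_{\lam,j}=\sigma_k w_{\mu,j}$ and checks, via the action in Lemma \ref{3.26}, that the increment of the right-hand side matches the ${\rm ht}$-increments of Lemmas \ref{4.1}--\ref{4.3}; the paper's type $C_l^{(1)}$ computation $\Delta_i=(\frac{a_j^\vee}{a_0^\vee}-u_1)(a_i-\sqrt2\,\omega_{i1}^\vee)$ is exactly your affine-reflection calculation written in coordinates. The reflection viewpoint makes the check uniform across types, whereas the paper writes out only $C_l^{(1)}$: for $k=0$ the terms with $i\neq 0$ cancel because $(\omega_i^\vee,\theta)=a_i$, and the $i=0$ coefficient $2a_0/(\theta,\theta)=1/(a_0d_0)$ equals $1$ in every type.
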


\begin{proof}
We shall only prove the ${\rm ht}_i$ formula by induction on the length of the affine Weyl group element $w_{\lam, j}$.
It is easy to check the lemma holds when $\ell(w_{\lam, j})=0$. Now assume that $\ell(w_{\lam, j})=k>0$, where $(\lam, j)$ is a core abacus.
Then there exists some core abacus $(\mu, j)$ and $0\leq r\leq l$ such that $w_{\lam, j}=\sigma_rw_{\mu, j}$. 
Write by $\mathrm{u}$ and $\mathrm{u}^{\sigma_r}$ the weighted Uglov vector of $(\mu, j)$ and $(\lam, j)$, respectively.
Denote by 
$$\Delta_i :=(\frac{a_0^{\vee}}{a_j^{\vee}}\frac{a_i}{2}\mathrm{u}^{\sigma_r}-\omega_{i}^{\vee}, \mathrm{u}^{\sigma_r})-
(\frac{a_0^{\vee}}{a_j^{\vee}}\frac{a_i}{2}\mathrm{u}-\omega_{i}^{\vee}, \mathrm{u}).$$ Then it is easy to check that 
$$\Delta_i =\frac{a_0^{\vee}}{a_j^{\vee}}\frac{a_i}{2}[(\mathrm{u}^{\sigma_r}, \mathrm{u}^{\sigma_r})-(\mathrm{u}, \mathrm{u})]+(\mathrm{u}-\mathrm{u}^{\sigma_r},\omega_{i}^{\vee}).$$
Clearly we only need to prove that $\Delta_i={\rm ht}_i(\lam,j)-{\rm ht}_i(\mu,j)$ for each $0\leq i\leq l$.

\smallskip

We will only give the proof of type $C_l^{(1)}$ because the proof of the other types are similar. We first consider $r=0$. By Lemma \ref{3.26}, $$\mathrm{u}^{\sigma_r}=\frac{\sqrt{2}}{2}(\frac{2a_j^{\vee}}{a_0^{\vee}}-u_1, \ldots, u_l),$$ 
and thus $$\Delta_i = (\frac{a_j^{\vee}}{a_0^{\vee}}-u_1)(a_i-\sqrt{2}\omega_{i1}^{\vee}),$$ where $\omega_{i1}^{\vee}$ is the first component of $\omega_{i}^{\vee}$. 
Recall that 
$$a_i=\begin{cases}
1, & \mbox{if } i=0, l \\
2, & \mbox{if } 1\leq i\leq l
\end{cases}$$
and 
$$\omega_{i1}^{\vee}=\begin{cases}
0, & \mbox{if } i=0 \\
\sqrt{2}, & \mbox{if } 1\leq i\leq l-1 \\
\frac{\sqrt{2}}{2}, & \mbox{if } i=l,
\end{cases}$$
and thus
$$\Delta_i=\begin{cases}
\frac{a_j^{\vee}}{a_0^{\vee}}-u_1, & \mbox{if } i=0 \\
0, & \mbox{otherwise},
\end{cases}$$
which is just the difference of ${\rm ht}_i$ between $(\lam, j)$ and $(\mu, j)$ by Lemma \ref{4.1}.

If $1\leq r\leq l-1$, then by Lemma \ref{3.26} $$\mathrm{u}^{\sigma_r}=\frac{\sqrt{2}}{2}(u_1,...,u_{r+1},u_{r},...,u_l),$$ and therefore
$$\Delta_i = \frac{\sqrt{2}}{2}(\omega_{ir}^{\vee}-\omega_{i,r+1}^{\vee})(u_r-u_{r+1}).$$ According to the definition of $\omega_{i}^{\vee}$, we have
$$\omega_{ir}^{\vee}-\omega_{i,r+1}^{\vee}=
\begin{cases}
  0-0, & \mbox{if } i<r \\
  \sqrt{2}-0, & \mbox{if } i=r \\
  \sqrt{2}-\sqrt{2}, & \mbox{if } i>r.
\end{cases}$$
This implies that
$$\Delta_i = 
\begin{cases}
u_r-u_{r+1}, & \mbox{if } i=r \\
0, & \mbox{otherwise},
\end{cases}$$
which is equal to ${\rm ht}_i(\lam,j)-{\rm ht}_i(\mu,j)$ for each $0\leq i\leq l$ by Lemma \ref{4.2}.

If $r=l$, then $u^{\sigma_l}=\frac{\sqrt{2}}{2}(u_1,...,-u_l)$ and consequently 
$$\Delta_i = \sqrt{2}u_l\omega_{il}^{\vee}=
\begin{cases}
u_l, & \mbox{if } i=l \\
0, & \mbox{otherwise}.
\end{cases}$$
By Lemma \ref{4.3}, we have $\Delta_i={\rm ht}_i(\lam,j)-{\rm ht}_i(\mu,j)$ for each $0\leq i\leq l$.
\end{proof}

The main significance of Lemma \ref{4.4} in this paper is the Diophantine equations arising from it. 
But before we study the equations in the next subsection, we want to point out that the height of a core $(\lam, j)$
is just the $\Lambda_j$-atomic length of $w_{\lam, j}$. This implies that we have established a combinatorics model for atomic length to a certain extent.

\begin{theorem}\label{4.5}
Let $(\lam, j)$ be a core abacus. Then ${\rm ht}(\lam,j)=\mathscr{L}_{\Lambda_j}(w_{\lam, j})$.
\end{theorem}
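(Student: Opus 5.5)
The plan is to compare both sides directly, without passing through Uglov vectors. By Definition \ref{2.1}, $\mathscr{L}_{\Lambda_j}(w)=\langle\Lambda_j-w(\Lambda_j),\rho^\vee\rangle$. By Definition \ref{3.2}, Lemma \ref{3.1} and Lemma \ref{3.12}, the core abacus $(\lam,j)=w_{\lam,j}(\varnothing,j)$ lies in the orbit of the abacus of weight $\Lambda_j$. The first step is to check that the $W$-action on abaci (Definition \ref{2.7}) is compatible with the $W$-action on weights, in the following sense: if $(\mu,j)\in\Lambda_j-\beta$ is a core, then $\sigma_i(\mu,j)\in\Lambda_j-\beta'$ with $\Lambda_j-\beta'=\sigma_i(\Lambda_j-\beta)$.

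To see this, induct on $\ell(w_{\lam,j})$, using the factorisation $w_{\lam,j}=\sigma_r w_{\mu,j}$ already used in Lemma \ref{4.4}. The weight $\Lambda_j-\beta$ is determined by counting $f_i$-actions (Definition \ref{2.6}). Applying $\sigma_r$ performs all possible $f_r$-actions, and Lemmas \ref{4.1}--\ref{4.3} give the number of them as ${\rm ht}_r(\lam,j)-{\rm ht}_r(\mu,j)$. So it suffices to show that this number equals $\langle \Lambda_j-\beta_\mu,\alpha_r^\vee\rangle$, because then $\beta_\lam=\beta_\mu+\langle\Lambda_j-\beta_\mu,\alpha_r^\vee\rangle\alpha_r$, which is exactly the reflection formula. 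The equality follows from the standard fact that for a weight $\Lambda-\beta$ in the $W$-orbit of $\Lambda$ (so the $\alpha_r$-string is full), the number of boxes of residue $r$ added by $\sigma_r$ is the pairing with $\alpha_r^\vee$. Alternatively, I would verify it directly from the Uglov-vector formulas: pair $\Lambda_j-\beta_\mu$ with $\alpha_r^\vee$, using that $\bar{w}$ and $\mathrm{q}$ from Lemma \ref{3.27} identify $w_{\mu,j}(\Lambda_j)$ modulo $\delta$ with $\frac{a_j^\vee}{a_0^\vee}\Lambda_0+\mathrm{u}^\mu$ (up to the weighting). Once this is established, $w_{\lam,j}(\Lambda_j)=\Lambda_j-\beta+k\delta$ for some $k$, and the $\delta$ term can be dropped since $\langle\delta,\rho^\vee\rangle$ has to be treated with care.

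The second step is the pairing itself. Take $\rho^\vee$ in the normalisation satisfying $\langle\alpha_i,\rho^\vee\rangle=1$ for all $i=0,\dots,l$. This is the normalisation in which Lemma \ref{4.4} is written, with $\rho^\vee=\sum_{i=0}^l\omega_i^\vee$ and $\omega_0^\vee$ treated as the affine coweight. Then $\langle\beta,\rho^\vee\rangle={\rm ht}(\beta)$, and hence $\mathscr{L}_{\Lambda_j}(w_{\lam,j})=\langle\beta,\rho^\vee\rangle={\rm ht}(\lam,j)$. In this normalisation $\langle\delta,\rho^\vee\rangle=h\neq0$, so the $\delta$-ambiguity must actually be excluded rather than ignored. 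Two points settle it. First, by the compatibility in step one, $\beta$ is exactly $\Lambda_j-w(\Lambda_j)$ as an element of $Q$, with no separate $\delta$ term, because the action on $\mathfrak h^*$ produces $\Lambda_j-\sum_i c_i\alpha_i$ and $\delta$ is itself a combination of the $\alpha_i$. Second, the count in Definition \ref{2.6} records every $\alpha_i$, including $\alpha_0$.

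As a cross-check, I would also compare with Lemma \ref{4.4}: expand $\langle\Lambda_j-t_{\mathrm q}\bar w(\Lambda_j),\rho^\vee\rangle$ using the translation formula $t_{\mathrm q}(\Lambda)=\Lambda+\langle\Lambda,c\rangle\mathrm q-((\Lambda,\mathrm q)+\frac12|\mathrm q|^2\langle\Lambda,c\rangle)\delta$. Then substitute $\mathrm q=\frac{a_0^\vee}{a_j^\vee}(\mathrm u-\bar w\omega_j)$ from Lemma \ref{3.27}; this should reproduce the quadratic formula of Lemma \ref{4.4}. The main obstacle is the first step, the compatibility of the combinatorial $\sigma_i$-action with the linear action on weights, in the special cases. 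These are the $2f_0$ and $2f_l$ moves and the half abaci of Table 2, where the normalisation of $D$ and the factor $\frac{a_j^\vee}{a_0^\vee}$ must match. I would handle these case by case via Lemmas \ref{4.1} and \ref{4.3}.
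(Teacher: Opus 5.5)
Your argument is correct, but it takes a genuinely different route from the paper. The paper's proof does not go through the identity $w_{\lam,j}(\Lambda_j)=\Lambda_j-\beta$. Instead, it substitutes $\mathrm u=\frac{a_j^\vee}{a_0^\vee}\mathrm q+\overline w(\omega_j)$ (Lemma \ref{3.27}) into the quadratic height formula of Lemma \ref{4.4}, expands, and matches the result term by term with the closed formula for $\mathscr{L}_{\Lambda_j}(t_{\mathrm q}\overline w)$ from \cite[Proposition 3.2]{BCG}.

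You instead prove that stronger identity by induction along $w_{\lam,j}=\sigma_r w_{\mu,j}$. The theorem then reduces to $\langle\beta,\rho^\vee\rangle={\rm ht}(\beta)$ for the affine $\rho^\vee$. The paper's own proof confirms this is the intended normalisation: the factor $h=\langle\delta,\rho^\vee\rangle$ appears in the formula from \cite{BCG}. Your alternative Uglov-vector verification closes the induction using only the paper's lemmas. Pairing $\frac{a_j^\vee}{a_0^\vee}\Lambda_0+\mathrm u^\mu$ with $\alpha_r^\vee$ reproduces exactly the counts in Lemmas \ref{4.1}--\ref{4.3}. For example, in type $C_l^{(1)}$, identifying $\alpha^\vee$ with $2\alpha/(\alpha,\alpha)$, one has $\alpha_r^\vee=\sqrt2(\epsilon_r-\epsilon_{r+1})$, $\alpha_l^\vee=\sqrt2\epsilon_l$ and $\alpha_0^\vee=c-\sqrt2\epsilon_1$. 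Against $\mathrm u=\frac{\sqrt2}{2}u$ these give $u_r-u_{r+1}$, $u_l$ and $\frac{a_j^\vee}{a_0^\vee}-u_1$, and the other types check the same way. By contrast, the ``standard fact'' about full $\alpha_r$-strings needs the abacus model to be a genuine Fock-space or crystal realisation, which would have to be imported from \cite{HLQ}.

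The $\delta$ issue does not actually arise. The reflection formula in the induction gives equality in $\mathfrak h^*$ exactly. The mod-$\delta$ identification from Lemma \ref{3.27} is only used inside pairings with $\alpha_r^\vee$, which kill $\delta$. So the sentence about ``dropping'' the $\delta$ term should simply be deleted.

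What your approach buys:
\begin{itemize}
\item It explains why the theorem holds: it is really the compatibility of the combinatorial $W$-action on abaci with the linear action on weights.
\item It identifies every coefficient ${\rm ht}_i(\lam,j)$, not just the total.
\item It bypasses both Lemma \ref{4.4} and the external formula of \cite{BCG}.
\end{itemize}
The paper's approach obtains the theorem as a formal identity between two quadratic expressions. It relies on Lemma \ref{4.4}, which the paper needs anyway for the Diophantine equations of Section 4.2.
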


\begin{proof}
Assume that $w_{\lam, j}=t_{\mathrm q}\overline{w}$.
\begin{align*}
{\rm ht}(\lam,j)
&=\frac{a_0^{\vee}}{a_j^{\vee}}\frac{h}{2}(|\mathrm{u}|^2-|\omega_{j}|^2)-(\mathrm{u}-\omega_{j},\rho^{\vee})\quad\text{(by Lemma \ref{4.4})}\\ 
&=\frac{a_0^{\vee}}{a_j^{\vee}}\frac{h}{2}((\frac{a_j^{\vee}}{a_0^{\vee}}{\mathrm q}+\overline{w}(\omega_{j}),\frac{a_j^{\vee}}{a_0^{\vee}}{\mathrm q}+\overline{w}(\omega_{j}))-|\omega_{j}|^2)-\\
& \quad\,\,\textup{ht}(\frac{a_j^{\vee}}{a_0^{\vee}}{\mathrm q}+\overline{w}(\omega_{j})-\omega_{j})\quad\text{(by Lemma \ref{3.27})}\\
&=\frac{a_j^{\vee}}{a_0^{\vee}}\frac{h}{2}|{\mathrm q}|^2+h({\mathrm q},\overline{w}(\omega_{j}))-
\frac{a_j^{\vee}}{a_0^{\vee}}\textup{ht}({\mathrm q})+\textup{ht}(\omega_{j}-\overline{w}(\omega_{j}))\\
&=\frac{a_j^{\vee}}{a_0^{\vee}}(\frac{h}{2}|{\mathrm q}|^2-\textup{ht}({\mathrm q}))+h({\mathrm q},\overline{w}(\omega_{j}))+\textup{ht}(\omega_{j}-\overline{w}(\omega_{j}))\\
&=\mathscr{L}_{\Lambda_j}(w_{\lam, j})\quad\text{(by \cite[Proposition 3.2]{BCG})}.
\end{align*}
The proof is complete.
\end{proof}

\begin{remarks}\rm{
(1) In \cite{BCG}, Brunat, Chapelier-Laget and Gerber raised an open problem to 
find a combinatorial model in which the size of partitions is given by the atomic length.
The above corollary can be viewed as an answer.

(2) Many experts in Combinatorics studied the Nekrasov–Okounkov formula and related topics. We refer the reader to \cite{LW, NO, P, W} for some examples. 
Among them, Lecouvey and Wahiche \cite{LW} expanded the affine Weyl denominator formulae as signed $e^{-\delta}$-series of ordinary Weyl characters running over
the affine Grassmannian and proved that the grading in $e^{-\delta}$ coincides with the (dual) atomic length. 
We think Theorem \ref{4.5} and Lemma \ref{3.12} should contribute to making progress on the relevant fields.
}
\end{remarks}

\begin{example}\label{4.7}\rm{
Let us consider the core $(\lam, 1)=((4,2,1,1,1,1,1), 1)$, which appeared in Example \ref{3.4}. Recall that $\mathrm{u}(\lam, 1)=\sqrt{2}(-2, 1)$.
Then by Lemma \ref{4.4}, we have
$${\rm ht}_0(\lambda)=(\frac{\sqrt{2}}{4}(-2,1),\sqrt{2}(-2,1))-(\frac{\sqrt{2}}{4}(1,0),\sqrt{2}(1,0))=2;$$
$${\rm ht}_1(\lambda)=(\frac{\sqrt{2}}{4}(-2,1)-\frac{\sqrt{2}}{2}(1,0),\sqrt{2}(-2,1))-(\frac{\sqrt{2}}{4}(1,0)-\frac{\sqrt{2}}{2}(1,0),\sqrt{2}(1,0))=5;$$
$${\rm ht}_2(\lambda)=(\frac{\sqrt{2}}{4}(-2,1)-\frac{\sqrt{2}}{2}(1,1),\sqrt{2}(-2,1))-(\frac{\sqrt{2}}{4}(1,0)-\frac{\sqrt{2}}{2}(1,1),\sqrt{2}(1,0))=4.$$

Filling the residue in each node of the Young diagram, we have

\center{\young(1221,01,0,1,2,2,1)}

which coincides with the result obtained by computing. Moreover,
recall that $w_{\lam, 1}=s_1s_2s_1s_0s_1$. It is easy to check that the atomic length of $w_{\lam, 1}$ is just the height 11.
}
\end{example}

Let us conclude this section by giving an application of Lemma \ref{4.4} on conjugation of a core abacus.

\begin{corollary}\label{4.8}
Let $(\lam, j)$ and $(\mu, s)$ be core abaci of the same type $C_l^{(1)}$, $D_l^{(1)}$ or $D_{l+1}^{(2)}$ with $j$ satisfying $(3.2.1)$. 
Suppose that the Uglov vector of $(\lam, j)$ is $(u_1, \dots, u_l)$. 
Then the Uglov vector of $(\mu, s)$ is $(1-u_l, \dots, 1-u_1)$ if and only if $(\mu, s)=(\lam', l-j)$.
\end{corollary}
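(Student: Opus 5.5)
Both directions rest on the bijection of Lemma \ref{3.8}, on Lemma \ref{3.9}, on the uniqueness in Lemma \ref{3.12}, and on the height formula of Lemma \ref{4.4}. The first step is to pin down how conjugation acts on Uglov vectors. Write $u=(u_1,\dots,u_l)$ for the Uglov vector of $(\lam,j)$, and define the involution $\tau(u)=(1-u_l,\dots,1-u_1)$.

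\emph{Forward direction.} I would show that $u(\lam',l-j)=\tau(u(\lam,j))$ by induction on $\ell(w_{\lam,j})$. In the base case $(\lam,j)=(\varnothing,j)$, the conjugate is $(\varnothing,l-j)$. Its Uglov vector is $u(\varnothing,l-j)$, which is $\omega_{l-j}$ up to the normalisation of Definition \ref{3.3}. One checks directly that $\tau$ sends the Uglov vector of $(\varnothing,j)$ to that of $(\varnothing,l-j)$. For instance, in type $C_l^{(1)}$ with the $-{\bf 1}$ shift, the Uglov vector of $(\varnothing,j)$ is presumably $(1,\dots,1,0,\dots,0)$ with $j$ ones, up to the chosen sign convention, and $\tau$ reverses it and complements it, producing $l-j$ ones. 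By Lemma \ref{3.5} this is consistent with the count of odd entries.

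For the inductive step, Lemma \ref{3.8} says that an $f_i$-action on $(\lam,j)$ corresponds to an $f_{l-i}$-action on $(\lam',l-j)$. Hence $\sigma_i(\lam,j)$ conjugates to $\sigma_{l-i}(\lam',l-j)$. It then suffices to check, from Lemma \ref{3.26}, the intertwining relation $\tau\circ\sigma_i=\sigma_{l-i}\circ\tau$ on Uglov vectors, taking the appropriate $a_j^\vee/a_0^\vee$ on each side.
\begin{itemize}
\item For $1\le i\le l-1$, $\sigma_i$ swaps $u_i,u_{i+1}$. After reversal this becomes the swap of the entries in positions $l-i,l-i+1$, which is exactly $\sigma_{l-i}$.
\item For $i=l$ in type $C_l^{(1)}$, $u_l\mapsto -u_l$ becomes $1-u_l\mapsto 1+u_l$ in the first slot, and this must match $\sigma_0$, namely $v_1\mapsto 2a_{l-j}^\vee/a_0^\vee-v_1$.
\end{itemize}
This last case is the delicate point. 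The constants $2a^\vee_{\bullet}/a_0^\vee$ in Lemma \ref{3.26} (3) have to equal exactly $2$, and the conventions for $\sigma_0$ and $\sigma_l$ have to be symmetric. I expect this verification, carried out for $C_l^{(1)}$, $D_l^{(1)}$ (where the pair $u_{l-1},u_l$ is involved) and $D_{l+1}^{(2)}$, to be the main obstacle, since the constants depend on the charge. Restricting $j$ as in (3.2.1) is presumably exactly what makes these constants symmetric.

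\emph{Converse direction.} Suppose $(\mu,s)$ is a core abacus with Uglov vector $\tau(u)$. By the forward direction and Lemma \ref{3.9}, $(\lam',l-j)$ is a core abacus with the same Uglov vector. The charge is read off from the Uglov vector: by Lemma \ref{3.5} it equals the number of odd components, so $s=l-j$. A core abacus of fixed charge is determined by its Uglov vector, because the Uglov map $\rm\bf U$ is a bijection and a core's image under it is determined by the runner charges ${\rm\bf s}$ (cores have no first-kind moves). Therefore $(\mu,s)=(\lam',l-j)$.

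As a fallback for this uniqueness step, I would use Lemma \ref{3.27}. The vector $\mathrm{u}$ determines $\mathrm{q}$ and $\overline{w}(\omega_j)$ up to the stabiliser of $\omega_j$. This in turn determines the minimal coset representative $w\in W^j$, and Lemma \ref{3.12} then identifies the core.
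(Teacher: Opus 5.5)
Your proof is correct, and it takes a different route from the paper's.

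\textbf{How the paper argues.} The paper works entirely through the height formula of Lemma~\ref{4.4}. Given that $(\mu,s)$ has Uglov vector $(1-u_l,\dots,1-u_1)$, it gets $s=l-j$ from Lemma~\ref{3.5}. It then checks, case by case in $i$, that ${\rm ht}_i(\lam,j)$ written in terms of $u$ equals ${\rm ht}_{l-i}(\mu,l-j)$ written in terms of $(1-u_l,\dots,1-u_1)$. By Lemma~\ref{3.8}, this puts $(\mu,l-j)$ and $(\lam',l-j)$ in the same weight space $\Lambda_{l-j}-\beta'$, and a core is unique there. For the other direction, it writes down the $l+1$ equations ${\rm ht}_i(\lam,j)={\rm ht}_{l-i}(\lam',l-j)$, with the Uglov vector $v$ of $(\lam',l-j)$ as the unknown, and solves for $v$.

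\textbf{How you argue.} You never compute heights. You prove the transformation law $u(\lam',l-j)=\tau(u(\lam,j))$ by showing, via Lemma~\ref{3.26}, that $\tau$ intertwines $\sigma_i$ with $\sigma_{l-i}$. You then get uniqueness from injectivity of $\mathbf U$ on cores, rather than from uniqueness of a core in its weight space.

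\textbf{The check you deferred goes through.}
\begin{itemize}
\item For $C_l^{(1)}$ one has $a^\vee=(1,\dots,1)$.
\item For $D_{l+1}^{(2)}$ with $1\le j\le l-1$, and for $D_l^{(1)}$ with $2\le j\le l-2$, one has $a_j^\vee/a_0^\vee=a_{l-j}^\vee/a_0^\vee=2$.
\item So in all three cases $\sigma_0$ reflects with constant $2$ on both sides. Then $\tau\sigma_l=\sigma_0\tau$ and $\tau\sigma_0=\sigma_l\tau$ are one-line verifications. For $D_l^{(1)}$, $\tau$ carries the pair $(u_{l-1},u_l)$ moved by $\sigma_l$ onto the first two slots moved by $\sigma_0$.
\item When the constant is $1$, as for $D_{l+1}^{(2)}$ at $j=0$ or $l$, the intertwining fails. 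This is exactly the role of (3.2.1).
\end{itemize}

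\textbf{What each route buys.} Yours explains the statement conceptually: $\tau$ is the diagram automorphism $i\mapsto l-i$ acting on Uglov vectors. It also makes the hypothesis (3.2.1) transparent and reproves Lemma~\ref{3.9} along the way. The paper's route is a direct computation once Lemma~\ref{4.4} is available. Its cost is leaving implicit the solution of the quadratic system, which becomes linear after subtracting suitable multiples of neighbouring equations.

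\textbf{Two small repairs.}
\begin{itemize}
\item For $D_{l+1}^{(2)}$, the image $\mathbf U(\lam,j)$ also contains the half runners $0$ and $l+1$, which are not recorded in $u$. To see that they are empty for a core, you need the absence of second-kind moves, not only first-kind ones.
\item Your fallback is misstated. $\mathrm u$ does not determine $\mathrm q$ and $\overline{w}(\omega_j)$ separately modulo the finite stabiliser of $\omega_j$. What Lemma~\ref{3.27} gives is that $\mathrm u$ is the (suitably normalised) finite part of $w_{\lam,j}(\Lambda_j)$. Hence it determines the coset $w_{\lam,j}W_j$, where $W_j$ is the stabiliser of $\Lambda_j$ in the affine Weyl group, not the stabiliser of $\omega_j$ in $W_0$. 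Lemma~\ref{3.12} then identifies the core.
\end{itemize}
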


\begin{proof}
Since the proof of all types is similar, we only consider type $C_l^{(1)}$.

``$\Longrightarrow$"  Assume that $(\lam, j)$ and $(\mu, s)$ are core abaci of type $C_l^{(1)}$.
By Lemma \ref{3.5}, there are $j$ odd components in $(u_1, \dots, u_l)$, 
and thus the number of odd components in $(1-u_l, \dots, 1-u_1)$ is $l-j$, that is, $s=l-j$.
Then by Lemma \ref{3.8} it is sufficient to prove that the number of $f_i$-actions on $(\lam, j)$
is equal to that of $f_{l-i}$-actions on $(\mu, l-j)$ for all $i=0,1, \dots, l$. 
Note that this can be computed by Lemma \ref{4.4}. We divide the computation into 4 cases according to the value of $i$.
\begin{enumerate}
\item [\rm Case 1.] $i=0$. ${\rm ht}_0(\lam, j)=\frac{1}{4}(u_1^2+\dots+u_l^2-j)={\rm ht}_l(\mu, l-j).$

\smallskip

\item [\rm Case 2.] $1\leq i\leq l-1$ and $j\leq i$. $${\rm ht}_i(\lam, j)=\frac{1}{2}(u_1^2+\dots+u_l^2)-(u_1+\dots+u_i)+\frac{j}{2}={\rm ht}_{l-i}(\mu, l-j).$$
\item [\rm Case 3.] $1\leq i\leq l-1$ and $j>i$.
		$${\rm ht}_i(\lam, j)=\frac{1}{2}(u_1^2+\dots+u_l^2)-(u_1+\dots+u_i)+\frac{2i-j}{2}={\rm ht}_{l-i}(\mu, l-j).$$
\item [\rm Case 4.] $i=l$.
		${\rm ht}_l(\lam, j)=\frac{1}{4}(u_1^2+\dots+u_l^2+j)-\frac{1}{2}(u_1+\dots+u_l)={\rm ht}_0(\mu, l-j).$
\end{enumerate}

\smallskip

``$\Longleftarrow$" Suppose that the Uglov vector of $(\lam', l-j)$ is $(v_1, \dots, v_l)$.
Then we have from Lemma \ref{3.8} and Lemma \ref{4.4} that

\smallskip

\noindent$\begin{cases}
\frac{1}{4}(u_1^2+\dots+u_l^2-j)&=	\frac{1}{4}(v_1^2+\dots+v_l^2+l-j)-	\frac{1}{2}(v_1+\dots+v_l)\\
\frac{1}{2}(u_1^2+\dots+u_l^2-j)-u_1+1&=	\frac{1}{2}(v_1^2+\dots+v_l^2+l-j)-	(v_1+\dots+v_{l-1})\\
\quad\quad\dots\quad\quad\dots\quad\quad\dots\quad\quad\dots & \quad\quad\dots\quad\quad\dots\quad\quad\dots\quad\quad\dots\quad\quad\dots\\
\frac{1}{2}(u_1^2+\dots+u_l^2+j)-(u_1+\dots+u_{l-1})&=	\frac{1}{2}(v_1^2+\dots+v_l^2)-	v_1-\frac{1}{2}(l-j)+1\\
\frac{1}{4}(u_1^2+\dots+u_l^2+j)-\frac{1}{2}(u_1+\dots+u_l)&=\frac{1}{4}(v_1^2+\dots+v_l^2)-\frac{1}{4}(l-j)\\
\end{cases}$

\smallskip

\noindent Consequently, $(v_1, \dots, v_l)=(1-u_l, \dots, 1-u_1)$.
\end{proof}

\medskip


\subsection{Diophantine equations}
Let us consider Diophantine equations arising naturally from the height formula of core abaci by direct computation. 
These equations will be called Diophantine equations of affine types.

We list equalities associated with core abaci type by type as follows without specific process.
Then Diophantine equations can be read out from the equalities easily. 
Note that all the equations are of the form
$\sum_ix_i^2=\mathtt{a}N+\mathtt{b}$, where $N$ is the height of a $j$-core (half) abacus of type $X$ and $\mathtt{a}, \mathtt{b}$ are determined by $l, j$. 

\begin{lemma}\label{4.9}
Let $(\lam, j)$ be a core abacus of type $A_{2l-1}^{(2)}$ with ${\rm ht}(\lambda, j)=N$. 
If $j=0, 1$, then
$$8(2l-1)N+\frac{l(2l+1)(2l-1)}{3}=\sum_{i=1}^l \left( 2(2l-1)u_i -( 2(l-i)+1) \right)^2;$$
if $2\leq j\leq l$, then
$$4(2l-1)N+\frac{l(2l+1)(2l-1)}{3}-j(2l-1)(2l-2j+1)=\sum_{i=1}^l \left( (2l-1)u_i -( 2(l-i)+1) \right)^2.$$ 
\end{lemma}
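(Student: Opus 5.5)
The plan is to specialise the height formula of Lemma \ref{4.4} to type $A_{2l-1}^{(2)}$ and complete the square coordinatewise. Lemma \ref{4.4} gives
$$N={\rm ht}(\lam,j)=\frac{a_0^{\vee}}{a_j^{\vee}}\frac{h}{2}\bigl(|\mathrm{u}|^2-|\omega_j|^2\bigr)-(\mathrm{u}-\omega_j,\rho^\vee),$$
and here the weighted Uglov vector is $\mathrm{u}=u$. So the work reduces to three steps.

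\emph{Step 1: fix the type-specific data.} For $A_{2l-1}^{(2)}$ I will record the marks $(a_0,\dots,a_l)=(1,1,2,\dots,2,1)$, so $h=2l-1$, and the comarks $(a_0^\vee,\dots,a_l^\vee)=(1,1,2,\dots,2,2)$. Hence $a_j^\vee/a_0^\vee=1$ for $j=0,1$ and $=2$ for $2\le j\le l$; this is exactly the split between the two displayed cases. I will also realise the finite data in the standard coordinates used in the proof of Lemma \ref{3.27}, namely $\mathbb{R}^l$ with $\alpha_i=\epsilon_i-\epsilon_{i+1}$ and $\alpha_l$ the appropriate end root. In these coordinates $\rho^\vee=(\rho^\vee_1,\dots,\rho^\vee_l)$ with $\rho^\vee_i$ affine in $i$, and the claimed formula forces
$$\rho^\vee_i=\frac{2(l-i)+1}{2}.$$
I will check this against $\langle \alpha_i,\rho^\vee\rangle=1$ for the finite simple roots. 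Finally $\omega_j=\epsilon_1+\dots+\epsilon_j$ (or $\tfrac12$ of that for $j=0$ conventions), so $|\omega_j|^2=j$ and $(\omega_j,\rho^\vee)=\sum_{i\le j}\frac{2(l-i)+1}{2}=j(2l-j)/2$.

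\emph{Step 2: complete the square.} Put $c=\frac{a_j^\vee}{a_0^\vee}$. Then
$$N=\frac{h}{2c}\sum_i u_i^2-\sum_i \rho^\vee_i u_i-\frac{h}{2c}j+(\omega_j,\rho^\vee).$$
Multiplying by $8ch=8c(2l-1)$ gives
$$\sum_i\bigl(2hu_i-2c\rho_i^\vee\bigr)^2=8chN+4c^2\sum_i(\rho_i^\vee)^2+4h^2j-8ch(\omega_j,\rho^\vee).$$

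For $c=1$ the summand is $(2(2l-1)u_i-(2(l-i)+1))^2$. We have $\sum_i(2(l-i)+1)^2=l(2l-1)(2l+1)/3$. The $j$-terms give $4h^2j-8h\cdot j(2l-j)/2=4hj(h-2l+j)=4hj(j-1)$, which vanishes for $j=0,1$. This yields the first equation.

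For $c=2$ I will divide the whole identity by $4$ to get summands $((2l-1)u_i-(2(l-i)+1))^2$. The constant becomes $l(2l+1)(2l-1)/3+h^2j-4h\cdot j(2l-j)/2$, which simplifies to $l(2l+1)(2l-1)/3-j(2l-1)(2l-2j+1)$, as claimed.

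\emph{Main obstacle.} The hardest part is pinning down the normalisations in Step 1: the coordinates of $\rho^\vee$ and $\omega_j$, and $|\omega_j|^2$ under the bilinear form fixed by the chosen $D$ for $A_{2l-1}^{(2)}$. I also need to check that the half-abacus cases $j=0$ and $j=l$ (where Definition \ref{3.3} may subtract $\frac32\mathbf 1$) fit the same formula. I will resolve these by checking against the base case $\ell(w_{\lam,j})=0$, i.e. $\mathrm{u}=\omega_j$ and $N=0$. That base case must make both sides agree, and this determines any ambiguous constant.
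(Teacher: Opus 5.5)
Your proposal is correct and takes the same route as the paper. The paper gives no explicit derivation, saying only that these equalities come from direct computation with the height formula of Lemma \ref{4.4}. You specialise that formula with $h=2l-1$, $c=a_j^\vee/a_0^\vee\in\{1,2\}$, $\rho^\vee_i=\frac{2(l-i)+1}{2}$, $|\omega_j|^2=j$ and $(\omega_j,\rho^\vee)=j(2l-j)/2$, then complete the square; your constants all check out, including $4hj(j-1)=0$ for $j=0,1$ and $-j(2l-1)(2l-2j+1)$ for $c=2$. (One minor correction: in type $A_{2l-1}^{(2)}$ the half abaci are those with $j=0,1$, not $j=l$, because $a_l^\vee/a_0^\vee=2$; so the $\frac32\mathbf{1}$ shift of Definition \ref{3.3} never arises here.)
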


\begin{lemma}\label{4.10}
Let $(\lam, j)$ be a core abacus of type $A_{2l}^{(2)}$  with ${\rm ht}(\lambda, j)=N$. 
If $j=0$, then	
$$8(2l+1)N+\frac{l(2l+1)(2l-1)}{3}=\sum_{i=1}^l \left( 2(2l+1)u_i -( 2(l-i)+1) \right)^2;$$
if $1\leq j\leq l$, then
$$4(2l+1)N+\frac{l(2l+1)(2l-1)}{3}-j(2l+1)(2l-2j-1)=\sum_{i=1}^l \left( (2l+1)u_i -( 2(l-i)+1) \right)^2.$$
\end{lemma}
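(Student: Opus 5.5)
The plan is to specialize the height formula of Lemma \ref{4.4} to type $A_{2l}^{(2)}$ and then complete the square coordinatewise. First I will record the data for this type in the coordinates used for Uglov vectors. Since $D={\rm diag}(\frac12,1,\dots,1,2)$, the weighted Uglov vector coincides with $u$. Moreover:
\begin{itemize}
\item $h=\sum_i a_i=2l+1$;
\item $\frac{a_j^\vee}{a_0^\vee}=1$ for $j=0$ (the half abacus case) and $\frac{a_j^\vee}{a_0^\vee}=2$ for $1\leq j\leq l$, consistent with Lemma \ref{3.26}(3);
\item $\omega_j=(1,\dots,1,0,\dots,0)$ with $j$ ones, so $|\omega_j|^2=j$;
\item $\rho^\vee$ has $i$-th coordinate $\frac{2(l-i)+1}{2}$.
\end{itemize}
I will check these normalizations against Example \ref{4.7}-style computations, or against $\ell(w)=0$, where $N=0$ and $u=\omega_j$ (respectively $u=0$ when $j=0$). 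This check is the step most likely to hide a factor-of-two error.

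For $1\leq j\leq l$, Lemma \ref{4.4} then reads
$$N=\frac{2l+1}{4}\bigl(|u|^2-j\bigr)-\sum_{i=1}^l\frac{2(l-i)+1}{2}u_i+\sum_{i=1}^j\frac{2(l-i)+1}{2}.$$
Multiplying by $4(2l+1)$ gives
$$4(2l+1)N=\sum_{i=1}^l\Bigl((2l+1)^2u_i^2-2(2l+1)(2(l-i)+1)u_i\Bigr)-(2l+1)^2j+2(2l+1)\sum_{i=1}^j(2l-2i+1).$$
Adding $\sum_{i=1}^l(2(l-i)+1)^2=\frac{l(2l+1)(2l-1)}{3}$ to both sides turns the first sum into $\sum_i\bigl((2l+1)u_i-(2(l-i)+1)\bigr)^2$. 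For the remaining constant I use $\sum_{i\le j}(2l-2i+1)=j(2l-j)$, so
$$-(2l+1)^2j+2(2l+1)j(2l-j)=(2l+1)j(2l-2j-1).$$
Moving this constant to the left-hand side yields exactly the stated identity.

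For $j=0$, the ratio is $1$ and $\omega_0=0$, so $N=\frac{2l+1}{2}|u|^2-\sum_i\frac{2(l-i)+1}{2}u_i$. Multiplying by $8(2l+1)$ and adding the same sum of odd squares completes the square as $\sum_i\bigl(2(2l+1)u_i-(2(l-i)+1)\bigr)^2$.

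The only real obstacle is fixing the normalizations of $\omega_j$, $\rho^\vee$ and the ratio $a_j^\vee/a_0^\vee$ consistently with Lemma \ref{3.26} and Lemma \ref{4.4}. Once those are fixed, the rest is routine algebra.
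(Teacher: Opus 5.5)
Your proposal is correct and follows the paper's own route: Lemma \ref{4.10} is obtained by specializing the height formula of Lemma \ref{4.4} to $A_{2l}^{(2)}$ and completing the square. The data you use are $\mathrm{u}=u$, $h=2l+1$, $\frac{a_j^\vee}{a_0^\vee}\in\{1,2\}$, $\omega_j=(1^j,0^{l-j})$ and $\rho^\vee_i=\frac{2(l-i)+1}{2}$; the paper leaves this step as ``direct computation,'' and your version fills it in. The normalizations you flagged as risky are all right, since $D={\rm diag}(\frac12,1,\dots,1,2)$ makes the finite part $C_l$ with $\alpha_i=\epsilon_i-\epsilon_{i+1}$ and $\alpha_l=2\epsilon_l$, and your constant $-(2l+1)^2j+2(2l+1)j(2l-j)=(2l+1)j(2l-2j-1)$ checks out.
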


\begin{lemma}\label{4.11}
Let $(\lam, j)$ be a core abacus of type $B_l^{(1)}$ with ${\rm ht}(\lambda, j)=N$. Then
$$4lN+\frac{l(l+1)(2l+1)}{6}=\sum_{i=1}^l \left( 2lu_i - (l-i+1) \right)^2, \,\,{\text if}\,\, j=0,1;$$
$$2lN+\frac{l(l+1)(2l+1)}{6}-jl(l-j+1)=\sum_{i=1}^l \left( lu_i - (l-i+1) \right)^2, \,\,{\text if}\,\, 2\leq j\leq l-1;$$
$$	4lN+\frac{l(l+1)(2l+1)}{6}-l^2=\sum_{i=1}^l \left( 2lu_i - (l-i+1) \right)^2, \,\,{\text if} \,\,j=l.$$
\end{lemma}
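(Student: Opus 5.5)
The plan is to specialize the height formula of Lemma \ref{4.4} to type $B_l^{(1)}$ and complete the square. Since the weighted Uglov vector equals $u$ in this type, we have ${\rm ht}(\lam,j)=\frac{a_0^\vee}{a_j^\vee}\frac{h}{2}(|u|^2-|\omega_j|^2)-(u-\omega_j,\rho^\vee)$. I would first record the data of $B_l^{(1)}$ in the standard orthonormal coordinates of $\mathbb{R}^l$. The marks are $(a_0,\dots,a_l)=(1,1,2,\dots,2,2)$, so $h=2l$. The comarks are $(1,1,2,\dots,2,1)$, so $a_j^\vee/a_0^\vee$ equals $1$ for $j=0,1,l$ and $2$ for $2\le j\le l-1$. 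With $\alpha_i=\epsilon_i-\epsilon_{i+1}$ and $\alpha_l=\epsilon_l$ (scaled as fixed by $D$), the fundamental weights are $\omega_j=\epsilon_1+\dots+\epsilon_j$ for $j<l$ and $\omega_l=\frac12(\epsilon_1+\dots+\epsilon_l)$. For the coweights, $\omega_i^\vee=\epsilon_1+\dots+\epsilon_i$ and $\rho^\vee=(l,l-1,\dots,1)$. So $(u,\rho^\vee)=\sum_i(l-i+1)u_i$. The exact normalization must be checked against the paper's choice of $D$ and against the $C_l^{(1)}$ computation in the proof of Lemma \ref{4.4}, where $\omega_{i1}^\vee$ was listed. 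I would confirm it on a small example such as $((5,1,1),2)$ from Example \ref{3.10}.

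Next I treat the three charge ranges separately. For $2\le j\le l-1$ the prefactor is $\frac{1}{2}\cdot\frac{2l}{2}=\frac l2$. This gives $N=\frac l2(|u|^2-j)-\sum_i(l-i+1)u_i+\sum_{i\le j}(l-i+1)$. Multiplying by $2l$ yields $2lN=\sum_i\big(l^2u_i^2-2l(l-i+1)u_i\big)-l^2j+2l\sum_{i\le j}(l-i+1)$. Completing the square in each coordinate with $(lu_i-(l-i+1))^2$ introduces $\sum_i(l-i+1)^2=\frac{l(l+1)(2l+1)}{6}$. It remains to simplify $-l^2j+2l\big(jl-\frac{j(j-1)}2\big)=jl(l-j+1)$, which produces the second displayed identity.

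For $j=0,1$ the prefactor is $l$. Multiplying by $4l$ makes the square $(2lu_i-(l-i+1))^2$, and the constant terms must cancel. For $j=0$ we have $\omega_0=0$, so no constant appears. For $j=1$ the constant is $-4l^2\cdot 1+4l\cdot l=0$, and this cancellation is what makes the $j=0,1$ formulas agree. For $j=l$ the prefactor is again $l$, $|\omega_l|^2=l/4$ and $(\omega_l,\rho^\vee)=\frac{l(l+1)}{4}$. After multiplying by $4l$ the constant is $-l^3+l^2(l+1)=l^2$, which gives the third line with $-l^2$.

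The main obstacle is the half abaci ($j=0,1,l$). There Definition \ref{3.3} shifts $\mathbf{s}$ by $\frac32\mathbf 1$ or $\mathbf 1$. The base case $\ell(w_{\lam,j})=0$ of Lemma \ref{4.4} then requires $u(\varnothing,j)=\omega_j$ with exactly this normalization, for instance $u=(\frac12,\dots,\frac12)$ for $\Lambda_l$. I would verify this directly from the Uglov map on the abaci of $\Lambda_0,\Lambda_1,\Lambda_l$ listed before Definition \ref{2.6}. If the coordinates differ by a constant factor, I would rescale so that the $j=1$ constant vanishes, using ${\rm ht}=0$ for the empty abacus as a check.
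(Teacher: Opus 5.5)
Your proposal is correct and follows the route the paper intends: the paper states Lemma \ref{4.11} without proof, as a direct specialization of the height formula of Lemma \ref{4.4}. Your data ($h=2l$, $a_j^\vee/a_0^\vee\in\{1,2\}$, the $\omega_j$, $\rho^\vee=(l,l-1,\dots,1)$) and the constants $0$, $jl(l-j+1)$, $l^2$ from completing the square all check out; the closing normalization worries are unnecessary, since these coordinates are exactly those forced by $D={\rm diag}(1,\dots,1,\frac12)$ and they reproduce the increments $\frac{a_j^\vee}{a_0^\vee}-u_1-u_2$ and $2u_l$ of Lemmas \ref{4.1} and \ref{4.3}, so no rescaling should be contemplated.
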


\begin{lemma}\label{4.12}
Let $(\lam, j)$ be a core abacus of type $C_l^{(1)}$  with ${\rm ht}(\lambda, j)=N$. 
Then for $0\leq j\leq l$
$$8lN+\frac{l(2l+1)(2l-1)}{3}-4lj(l-j)=\sum_{i=1}^l \left( 2lu_i -( 2(l-i)+1) \right)^2.$$
\end{lemma}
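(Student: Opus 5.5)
The plan is to specialise the height formula of Lemma \ref{4.4} to type $C_l^{(1)}$, write everything in terms of the (unweighted) Uglov vector $u=(u_1,\dots,u_l)$, multiply by $8l$, and complete the square coordinatewise.

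\emph{Step 1: the type $C_l^{(1)}$ data.} For $C_l^{(1)}$ we have $a=(1,2,\dots,2,1)$, so $h=2l$. I expect $a_j^\vee=a_0^\vee$ for every $j$, so that $\frac{a_0^{\vee}}{a_j^{\vee}}=1$; this is consistent with the example $\mathrm{u}^{\sigma_0}$ in Lemma \ref{3.26}. Lemma \ref{4.4} then reads
\[
N=l\bigl(|\mathrm{u}|^2-|\omega_j|^2\bigr)-(\mathrm{u}-\omega_j,\rho^\vee),\qquad \mathrm{u}=\tfrac{\sqrt2}{2}u .
\]

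\emph{Step 2: coordinates of $\rho^\vee$ and $\omega_j$.} I would use the realization already fixed in the proof of Lemma \ref{4.4}. There $\omega_i^\vee$ has first $i$ coordinates equal to $\sqrt2$ for $1\le i\le l-1$, and $\omega_l^\vee=\frac{\sqrt2}{2}(1,\dots,1)$. Summing, the $k$-th coordinate of $\rho^\vee$ is $\sqrt2\,(l-k+\tfrac12)$.

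For $\omega_j$ I take $\omega_j=\frac{\sqrt2}{2}(1,\dots,1,0,\dots,0)$, with $j$ ones. This is the weighted Uglov vector of $(\varnothing,j)$; by Lemma \ref{3.27} with $w=1$ it must equal $\omega_j$, and Lemma \ref{3.5} requires exactly $j$ odd entries. Consequently $|\omega_j|^2=j/2$ and $(\omega_j,\rho^\vee)=\sum_{k=1}^j(l-k+\tfrac12)=jl-\tfrac{j^2}{2}$.

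\emph{Step 3: the algebra.} Substituting, and using $(\mathrm{u},\rho^\vee)=\sum_i u_i(l-i+\tfrac12)$, we get
\[
N=\tfrac{l}{2}\sum_i u_i^2-\tfrac{lj}{2}-\sum_i u_i\bigl(l-i+\tfrac12\bigr)+jl-\tfrac{j^2}{2}.
\]
Multiplying by $8l$ gives
\[
8lN=4l^2\sum_i u_i^2-4l\sum_i u_i\bigl(2(l-i)+1\bigr)+4l^2j-4lj^2 .
\]
On the other side, $\sum_i\bigl(2lu_i-(2(l-i)+1)\bigr)^2$ expands to the same $u$-dependent terms plus $\sum_{i=1}^l(2i-1)^2=\frac{l(2l-1)(2l+1)}{3}$. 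Comparing the two gives exactly
\[
8lN+\tfrac{l(2l+1)(2l-1)}{3}-4lj(l-j)=\sum_{i=1}^l \bigl(2lu_i-(2(l-i)+1)\bigr)^2 .
\]

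The only real obstacle is Step 2: pinning down the normalisation of $\omega_j$ and $\rho^\vee$ in the paper's coordinates, including the factor $\frac{\sqrt2}{2}$ and the claim that $a_j^\vee/a_0^\vee=1$. I would double-check both against the $C_l^{(1)}$ case of Corollary \ref{4.8}, whose ${\rm ht}_0$ and ${\rm ht}_l$ formulas must be reproduced. Once this is confirmed, the rest is a routine completion of squares.
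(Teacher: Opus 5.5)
Your proposal is correct and follows the paper's route: the paper obtains Lemma \ref{4.12} by specialising the height formula of Lemma \ref{4.4} to type $C_l^{(1)}$ (with $h=2l$ and $a_j^\vee/a_0^\vee=1$) and lists the result ``without specific process'', and your computation of $\rho^\vee$, $|\omega_j|^2=j/2$, $(\omega_j,\rho^\vee)=jl-\frac{j^2}{2}$ followed by completing the square supplies exactly those omitted details. Your normalisation $\omega_j=\frac{\sqrt2}{2}(1,\dots,1,0,\dots,0)$ is right: running the Uglov map on $(\varnothing,j)$ gives $u(\varnothing,j)=(1,\dots,1,0,\dots,0)$, and this reproduces the $C_l^{(1)}$ formulas in Corollary \ref{4.8}; note, though, that Lemma \ref{3.5} by itself only fixes how many entries are odd, not where the ones sit.
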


\begin{lemma}\label{4.13}
Let $(\lam, j)$ be a core abacus of type $D_l^{(1)}$ with ${\rm ht}(\lambda, j)=N$.	
If $j=0, 1, l-1, l$, then
$$4(l-1)N+\frac{(l-1)l(2l-1)}{6}=\sum_{i=1}^l \left( 2(l-1)u_i -(l-i) \right)^2;$$
if $2\leq j\leq l-2$, then
$$2(l-1)N+\frac{(l-1)l(2l-1)}{6}-j(l-1)(l-j)=\sum_{i=1}^l \left((l-1)u_i -(l-i) \right)^2.$$
\end{lemma}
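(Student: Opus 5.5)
The plan is to derive the identity directly from the height formula of Lemma \ref{4.4}, specialised to type $D_l^{(1)}$, and then complete the square.

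\textbf{Data of type $D_l^{(1)}$.} We have $a_0=a_1=a_{l-1}=a_l=1$ and $a_i=2$ for $2\le i\le l-2$, so $h=2(l-1)$. Also $a_j^\vee/a_0^\vee=a_j/a_0$, which equals $1$ for $j\in\{0,1,l-1,l\}$ and $2$ for $2\le j\le l-2$. The weighted Uglov vector is $\mathrm{u}=u$ itself, with the standard inner product on $\mathbb{R}^l$.

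In the standard realisation, $\rho^\vee=\rho=(l-1,l-2,\dots,1,0)$, and the relevant $\omega_j$ are:
\begin{itemize}
\item $\omega_0=0$, $\omega_1=(1,0,\dots,0)$;
\item $\omega_j=(1,\dots,1,0,\dots,0)$ with $j$ ones, for $2\le j\le l-2$;
\item $\omega_{l-1},\omega_l=(\tfrac12,\dots,\tfrac12,\mp\tfrac12)$.
\end{itemize}

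One normalisation point needs care. The abacus of $\Lambda_1$, $\Lambda_{l-1}$, $\Lambda_l$ is a half abacus, and Definition \ref{3.3} subtracts $\tfrac32\mathbf 1$ in the cases $k=l,l+1$. So I would first confirm, using the base case $w_{\lam,j}=e$ of Lemma \ref{3.27}, that the convention for $\omega_j$ matches $u(\varnothing,j)$. For $j=0,1,l-1,l$, Lemma \ref{3.27} together with $a_j^\vee/a_0^\vee=1$ gives the base vector $\overline{w}(\omega_j)$.

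\textbf{Substitution.} Write $c=a_0^\vee/a_j^\vee$. Lemma \ref{4.4} gives
$$N=\frac{c\,h}{2}\bigl(|u|^2-|\omega_j|^2\bigr)-(u-\omega_j,\rho^\vee).$$
Multiplying by $2h/c$ turns this into
$$\frac{2h}{c}N=h^2|u|^2-\frac{2h}{c}(u,\rho)-h^2|\omega_j|^2+\frac{2h}{c}(\omega_j,\rho).$$
Complete the square using $\bigl|\tfrac{h}{?}u-\rho\bigr|^2$. The two cases are:
\begin{itemize}
\item For $c=1$, i.e. $j\in\{0,1,l-1,l\}$: multiply $N$ by $2h=4(l-1)$. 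The right side becomes $\sum_i\bigl(2(l-1)u_i-(l-i)\bigr)^2$.
\item For $c=\tfrac12$: multiply $N$ by $h=2(l-1)$. The right side becomes $\sum_i\bigl((l-1)u_i-(l-i)\bigr)^2$.
\end{itemize}
In both cases $|\rho|^2=\sum_{k=0}^{l-1}k^2=\frac{(l-1)l(2l-1)}{6}$ appears as the additive constant.

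\textbf{Main obstacle: the constant term.} What remains is to evaluate $-h^2|\omega_j|^2/(\cdot)+(\cdots)(\omega_j,\rho)$ in each case.
\begin{itemize}
\item For $2\le j\le l-2$ we have $|\omega_j|^2=j$ and $(\omega_j,\rho)=\sum_{k=1}^{j}(l-k)=jl-\frac{j(j+1)}2$. The constant becomes $-(l-1)^2j+(l-1)(2jl-j^2-j)=(l-1)j(l-j)$, which gives the stated $-j(l-1)(l-j)$ after moving it to the left.
\item For $j\in\{0,1,l-1,l\}$ the constant must vanish. This is clear for $j=0$. For $j=1$ it reads $-4(l-1)^2+4(l-1)(l-1)=0$.
\item For $j=l-1,l$ it reads $-4(l-1)^2\cdot\frac l4+4(l-1)\cdot\frac12\cdot\frac{l(l-1)}2=0$, which also checks out.
\end{itemize}
This confirms the two displayed identities, provided the normalisation of $u$ for the half-abacus charges is the one that makes $u(\varnothing,j)=\omega_j$. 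I expect verifying that normalisation from Definition \ref{3.3} to be the only delicate point.
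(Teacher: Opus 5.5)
Your argument is correct and is essentially the paper's route: the paper states Lemma \ref{4.13} without proof, as a direct specialisation of the height formula in Lemma \ref{4.4} followed by completing the square, with $u(\varnothing,j)=\omega_j$ supplied by the base case of Lemma \ref{3.27}; your constant-term checks for $j\in\{0,1,l-1,l\}$ and for $2\le j\le l-2$ are right. One small slip: the uniform multiplier that completes the square is $2ch=2h\,a_0^\vee/a_j^\vee$, not $2h/c$ (the two agree only for $c=1$), and for $c=\tfrac12$ you in fact use $h=2(l-1)$, which is the correct choice.
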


\begin{lemma}\label{4.14}
Let $(\lam, j)$ be a core abacus of type $D_{l+1}^{(2)}$  with ${\rm ht}(\lambda, j)=N$.
If $j=0, l$, then
$$4(l+1)N+\frac{l(l+1)(2l+1)}{6}=\sum_{i=1}^l \left( 2(l+1)u_i -(l-i+1) \right)^2;$$
if $1\leq j\leq l-1$, then
$$2(l+1)N+\frac{l(l+1)(2l+1)}{6}-j(l+1)(l-j)=\sum_{i=1}^l \left( (l+1)u_i -(l-i+1) \right)^2.$$
\end{lemma}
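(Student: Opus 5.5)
The statement to prove is Lemma \ref{4.14}, the Diophantine identity for type $D_{l+1}^{(2)}$. My plan is to specialize the height formula of Lemma \ref{4.4} to this type and complete the square. For $D_{l+1}^{(2)}$ the marks are $a_0=\dots=a_l=1$, so $h=l+1$. The comarks are $a_0^\vee=a_l^\vee=1$ and $a_i^\vee=2$ for $1\le i\le l-1$. Hence $\frac{a_0^\vee}{a_j^\vee}=1$ if $j=0,l$, and $\frac12$ if $1\le j\le l-1$; this dichotomy is exactly the case split in the statement. The weighted Uglov vector is $\mathrm{u}=\sqrt2\,u$.

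The first step is to fix concrete coordinates compatible with the normalisation $D=\mathrm{diag}(1,2,\dots,2,1)$. I would realise the finite root system so that, as in the computation of Example \ref{4.7}, $\omega_i^\vee=\frac{\sqrt2}{2}(1,\dots,1,0,\dots,0)$ with $i$ ones. The fundamental weights are $\omega_j=\sqrt2(\underbrace{1,\dots,1}_{j},0,\dots,0)$ for $1\le j\le l-1$, with $\omega_0=0$, and $\omega_l$ read off from the same convention. Consequently
\[
\rho^\vee=\tfrac{\sqrt2}{2}(l,l-1,\dots,1), \qquad (\mathrm{u},\rho^\vee)=\sum_{i}(l-i+1)u_i .
\]
These values are checked against Example \ref{4.7}, where $l=2$ and $j=1$ give ${\rm ht}=11$.

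Next, substitute into Lemma \ref{4.4}. Using $|\mathrm{u}|^2=2\sum u_i^2$, the case $1\le j\le l-1$ gives
\[
N=\tfrac{l+1}{2}\Big(\sum u_i^2-j\Big)-\sum_i (l-i+1)u_i+\sum_{i\le j}(l-i+1).
\]
Multiply by $2(l+1)$ and complete the square termwise, writing $(l+1)^2u_i^2-2(l+1)(l-i+1)u_i=((l+1)u_i-(l-i+1))^2-(l-i+1)^2$. Since $\sum_i (l-i+1)^2=\frac{l(l+1)(2l+1)}{6}$, this produces the sum of squares together with the constant. The leftover constant is
\[
-(l+1)^2j+2(l+1)\sum_{i=1}^{j}(l-i+1)=-(l+1)^2j+(l+1)j(2l-j+1)=(l+1)j(l-j),
\]
which, moved to the left-hand side, gives $-j(l+1)(l-j)$, as claimed.

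For $j=0$ and $j=l$ the prefactor $\frac{a_0^\vee}{a_j^\vee}$ is $1$, so the quadratic term doubles. Multiplying instead by $4(l+1)$ and completing the square with $2(l+1)u_i$ gives the first identity. For $j=0$ this is immediate. For $j=l$, where $u$ takes half-integer values by Definition \ref{3.3}, the contribution of $\omega_l$ must cancel exactly; I will verify this explicitly using $|\omega_l|^2$ and $(\omega_l,\rho^\vee)$.

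The main obstacle is this normalisation bookkeeping: pinning down $\omega_l$, the half-abacus Uglov shift, and the factor $\sqrt2$, so that all constant terms cancel. I would settle these by testing the small cases $j=0$ and $j=l$ with $(\lam,j)$ empty, where $N=0$ and the $u$ are known.
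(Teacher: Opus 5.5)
Your proposal is correct and takes the same route as the paper, which states these identities follow by direct computation from the height formula of Lemma \ref{4.4} ("without specific process"). You carry out exactly that substitution and completion of the square, and your constants check out, for instance against Example \ref{4.7}. The $j=l$ case you flagged does close: with $\omega_l=\frac{\sqrt2}{2}(1,\dots,1)$ one has $\frac{h}{2}|\omega_l|^2=(\omega_l,\rho^\vee)=\frac{l(l+1)}{4}$, so the $\omega_l$ terms cancel and your $j=0$ computation applies verbatim.
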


According to the above lemmas, we can define affine transformation $F$ on Uglov vectors as follows.

\begin{definition}\label{4.15}
Let $(\lam, j)$ be a core abacus of type $X$ with Uglov vector $u$. Define 
$$F(u)=\mathtt{k} u-\mathrm{c},$$ where $\mathtt{k}$ and $\mathrm{c}=(\mathrm{c}_1, \dots, \mathrm{c}_l)$ are defined as follows:
$$\mathtt{k}=\begin{cases}
\smallskip
l, &\text{\rm if $X=B_l^{(1)}, 2\leq j\leq l-1$};\\
\smallskip
l-1, &\text{{\rm if} $X=D_l^{(1)}$, $2\leq j\leq l-2$};\\
\smallskip
l+1, &\text{{\rm if} $X=D_{l+1}^{(2)}$, $1\leq j\leq l-1$};\\
\smallskip
2l, &\text{{\rm if} $X=B_l^{(1)}, j=0, 1, l$ {\rm or type} $C_l^{(1)}$};\\
\smallskip
2l-1, &\text{{\rm if} $X=A_{2l-1}^{(2)}, 2\leq j\leq l$};\\
\smallskip
2l-2, &\text{{\rm if} $X=D_l^{(1)}, j=0,1, l-1, l$};\\
\smallskip
2l+1, &\text{{\rm if} $X=A_{2l}^{(2)}, j\neq 0$};\\
\smallskip
2l+2, &\text{{\rm if} $X=D_{l+1}^{(2)}, j=0, l$};\\
\smallskip
4l-2, &\text{{\rm if} $X=A_{2l-1}^{(2)}, j=0, 1$};\\
\smallskip
4l+2, &\text{{\rm if} $X=A_{2l}^{(2)}, j=0$},\\
\end{cases}$$
$$\mathrm{c}_i=\begin{cases}
\smallskip
2(l-i)+1, &\text{{\rm if} $X=A_{2l-1}^{(2)}, A_{2l}^{(2)} \,{\rm or}\, C_l^{(1)}$};\\
\smallskip
l-i+1, &\text{{\rm if} $X=B_l^{(1)}$, $D_{l+1}^{(2)}$};\\
\smallskip
l-i, &\text{{\rm if} $X=D_l^{(1)}$}.\\
\end{cases}$$
\end{definition}

Combining Lemmas \ref{4.9}-\ref{4.14} yields the following theorem.

\begin{theorem}\label{4.16}
Let $(\lam, j)$ be a type $X$ core abacus with ${\rm ht}(\lambda, j)=N$. 
Then $F(u)$ is a solution of the corresponding affine Diophantine equation $$x_1^2+x_2^2+\dots+x_l^2=\mathtt{a}N+\mathtt{b},$$
where $\mathtt{a}$ and $\mathtt{b}$ are defined as follows.
$$\mathtt{a}=\begin{cases}
\smallskip
2l, & \text{{\rm if} $X=B_l^{(1)}, 2\leq j\leq l-1$};\\
\smallskip
2(l-1), & \text{{\rm if} $X=D_l^{(1)}$, $2\leq j\leq l-2$};\\
\smallskip
2(l+1), & \text{{\rm if} $X=D_{l+1}^{(2)}$, $2\leq j\leq l-1$};\\
\smallskip
4l, & \text{{\rm if} $X=B_l^{(1)}, j=0, 1, l$};\\
\smallskip
4(l-1), & \text{{\rm if} $X=D_l^{(1)}, j=0,1, l-1, l$};\\
\smallskip
4(l+1), & \text{{\rm if} $X=D_{l+1}^{(2)}, j =0, l$};\\
\smallskip
8l, & \text{{\rm if} $X=C_{l}^{(1)}$};\\
\smallskip
8l-4, & \text{{\rm if} $X=A_{2l-1}^{(2)}, 2\leq j\leq l$};\\
\smallskip
8l+4, & \text{{\rm if} $X=A_{2l}^{(2)}, j\neq 0$};\\
\smallskip
16l-8, & \text{{\rm if} $X=A_{2l-1}^{(2)}, j=0, 1$};\\
\smallskip
16l+8, & \text{{\rm if} $X=A_{2l}^{(2)}, j = 0$,}
\end{cases}$$
$$\mathtt{b}=\begin{cases}
\smallskip
(2l+1)(\frac{1}{3}l(2l-1)-j(2l-2j-1)), & \text{{\rm if} $X=A_{2l}^{(2)}$};\\
\smallskip
(2l-1)(\frac{1}{3}l(2l+1)-j(2l-2j+1)), & \text{{\rm if} $X=A_{2l-1}^{(2)}, j\neq 1$};\\
\smallskip
\frac{1}{3}l(2l+1)(2l-1), & \text{{\rm if} $X=A_{2l-1}^{(2)}, j= 1$};\\
\smallskip
\frac{1}{3}l(2l+1)(2l-1)-4lj(l-j), & \text{{\rm if} $X=C_l^{(1)}$};\\
\smallskip
\frac{1}{6}l(l+1)(2l+1)-jl(l-j+1), & \text{{\rm if} $X=B_l^{(1)}$, $j\neq 1$};\\
\smallskip
\frac{1}{6}l(l+1)(2l+1), & \text{{\rm if} $X=B_l^{(1)}$, $j= 1$};\\
\smallskip
(l-1)(\frac{1}{6}l(2l-1)-j(l-j)), & \text{{\rm if} $X=D_l^{(1)}, j\neq 1, l-1$};\\
\smallskip
\frac{1}{6}(l-1)l(2l-1), & \text{{\rm if} $X=D_l^{(1)}, j= 1, l-1$};\\
\smallskip
(l+1)(\frac{1}{6}l(2l+1)-j(l-j)), & \text{{\rm if} $X=D_{l+1}^{(2)}$}.\\
\end{cases}$$
\end{theorem}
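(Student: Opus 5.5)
Theorem \ref{4.16} is a repackaging of Lemmas \ref{4.9}--\ref{4.14}. I will take each affine type $X$ and each admissible range of $j$ and check three things: $\mathtt{k}u-\mathrm{c}$ from Definition \ref{4.15} is exactly the vector whose squared entries appear on the right-hand side of the corresponding lemma; the coefficient of $N$ there equals $\mathtt{a}$; and the constant term equals $\mathtt{b}$. So the plan is first to derive each of Lemmas \ref{4.9}--\ref{4.14} from the height formula of Lemma \ref{4.4}, and then to do the bookkeeping.

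\textbf{Deriving the lemmas.} For a fixed type I substitute the explicit data into
$${\rm ht}(\lambda,j)=\frac{a_0^{\vee}}{a_j^{\vee}}\frac{h}{2}(|\mathrm{u}|^2-|\omega_j|^2)-(\mathrm{u}-\omega_j,\rho^\vee).$$
The data are $h$, the ratio $a_j^\vee/a_0^\vee$ (equal to $1$ for the special nodes $j=0,1,l-1,l$ as appropriate, and $2$ otherwise), the weighting $\mathrm{u}=\kappa u$ with $\kappa\in\{\tfrac{\sqrt2}{2},\sqrt2,1\}$, and $\rho^\vee$ in the standard $\varepsilon$-coordinates, giving $(\mathrm u,\rho^\vee)=\kappa\sum_i r_iu_i$ with $r_i$ linear in $i$.

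This yields $N=A\sum u_i^2-\sum_i r_i u_i+C_j$, where $A=\frac{a_0^\vee}{a_j^\vee}\frac{h}{2}\kappa^2$. I then complete the square coordinatewise: multiply by a suitable constant so that each term becomes $(\mathtt{k}u_i-\mathrm c_i)^2$, with $\mathrm c_i$ proportional to $r_i$. Summing $\mathrm c_i^2$ gives the $\frac{l(l+1)(2l+1)}{6}$- or $\frac{l(2l+1)(2l-1)}{3}$-type constants, according as $\mathrm c_i$ runs over consecutive integers or consecutive odd integers. The $j$-dependent constant comes from $|\omega_j|^2$ and $(\omega_j,\rho^\vee)$, with $\omega_j=\varepsilon_1+\cdots+\varepsilon_j$ (suitably scaled), and it produces the terms $-j(\cdot)(\cdot)$. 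The factor of two between the two cases within a type comes from the ratio $a_j^\vee/a_0^\vee$ being $1$ or $2$, which explains why $\mathtt{k}$ and $\mathtt{a}$ double exactly at the special nodes.

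\textbf{Obstacle.} The main difficulty is the boundary charges: $j=1$ for $A_{2l-1}^{(2)}$ and $B_l^{(1)}$, $j=1,l-1$ for $D_l^{(1)}$, and $j=l$ for $B_l^{(1)}$ and $D_{l+1}^{(2)}$. Here the abacus of $\Lambda_j$ is a half abacus, so the Uglov vector carries the shift $-\frac32\mathbf 1$ of Definition \ref{3.3}, and $\omega_j$ is a spin-type or half-integral weight.

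I would treat these cases separately and check that the $j$-term cancels against $|\omega_j|^2$. This cancellation is what gives, for example, $\mathtt b=\frac{1}{6}l(l+1)(2l+1)$ for $B_l^{(1)}$ with $j=1$ rather than the generic expression. I would verify these boundary cases on small examples such as Example \ref{4.7} before trusting them.

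After that, it remains to confirm that multiplying through gives integer $\mathtt a,\mathtt b$ matching the tables. Finally, since $u\in\mathbb Z^l$, $F(u)$ is an integral solution, which proves the theorem.
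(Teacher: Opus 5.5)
Your route is the paper's own. Theorem~\ref{4.16} is obtained there simply by combining Lemmas~\ref{4.9}--\ref{4.14}. Those lemmas are listed without written details, but they come from inserting the type-by-type data into the height formula of Lemma~\ref{4.4} and completing the square, exactly as you describe. For instance, for $C_l^{(1)}$ one has $\mathrm{u}=\frac{\sqrt2}{2}u$, $\omega_j=\frac{1}{\sqrt2}(\varepsilon_1+\cdots+\varepsilon_j)$ and $(\mathrm{u},\rho^\vee)=\frac12\sum_i(2(l-i)+1)u_i$. Hence $N=\frac{l}{2}|u|^2-\frac12\sum_i(2(l-i)+1)u_i+\frac{j(l-j)}{2}$, and multiplying by $8l$ gives Lemma~\ref{4.12}.

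One step is wrong as written. Your last sentence uses $u\in\mathbb{Z}^l$, but this fails in the cases $k=l$ or $l+1$: charges $l-1,l$ for $D_l^{(1)}$, and charge $l$ for $B_l^{(1)}$ and $D_{l+1}^{(2)}$. There the shift $-\frac32\mathbf{1}$ of Definition~\ref{3.3} gives $u\in(\mathbb{Z}+\frac12)^l$, as Lemma~\ref{6.1}(3) reflects. This also contradicts your own obstacle paragraph. Integrality of $F(u)$ still holds, because in these cases $\mathtt{k}$ is $2l-2$, $2l$ or $2l+2$, which is even; you should say so explicitly.

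Two smaller points on the obstacle paragraph and the bookkeeping:
\begin{itemize}
\item The $-\frac32$ shift does not apply at charge $1$ for $A_{2l-1}^{(2)}$, $B_l^{(1)}$, $D_l^{(1)}$; there $k=0$ and the shift is $-\mathbf{1}$. In any case the shift plays no role in completing the square, which only involves $\mathrm{u}$ and $\omega_j$.
\item When you match coefficients of $N$, you will find a typo in the statement's $\mathtt{a}$-table. For type $D_{l+1}^{(2)}$, the value $2(l+1)$ should be assigned to $1\le j\le l-1$, as in Lemma~\ref{4.14} and Definition~\ref{4.15}, not to $2\le j\le l-1$.
\end{itemize}
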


\begin{remark}\rm{
In general, core abaci only provide partially parametrization for the solutions of the corresponding Diophantine equation.
We will consider in the next section under what conditions the parametrization is complete.}
\end{remark}

\bigskip


\section{Solutions of Diophantine equations and core abaci}

The main task of this section is to answer the following question: 
given a solution orbit $\rm O$ of a classical affine Lie type Diophantine equation, 
under what conditions $\rm O$ is parameterized by core abaci? After giving some general results, 
we obtain some special classes Diophantine equations completely parameterized by core abaci.
Consequently, some closed formulae on the number of certain core abaci are given.
We shall also raise two open problems. 

\subsection{Some general results}
It is well known that the Weyl group $W$ of type $B_l$ acts on the solution set of affine type 
Diophantine equation $x_1^2+x_2^2+\dots+x_l^2=\mathtt{a}N+\mathtt{b}$ naturally 
(see Theorem 4.16 for notations $\mathtt{a}$ and $\mathtt{b}$). For a given solution $t$, 
define the associated solution orbit ${\rm O}_t=\{w.t\mid w\in W\}$.

For the convenience of description, we give a definition as below.
\begin{definition}
Let $t$ be a solution of an affine type Diophantine equation $x_1^2+x_2^2+\dots+x_l^2=\mathtt{a}N+\mathtt{b}.$ We say $t$ is parameterized by a core abacus if
there exists certain height $N$ core abacus $(\lam, j)$ with the Uglov vector $(u_1, u_2, \dots, u_l)$ such that $F(u)=t$.
A solution orbit ${\rm O}_t$ is called to be parameterized by core abaci if there is a solution in ${\rm O}_t$ being parameterized by a core abacus.
Denote by ${\rm O}^{\rm p}_t$ the solutions in ${\rm O}_t$ being parameterized by core abaci.
The Diophantine equation is said to be completely parameterized by core if each solution orbit being parameterized by core abaci.
\end{definition}

Let us give an easy criterion for a solution being parameterized by a core abacus. The proof is omitted here. 
Recall notations $\mathtt{k}$ and $\mathrm{c}_i$ in Definition \ref{4.15}.

\begin{lemma}\label{6.1}
A solution $t$ of an affine type Diophantine equation $x_1^2+x_2^2+\dots+x_l^2=\mathtt{a}N+\mathtt{b}$
is parameterized by a core abacus $(\lam, j)$ if and only if for $i=1, \dots, l$
\begin{enumerate}
\item [{\rm (1)}] $\frac{t_i+\mathrm{c}_i}{\mathtt{k}}\in\mathbb{Z}$
and the number of odd $\frac{t_i+\mathrm{c}_i}{\mathtt{k}}$ is $j$, for type
$C_l^{(1)}$ with charge $j$, 
or type $A_{2l-1}^{(2)}$, $A_{2l}^{(2)}$, $B_l^{(1)}$, $D_l^{(1)}$ or $D_{l+1}^{(2)}$ with charge $j$ satisfying $\frac{a_j^\vee}{a_0^\vee}=2$;
\item [{\rm (2)}] $\frac{t_i+\mathrm{c}_i}{\mathtt{k}}\in\mathbb{Z}$, for type
  $A_{2l-1}^{(2)}$, $B_l^{(1)}$ or $D_l^{(1)}$ with charge $j$ being $0$ or $1$, or type $A_{2l}^{(2)}$ or $D_{l+1}^{(2)}$ with charge $j=0$;
\item [{\rm (3)}] $\frac{t_i+\mathrm{c}_i}{\mathtt{k}}\in\mathbb{Z}+\frac{1}{2}$, for type
$D_l^{(1)}$ with charge $l-1$ or $l$, or type $B_{l}^{(1)}$ or $D_{l+1}^{(2)}$ with charge $l$.
\end{enumerate}
\end{lemma}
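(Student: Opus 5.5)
Since $F(u)=\mathtt{k}u-\mathrm{c}$, a solution $t$ equals $F(u)$ exactly when $u=(t+\mathrm{c})/\mathtt{k}$. So the statement is really a characterization of which vectors $u\in\mathbb{Q}^l$ occur as Uglov vectors of core abaci of type $X$ and charge $j$. Once that characterization is in hand, the height $N$ takes care of itself. Indeed, Lemma \ref{4.4} gives ${\rm ht}(\lam,j)$ as a function of $u$, and Lemmas \ref{4.9}--\ref{4.14} say that $\sum_i F(u)_i^2=\mathtt{a}\,{\rm ht}(\lam,j)+\mathtt{b}$. Therefore, if $u=(t+\mathrm{c})/\mathtt{k}$ is the Uglov vector of some core $(\lam,j)$, then $\mathtt{a}N+\mathtt{b}=\sum t_i^2=\mathtt{a}\,{\rm ht}(\lam,j)+\mathtt{b}$, which forces ${\rm ht}(\lam,j)=N$.

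For necessity, I would read off the shape of $u$ from Definition \ref{3.3}. For a whole abacus, ${\rm\bf s}({\rm\bf U}(\lam,j))$ is an integer vector and $u={\rm\bf s}-{\bf 1}$, so $u\in\mathbb{Z}^l$; Lemma \ref{3.5} then says exactly $j$ of its components are odd. This is case (1), which covers the whole-abacus situations ($C_l^{(1)}$, and the types with $a_j^\vee/a_0^\vee=2$). For a half abacus with $k=0$, the subtraction is again ${\bf 1}$, so $u$ is integral and there is no parity restriction; this is case (2). For a half abacus with $k=l$ or $l+1$, the subtraction is $\tfrac32{\bf 1}$, so $u\in(\mathbb{Z}+\tfrac12)^l$; this is case (3). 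In each case I would match the charges to the abacus of $\Lambda_j$ listed before Definition \ref{2.6}.

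For sufficiency, I start from an admissible $u$ and construct a core with that Uglov vector. The plan is to use Lemma \ref{3.26} to move $u$ by the group generated by the $\sigma_i$, which acts on vectors exactly as an affine Weyl group. I would reduce $u$ to the Uglov vector of the abacus of $\Lambda_j$, which is $\omega_j$ in the appropriate (weighted) coordinates by Lemma \ref{3.27} with $w=e$. Concretely:
\begin{enumerate}
\item Apply $\sigma_1,\dots,\sigma_{l-1}$ and $\sigma_l$ to sort $u$ and to make the relevant signs nonnegative.
\item Apply $\sigma_0$, which replaces $u_1$ by $\tfrac{2a_j^\vee}{a_0^\vee}-u_1$ (or acts on $u_1,u_2$ together), to strictly decrease a quantity such as $|\mathrm{u}-\omega_j|$.
\item Iterate until $u$ lies in the closure of the fundamental region. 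Under the parity and integrality constraints, the only lattice point there should be $\omega_j$ itself.
\end{enumerate}
Running the recorded reflections backwards from $(\varnothing,j)$ then gives the abacus $w(\varnothing,j)$. This abacus is a core by Lemma \ref{3.1}(2), and its Uglov vector is $u$ by repeated use of Lemma \ref{3.26}.

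The main obstacle is the termination step: showing that the orbit of $\omega_j$ under this affine action is exactly the set of vectors satisfying the stated integrality and parity conditions. Equivalently, the set of $\tfrac{a_j^\vee}{a_0^\vee}\mathrm{q}+\overline{w}(\omega_j)$ with $\mathrm{q}\in Q_0^\vee$ and $\overline{w}\in W_0$ must exhaust that lattice coset, with parity preserved. I would check this type by type. The translations $t_{\mathrm q}$ change coordinates by multiples of $2$ (in unweighted coordinates) or by pairs of $\pm1$; the finite Weyl group $W_0$ permutes coordinates and changes signs, which preserves the number of odd entries. This invariance should show that the coset is determined precisely by $j$ odd coordinates, respectively half-integrality.
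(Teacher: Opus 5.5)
Your reduction $t=F(u)\iff u=(t+\mathrm{c})/\mathtt{k}$ is sound. So are the height bookkeeping through Lemmas~\ref{4.4} and \ref{4.9}--\ref{4.14}, the necessity direction, and the construction of the core by reflecting the abacus of $\Lambda_j$ (Lemmas~\ref{3.26}, \ref{3.27}, \ref{3.1}). In case (1), in case (2) for $A_{2l}^{(2)}$, $D_{l+1}^{(2)}$, and in case (3) for $B_l^{(1)}$, $D_{l+1}^{(2)}$, the closed alcove does contain exactly one admissible point, namely $\omega_j$.

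The gap is your step (3) together with the orbit claim in your last paragraph. Both fail in case (2) for $A_{2l-1}^{(2)}$, $B_l^{(1)}$, $D_l^{(1)}$, and in case (3) for $D_l^{(1)}$. In these cases $a_j^\vee/a_0^\vee=1$, so $\sigma_0$ acts by $(u_1,u_2,\dots)\mapsto(1-u_2,1-u_1,\dots)$, and the translation part consists of the vectors $\pm e_a\pm e_b$. These change the parity of two coordinates, so the number of odd entries is not the relevant invariant. Instead, every generator in Lemma~\ref{3.26} preserves $\sum_i u_i$ modulo $2$. So, contrary to your remark in the necessity part, there is a parity restriction for each single charge.

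The closed alcove is given by $u_1\ge\cdots\ge u_{l-1}\ge|u_l|$ (resp.\ $u_1\ge\cdots\ge u_l\ge 0$) together with $u_1+u_2\le 1$. It contains two admissible points:
\begin{itemize}
\item in the integral case, $0=\omega_0$ and $e_1=\omega_1$;
\item in the half-integral $D_l^{(1)}$ case, $(\frac12,\dots,\frac12,\pm\frac12)$, i.e.\ $\omega_l$ and $\omega_{l-1}$. Here $\sigma_l$ flips two signs at once, so you cannot force $u_l\ge 0$.
\end{itemize}
The two points lie in different orbits; for instance, the orbit of $\omega_0$ consists only of the integral vectors with even coordinate sum. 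In fact the lemma, read for one fixed charge, is false. In type $B_3^{(1)}$ the charges $0$ and $1$ share $\mathtt{k}=6$, $\mathrm{c}=(3,2,1)$ and the equation $x^2+y^2+z^2=12N+14$. The $1$-core $(\varnothing,1)$ has $u=(1,0,0)$ and yields $t=(3,-2,-1)$ with $N=0$, so $(t+\mathrm{c})/\mathtt{k}$ is integral. Yet the only $0$-core of height $0$ yields $(-3,-2,-1)$.

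What your argument does prove, once step (3) is amended to say that the reduced vector is one of these two points, is a slightly different version. In it, (2) means ``parameterized by a core of charge $0$ or $1$'', and (3) for $D_l^{(1)}$ means ``charge $l-1$ or $l$''. This is the reading under which the lemma holds. Your height argument survives it because the two charges share $\mathtt{k}$, $\mathrm{c}$, $\mathtt{a}$ and $\mathtt{b}$ (Lemmas~\ref{4.9}, \ref{4.11}, \ref{4.13}), so ${\rm ht}=N$ for whichever charge the reduction lands on. If you want a criterion for a single charge, you must add the condition on $\sum_i (t_i+\mathrm{c}_i)/\mathtt{k}$ modulo $2$.

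A smaller point: $|\mathrm{u}-\omega_j|$ is not a valid termination measure. For example, in type $C_l^{(1)}$ every $\omega_j$ with $j\ge 1$ lies on the $\sigma_0$-wall $u_1=1$, so reflecting in that wall leaves $|\mathrm{u}-\omega_j|$ unchanged. Use $|\mathrm{u}|$ instead: $W_0$ preserves it, and $\sigma_0$ strictly decreases it on sorted vectors lying outside the alcove.
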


Based on Lemma \ref{6.1}, we can give an answer of the question raised at the beginning of this section.

\begin{lemma}\label{6.2}
Let ${\rm O}_t$ be a solution orbit of an affine type $X$ Diophantine equation $$x_1^2+x_2^2+\dots+x_l^2=\mathtt{a}N+\mathtt{b}$$ with charge $j$, where 
$t=(t_1, \dots, t_l)\equiv (r_1, \dots, r_l) \pmod {2\mathtt{k}}$ with $r_i=0, \dots, \mathtt{k}$. 
Then ${\rm O}_t$ is parameterized by core abaci if for $i=1, \dots, l$,
\begin{enumerate}
\item [{\rm (1)}] $r_i+\mathrm{c}_i=\mathtt{k}$ or $r_i=\mathrm{c}_i$ and the number of 
$r_i$ satisfying $r_i+\mathrm{c}_i=\mathtt{k}$ is exact $j$, for type
$C_l^{(1)}$ with charge $j$, or type $A_{2l-1}^{(2)}$, $A_{2l}^{(2)}$, $B_l^{(1)}$, $D_l^{(1)}$ 
or $D_{l+1}^{(2)}$ with charge $j$ satisfying $\frac{a_j^\vee}{a_0^\vee}=2$; 
\item [{\rm (2)}] $r_i=\mathrm{c}_i$, for type
  $A_{2l-1}^{(2)}$, $B_l^{(1)}$ or $D_l^{(1)}$ with charge $j$ being $0$ or $1$, or type $A_{2l}^{(2)}$ or $D_{l+1}^{(2)}$ with charge $j=0$;
\item [{\rm (3)}] $r_i+\mathrm{c}_i=\frac{\mathtt{k}}{2}$, for type
$D_l^{(1)}$ with charge $l-1$ or $l$, or type $B_{l}^{(1)}$ or $D_{l+1}^{(2)}$ with charge $l$.
\end{enumerate}
\end{lemma}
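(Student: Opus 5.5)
The strategy is to exhibit, inside the orbit ${\rm O}_t$, an explicit element $t'$ satisfying the numerical criterion of Lemma \ref{6.1}. Lemma \ref{6.1} then produces a core abacus $(\lam, j)$ with $F(u)=t'$, so ${\rm O}_t$ is parameterized. The $B_l$ Weyl group acting on the solution set consists of all signed permutations, and every such move preserves $\sum_i x_i^2=\mathtt{a}N+\mathtt{b}$. So the task is to choose a sign change and a permutation that bring each coordinate into the residue class mod $\mathtt{k}$ (or mod $\mathtt{k}/2$) required by Lemma \ref{6.1}.

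The key observation concerns a single coordinate. If $t_i\equiv r_i \pmod{2\mathtt{k}}$ and $r_i=\mathrm{c}_i$, then replacing $t_i$ by $-t_i$ gives $-t_i+\mathrm{c}_i\equiv 0\pmod{2\mathtt{k}}$. Hence $(-t_i+\mathrm{c}_i)/\mathtt{k}$ is an even integer.

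If instead $r_i+\mathrm{c}_i=\mathtt{k}$, then $t_i+\mathrm{c}_i\equiv \mathtt{k}\pmod{2\mathtt{k}}$. Hence $(t_i+\mathrm{c}_i)/\mathtt{k}$ is an odd integer.

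In case (3), $r_i+\mathrm{c}_i=\mathtt{k}/2$ gives $(t_i+\mathrm{c}_i)/\mathtt{k}\in\mathbb{Z}+\frac12$.

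So the plan is to apply $w=$ the sign change on exactly those coordinates with $r_i=\mathrm{c}_i$ and leave the others fixed. In case (2) this negates every coordinate. In case (1) the negated coordinates give even quotients, and the remaining ones, exactly $j$ of them, give odd quotients. This matches the parity count demanded in Lemma \ref{6.1}(1).

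One subtlety is that the condition is stated at each position $i$ with its own $\mathrm{c}_i$. Since $\mathrm{c}_i$ depends on $i$, a permutation would mismatch the constants. I will therefore use only sign changes, which keep positions fixed, so no permutation is needed. Care is needed when $r_i=\mathrm{c}_i$ and $r_i+\mathrm{c}_i=\mathtt{k}$ hold simultaneously, i.e.\ $\mathrm{c}_i=\mathtt{k}/2$. I will check from Definition \ref{4.15} that this happens only in degenerate cases, or else treat such a coordinate as flexible: it can be made either odd (leave it) or even (negate it), which only helps in achieving exactly $j$ odd quotients.

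The main obstacle is whether Lemma \ref{6.1} truly applies to the new solution $t'$ at the same $N$ and charge $j$. Lemma \ref{6.1} is an "if and only if", so the integrality and parity conditions should suffice. It remains to verify that $u=(t'+\mathrm{c})/\mathtt{k}$ is the Uglov vector of an actual core abacus of charge $j$. I would argue this by realizing $u$ via Lemma \ref{3.26}: every integer vector with the prescribed parity pattern lies in the $W$-orbit of $u(\varnothing,j)$. By Lemma \ref{3.27} this orbit is exactly the set of translates $\frac{a_j^\vee}{a_0^\vee}\mathrm{q}+\overline{w}(\omega_j)$. By Lemma \ref{4.4} the resulting height then equals $N$, because the Diophantine identity is equivalent to the height formula.
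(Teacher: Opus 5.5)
Your proposal is correct and is essentially the paper's proof: the paper also uses only sign changes, keeping $t_i$ when $r_i=\mathtt{k}-\mathrm{c}_i$ and negating it otherwise, and then cites Lemma~\ref{6.1} directly rather than re-deriving it as you sketch. The coincidence $\mathrm{c}_i=\mathtt{k}/2$ genuinely occurs (e.g.\ type $C_3^{(1)}$, where $\mathtt{k}=6$ and $\mathrm{c}_2=3$), so your fallback of leaving such a coordinate unchanged is needed; with it your rule becomes exactly the paper's, which keeps precisely the $j$ coordinates with $r_i+\mathrm{c}_i=\mathtt{k}$ and so gives exactly $j$ odd quotients.
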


\begin{proof}
Since the all the proof of these three cases is similar, we shall only prove the first one.
Assume that $t_i=2\mathtt{k}k_i+r_i$ for all $i=1, \dots, l$.
Note that  $r_i=\mathtt{k}-\mathrm{c}_i$ or  $r_i=\mathrm{c}_i$.
Take $v=(v_1, \dots, v_l)\in {\rm O}_t$, where
$$v_i=\begin{cases}
       t_i, & \mbox{if } r_i=\mathtt{k}-\mathrm{c}_i; \\
       -t_i, & \mbox{otherwise}.
     \end{cases}$$
Then $$\frac{v_i+\mathrm{c}_i}{\mathtt{k}}=\begin{cases}
       2k_i+1, & \mbox{if } r_i=\mathtt{k}-\mathrm{c}_i; \\
       2k_i, & \mbox{otherwise}.
     \end{cases}$$
Note that the number of odd components in $(\frac{v_1+\mathrm{c}_1}{\mathtt{k}}, \dots, \frac{v_l+\mathrm{c}_l}{\mathtt{k}})$ is $j$. 
Then the result follows by Lemma \ref{6.1}.
\end{proof}

Next we study the relationship between two solution orbits. To this aim, we need to define an equivalence relation in $\mathbb{Z}^l$.

\begin{definition}\label{6.3}
Let $k\in \mathbb{Z}$ and $(b_1, \dots, b_l)\equiv (r_1, \dots, r_l)$, $(n_1, \dots, n_l)\equiv (r_1', \dots, r_l') \pmod k$, 
where $b_i, n_i\in \mathbb{Z}$ for $i=1, \dots, l$. Define $(b_1, \dots, b_l) \sim_k (n_1, \dots, n_l)$ if there exists 
$\sigma\in \mathfrak{S}_l$ such that $r_i=\sigma(r_i')$ or $r_i+\sigma(r_i')=k$ for each $i=1, \dots, l$.
\end{definition}

It is easy to check that $\equiv_k$ is an equivalence relation. 
The equivalence class including $(b_1, \dots, b_l)$ is denoted by $[(b_1, \dots, b_l)]$.
The following lemma can be obtained by direct calculations and we omit the proof here.

\begin{lemma}\label{5.4'}
Let $t, t'\in \rm O$ be two solutions of the equation $x_1^2+x_2^2+\dots+x_l^2=\mathtt{a}N+\mathtt{b}.$ 
Then $t \sim_{2\mathtt{k}} t'$.
\end{lemma}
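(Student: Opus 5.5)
The plan is to reduce to the generators of the orbit. The solution orbit ${\rm O}$ is, by definition, ${\rm O}_t=\{w.t\mid w\in W\}$, where $W$ is the Weyl group of type $B_l$ acting on $\mathbb{Z}^l$ by signed permutations. So any $t'\in{\rm O}$ has the form $t'=w.t$ with $w$ a product of the generators $\sigma_1,\dots,\sigma_{l-1}$ (adjacent transpositions of coordinates) and $\sigma_l$ (sign change of the last coordinate). First I will check that $\sim_{2\mathtt{k}}$ is transitive and symmetric. Once that is in place, it suffices to verify $t\sim_{2\mathtt{k}}\sigma.t$ for a single generator $\sigma$ and then induct on the length of $w$.

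Both checks work directly on residues. Write $t\equiv(r_1,\dots,r_l)\pmod{2\mathtt{k}}$ with $0\le r_i<2\mathtt{k}$.
\begin{enumerate}
\item[(a)] For $\sigma_i$ with $1\le i\le l-1$, the residue vector of $\sigma_i.t$ is the residue vector of $t$ with entries $i$ and $i+1$ swapped. Taking the permutation in Definition \ref{6.3} to be the transposition $(i,i+1)$ gives $r_m=\sigma(r'_m)$ for every $m$.
\item[(b)] For $\sigma_l$, the residue of $-t_l$ is $2\mathtt{k}-r_l$ when $r_l\neq 0$, and $0$ when $r_l=0$. Taking $\sigma=\mathrm{id}$, we get $r_l+r'_l=2\mathtt{k}$ in the first case and $r_l=r'_l$ in the second. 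All other coordinates are unchanged, so $r_m=r'_m$ for $m\ne l$.
\end{enumerate}
Thus in both cases each coordinate satisfies one of the two alternatives of Definition \ref{6.3} with modulus $2\mathtt{k}$.

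The induction step needs transitivity, which is the only point requiring care. Suppose $t\sim t'$ via $\sigma$ and $t'\sim t''$ via $\tau$. For each $i$, the residue $r_i$ equals either $r'_{\sigma(i)}$ or $2\mathtt{k}-r'_{\sigma(i)}$ (reading $2\mathtt{k}-0$ as $0$ modulo $2\mathtt{k}$). Likewise $r'_{\sigma(i)}$ is $\pm r''_{\tau\sigma(i)}$ modulo $2\mathtt{k}$. Composing, $r_i\equiv\pm r''_{\tau\sigma(i)}\pmod{2\mathtt{k}}$, so $r_i=r''_{\tau\sigma(i)}$ or $r_i+r''_{\tau\sigma(i)}=2\mathtt{k}$ (or both are $0$). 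The permutation $\tau\sigma$ therefore witnesses $t\sim t''$. Symmetry holds because both conditions are symmetric once $\sigma$ is replaced by $\sigma^{-1}$.

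The main obstacle is that Definition \ref{6.3} is written loosely: ``$r_i=\sigma(r'_i)$'' has to be read as $r_i=r'_{\sigma(i)}$. The other delicate point is the residue $0$, where $r+r'=2\mathtt{k}$ fails but $r=r'$ holds. I will state this interpretation explicitly before carrying out the checks. Combining (a), (b), transitivity, and induction on $\ell(w)$ then gives $t\sim_{2\mathtt{k}}t'$ for all $t,t'\in{\rm O}$.
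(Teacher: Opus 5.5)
Your argument is correct, and it is exactly the kind of direct residue calculation the paper has in mind: the paper omits this proof entirely and only asserts that $\sim_{k}$ is an equivalence relation, which you verify. The reduction to generators and the transitivity check could be bypassed, since every $w$ in the type $B_l$ Weyl group acts as a signed permutation $t'_i=\epsilon_i t_{\pi(i)}$; then $r'_i\equiv \epsilon_i r_{\pi(i)} \pmod{2\mathtt{k}}$ gives the witnessing permutation in one line.
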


Based on Lemma \ref{5.4'}, we can say that a solution orbit in an equivalence class in the sense of Definition \ref{6.3}.
It is easy to check that for arbitrary $(b_1, \dots, b_l)\in \mathbb{Z}^l$, there exists $(r_1, \dots, r_l)\sim_{2\mathtt{k}}(b_1, \dots, b_l)$ 
such that $0\leq r_i\leq \mathtt{k}$ for each $i=1, \dots, l$.
Then we have the following lemma about the relation between two solution orbits.

\begin{lemma}\label{6.4}
Let ${\rm O}_v$ and ${\rm O}_w$ be two solution orbits of an affine type $X$ Diophantine equation $$x_1^2+x_2^2+\dots+x_l^2=\mathtt{a}N+\mathtt{b}$$
with charge $j$, where $$(v_1, \dots, v_l)\equiv (r_1, \dots, r_l) \pmod {2\mathtt{k}},
$$ $$(w_1, \dots, w_l)\equiv (r_1', \dots, r_l') \pmod {2\mathtt{k}}$$ and
$$(r_1, \dots, r_l)\sim_{2\mathtt{k}}(r_1', \dots, r_l').$$
If $(r_1, \dots, r_l)$ satisfies one of the condition of Lemma \ref{6.2}, 
then ${\rm O}_w$ is parameterized by core abaci.
\end{lemma}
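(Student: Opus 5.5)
The plan is to reduce the statement to Lemma \ref{6.2} by replacing the residue vector of $w$ with one that literally satisfies its hypothesis. The key observation is that the relation $\sim_{2\mathtt{k}}$ of Definition \ref{6.3} is generated by two operations: permuting coordinates, and replacing a residue $r_i$ by $2\mathtt{k}-r_i$. Both operations can be realized inside the orbit, since the type $B_l$ Weyl group acts on $\mathbb{Z}^l$ by signed permutations and therefore preserves $x_1^2+\dots+x_l^2$.

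\textbf{Step 1.} From $(r_1,\dots,r_l)\sim_{2\mathtt{k}}(r'_1,\dots,r'_l)$ we obtain $\sigma\in\mathfrak{S}_l$ such that, for each $i$, either $r_i=r'_{\sigma(i)}$ or $r_i+r'_{\sigma(i)}=2\mathtt{k}$ (reading the second alternative modulo $2\mathtt{k}$).

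\textbf{Step 2.} Define $w^\ast\in{\rm O}_w$ by
\[
w^\ast_i=\pm w_{\sigma(i)},
\]
with the sign $+$ in the first case of Step 1 and $-$ in the second. This is a signed permutation of $w$, hence lies in ${\rm O}_w$ and solves the same equation. Since $-w_{\sigma(i)}\equiv -r'_{\sigma(i)}\equiv r_i \pmod{2\mathtt{k}}$, we get $w^\ast\equiv(r_1,\dots,r_l)\pmod{2\mathtt{k}}$.

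\textbf{Step 3.} Since ${\rm O}_{w^\ast}={\rm O}_w$ and the residue vector of $w^\ast$ is $(r_1,\dots,r_l)$, which satisfies one of the conditions of Lemma \ref{6.2}, that lemma applies directly to ${\rm O}_{w^\ast}$. It shows that ${\rm O}_w$ is parameterized by core abaci. Concretely, the proof of Lemma \ref{6.2} flips the signs of the appropriate coordinates of $w^\ast$ to produce $v\in{\rm O}_w$ whose vector $\bigl((v_i+\mathrm{c}_i)/\mathtt{k}\bigr)_i$ has the integrality and parity pattern required by Lemma \ref{6.1}.

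\textbf{Main obstacle.} The delicate point is the normalization of residues. Lemma \ref{6.2} asks for representatives with $0\le r_i\le\mathtt{k}$, whereas $\sim_{2\mathtt{k}}$ permits pairing $r$ with $2\mathtt{k}-r$, so a representative may fall outside $[0,\mathtt{k}]$. I would first replace each residue by $\min(r,2\mathtt{k}-r)$, which is again realized by a sign change inside the orbit. I would then check that the conditions of Lemma \ref{6.2}, namely $r_i=\mathrm{c}_i$, $r_i+\mathrm{c}_i=\mathtt{k}$ and $r_i+\mathrm{c}_i=\mathtt{k}/2$, are stated for these normalized values. The count of indices with $r_i+\mathrm{c}_i=\mathtt{k}$ must also be unaffected by the permutation $\sigma$. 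This needs care because $\mathrm{c}_i$ depends on $i$, so permuting coordinates does not permute the conditions with them. If this turns out to be an issue, I would use that the hypothesis is imposed on $(r_1,\dots,r_l)$ in its own coordinate order, and that we built $w^\ast$ to have exactly that residue vector in that same order. Under that reading the $i$-dependence of $\mathrm{c}_i$ causes no trouble.
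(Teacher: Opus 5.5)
Your proof is correct and is essentially the paper's: the paper uses the same permutation $\sigma$ and writes down a single element $z\in{\rm O}_w$, obtained by permuting and then setting $z_i=\pm w_i$, and checks the integrality and parity of $(z_i+\mathrm{c}_i)/\mathtt{k}$ for Lemma \ref{6.1} in four cases; that $z$ is exactly what you get by composing your signed permutation $w\mapsto w^\ast$ with the sign flips from the proof of Lemma \ref{6.2}. Factoring through Lemma \ref{6.2} is slightly cleaner, and the normalization worry in your last paragraph is moot, because the hypothesis already puts $(r_1,\dots,r_l)$ in the form Lemma \ref{6.2} requires and $w^\ast$ realizes it in the same coordinate order.
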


\begin{proof}
We assume that $(r_1, \dots, r_l)$ satisfies condition $(1)$ of Lemma \ref{6.2} because all the proof is similar.
Since $(r_1, \dots, r_l)\sim_{2\mathtt{k}}(r_1', \dots, r_l')$, by Definition \ref{6.3}, there exists $\sigma\in \mathfrak{S}_l$ such that 
$r_i=\sigma(r_i')$ or $r_i+\sigma(r_i')=2\mathtt{k}$ for each $i=1, \dots, l$. 
Then clearly 
$$\sigma(r_i')=\begin{cases}
	\mathtt{k}+\mathrm{c}_i, & \mbox{if }  \sigma(r_i')= 2\mathtt{k}-r_i\,\, \text{and} \,\,r_i+\mathrm{c}_i=\mathtt{k}; \\
	2\mathtt{k}-\mathrm{c}_i, & \mbox{if } \sigma(r_i')= 2\mathtt{k}-r_i\,\, \text{and} \,\,r_i+\mathrm{c}_i \neq\mathtt{k};\\
	\mathtt{k}-\mathrm{c}_i, & \mbox{if } \sigma(r_i')\neq 2\mathtt{k}-r_i \,\, \text{and}\,\,r_i+\mathrm{c}_i=\mathtt{k}; \\
     \mathrm{c}_i, & \mbox{if } \sigma(r_i') \neq 2\mathtt{k}-r_i\,\, \text{and} \,\,r_i+\mathrm{c}_i \neq \mathtt{k}.
\end{cases}$$
Take $z=(z_1, \dots, z_l)\in {\rm O}_w$, where
$$z_i=\begin{cases}
	-w_i, & \mbox{if }  \sigma(r_i')= 2\mathtt{k}-r_i\,\, \text{and} \,\,r_i+\mathrm{c}_i=\mathtt{k}; \\
	w_i, & \mbox{if } \sigma(r_i')= 2\mathtt{k}-r_i\,\, \text{and} \,\,r_i+\mathrm{c}_i \neq\mathtt{k};\\
	w_i, & \mbox{if } \sigma(r_i')\neq 2\mathtt{k}-r_i \,\, \text{and}\,\,r_i+\mathrm{c}_i=\mathtt{k}; \\
    -w_i, & \mbox{if } \sigma(r_i') \neq 2\mathtt{k}-r_i\,\, \text{and} \,\,r_i+\mathrm{c}_i \neq \mathtt{k}.
\end{cases}$$
Suppose that $w_i=2\mathtt{k}k_i+\sigma(r_i')$ for all $i=1, \dots, l$.
Then
$$\frac{z_i+\mathrm{c}_i}{\mathtt{k}}=\begin{cases}
	2k_i-1, & \mbox{if }  \sigma(r_i')= 2\mathtt{k}-r_i\,\, \text{and} \,\,r_i+\mathrm{c}_i=\mathtt{k}; \\
	2k_i+2, & \mbox{if } \sigma(r_i')=2\mathtt{k}-r_i\,\, \text{and} \,\,r_i+\mathrm{c}_i \neq\mathtt{k};\\
	2k_i+1, & \mbox{if } \sigma(r_i')\neq 2\mathtt{k}-r_i \,\, \text{and}\,\,r_i+\mathrm{c}_i=\mathtt{k}; \\
	2k_i, & \mbox{if } \sigma(r_i') \neq 2\mathtt{k}-r_i\,\, \text{and} \,\,r_i+\mathrm{c}_i \neq \mathtt{k}.
\end{cases}$$ 
By Lemma \ref{6.1}, the result follows.
\end{proof}

By Lemma \ref{6.4}, direct computation yields the following result, which can be used to simplify the related calculation in the next subsections.

\begin{corollary}\label{6.5}
Let ${\rm O}_v$ and ${\rm O}_w$ be two solution orbits of an affine type $X$ 
Diophantine equation $x_1^2+x_2^2+\dots+x_l^2=\mathtt{a}N+\mathtt{b}$ with $[v]=[w]$.
Then the number of solutions being parameterized by core abaci in ${\rm O}_v$ is equal to that in ${\rm O}_w$.
\end{corollary}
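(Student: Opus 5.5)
The plan is to build an explicit bijection between ${\rm O}^{\rm p}_v$ and ${\rm O}^{\rm p}_w$ from the data witnessing $[v]=[w]$, following the construction in the proof of Lemma \ref{6.4}. Since the equivalence classes are defined through residues modulo $2\mathtt{k}$, Lemma \ref{5.4'} shows that every element of ${\rm O}_v$ lies in $[v]$ and every element of ${\rm O}_w$ lies in $[w]=[v]$. So I fix $v'\in{\rm O}_v$ and $w'\in{\rm O}_w$ whose residues $(r_1,\dots,r_l)$ and $(r'_1,\dots,r'_l)$ modulo $2\mathtt{k}$ are related by a permutation $\sigma\in\mathfrak{S}_l$, with $r_i=r'_{\sigma(i)}$ or $r_i+r'_{\sigma(i)}=2\mathtt{k}$ for each $i$.

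The key observation is that membership in ${\rm O}^{\rm p}$ depends only on residues. By Lemma \ref{6.1}, a solution $t$ is parameterized by a core abacus exactly when each $(t_i+\mathrm{c}_i)/\mathtt{k}$ lies in $\mathbb{Z}$ (respectively $\mathbb{Z}+\frac12$), together with a parity count in case (1). Whether $(t_i+\mathrm{c}_i)/\mathtt{k}$ is an integer, and whether it is odd or even, is determined by $t_i \bmod 2\mathtt{k}$. The same holds for the half-integer condition in case (3), since $\mathtt{k}$ is even there. Hence ${\rm O}^{\rm p}_v$ is the set of $t\in{\rm O}_v$ whose residue vector belongs to an explicit set $R\subset(\mathbb{Z}/2\mathtt{k})^l$, and ${\rm O}^{\rm p}_w$ is described by the same $R$.

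Next I would set up the map. Elements of ${\rm O}_v$ are exactly the vectors obtained from $v'$ by permuting coordinates and changing signs. Let $\phi:{\rm O}_v\to{\rm O}_w$ send $\epsilon\cdot\pi(v')$ to $\epsilon\cdot\pi\sigma^{-1}$ applied to $w'$, where the sign on each coordinate is adjusted by whether $r_i=r'_{\sigma(i)}$ or $r_i\equiv -r'_{\sigma(i)}$. This is the same sign bookkeeping as the choice of $z_i$ in Lemma \ref{6.4}. With this choice, $\phi(t)$ has residue vector equal to that of $t$ coordinatewise modulo $2\mathtt{k}$. By the previous paragraph, $\phi$ therefore carries ${\rm O}^{\rm p}_v$ into ${\rm O}^{\rm p}_w$. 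The symmetric construction with $\sigma^{-1}$ gives the inverse, so the two counts agree.

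The main obstacle is well-definedness of $\phi$ when $v'$ has repeated entries up to sign, or zero entries. In that case different pairs $(\epsilon,\pi)$ give the same $t$ but might give different images. I would avoid this by counting over the orbit of the type $B_l$ Weyl group with stabilizers. Equal coordinates, or coordinates that are negatives of each other, have equal or negated residues, and this should force the corresponding $w'$-coordinates to coincide up to sign as well, so the stabilizers match. If that fails, I would fall back on a direct count: $|{\rm O}^{\rm p}|$ equals the number of residue patterns in $R$ realized by the orbit, weighted by multiplicities. These multiplicities can be read off from residues alone, and the residues are the same for $v$ and $w$.
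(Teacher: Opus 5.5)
The step that breaks is the one you flag as the main obstacle, and it cannot be repaired. Your $\phi$ is really defined on group elements. If $h$ is the signed permutation with $h.\bar v'=\bar w'$ modulo $2\mathtt{k}$, then $\phi(g.v')=gh^{-1}.w'$. This descends to a map on ${\rm O}_v$ only when $h\,\mathrm{Stab}(v')\,h^{-1}\subseteq \mathrm{Stab}(w')$. Your reason for expecting this runs the wrong way: equal (or opposite) coordinates have equal (or opposite) residues, but equal residues modulo $2\mathtt{k}$ say nothing about equality of the coordinates. The fallback fails for the same reason. The multiplicity with which a residue pattern occurs is $|\mathrm{Stab}(v')|$, and that depends on the actual entries, not on residues.

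In fact the equality of counts is false. Take type $D_4^{(2)}$ with $j=2$, so $\mathtt{k}=4$, $\mathrm{c}=(3,2,1)$, and the equation is $x^2+y^2+z^2=8N+6$. For $N=20$ one has $166=9^2+9^2+2^2=9^2+7^2+6^2$. Put $v=(9,9,2)\equiv(1,1,2)$ and $w=(7,9,6)\equiv(7,1,6) \pmod 8$, so $v\sim_8 w$. Lemma \ref{6.1}(1) requires all $(t_i+\mathrm{c}_i)/4$ to be integers with exactly two odd. This gives ${\rm O}_v^{\rm p}=\{(9,2,-9)\}$ but ${\rm O}_w^{\rm p}=\{(9,-6,7),(-7,-6,-9)\}$. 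This is the dichotomy $(|{\rm O}_t|,|{\rm O}_t^{\rm p}|)=(24,1)$ versus $(48,2)$ that the paper itself records for the class $[(1,1,2)]$ in the proof of Theorem \ref{6.16}. No residue-preserving bijection ${\rm O}_v\to{\rm O}_w$ can exist, since $|{\rm O}_v|=24\neq 48=|{\rm O}_w|$.

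What your construction does prove, if you stay at the level of the type $B_l$ Weyl group $W$, is a weighted version. We have $g.w'\in{\rm O}_w^{\rm p}$ iff $g.\bar w'=gh.\bar v'\in R$ iff $gh.v'\in{\rm O}_v^{\rm p}$. Hence the set $\{g\in W: g.w'\in{\rm O}_w^{\rm p}\}$ is the right translate by $h^{-1}$ of $\{g\in W: g.v'\in{\rm O}_v^{\rm p}\}$. Counting fibres of $g\mapsto g.v'$ then gives $|{\rm O}_v^{\rm p}|\,|\mathrm{Stab}(v')|=|{\rm O}_w^{\rm p}|\,|\mathrm{Stab}(w')|$, that is, $|{\rm O}_v^{\rm p}|/|{\rm O}_v|=|{\rm O}_w^{\rm p}|/|{\rm O}_w|$. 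This ratio is the quantity the paper actually uses afterwards: the enumeration theorems rely on constant values of $|{\rm O}_t|/|{\rm O}_t^{\rm p}|$. Equality of the counts themselves holds exactly when $|\mathrm{Stab}(v')|=|\mathrm{Stab}(w')|$. The paper gives no argument beyond ``direct computation from Lemma \ref{6.4}'', so it does not supply a way around this either.
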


\medskip


\subsection{Special Diophantine equations}

Combining Lemmas \ref{6.2} and \ref{6.4}, we can study some special classes of Diophantine equations.

\begin{theorem}\label{6.6}
The following affine type Diophantine equations are completely parameterized by core abaci.
\begin{enumerate}
  \item [{\rm(1)}] $x^2+y^2=16N+10$; 
  \item [{\rm(2)}] $x^2+y^2=16N+2$;
  \item [{\rm(3)}] $x^2+y^2+z^2=24N+11$; 
  \item [{\rm(4)}] $x^2+y^2+z^2=6N+2$;
  \item [{\rm(5)}] $x_1^2+x_2^2+x_3^2+x_4^2=8N+6$;
  \item [{\rm(6)}] $x^2+y^2=12N+5$; 
  \item [{\rm(7)}] $x^2+y^2=6N+2$; 
  \item [{\rm(8)}] $x^2+y^2+z^2=8N+6$; 
  \item [{\rm(9)}] $x_1^2+x_2^2+x_3^2+x_4^2=6N+2$.
\end{enumerate}
\end{theorem}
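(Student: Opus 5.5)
The plan is to treat each of the nine equations separately with one common template. First, read off from Theorem \ref{4.16} which affine type $X$, rank $l$ and charge(s) $j$ produce the equation. Then compute the constants $\mathtt{k}$ and $\mathrm{c}=(\mathrm{c}_1,\dots,\mathrm{c}_l)$ from Definition \ref{4.15}. Finally, show by a congruence argument that every integral solution $t$ has a residue vector modulo $2\mathtt{k}$ which, after reduction to $0\le r_i\le\mathtt{k}$ and up to $\sim_{2\mathtt{k}}$, satisfies one of the conditions of Lemma \ref{6.2}. Lemma \ref{6.4} (together with Lemma \ref{5.4'}) then gives that the orbit ${\rm O}_t$ is parameterized by core abaci. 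The point of Lemma \ref{6.4} is that I never need to locate the good representative explicitly. It is enough to show that the residue class of $t$ modulo $2\mathtt{k}$, up to signed permutations, lies in the admissible set of Lemma \ref{6.2}. The type $B_l$ Weyl group acting on solutions is exactly the group of signed permutations.

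The identifications I expect, each to be checked against the formulas for $\mathtt{a},\mathtt{b}$ in Theorem \ref{4.16}, are as follows.
\begin{enumerate}
\item[(1), (2)] Type $C_2^{(1)}$, with $\mathtt{a}=16$ and $\mathtt{b}=10-8j(2-j)$. Equation (1) comes from $j=0$ or $2$, equation (2) from $j=1$; here $\mathtt{k}=4$ and $\mathrm{c}=(3,1)$.
\item[(3)] Type $C_3^{(1)}$ with $j=1,2$ ($\mathtt{b}=35-24=11$); here $\mathtt{k}=6$ and $\mathrm{c}=(5,3,1)$.
\item[(4)] Type $B_3^{(1)}$ with $j=2$; here $\mathtt{k}=3$ and $\mathrm{c}=(3,2,1)$.
\item[(5)] Type $B_4^{(1)}$ with $j=2,3$; here $\mathtt{k}=4$ and $\mathrm{c}=(4,3,2,1)$.
\item[(6), (7)] Type $D_3^{(2)}$, with $j=0$ or $2$ (giving $12N+5$) and $j=1$ (giving $6N+2$); here $\mathtt{k}=6$ resp.\ $3$.
\item[(8)] Type $D_4^{(2)}$ with $j=1,2$; here $\mathtt{k}=4$.
\item[(9)] Type $D_4^{(1)}$ with $j=2$; here $\mathtt{k}=3$ and $\mathrm{c}=(3,2,1,0)$.
\end{enumerate}
These identifications are also what produce the enumerative corollaries for $C_2^{(1)},D_3^{(2)},B_3^{(1)},D_4^{(2)},B_4^{(1)},D_4^{(1)}$ announced in the introduction.

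The heart of each case is a small computation of squares modulo $2\mathtt{a}$ (or modulo $4\mathtt{k}^2$ where needed). Take (1) as the model. From $x^2+y^2\equiv10\pmod{16}$, both $x,y$ are odd, and since odd squares are $1$ or $9$ modulo $16$, one of them is $\equiv\pm1$ and the other $\equiv\pm3\pmod 8$. After reducing to $0\le r_i\le 4$ and permuting, $r=(3,1)$. This satisfies condition (1) of Lemma \ref{6.2} with no index having $r_i+\mathrm{c}_i=\mathtt{k}$, i.e.\ $j=0$. For (2), $x^2+y^2\equiv2\pmod{16}$ forces $x\equiv\pm y\pmod 8$ with both odd. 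Hence $r=(1,1)$ or $(3,3)$, and in either case exactly one coordinate has $r_i+\mathrm{c}_i=4$, i.e.\ $j=1$. The remaining cases go the same way: the congruence pins down the multiset of residues modulo $\mathtt{k}$ up to sign. One then checks that a suitable signed permutation matches coordinate $i$ to either $\mathrm{c}_i$ or $\mathtt{k}-\mathrm{c}_i$, with the count of the latter equal to $j$. For charges with $a_j^\vee/a_0^\vee\ne2$, one matches instead to the rule (2) or (3) of Lemma \ref{6.2}.

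I expect the main obstacle to be the rank $3$ and $4$ cases, (3), (5), (8), (9). There, sums of three or four squares modulo $2\mathtt{a}$ admit several residue patterns, and the bookkeeping must show that every pattern is admissible for at least one of the allowed charges. This is where having two charges (e.g.\ $j=1,2$ for $C_3^{(1)}$, or $j=2,3$ for $B_4^{(1)}$) is essential. Patterns with an even number of odd coordinates go to one charge and the others to the other charge, using Lemma \ref{3.5} and the parity count in Lemma \ref{6.1}. For these cases I would first enumerate residues modulo $2\mathtt{k}$ directly and only then sort them by the number of coordinates with $r_i\equiv\mathtt{k}-\mathrm{c}_i$. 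A further subtlety is that Lemma \ref{6.2} requires each $r_i$ to be matched with $\mathrm{c}_i$ in its own position. If a permutation cannot achieve this, for instance because two coordinates demand the same $\mathrm{c}_i$, the argument for that pattern fails. In that event I would try to exclude the pattern by refining the congruence modulo a higher power of $2$ or modulo $3$.
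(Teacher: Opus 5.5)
Your template is the paper's own route. The only case the paper writes out is (9), and it proceeds exactly as you describe: classify the residues of solutions modulo $2\mathtt{k}=6$, group them into $\sim_6$-classes, check a representative of each class against Lemma \ref{6.2}(1), and conclude with Lemma \ref{6.4}. Your identifications of type, charge, $\mathtt{k}$ and $\mathrm{c}$ are all correct. In (1)--(5) and (7)--(9) the relevant criterion is condition (1) of Lemma \ref{6.1}, and there the residue enumeration you outline does succeed. The template breaks, however, on (6).

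In (6) the charges are $j=0$ and $j=l=2$ of $D_3^{(2)}$, with $\mathtt{k}=6$ and $\mathrm{c}=(2,1)$. So only conditions (2) and (3) of Lemma \ref{6.2} are available, and both pin the reduced residues modulo $12$ to the single multiset $\{1,2\}$.

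But $x^2+y^2\equiv 5\pmod{12}$ only forces one coordinate to be $\equiv\pm1$ and the other $\equiv\pm2\pmod 6$. So the reduced patterns $\{1,4\}$, $\{5,2\}$ and $\{5,4\}$ also occur, e.g.\ $1^2+4^2=17$ ($N=1$), $5^2+2^2=29$ ($N=2$), $5^2+4^2=41$ ($N=3$). None of them is $\sim_{12}$-equivalent to $\{1,2\}$. For $N=1$ the orbit of $(1,4)$ is the only solution orbit, so Lemmas \ref{6.2} and \ref{6.4} never reach it. Your fallback of excluding bad patterns by a finer congruence cannot work either, because these patterns genuinely occur.

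The missing observation is that conditions (2) and (3) of Lemma \ref{6.1} carry no parity count. Only residues modulo $\mathtt{k}$ matter, not modulo $2\mathtt{k}$, so Lemma \ref{6.2}(2),(3) are merely sufficient. Use Lemma \ref{6.1} directly: after a signed permutation, $t\equiv(-2,-1)\pmod 6$, so $(t_i+\mathrm{c}_i)/6\in\mathbb{Z}$ and ${\rm O}_t$ is parameterized by a core abacus of charge $0$. For instance, $(4,-1)=F(1,0)$, where $(1,0)$ is the Uglov vector of $\sigma_0(\varnothing,0)$, which has height $1$. Alternatively, arrange $t\equiv(1,2)\pmod 6$ and use Lemma \ref{6.1}(3) to get a core abacus of charge $2$.

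The paper only says the remaining cases are similar to (9), so it does not address this point either.
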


\begin{proof}
We only prove (9) because the proof of all the others is similar. 

Note that equation (9) is of affine type $D_4^{(1)}$ with charge 2. Note that $\mathtt{k}=3$, $\mathrm{c}=(3, 2, 1, 0)$ in this case.
For a given nonnegative integer $N$, let ${\rm O}_t$ be a solution orbit of equation (9) with $t$ being a nonnegative solution and 
$(t_1, t_2, t_3, t_4)\equiv (r_1, r_2, r_3, r_4) \pmod {6}$ with $0\leq r_1\leq r_2\leq r_3 \leq r_4\leq 5$. 
Since $6N+2$ is even, an easy computation gives that all $t_i$ are all odd, all even or only two ones even.
Moreover, the residue $(r_1, r_2, r_3, r_4)$ has to be one of the
following 16 cases by a routine calculation:
$$\begin{array}{cccccc}
  (0, 0, 1, 1) & (0, 0, 1, 5) & (0, 0, 2, 2) & (0, 0, 2, 4) & (0, 0, 4, 4) & (0, 0, 5, 5)\\ 
  (0, 1, 2, 3) & (0, 1, 3, 4) & (0, 2, 3, 5) & (0, 3, 4, 5) & (1, 1, 3, 3) & (1, 3, 3, 5)\\
  (2, 2, 3, 3) & (2, 3, 3, 4) & (3, 3, 4, 4) & (3, 3, 5, 5)
\end{array}$$
If $r=(r_1,r_2,r_3,r_4)=(0,0,1,1)$, denoting $(0, 1, 1, 0)$ by $\overline{r}$, then $\overline{t}=(t_1, t_3, t_4, t_2)\in {\rm O}_t$ and $\overline{t}\equiv \overline{r} \pmod 6$. 
Clearly, $\overline{r}_1+\mathrm{c}_1=3$, $\overline{r}_2+\mathrm{c}_2=3$, $\overline{r}_3=\mathrm{c}_3$ and $\overline{r}_4=\mathrm{c}_4$. 
That is, all the conditions that Lemma \ref{6.2} required for this case are satisfied and then the solution orbit is parameterized by core abaci.
Consequently, if $r\in [(0, 0, 1, 1)]$, the corresponding solution orbit is parameterized by core by Lemma \ref{6.4}.

It is easy to check that 
\begin{align*}
  [(0,0,1,1)] = & \left\{(0,0,1,1),(0,0,1,5),(0,0,5,5)\right\} \\
  [(0,0,2,2)] = & \left\{(0,0,2,2),(0,0,2,4),(0,0,4,4)\right\} \\
  [(1,1,3,3)] = & \left\{(1,1,3,3),(1,3,3,5),(3,3,5,5)\right\} \\
  [(2,2,3,3)] = & \left\{(2,2,3,3),(2,3,3,4),(3,3,4,4)\right\} \\
  [(0,1,2,3)] = & \left\{(0,1,2,3),(0,1,3,4),(0,2,3,5),(0,3,4,5)\right\}
\end{align*}
in the sense of Definition $\ref{6.3}$. 
The proof is similar if $r$ is in the other equivalence classes and we omit the details here.
\end{proof}



\subsection{Open problems and related results} In this subsection, we shall propose two open problems about core abaci and give some relevant results.
Note that all the quadratic form content from number theory used in this section can be found in \cite{G, I}.

\smallskip

\noindent{\bf Open problem 1.} Given an affine type $X$, determine the set $\{|(\lam, j)|\mid (\lam, j)\in \mathcal{C}_j\}$.

\smallskip

Let us focus on the so-called positivity conjecture raised by Hanusa and Nath in \cite{HN} first. 
We will give an equivalent description for it.
\begin{conjecture}\label{6.7}
For affine type $C_3^{(1)}$, we have
$\{|\lam|\mid \lam\in \mathcal{C}_0\}=\mathbb{N}-\{2, 12, 13, 73\}$.
\end{conjecture}

\begin{remark}{\rm
In \cite{A} Alpoge proved the conjecture for large enough $n$ by applying deep results of Duke and Schulze-Pillot \cite{DS}.
However, the implied constant is ineffective because of the Landau–Siegel phenomenon. On the other hand, 
Hanson and Jameson \cite{HJ} proved the conjecture by assuming Generalized Riemann Hypothesis holds.}
\end{remark}

According to Subsections 5.1 and 5.2, we can give a new description of the set $\{|\lam|\mid \lam\in \mathcal{C}_0\}$.

\begin{proposition}\label{6.10}
For affine type $C_3^{(1)}$, we have
$$\{|\lam|\mid \lam\in \mathcal{C}_0\}= \{6(k_1^2+k_2^2+k_3^2)+3k_1+k_2+5k_3\mid k_1, k_2, k_3\in \mathbb{Z}\}.$$
\end{proposition}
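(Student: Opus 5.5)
The plan is to read the set of sizes directly off the height formula of Lemma \ref{4.12}, after pinning down exactly which Uglov vectors occur for $0$-core abaci of type $C_3^{(1)}$. For type $C_l^{(1)}$ we have $|\lam|={\rm ht}(\beta)=N$, as remarked after Definition \ref{3.2}. So it suffices to describe $\{{\rm ht}(\lam,0)\}$ through the Uglov vector $u=(u_1,u_2,u_3)$.

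\emph{Step 1: evaluate the height formula.} Specialize Lemma \ref{4.12} to $l=3$, $j=0$. This gives
$$24N+35=(6u_1-5)^2+(6u_2-3)^2+(6u_3-1)^2,$$
and expanding yields $N=\tfrac{3}{2}(u_1^2+u_2^2+u_3^2)-\tfrac{5}{2}u_1-\tfrac{3}{2}u_2-\tfrac{1}{2}u_3$. By Lemma \ref{3.5} with $j=0$, every $u_i$ is even, so we may write $u_i=2k_i$. Then
$$N=6(k_1^2+k_2^2+k_3^2)-5k_1-3k_2-k_3.$$
Next apply the substitution $k_i\mapsto -k_i$ together with a permutation of the indices; both are bijections of $\mathbb{Z}^3$ preserving $\sum k_i^2$. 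The first turns the linear part into $5k_1+3k_2+k_3$. Relabelling $(k_1,k_2,k_3)\mapsto(k_3,k_1,k_2)$ then gives $6\sum k_i^2+3k_1+k_2+5k_3$. This proves the inclusion ``$\subseteq$'', provided every $u$ that occurs is even, which Lemma \ref{3.5} guarantees.

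\emph{Step 2: every even vector occurs.} For ``$\supseteq$'' I must show that every $u\in(2\mathbb{Z})^3$ is the Uglov vector of some $0$-core abacus $(\lam,0)$. First I check, from Definition \ref{3.3} for the whole abacus $(\varnothing,0)$, that its Uglov vector is $(0,0,0)$; this base case is consistent with Lemma \ref{3.27} when $w$ is trivial. Since $W$ preserves the set of $0$-cores (Lemma \ref{3.1}(2)), Lemma \ref{3.26} describes the action of $W$ on Uglov vectors as follows:
\begin{itemize}
\item $\sigma_1,\sigma_2$ permute coordinates;
\item $\sigma_3$ negates $u_3$;
\item $\sigma_0$ sends $u_1\mapsto 2-u_1$, since $a_0^\vee/a_0^\vee=1$.
\end{itemize}
These generate the full signed permutation group together with the translation $u\mapsto u+2e_1$, because $\sigma_0$ composed with the negation of the first coordinate is that translation. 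Conjugating by the signed permutations gives all translations $u\mapsto u+2e_i$. The orbit of $(0,0,0)$ is therefore all of $(2\mathbb{Z})^3$. Each element of the orbit is the Uglov vector of the core abacus $w(\varnothing,0)$, and its height is given by the formula in Step 1. This proves equality.

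The main obstacle I expect is the base-case normalization: confirming that $u(\varnothing,0)=(0,0,0)$ under the shift by $\mathbf 1$ in Definition \ref{3.3}, and that Lemma \ref{3.26}(3) applies with the constant $2a_j^\vee/a_0^\vee=2$ for $j=0$. If instead the base vector were some other even vector, the orbit argument would be unchanged, because the translations by $2e_i$ still reach all of $(2\mathbb{Z})^3$. Everything after that is routine algebra.
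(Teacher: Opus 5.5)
Your proof is correct, but it takes a different route from the paper's.

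\textbf{The paper's route.} It works on the Diophantine side. It observes that charge-$0$ cores of type $C_3^{(1)}$ give solutions of $x^2+y^2+z^2=24N+35$. It then sorts solution orbits into the $\sim_{12}$-classes $[(1,1,3)]$, $[(1,3,5)]$, $[(3,5,5)]$ and invokes Lemmas~\ref{6.2} and~\ref{6.4} to see that the orbits in $[(1,3,5)]$ are parameterized by cores. Next it runs through the eight residue patterns of that class, reducing each to the normal form by substitutions such as $\bar k_3=-k_3-1$; this gives ``$\subseteq$'', leaving implicit that every core's vector $F(u)$ lands in that class. For ``$\supseteq$'' it exhibits an explicit solution and appeals to the criterion Lemma~\ref{6.1}, whose proof is omitted.

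\textbf{Your route.} You stay on the Uglov-vector side throughout. Lemma~\ref{3.5} forces $u\in(2\mathbb{Z})^3$, so the height formula gives $N=6\sum k_i^2-5k_1-3k_2-k_3$ directly, with no residue casework. For ``$\supseteq$'' you prove the needed surjectivity yourself. Via Lemma~\ref{3.26}, the maps induced by $\sigma_0,\dots,\sigma_3$ generate the signed permutations together with the translations by $2e_i$. Hence the set of Uglov vectors of $0$-cores is all of $(2\mathbb{Z})^3$, which in effect proves the case of Lemma~\ref{6.1} that the paper cites without proof.

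\textbf{What each buys.} Your route is shorter and more self-contained. The paper's route shows how the proposition fits the orbit/residue framework of Section~5.

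\textbf{A side remark.} Your explicit sign-and-permutation step is cleaner than the paper's displayed arithmetic. With $t=(12k_1+1,12k_2+3,12k_3+5)$ one actually gets $N=6\sum k_i^2+k_1+3k_2+5k_3$. This matches the stated set only after swapping $k_1$ and $k_2$, which is harmless because the quadratic part is symmetric.
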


\begin{proof}
Note that the equations of affine $C_3^{(1)}$ type with charge $0$ are of the form $x^2+y^2+z^2=24N+35$.
For a given nonnegative integer $N$, let $t=(t_1, t_2, t_3)$ be a solution of the equation.
It is easy to check that $t_1, t_2, t_3$ are all odd and ${\rm O}_{t}$ is in a unique $\sim_{12}$-equivalence class
$[(1, 1, 3)]$, $[(1, 3, 5)]$ or $[(3, 5, 5)]$ in the sense of Definition \ref{6.3}. By Lemmas \ref{6.2} and \ref{6.4}, all the solution orbits in 
the equivalence class $[(1, 3, 5)]$ can be parameterised by core abaci.

Without considering the order, the residue of a resolution in the equivalence class $[(1, 3, 5)]$ has to be one of the following eight cases.
$$\begin{array}{cccc}
    (1, 3, 5) & (1, 3, 7) & (1, 5, 9) & (1, 7, 9) \\
    (3, 5, 11) & (3, 7, 11) & (5, 9, 11) & (7, 9, 11).
  \end{array}$$
If the residue of $t=(t_1, t_2, t_3)$ is $(1, 3, 5)$, then assume that $t_1=12k_1+1$, $t_2=12k_2+3$ and $t_3=12k_3+5$.
By substituting them into the equation, we get $N=6(k_1^2+k_2^2+k_3^2)+3k_1+k_2+5k_3$.
If the residue of $t=(t_1, t_2, t_3)$ is $(1, 3, 7)$, assuming that $t_1=12k_1+1$, $t_2=12k_2+3$ and $t_3=12k_3+7$
yields $N=6(k_1^2+k_2^2+k_3^2)+3k_1+k_2+7k_3+1$. Let $\overline{k}_1=k_1$, $\overline{k}_2=k_2$ and $\overline{k}_3=-k_3-1$.
We obtain $N=6(\bar{k}_1^2+\bar{k}_2^2+\bar{k}_3^2)+3\overline{k}_1+\overline{k}_2+5\overline{k}_3$.
The other cases can be handled by similar method and we omit the details. As a result, we have proved ``$\subseteq$".

For the other direction ``$\supseteq$", fix $k_1, k_2, k_3\in \mathbb{Z}$ and denote $N=6(k_1^2+k_2^2+k_3^2)+3k_1+k_2+5k_3$.
We get $(12k_1+1)^2+(12k_2+3)^2+(12k_3+5)^2=24N+35$. It follows from Lemma \ref{6.1} that $(-12k_3-5, -12k_2-3, -12k_1-1)$
can be parameterized by core. This completes the proof.
\end{proof}

Based on Proposition \ref{6.10}, we have the following conjecture which is equivalent to Conjecture \ref{6.7}. 
We hope this equivalent description is useful to prove the positivity conjecture.

\begin{conjecture}\label{6.11}
Each number in $\mathbb{N}-\{2, 12, 13, 73\}$ is of the form $$6(k_1^2+k_2^2+k_3^2)+3k_1+k_2+5k_3,$$ where $k_1, k_2, k_3\in\mathbb{Z}$.
\end{conjecture}

In some cases, the set $\{|(\lam, j)|\mid (\lam, j)\in \mathcal{C}_j\}$ in {\bf Open problem 1} is just $\mathbb{N}$.
Note that in the frame of this paper, the role of $|\lam|$ is replaced by the corresponding height 
(which is equal to the atomic length, and a similar problem is considered in \cite{CGJL}). 
In non-branching cases, these two statistics are the same.
The following proposition provides some classes of examples.

\begin{proposition}\label{6.12}
For each nonnegative integer $N$, there exists a core abacus $(\lam, j)$ of affine type $X$ such that the height of $(\lam, j)$ is $N$,
where $X$ and $j$ are given in the following tableau.
\begin{table}[h] 
\centering
\begin{tabular}{|c|c|c|c|c|c|c|c|} 
\hline       		
{\rm type} & {\rm charge} & {\rm type} & {\rm charge} & {\rm type} & {\rm charge} & {\rm type} & {\rm charge} \\
\hline
$C_3^{(1)}$ & 1 & $D_4^{(2)}$ & 1 & $B_4^{(1)}$ & 2 &  $D_4^{(1)}$ & 2\\
\hline
\end{tabular}
\end{table}
\end{proposition}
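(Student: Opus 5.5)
The plan is to reduce the statement to the complete parametrization results of Theorem \ref{6.6}, combined with the classical theorems on sums of three and four squares. For each pair (type, charge) in the table, I first write down the affine type Diophantine equation $x_1^2+\dots+x_l^2=\mathtt{a}N+\mathtt{b}$ of Theorem \ref{4.16}, using Lemmas \ref{4.12}, \ref{4.14}, \ref{4.11} and \ref{4.13}. Then I identify it with an item of Theorem \ref{6.6}.
\begin{itemize}
\item For $C_3^{(1)}$ with $j=1$: $\mathtt{a}=24$ and $\mathtt{b}=35-24=11$, which is equation (3), $x^2+y^2+z^2=24N+11$.
\item For $D_4^{(2)}$ (so $l=3$) with $j=1$: $\mathtt{a}=2(l+1)=8$ and $\mathtt{b}=4(\tfrac72-2)=6$, which is equation (8), $x^2+y^2+z^2=8N+6$.
\item For $B_4^{(1)}$ with $j=2$: $\mathtt{a}=8$ and $\mathtt{b}=30-24=6$, which is equation (5), $x_1^2+\dots+x_4^2=8N+6$.
\item For $D_4^{(1)}$ with $j=2$: $\mathtt{a}=6$ and $\mathtt{b}=3(\tfrac{14}{3}-4)=2$, which is equation (9), $x_1^2+\dots+x_4^2=6N+2$.
\end{itemize}
This bookkeeping of $\mathtt{a},\mathtt{b},\mathtt{k},\mathrm{c}$ must be checked carefully, in particular the charge ranges in Definition \ref{4.15} and Theorem \ref{4.16} for $D_{l+1}^{(2)}$ with $j=1$.

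Next, fix an arbitrary $N\geq 0$ and show that the relevant equation has at least one integral solution.
\begin{itemize}
\item In the two four-variable cases, solvability is immediate from Lagrange's four-square theorem.
\item In the three-variable cases, I use the Gauss--Legendre three-square theorem: $n$ is a sum of three squares if and only if $n$ is not of the form $4^a(8b+7)$. Here $24N+11\equiv 3\pmod 8$ and $8N+6\equiv 6\pmod 8$. Neither residue is $7$, and neither number is divisible by $4$. Hence both are sums of three squares.
\end{itemize}
In every case we obtain a solution $t$, and hence a solution orbit ${\rm O}_t$ for this value of $N$.

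Since Theorem \ref{6.6} says these equations are completely parameterized by core abaci, the orbit ${\rm O}_t$ contains a solution $F(u)$, where $u$ is the Uglov vector of a core abacus $(\lam,j)$ of the given type and charge. By the definition of parameterization, this core has height $N$. Equivalently, by Theorem \ref{4.16} and the injectivity of the affine map $F$, the equation forces ${\rm ht}(\lam,j)=N$. This gives the required core abacus for each $N$.

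The main obstacle is really the content of Theorem \ref{6.6} for these four equations. There one needs that every $\sim_{2\mathtt{k}}$-class of residues that can occur for the equation meets the conditions of Lemma \ref{6.2}, so that Lemma \ref{6.4} applies. Since that theorem is available, the remaining work is the residue check above and the verification that the charge conditions of Lemma \ref{6.1} match the stated charges.
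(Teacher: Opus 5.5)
Your proposal is correct and is essentially the paper's proof: you match each (type, charge) pair with an equation of Theorem~\ref{6.6} ($24N+11$ and $8N+6$ in three variables, $8N+6$ and $6N+2$ in four), obtain a solution from the three-square theorem or from Lagrange's four-square theorem, and conclude by complete parametrization. Your explicit three-square check for $8N+6$ fills in the paper's ``proved similarly''. One small correction: ${\rm ht}(\lam,j)=N$ follows from comparing $|t|^2=\mathtt{a}N+\mathtt{b}$ with $|F(u)|^2=\mathtt{a}\,{\rm ht}(\lam,j)+\mathtt{b}$ and using $\mathtt{a}\neq 0$; injectivity of $F$ plays no role, and the equality is in any case built into the definition of a parameterized solution.
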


\begin{proof}
We first prove the proposition for $1$-cores of affine type $C_3^{(1)}$ 
and the case of $1$-cores of the affine type $D_4^{(2)}$ is proved similarly.
It was already known to Gauss that a positive integer $n$ is representable 
as a sum of three squares if and only if $n\neq 4^k(8m-1)$.
Clearly, $24N+11$ is not the form of $4^k(8m-1)$, that is, 
the Diophantine equation $x^2+y^2+z^2=24N+11$ has integer solutions for arbitrary nonnegative $N$.
Then the result follows from Theorem \ref{6.6}. For affine types $B_4^{(1)}$ and $D_4^{(1)}$, 
we only need to point out that every positive integer can be 
represented as the sum of four squares.
\end{proof}

\smallskip

\noindent{\bf Open problem 2.} Enumerate core abaci of affine types with fixing certain parameters.

\smallskip

Closed formulae for enumerating certain cores will be given in some special cases. 
We first consider affine types $C_2^{(1)}$ and $D_3^{(2)}$, 
then $B_3^{(1)}$ and $D_4^{(2)}$, and finally $B_4^{(1)}$ and $D_4^{(1)}$.

The following lemma about the number of solutions parameterized by core abaci is important.
\begin{lemma}\label{6.13}
Given a Diophantine equation of affine type $X=C_2^{(1)}$ or $D_3^{(2)}$, then the number of solutions in a orbit  ${\rm O}_t$
being parameterized by core abaci is
$$|{\rm O}_t^{\rm p}|=
\begin{cases}
  1, & \mbox{\rm if } X=C_2^{(1)} \,{\rm or}\, X=D_3^{(2)}, j=0, 2; \\
  1, & \mbox{\rm if } X=C_2^{(1)} \,{\rm or}\, X=D_3^{(2)}, j=1 \,{\rm and}\, t=(t_1, t_2) \,{\rm with}\, |t_1| = |t_2|; \\
  2, & \mbox{\rm if } X=C_2^{(1)} \,{\rm or}\, X=D_3^{(2)}, j=1 \,{\rm and}\, t=(t_1, t_2) \,{\rm with}\, |t_1|\neq |t_2|.
\end{cases}
$$
\end{lemma}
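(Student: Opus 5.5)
The plan is to reduce everything to counting the elements $v$ of the $B_2$-orbit ${\rm O}_t$ that satisfy the arithmetic criterion of Lemma \ref{6.1}. The Weyl group of type $B_2$ is the signed permutation group of order $8$, so ${\rm O}_t$ is $\{(\pm t_1,\pm t_2),(\pm t_2,\pm t_1)\}$. Its size is $8$ generically, $4$ if $|t_1|=|t_2|\neq 0$ or if exactly one entry is $0$, and $1$ if $t=0$. For both types $C_2^{(1)}$ and $D_3^{(2)}$, Definition \ref{4.15} gives a constant $\mathtt{k}$ and shift $\mathrm{c}=(\mathrm{c}_1,\mathrm{c}_2)$ with $\mathrm{c}_1\neq \mathrm{c}_2$:
\begin{itemize}
\item $C_2^{(1)}$: $\mathtt{k}=4$ and $\mathrm{c}=(3,1)$;
\item $D_3^{(2)}$: $\mathtt{k}=6$ for $j=0,2$ and $\mathtt{k}=3$ for $j=1$, with $\mathrm{c}=(2,1)$.
\end{itemize}
By Lemma \ref{6.1}, $v\in{\rm O}_t$ is parameterized by a core abacus exactly when each $(v_i+\mathrm{c}_i)/\mathtt{k}$ lies in $\mathbb{Z}$ (or in $\mathbb{Z}+\frac12$ in the charge $l$ case of $D_3^{(2)}$), together with a parity condition: the number of odd quotients must equal $j$ wherever condition (1) applies. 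So $|{\rm O}^{\rm p}_t|$ is the number of signed permutations of $t$ that put the first coordinate in a prescribed residue class modulo $\mathtt{k}$ (or modulo $2\mathtt{k}$ once the parity is included), and the second coordinate in another prescribed class.

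First I handle $j=0,2$, say for $C_2^{(1)}$. The condition becomes $v_1\equiv -\mathrm{c}_1+\epsilon_1\mathtt{k}$ and $v_2\equiv-\mathrm{c}_2+\epsilon_2\mathtt{k}\pmod{2\mathtt{k}}$, with $\epsilon_1,\epsilon_2$ both $0$ (when $j=0$) or both $1$ (when $j=2$). These targets are $5,7 \pmod 8$ for $j=0$ and $1,3\pmod 8$ for $j=2$. Each target class, together with its negative class, forms a pair of classes disjoint from the pair for the other coordinate. Hence at most one assignment of $\{|t_1|,|t_2|\}$ to the two coordinates works, and within it at most one choice of signs, since a class and its negative differ modulo $2\mathtt{k}$ because $\mathrm{c}_i\not\equiv 0,\mathtt{k}$. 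So $|{\rm O}^{\rm p}_t|\le 1$. Existence for orbits coming from a core abacus is the definition, which gives exactly $1$. The same argument applies to $D_3^{(2)}$ with $j=0,2$, now using $\mathtt{k}=6$ and $\mathrm{c}=(2,1)$, or the half-integer condition (3) when $j=2=l$.

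Next I handle $j=1$. Here exactly one of $\epsilon_1,\epsilon_2$ equals $1$, so there are two admissible target patterns, $(\epsilon_1,\epsilon_2)=(1,0)$ and $(0,1)$. For $C_2^{(1)}$ the allowed classes modulo $8$ are $\{1,7\}$ for the first coordinate and $\{3,7\}$ for the second. Each pattern, by the argument above, is realized by at most one signed permutation. I then show the two realizations are distinct elements of ${\rm O}_t$ when $|t_1|\neq|t_2|$, and coincide when $|t_1|=|t_2|$. For the coincidence, if $|t_1|=|t_2|$ then the two coordinates carry the same absolute value, and a simultaneous solution of both patterns would force one value into two incompatible classes. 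So only one pattern is realizable, and I expect to use the $\sim_{2\mathtt{k}}$ bookkeeping of Lemma \ref{5.4'} and Definition \ref{6.3} here.

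The main obstacle is proving that when $|t_1|\neq|t_2|$ both patterns are actually realized, not merely that each is realized at most once. My plan is to start from a parameterized $v=(v_1,v_2)$, with say $v_1$ odd-pattern, and explicitly exhibit the second one as $(\mp v_2,\mp v_1)$ with suitable signs. This amounts to checking that $-\mathrm{c}_2+\mathtt{k}\equiv \pm(-\mathrm{c}_1)$ and $-\mathrm{c}_1\equiv\pm(-\mathrm{c}_2+\mathtt{k})\pmod{2\mathtt{k}}$, i.e. $\mathrm{c}_1+\mathrm{c}_2\equiv \mathtt{k}$. This holds for $C_2^{(1)}$, where $3+1=4$, and for $D_3^{(2)}$ with $j=1$, where $2+1=3$. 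I would verify this congruence first, since the whole dichotomy between $1$ and $2$ hinges on it.
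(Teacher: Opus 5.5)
Your treatment of $j=0,2$ works. For $D_3^{(2)}$ with $j=0$ the congruences are only modulo $\mathtt{k}=6$, but the classes $\pm 2$ and $\pm 1$ are still disjoint and each target differs from its negative, so uniqueness holds. Counting signed permutations against Lemma \ref{6.1} is exactly the routine computation the paper intends.

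The gap is in $j=1$, where you reuse the argument from $j=0,2$ to claim each parity pattern is realized at most once. That argument relied on the two coordinates having disjoint $\pm$-classes, and this fails here. In pattern $(1,0)$ the targets are $v_1\equiv \mathtt{k}-\mathrm{c}_1$ and $v_2\equiv -\mathrm{c}_2 \pmod{2\mathtt{k}}$. Because $\mathrm{c}_1+\mathrm{c}_2=\mathtt{k}$, these are $\mathrm{c}_2$ and $-\mathrm{c}_2$, which is one and the same $\pm$-class; likewise pattern $(0,1)$ uses $-\mathrm{c}_1$ and $\mathrm{c}_1$.

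A concrete counterexample: for $C_2^{(1)}$, $N=3$, $t=(1,7)$, both $(1,7)$ and $(-7,-1)$ satisfy pattern $(1,0)$, with Uglov vectors $(1,2)$ and $(-1,0)$. Pattern $(0,1)$, which needs $v_1\equiv 5$ and $v_2\equiv 3 \pmod 8$, is not realized at all. The same happens for $D_3^{(2)}$ with $t=(1,5)$, $N=4$. Also, the admissible classes for $v_1$ in $C_2^{(1)}$ are $\{1,5\}$, not $\{1,7\}$, since $v_1+3$ must be divisible by $4$.

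So the step you call the main obstacle, that both patterns occur when $|t_1|\neq|t_2|$, is false, not merely hard. Since $\mathrm{c}_1\not\equiv \pm\mathrm{c}_2 \pmod{2\mathtt{k}}$ ($3\not\equiv\pm 1$ mod $8$, $2\not\equiv \pm 1$ mod $6$), a parameterized orbit has both absolute values in the class $\pm\mathrm{c}_2$ or both in $\pm\mathrm{c}_1$. Hence every orbit meets exactly one pattern.

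Your congruence $\mathrm{c}_1+\mathrm{c}_2\equiv\mathtt{k}$ is the right fact, but what it proves is that $v\mapsto(-v_2,-v_1)$ \emph{preserves} the pattern. Sending $v$ by a signed swap into the other pattern would instead require $\mathrm{c}_1\equiv\pm\mathrm{c}_2$, and your displayed check does not encode that requirement.

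The correct count goes as follows:
\begin{itemize}
\item In the unique realizable pattern the signs are forced, because each target differs from its negative modulo $2\mathtt{k}$.
\item The only freedom is which of $|t_1|,|t_2|$ occupies which coordinate, so ${\rm O}^{\rm p}_t=\{v,(-v_2,-v_1)\}$.
\item This set has two elements exactly when $v_1\neq -v_2$, i.e.\ when $|t_1|\neq|t_2|$.
\end{itemize}
By Corollary \ref{4.8}, $(-v_2,-v_1)=F(1-u_2,1-u_1)$, so the two parameterized solutions are a $1$-core and its conjugate. Your final numbers agree with the lemma, but the proposed mechanism must be replaced by this argument.
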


\begin{proof}
It is a routine task of computation by Lemma \ref{6.4} and Corollary \ref{6.5}.
\end{proof}

We recall a notation from number theory that will appear in the closed formulae.
$$\chi_4(d)=
\begin{cases}
  1, & \mbox{if } d\equiv 1 \pmod 4; \\
  -1, & \mbox{if } d\equiv -1 \pmod 4;  \\
  0, & \mbox{if } d\equiv 0\,\, \mbox{or } 2 \pmod 4.
\end{cases}
$$

Denote the representation number of a positive integer $n$ represented as the sum of $k$ squares by $r_k(n)$.
Then we can calculate the number of core abaci of affine type $C_2^{(1)}$ and $D_3^{(2)}$ with arbitrary charge and a given height.
\begin{theorem}\label{6.14}
Denote the number of $j$-core abaci of affine type $X$ with height $N$ by $|\mathcal{C}_j^X(N)|$. Then 
$$|\mathcal{C}_j^X(N)|=
\begin{cases}
  \smallskip
  \frac{1}{8}r_2(16N+10)=\frac{1}{2}\sum\limits_{d\mid 16N+10}\chi_4(d), & \mbox{\rm if } j=0 \mbox{ \rm or } 2, \mbox{ \rm and } X=C_2^{(1)}; \\
  \smallskip
  \frac{1}{8}r_2(12N+5)=\frac{1}{2}\sum\limits_{d\mid 12N+5}\chi_4(d), & \mbox{\rm if } j=0 \mbox{ \rm or } 2, \mbox{ \rm and } X=D_3^{(2)}; \\
  \smallskip
  \frac{1}{4}r_2(16N+2)=\sum\limits_{d\mid 16N+2}\chi_4(d), & \mbox{\rm if } j=1 \mbox{ \rm and } X=C_2^{(1)}; \\
  \smallskip
  \frac{1}{4}r_2(6N+2)=\sum\limits_{d\mid 6N+2}\chi_4(d), & \mbox{\rm if } j=1 \mbox{ \rm and } X=D_3^{(2)}.
\end{cases}
$$
\end{theorem}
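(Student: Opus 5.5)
Identify $j$-core abaci of height $N$ with the solutions of the corresponding Diophantine equation that are parameterized by core abaci, and then count those solutions via $r_2$. By Theorem \ref{4.16} the map $(\lam,j)\mapsto F(u(\lam,j))$ sends a height $N$ core abacus to a solution of $x^2+y^2=\mathtt{a}N+\mathtt{b}$. For $C_2^{(1)}$ we have $\mathtt{k}=4$, and the equation is $16N+10$ for $j=0,2$ and $16N+2$ for $j=1$. For $D_3^{(2)}$ we have $\mathtt{k}=6$ when $j=0,2$ and $\mathtt{k}=3$ when $j=1$, giving $12N+5$ and $6N+2$ respectively; these are items (1),(2),(6),(7) of Theorem \ref{6.6}.

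\textbf{Step 1: injectivity.} The map $F$ is affine and injective. A core abacus is determined by its Uglov vector: by Lemma \ref{3.26} and Lemma \ref{3.27}, $u$ determines $\mathrm{q}$ and $\overline{w}(\omega_j)$, hence the coset $w_{\lam,j}W_j$, hence the abacus by Lemma \ref{3.12}. So $|\mathcal{C}_j^X(N)|$ equals the number of solutions parameterized by core abaci.

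\textbf{Step 2: every orbit is hit.} By Theorem \ref{6.6}, every $B_2$-orbit of solutions contains at least one parameterized solution. Combined with Lemma \ref{6.13}, we get $|\mathcal{C}_j^X(N)|=\sum_{{\rm O}}|{\rm O}^{\rm p}|$.

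\textbf{Step 3: orbit sizes.} The $B_2$ Weyl group has order $8$. The right-hand sides $16N+10$, $12N+5$, $16N+2$, $6N+2$ are never $0$ and are never of the form $2m^2$ with $m$ odd $\neq 0$ and $t_i=0$ simultaneously. Concretely, $t_i=0$ would force the other coordinate squared to equal the RHS, which is $\equiv 2,5,\dots$, not a square mod small moduli. Hence no coordinate is $0$.

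For $j=0,2$: $t_1\neq\pm t_2$, because $2t^2=16N+10$ forces $t^2\equiv 5\pmod 8$, which is impossible, and $12N+5$ is odd. So each orbit has size $8$ and contains exactly one parameterized solution, giving $r_2/8$.

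For $j=1$: orbits with $|t_1|\neq|t_2|$ have size $8$ and contribute $2$. Orbits with $|t_1|=|t_2|$ have size $4$ and contribute $1$. In either case each orbit contributes $|{\rm O}|/4$, so the total is $r_2/4$.

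\textbf{Step 4: closed form.} Apply Jacobi's formula $r_2(n)=4\sum_{d\mid n}\chi_4(d)$ to obtain the displayed sums.

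The main obstacle is Step 1, namely that distinct core abaci give distinct parameterized solutions and that the count in Lemma \ref{6.13} matches exactly. I would settle this by checking that Uglov vectors with the parity constraint of Lemma \ref{3.5} are in bijection with the $W$-orbit of the weighted vector $\omega_j$ under the action of Lemma \ref{3.26}.
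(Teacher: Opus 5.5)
Your proposal is correct and is essentially the paper's proof: Lemma \ref{6.13} gives the number of parameterized solutions in each $B_2$-orbit, the congruence checks give orbit size $8$ (or $4$ when $|t_1|=|t_2|$ and $j=1$), and Jacobi's formula for $r_2$ finishes. You also write out the $j=1$ bookkeeping and the injectivity of $(\lam,j)\mapsto F(u)$, both of which the paper leaves implicit.

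One correction in Step 1: $\mathrm{u}$ does not determine $\mathrm{q}$ and $\overline{w}(\omega_j)$ separately. For $C_2^{(1)}$ with $j=1$, both $e$ and $\sigma_0=t_{\mathrm{q}^{\sigma_0}}\overline{\sigma}_0$ give $\mathrm{u}=\omega_1$. It does, however, determine the coset $w_{\lam,j}W_j$, because $W_j$ is exactly the stabilizer of $\omega_j$ under the affine action of Lemma \ref{3.26}; this is the fix you already sketch at the end.
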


\begin{proof}
We first consider 0-cores of affine type $C_2^{(1)}$. It follows from Lemma \ref{6.13} that 
there is a bijection between the cores of height $N$ and solution orbits
for Diophantine equation $x^2+y^2=16N+10$, where $N$ is a nonnegative integer. Moreover, 
simple calculation yields that if $(t_1, t_2)$ is a solution of the Diophantine equations then $t_1\neq t_2$ and $t_i\neq 0$, $i=1,2$.
This implies that there are 8 solutions in each solution orbit of the Diophantine equations. 
We have from Dirichlet formula that $$r_2(16N+10)=4\sum\limits_{d\mid 16N+10}\chi_4(d)$$ and thus the proof of this case is complete.
Similarly, we can prove all of the other cases.
\end{proof}

Recall that a map $f: \mathbb{N}\longrightarrow \mathbb{N}$ 
is said to be multiplicative if $f(ab)=f(a)f(b)$ for ${\rm gcd}(a, b)=1$.
It is well-known that $\sum\limits_{d\mid n}\chi_4(d)$ is multiplicative. 
Then we can obtain a corollary of Proposition \ref{6.14}.

\begin{corollary}
Given two nonnegative integer $N_1, N_2$, then $|\mathcal{C}_1^{X}(N_1)||\mathcal{C}_1^{X}(N_2)|$ is equal to
$$
\begin{cases}
  |\mathcal{C}_1^{X}(8N_1N_2+N_1+N_2)|, & \mbox{\rm if } X=C_2^{(1)},\,  {\rm gcd}(8N_1+1, 8N_2+1)=1;\\
  |\mathcal{C}_1^{X}(3N_1N_2+N_1+N_2)|, & \mbox{\rm if } X=D_3^{(2)},\,  {\rm gcd}(3N_1+1, 3N_2+1)=1.
\end{cases}
$$
\end{corollary}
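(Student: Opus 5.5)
The plan is to reduce the statement to the multiplicativity of the divisor sum $g(n):=\sum_{d\mid n}\chi_4(d)$ by means of the closed formulae for charge $1$ in Theorem \ref{6.14}. By that theorem,
$$|\mathcal{C}_1^{C_2^{(1)}}(N)|=g(16N+2),\qquad |\mathcal{C}_1^{D_3^{(2)}}(N)|=g(6N+2).$$

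\textbf{Step 1: remove the factor $2$.} We first show that $g(2n)=g(n)$ for every positive integer $n$. Since $\chi_4(d)=0$ for even $d$, only the odd divisors of $2n$ contribute to $g(2n)$. The odd divisors of $2n$ are exactly the odd divisors of $n$. Hence $g(2n)=g(n)$, and no coprimality between $2$ and $n$ is needed. This gives
$$|\mathcal{C}_1^{C_2^{(1)}}(N)|=g(8N+1),\qquad |\mathcal{C}_1^{D_3^{(2)}}(N)|=g(3N+1).$$

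\textbf{Step 2: apply multiplicativity.} Take type $C_2^{(1)}$ and assume $\gcd(8N_1+1,8N_2+1)=1$. Multiplicativity of $g$ gives
$$g(8N_1+1)\,g(8N_2+1)=g\big((8N_1+1)(8N_2+1)\big).$$
Expanding, $(8N_1+1)(8N_2+1)=8(8N_1N_2+N_1+N_2)+1$. Setting $M=8N_1N_2+N_1+N_2$ and reading Step 1 backwards, the right-hand side equals $g(16M+2)=|\mathcal{C}_1^{C_2^{(1)}}(M)|$.

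For type $D_3^{(2)}$ the argument is identical. Assume $\gcd(3N_1+1,3N_2+1)=1$. Then $(3N_1+1)(3N_2+1)=3(3N_1N_2+N_1+N_2)+1$. Setting $M=3N_1N_2+N_1+N_2$, we get $g(3N_1+1)\,g(3N_2+1)=g(3M+1)=g(6M+2)=|\mathcal{C}_1^{D_3^{(2)}}(M)|$.

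\textbf{Possible obstacle.} No step is really hard. The only points needing care are the elementary identity $g(2n)=g(n)$ and the standard multiplicativity of $g$. The latter holds because $g=\chi_4*\mathbf{1}$ is a Dirichlet convolution of multiplicative functions. One should also note that $M$ is a nonnegative integer, so Theorem \ref{6.14} applies at height $M$.
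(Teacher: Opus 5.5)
Your proposal is correct and is essentially the paper's argument. You rewrite the charge-$1$ counts from Theorem \ref{6.14} as the divisor sum $\sum_{d\mid n}\chi_4(d)$, discard the factor $2$ (the paper states this as $r_2(16N+2)=r_2(8N+1)$ via Dirichlet's formula, which is exactly your observation that only odd divisors contribute), and then apply multiplicativity to $(8N_1+1)(8N_2+1)=8M+1$, respectively $(3N_1+1)(3N_2+1)=3M+1$; your write-up also spells out the $D_3^{(2)}$ case that the paper only calls similar.
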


\begin{proof}
It is easy to check that $r_2(16N+2)=r_2(8N+1)$ by Dirichlet formula.
Since $\sum\limits_{d\mid n}\chi_4(d)$ is multiplicative, if ${\rm gcd}(8N_1+1, 8N_2+1)=1$, then
$$\frac{1}{4}r_2(8N_1+1)\cdot\frac{1}{4}r_2(8N_2+1)=\frac{1}{4}r_2(8(8N_1N_2+N_1+N_2)+1).$$
By Theorem \ref{6.14}, for 1-core abaci of affine type $C_2^{(1)}$, this implies that
$$|\mathcal{C}_1(N_1)||\mathcal{C}_1(N_2)|=|\mathcal{C}_1(8N_1N_2+N_1+N_2)|.$$
The proof of affine type $D_3^{(2)}$ is similar.
\end{proof}

Next let us study the affine types $B_3^{(1)}$ and $D_4^{(2)}$. 
Recall that $r_3(n)=\sum_{d^2|n}r_3^*(\frac{n}{d^2})$ (see \cite[Equation (4.8)]{G}), 
where $r_3^*$ is the number of primitive representations (Gauss expressed it in terms of the class number, 
for instance, see \cite[Equation (11.29)]{I} for the formula).

\begin{theorem}\label{6.16}
	Given a nonnegative integer $N$, we have
	$$|\mathcal{C}_j^X(N)|=
	\begin{cases}
		\frac{1}{24}r_3(8N+6),\, & \mbox{\rm if } X=D_4^{(2)}, j=2, \ N \,\ \text{\rm is even}; \\
		\frac{1}{48}r_3(8N+6),\,\, & \mbox{\rm if } X=D_4^{(2)}, j=2, \  N \,\ \text{\rm is odd}; \\
		\frac{1}{12}r_3(6N+2), & \mbox{\rm if } X=B_3^{(1)}, j=2,  \  N \,\ \text{\rm is odd}.
	\end{cases}
	$$
\end{theorem}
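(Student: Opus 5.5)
The plan is to count $j$-core abaci by counting the solutions they parameterize, orbit by orbit, and then compare with the full orbit sizes that make up $r_3$.

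First I will fix the data. For $X=D_4^{(2)}$ (so $l=3$) with $j=2$, Definition~\ref{4.15} gives $\mathtt{k}=l+1=4$ and $\mathrm{c}=(3,2,1)$. Theorem~\ref{4.16} then gives the equation $x^2+y^2+z^2=8N+6$, since $2(l+1)N+14-j(l+1)(l-j)=8N+6$. For $X=B_3^{(1)}$ with $j=2$ we get $\mathtt{k}=3$, $\mathrm{c}=(3,2,1)$ and the equation $x^2+y^2+z^2=6N+2$.

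Next I will show that $(\lam,j)\mapsto F(u(\lam,j))$ is injective. The Uglov map is a bijection, so the Uglov vector together with the charge determines the abacus. Moreover $F$ is an injective affine map. Together with Lemma~\ref{6.1}, this identifies $\mathcal{C}^X_2(N)$ with the set of solutions of the equation that satisfy condition (1) of Lemma~\ref{6.1}. That set is the disjoint union of the sets ${\rm O}^{\rm p}_t$ over all solution orbits ${\rm O}_t$ under the type $B_3$ Weyl group. By Theorem~\ref{6.6}(8) and (4), every orbit has ${\rm O}^{\rm p}_t\neq\varnothing$. Hence
\[
|\mathcal{C}^X_2(N)|=\sum_{{\rm O}}|{\rm O}^{\rm p}|,
\qquad
r_3(\mathtt{a}N+\mathtt{b})=\sum_{{\rm O}}|{\rm O}|,
\]
and it suffices to prove that the ratio $|{\rm O}|/|{\rm O}^{\rm p}|$ is constant: $24$ or $48$ according to the parity of $N$ for $D_4^{(2)}$, and $12$ for $B_3^{(1)}$ with $N$ odd.

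To compute $|{\rm O}^{\rm p}|$ I will work with residues modulo $2\mathtt{k}$, as in the proof of Theorem~\ref{6.6}. By Lemma~\ref{6.1}(1), a point $t$ lies in ${\rm O}^{\rm p}$ exactly when each $t_i\equiv -\mathrm{c}_i \pmod{\mathtt{k}}$ and exactly two of the quotients $(t_i+\mathrm{c}_i)/\mathtt{k}$ are odd. For $D_4^{(2)}$ this means that, modulo $8$, $t_i\equiv -\mathrm{c}_i$ or $t_i\equiv 4-\mathrm{c}_i$, with exactly two coordinates of the second kind. Reducing $8N+6$ modulo $16$ fixes, for each parity of $N$, which residue patterns modulo $8$ can occur for $(|t_1|,|t_2|,|t_3|)$. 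For such a pattern I will count the signed permutations in $W(B_3)$ that move $t$ into the required residue shape. I expect this count to be $2$ when $N$ is even and $1$ when $N$ is odd; in particular, for $N$ odd the position of the coordinate $\equiv 2\pmod 4$ is forced. For $B_3^{(1)}$ I will do the same modulo $6$ with $6N+2$ and $N$ odd, and expect $|{\rm O}^{\rm p}|=4$ on a generic orbit of size $48$.

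The main obstacle is degenerate orbits: those with a zero coordinate, or with two coordinates of equal absolute value. For these $|{\rm O}|<48$, and I must check that $|{\rm O}^{\rm p}|$ drops by the same factor, which amounts to showing that the stabilizer of $t$ acts freely on the parameterized points. First I will use the congruences to rule out whatever degeneracies cannot occur. For example, coordinates with forced distinct residues modulo $\mathtt{k}$ cannot have equal absolute value, and zero is excluded when the required residue is nonzero. For the remaining degenerate cases I will verify directly that the stabilizer elements are exactly the ones identifying parameterized points in pairs. I expect this case analysis, together with the parity split in $N$, to be the real work; the rest of the argument is bookkeeping with Lemmas~\ref{6.2} and~\ref{6.4} and Corollary~\ref{6.5}.
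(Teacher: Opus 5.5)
Your proposal is correct and follows essentially the same route as the paper. You reduce to showing that $|{\rm O}_t|/|{\rm O}^{\rm p}_t|$ is constant over solution orbits, split the $D_4^{(2)}$ case by the residue of $8N+6$ modulo $16$ (the paper's classes $[(1,1,2)]$, $[(2,3,3)]$ for $N$ even versus $[(1,2,3)]$ for $N$ odd), use $4\mid 6N+2$ to confine the $B_3^{(1)}$ case to the class $[(0,2,2)]$, and handle degenerate orbits case by case, exactly as the paper does.

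Two small points need tightening. First, injectivity of $(\lam,j)\mapsto F(u(\lam,j))$ does not follow merely from the Uglov map being a bijection, since many abaci share a Uglov vector. It does follow from Lemma~\ref{4.4}, which shows that $\mathrm{u}$ determines $\beta$, together with the uniqueness of a core abacus in $\Lambda_j-\beta$. Second, for $N$ odd the position of the coordinate $\equiv 2 \pmod 4$ is not what becomes forced: it always sits in position $2$, with its sign forced in both parity cases. What becomes forced is which odd coordinate occupies position $1$ rather than $3$.
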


\begin{proof}
The equations of affine type $D_4^{(2)}$ with charge $2$ are of the form $x^2+y^2+z^2=8N+6$.
For a given nonnegative integer $N$, let $t=(t_1, t_2, t_3)$ be a solution of the equation.
Then two of $t_1, t_2, t_3$ are odd and ${\rm O}_{t}$ is in a unique $\sim_8$-equivalence class
$[(1, 1, 2)]$, $[(1, 2, 3)]$, $[(2, 3, 3)]$ in the sense of Definition \ref{6.3}.
By Corollary \ref{6.5}, we only need to take one solution orbit in each equivalence class to compute $|{\rm O}_t^{\rm p}|$.
If $t\in [(1, 1, 2)]$, without loss of generality, assume that $t_1=8k_1+1$, $t_2=8k_2+1$ and $t_3=8k_3+2$. 
According to the definition of a solution orbit and Lemma \ref{6.1}, one can obtain by routine calculation 
that $|{\rm O}_t|$ and the number of solutions in ${\rm O}_t$ parameterized by core abaci are as follows.
$$(|{\rm O}_t|, |{\rm O}_t^{\rm p}|)=
\begin{cases}
(24,\,1),  & {\rm if \,\,}  \text{ $|t_1|=|t_2|$}; \\
(48,\,2),  & {\rm if \,\,}  \text{ $|t_1|\neq|t_2|$}.
\end{cases}$$
	Similarly, if $t\in [(2, 3, 3)]$, we have
	$$(|{\rm O}_t|, |{\rm O}_t^{\rm p}|)=
	\begin{cases}
		(24,\,1),  & {\rm if \,\,}  \text{ $|t_1|=|t_2|$}; \\
		(48,\,2),  & {\rm if \,\,}  \text{ $|t_1|\neq|t_2|$}.
	\end{cases}$$
If $t\in [(1, 2, 3)]$, then $(|{\rm O}_t|, |{\rm O}_t^{\rm p}|)=(48, \,1)$.
	
If $N$ is even, then $8N+6\equiv 6 \pmod {16}$. This forces the squares of two odd numbers in $t$ simultaneously satisfying 
$x^2\equiv 1 \pmod{16}$, or $x^2\equiv 9 \pmod{16}$ by analysis of modulo 16. As a result, $t\in [(1, 1, 2)]$ or $[(2, 3, 3)]$.
This implies that $|{\rm O}_t|/ |{\rm O}_t^{\rm p}|=24$ and thus $|\mathcal{C}_j^X(N)|=\frac{1}{24}r_3(8N+6)$.
On the other hand, for $N$ being odd, the squares of two odd numbers in $t$ satisfy that one is of the form $16n_1+1$ and the other one $16n_2+9$.
That is, $t\in [(1, 2, 3)]$ and hence $|{\rm O}_t|/ |{\rm O}_t^{\rm p}|=48$. The proof of affine type $D_4^{(2)}$ is complete.
	
For affine type $B_3^{(1)}$, the corresponding equations are $x^2+y^2+z^2=6N+2$. 
If $N$ is odd, then $4|6N+2$. Based on this fact, we can obtain by direct calculation 
that a solution $t$ of the equation is in the $\sim_6$ equivalence class $[(0, 2, 2)]$.
Without loss of generality, assume that $t_1=6k_1$, $t_2=6k_2+2$ and $t_3=6k_3+2$. Then
$$(|{\rm O}_t|, |{\rm O}_t^{\rm p}|)=
\begin{cases}
(12,\,1),  &  {\rm if }\,\, \text{ $k_1=0$, $|t_2|=|t_3|$}; \\
(24,\,2),  & {\rm if }\,\, \text{ $k_1=0$, $|t_2|\neq|t_3|$};\\
(24,\,2),  & {\rm if }\,\, \text{ $k_1\neq0$, $|t_2|=|t_3|$};\\
(48,\,4),  & {\rm if }\,\, \text{ $k_1\neq0$, $|t_2|\neq|t_3|$.}
\end{cases}$$
That is, $|{\rm O}_t|/|{\rm O}_t^{\rm p}|=12$ and the proof is complete.
\end{proof}

The number $p_l(n)$ of partitions of $n$ into $l$ squares is clearly meaningful. Unfortunately, as far as we know, there is not a closed formula for it in general.
According to the results obtained above, we can partially deal with this problem for small ranks. 
The following corollary is obtained from the proof of Theorems \ref{6.14} and \ref{6.16}.

\begin{corollary}
Keep notations as above and let $N$ be a nonnegative integer. We have
\begin{enumerate}
  \item[\rm (1)]\, $p_2(16N+10)=\frac{1}{2}\sum\limits_{d\mid 16N+10}\chi_4(d)$;
  \item[\rm (2)]\, $p_2(12N+5)= \frac{1}{2}\sum\limits_{d\mid 12N+5}\chi_4(d)$;
  \item[\rm (3)]\, $p_3(8N+6)= \frac{1}{48}r_3(8N+6)$ if $N$ is odd.
\end{enumerate}
\end{corollary}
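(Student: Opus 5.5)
The corollary has three parts. Parts (1) and (2) come from the $j=0$ (equivalently $j=2$) cases of Theorem \ref{6.14}, and part (3) from the odd-$N$, $D_4^{(2)}$ case of Theorem \ref{6.16}. The strategy is to reread those proofs as counts of unordered representations by positive squares. By definition, $p_l(n)$ counts multisets $\{y_1,\dots,y_l\}$ of positive integers with $\sum y_i^2=n$. Since the type $B_l$ Weyl group acts by signed permutations, each such multiset corresponds to exactly one solution orbit of $x_1^2+\dots+x_l^2=n$ in which no coordinate is zero. So $p_l(n)$ equals the number of solution orbits without zero coordinates. The remaining task is to show that, for the values of $n$ in question, every orbit has this form and all orbits are counted correctly.

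For (1), take $n=16N+10\equiv 2\pmod 8$, which forces $x,y$ both odd. The proof of Theorem \ref{6.14} already records that $t_1\neq \pm t_2$ and $t_i\neq 0$. Equality $|t_1|=|t_2|$ would give $2t_1^2\equiv 10\pmod{16}$, i.e.\ $t_1^2\equiv 5\pmod 8$, which is impossible. Hence every orbit has $8$ elements, and the number of orbits is $r_2(16N+10)/8=\frac12\sum_{d\mid 16N+10}\chi_4(d)$ by the Dirichlet formula. That is exactly $p_2(16N+10)$, because each orbit gives one multiset of two distinct positive odd numbers.

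For (2), take $n=12N+5$, which is odd, so one coordinate is odd and the other even. Neither coordinate can be $0$, since $12N+5\equiv 2\pmod 3$ is not a square; and the two absolute values differ by parity. So again every orbit has size $8$, and $p_2(12N+5)=r_2(12N+5)/8$, matching the formula in Theorem \ref{6.14}.

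For (3), take $N$ odd and $n=8N+6$. The proof of Theorem \ref{6.16} shows that every solution lies in the class $[(1,2,3)]$ with $|{\rm O}_t|=48$. I will verify this step directly: two coordinates are odd with squares $\equiv 1$ and $\equiv 9\pmod{16}$, so their absolute values differ. The third coordinate is even, and it is nonzero because $8N+6\equiv 14\pmod{16}$ while the sum of the two odd squares is $\equiv 10\pmod{16}$. So all three absolute values are distinct and nonzero, each orbit has exactly $2^3\cdot 3!=48$ elements, and $p_3(8N+6)=r_3(8N+6)/48$.

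The main obstacle is this orbit-size bookkeeping. Every orbit must be shown to be free, meaning it has no zero coordinates and no repeated absolute values, so that the division by $8$ or $48$ is exact. I expect the nonzero-coordinate checks via residues modulo $16$ (and modulo $3$ in case (2)) to be the only delicate point.
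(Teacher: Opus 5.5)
Your proposal is correct and is essentially the paper's argument. The corollary is read off from the orbit sizes ($8$ and $48$) in the proofs of Theorems \ref{6.14} and \ref{6.16}, since each signed-permutation orbit with nonzero coordinates and distinct absolute values corresponds to exactly one partition into $l$ positive squares; you simply make these freeness checks explicit, including case (2), which the paper leaves as ``similarly'', and the order of checks in (3), where the cleanest route is to note first that $8N+6\equiv 6 \pmod 8$ forces the even coordinate to be $\equiv 2 \pmod 4$, hence nonzero with square $\equiv 4 \pmod{16}$, and only then deduce the $1+9$ split of the odd squares modulo $16$.
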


Recall that Jacobi gave the formula $r_4(n)=8(2+(-1)^n)\sum\limits_{d|n, 2\nmid d}d$.
Now we can consider affine types $B_4^{(1)}$ and $D_4^{(1)}$. 

\begin{theorem}
Given a nonnegative integer $N$, we have
$$|\mathcal{C}_j^X(N)|=
\begin{cases}
  \frac{1}{96}r_4(8N+6)=\frac{1}{4}\sum\limits_{d\mid 8N+6, 2\nmid d}d,\, & \mbox{\rm if } X=B_4^{(1)}, j=2, \ N \,\ \text{\rm is even}; \\
  \frac{1}{192}r_4(8N+6)=\frac{1}{8}\sum\limits_{d\mid 8N+6, 2\nmid d}d,\,\, & \mbox{\rm if } X=B_4^{(1)}, j=2, \  N \,\ \text{\rm is odd}; \\
  \frac{1}{24}r_4(6N+2)=\sum\limits_{d\mid 6N+2, 2\nmid d}d, & \mbox{\rm if } X=D_4^{(1)}, j=2,  \  N \,\ \text{\rm is odd}.
\end{cases}
$$
\end{theorem}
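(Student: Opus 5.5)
The plan is to follow the template of Theorem \ref{6.16}, now with four squares. By Theorem \ref{6.6}(5) and (9), the equations in question are $x_1^2+x_2^2+x_3^2+x_4^2=8N+6$ for $B_4^{(1)}$ with charge $2$ (here $\mathtt{k}=4$, $\mathrm{c}=(4,3,2,1)$) and $x_1^2+\dots+x_4^2=6N+2$ for $D_4^{(1)}$ with charge $2$ (here $\mathtt{k}=3$, $\mathrm{c}=(3,2,1,0)$). Both equations are completely parameterized by core abaci. The map $(\lam,j)\mapsto F(u(\lam,j))$ is injective, since the Uglov vector determines the core by Lemma \ref{3.26} and the bijection with $W^j$. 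Hence
$$|\mathcal{C}_2^X(N)|=\sum_{\text{orbits } {\rm O}_t}|{\rm O}_t^{\rm p}|.$$
It therefore suffices to show that $|{\rm O}_t|/|{\rm O}_t^{\rm p}|$ is a constant $c$ on all orbits for the given parity of $N$. Then $|\mathcal{C}_2^X(N)|=r_4(\cdot)/c$, and Jacobi's formula gives the divisor-sum expressions. Since $8N+6$ and $6N+2$ are even, $r_4(n)=24\sum_{d\mid n,\,2\nmid d}d$, so the targets are $c=96,192$ for $B_4^{(1)}$ and $c=24$ for $D_4^{(1)}$.

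\textbf{Type $B_4^{(1)}$.} Since $8N+6\equiv 6\pmod 8$, exactly two of the $t_i$ are odd and two are even. Working mod $16$ as in Theorem \ref{6.16}, the odd squares are $\equiv1$ or $9$ and the even squares are $\equiv0$ or $4$. I will list the admissible residue classes mod $2\mathtt{k}=8$ under $\sim_8$ (Definition \ref{6.3}) and split them by the parity of $N$. For one representative of each class, Corollary \ref{6.5} lets me count the elements of an orbit satisfying Lemma \ref{6.1}(1): each $(t_i+\mathrm{c}_i)/4$ must be an integer, and exactly two of them must be odd. Here $\mathrm{c}\equiv(0,3,2,1)\bmod 4$, so the constraint forces which coordinate slots receive odd entries and which signs they take. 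I then compare this count with $|{\rm O}_t|$, which depends on coincidences $|t_a|=|t_b|$ and zero entries. I expect the ratio to be $96$ when $N$ is even and $192$ when $N$ is odd, the factor $2$ arising as in the $D_4^{(2)}$ case from whether the two odd squares agree mod $16$.

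\textbf{Type $D_4^{(1)}$, $N$ odd.} Then $4\mid 6N+2$, which forces all $t_i$ to be even or all odd, and the mod-$6$ analysis in the proof of Theorem \ref{6.6}(9) restricts the possible classes. For each case (distinct absolute values, repeated values, zero entries) I will check that $|{\rm O}_t|/|{\rm O}_t^{\rm p}|=24$, mirroring the $B_3^{(1)}$ computation in Theorem \ref{6.16}, which gave the ratio $12$.

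The main obstacle is the case bookkeeping for orbits with degenerate stabilizers: entries of equal absolute value, or a zero entry. There both $|{\rm O}_t|$ and $|{\rm O}_t^{\rm p}|$ drop, and I must verify that they drop by the same factor. I would handle this by noting that ${\rm O}_t^{\rm p}$ is the set of elements of the orbit lying in a fixed union of residue patterns. The stabilizer of $t$ in the type $B_4$ Weyl group then acts compatibly on both sets, so the ratio is the index computed in the generic case.
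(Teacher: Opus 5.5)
Your plan is correct and is essentially the paper's proof. The paper likewise sorts solution orbits into $\sim_{2\mathtt{k}}$-classes, using the mod $16$ analysis for $8N+6$ and, for $D_4^{(1)}$, $4\mid 6N+2$ together with the mod-$6$ list. It computes $|{\rm O}_t|/|{\rm O}_t^{\rm p}|$ class by class via Lemma \ref{6.1} and Corollary \ref{6.5}, and finishes with Jacobi's formula. The only difference is in degenerate orbits: you observe that $|{\rm O}_t|/|{\rm O}_t^{\rm p}|=|W|/\#\{w\in W: w\bar t\in\bar S\}$ depends only on the residues of $t$ modulo $2\mathtt{k}$, whereas the paper tabulates the subcases $t_1=0$ and $|t_a|=|t_b|$ explicitly. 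Carrying out your count gives $4$, $2$ and $16$ elements out of $|W|=384$, i.e.\ ratios $96$, $192$ and $24$, as required.
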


\begin{proof}
Let us consider affine type $B_4^{(1)}$ first. Note that the equations 
of affine $B_4^{(1)}$ type with charge $2$ are of the form $x_1^2+x_2^2+x_3^2+x_4^2=8N+6$.
For a given nonnegative integer $N$, let $t=(t_1, t_2, t_3, t_4)$ be a solution of the equation.
It is easy to check that two of $t_1, t_2, t_3, t_4$ are odd and ${\rm O}_{t}$ is in a unique $\sim_8$-equivalence class
$[(0, 1, 1, 2)]$, $[(0, 1, 2, 3)]$, $[(0, 2, 3, 3)]$, $[(1, 1, 2, 4)]$, $[(1, 2, 3, 4)]$ or $[(2, 3, 3, 4)]$ in the sense of Definition \ref{6.3}.
By Corollary \ref{6.5}, we only need to take one solution orbit in each equivalence class to compute $|{\rm O}_t^{\rm p}|$.
If $t\in [(0, 1, 1, 2)]$, without loss of generality, assume that $t_1=8k_1$, $t_2=8k_2+1$, $t_3=8k_3+1$ and $t_4=8k_4+2$. 
Then according to the definition of a solution orbit and Lemma \ref{6.1}, one can get by routine calculation 
that $|{\rm O}_t|$ and the number of solutions in ${\rm O}_t$ parameterized by core abaci are as follows.
$$(|{\rm O}_t|, |{\rm O}_t^{\rm p}|)=
\begin{cases}
		(96,\,1),  & {\rm if \,\,}  \text{ $k_1=0$, $|t_1|=|t_2|$}; \\
		(192,\,2),  & {\rm if \,\,}  \text{ $k_1\neq 0$, $|t_1|=|t_2|$};\\
		(192,\,2),  & {\rm if \,\,}  \text{ $k_1= 0$, $|t_1|\neq|t_2|$};\\
		(384,\,4),  & {\rm if \,\,}  \text{ $k_1\neq0$, $|t_1|\neq|t_2|$}.
\end{cases}$$
Similarly, if $t\in [(0, 1, 2, 3)]$, the corresponding datum is
$$(|{\rm O}_t|, |{\rm O}_t^{\rm p}|)=
\begin{cases}
	(192,\,1),  &  {\rm if \,\,} \text{ $k_1=0$}; \\
	(384,\,2),  &  {\rm if \,\,} \text{ $k_1\neq 0$.}
\end{cases}$$
If $t\in [(0, 2, 3, 3)]$, we have
$$(|{\rm O}_t|, |{\rm O}_t^{\rm p}|)=
\begin{cases}
		(96,\,1),  & {\rm if \,\,}  \text{ $k_1=0$, $|t_3|=|t_4|$}; \\
		(192,\,2),  & {\rm if \,\,}  \text{ $k_1\neq 0$, $|t_3|=|t_4|$};\\
		(192,\,2),  & {\rm if \,\,}  \text{ $k_1= 0$, $|t_3|\neq|t_4|$};\\
		(384,\,4),  & {\rm if \,\,}  \text{ $k_1\neq0$, $|t_3|\neq|t_4|$}.
\end{cases}$$
If $t\in [(1, 1, 2, 4)]$, then
$$(|{\rm O}_t|, |{\rm O}_t^{\rm p}|)=
\begin{cases}
		(192,\,2),  & {\rm if \,\,}  \text{ $|t_1|=|t_2|$};\\
		(384,\,4),  & {\rm if \,\,}  \text{ $|t_1|\neq|t_2|$}.
\end{cases}$$
If $t\in [(2, 3, 3, 4)]$, then
$$(|{\rm O}_t|, |{\rm O}_t^{\rm p}|)=
\begin{cases}
		(192,\,2),  & {\rm if \,\,}  \text{ $|t_2|=|t_3|$};\\
		(384,\,4),  & {\rm if \,\,}  \text{ $|t_2|\neq|t_3|$}.
\end{cases}$$
Finally, if $t\in [(1, 2, 3, 4)]$, then $(|{\rm O}_t|, |{\rm O}_t^{\rm p}|)=(384, \,2)$.

If $N$ is even, then $8N+6\equiv 6 \pmod {16}$. This forces the squares of two odd numbers in $t$ simultaneously satisfying 
$x^2\equiv 1 \pmod{16}$, or $x^2\equiv 9 \pmod{16}$ by analysis of modulo 16. 
As a result, $t\in [(0, 1, 1, 2)]$, $[(0, 2, 3, 3)]$, $[(1, 1, 2, 4)]$ or $[(2, 3, 3, 4)]$.
This implies that $|{\rm O}_t|/ |{\rm O}_t^{\rm p}|=96$ and thus $|\mathcal{C}_j^X(N)|=\frac{1}{96}r_4(8N+6)$.
On the other hand, for $N$ being odd, the squares of two odd numbers in $t$ 
satisfy that one is of the form $16n_1+1$ and the other one $16n_2+9$.
That is, $t\in [(0, 1, 2, 3)]$ or $[(1, 2, 3, 4)]$ and hence $|{\rm O}_t|/ |{\rm O}_t^{\rm p}|=192$. 
The proof of affine type $B_4^{(1)}$ is complete.

We now consider affine type $D_4^{(1)}$. Note that the corresponding equations are of the form $x_1^2+x_2^2+x_3^2+x_4^2=6N+2$. 
If $N$ is odd, then $4|6N+2$. Based on this fact, we can obtain by direct calculation 
that a solution $t$ of the equation is in a $\sim_6$ equivalence class $[(0, 0, 2, 2)]$ or $[(1, 1, 3, 3)]$.
If $t\in [(0, 0, 2, 2)]$, without loss of generality, assume that $t_1=6k_1$, $t_2=6k_2$, $t_3=6k_3+2$ and $t_4=6k_4+2$. Then
$$(|{\rm O}_t|, |{\rm O}_t^{\rm p}|)=
\begin{cases}
	(24,\,1),  &  {\rm if }\,\, \text{ $k_1=k_2=0$, $|t_3|=|t_4|$}; \\
	(48,\,2),  & {\rm if }\,\, \text{ $k_1=k_2=0$, $|t_3|\neq|t_4|$};\\
	(96,\,4),  & {\rm if }\,\, \text{ $k_1k_2=0$, $k_1\neq k_2$, $|t_3|=|t_4|$};\\
    (192,\,8),  & {\rm if }\,\, \text{ $k_1k_2=0$, $k_1\neq k_2$, $|t_3|\neq|t_4|$};\\
	(96,\,4),  & {\rm if }\,\, \text{ $k_1k_2\neq0$, $|t_1|=|t_2|$, $|t_3|=|t_4|$}; \\
	(192,\,8),  & {\rm if }\,\, \text{ $k_1k_2\neq0$, $|t_1|=|t_2|$, $|t_3|\neq|t_4|$};\\
	(192,\,8),  & {\rm if }\,\, \text{ $k_1k_2\neq0$, $|t_1|\neq|t_2|$, $|t_3|=|t_4|$};\\
	(384,\,16),  & {\rm if }\,\, \text{ $k_1k_2\neq0$, $|t_1|\neq|t_2|$, $|t_3|\neq|t_4|$.}
\end{cases}
$$
If $t\in [(1, 1, 3, 3)]$, then
$$(|{\rm O}_t|, |{\rm O}_t^{\rm p}|)=
\begin{cases}
	(96,\,4),  & {\rm if }\,\, \text{ $|t_1|=|t_2|$, $|t_3|=|t_4|$}; \\
	(192,\,8),  & {\rm if }\,\, \text{ $|t_1|=|t_2|$, $|t_3|\neq|t_4|$};\\
	(192,\,8),  & {\rm if }\,\, \text{ $|t_1|\neq|t_2|$, $|t_3|=|t_4|$};\\
	(384,\,16),  & {\rm if }\,\, \text{ $|t_1|\neq|t_2|$, $|t_3|\neq|t_4|$.}
\end{cases}
$$
That is, $|{\rm O}_t|/|{\rm O}_t^{\rm p}|=24$ for all cases and the proof is complete.
\end{proof}

\bigskip

\bigskip

\noindent{\bf Acknowledgement}
The work is partially supported by the Open Project Program of Key Laboratory of Mathematics and Complex
System, Beijing Normal University (Grant No. K202402).
The authors would like to thank Dr. Xiangyu Qi and Feiyue Huang for some helpful conversations.
Part of this work was done when Li visited Key Laboratory of Mathematics and Complex
System at Beijing Normal University in July 2025, and when Li visited Department of Mathematics of National University of Singapore in Jan. 2026. 
He takes this opportunity to express his sincere thanks to
Prof. W. Hu  and Prof. K. M. Tan for the hospitality during the visits. 



\bigskip\bigskip


\begin{thebibliography}{}

\bibitem{A} L. Alpoge, {\em Self-conjugate core partitions and modular forms}, J. Number Theory, {\bf 140} (2014) 60-92.

\bibitem{BDF} J. Baldwin, M. Depweg, B. Ford, A. Kunin and L. Sze, {\em Self-conjugate t-core partitions, sums
of squares, and p-blocks of An}, J. Algebra, {\bf 297} (2006) 438-452.

\bibitem{BB} A. Björner and F. Brenti, Combinatorics of Coxeter groups, Graduate Texts in Mathematics, Vol. 231 (Springer,
New York, 2005). Page 39, 40

\bibitem{BCG} O. Brunat, N. Chapelier-Laget and T. Gerber, {\em Generalised core partitions and Diophantine equations}, arXiv: 2403.11191.

\bibitem{CG} N. Chapelier-Laget and T. Gerber, {\em Atomic length on Weyl groups}, J. Comb. Algebra (2024), published online first.

\bibitem{CGJL} N. Chapelier-Laget, T. Gerber, N. Jacon and C. Lecouvey, {\em Entropy of affine permutations and universality of affine
atomic lengths}, arXiv: 2603.22256.

\bibitem{DS} W. Duke and R. Schulze-Pillot, {\em Representations of integers by positive ternary quadratic forms and
equidistribution of lattice points on ellipsoids}, Invent. Math., {\bf 99} (1990) 49-57.


\bibitem{F2} M. Fayers, {\em Core blocks of Ariki-Koike algebras}, J. Algebr. Comb., {\bf 26} (2007) 47-81.

\bibitem{GKS} F. G. Garvan, D. Kim, and D. Stanton, {\em Cranks and t-cores}, Invent. Math., {\bf 101} (1990) 1-17.

\bibitem{GO} A. Granville and K. Ono, {\em Defect zero p-blocks for finite simple groups}, Trans. AMS, {\bf 348} (1996) 331-347.

\bibitem{G} E. Grosswald, Representations of integers as sums of squares,  Springer-Verlag New York Inc. (1985).

\bibitem{HJ} M. Hanson and M. Jameson, {\em Self-conjugate 6-cores and quadratic forms}, arXiv: 2211.00738.

\bibitem{HJ2} C. Hanusa and B. Jones, {\em Abacus models for parabolic quotients of affine Weyl groups}, J. Algebra, {\bf 361} (2012) 134-162.

\bibitem{HN} C. Hanusa and R. Nath, {\em The number of self-conjugate core partitions}, J. Number Theory, {\bf 133} (2013) 751-768.

\bibitem{HLQ} W. Hu, F. Huang, Y. Li and X. Qi, {\em Moving vectors and level-rank duality}, arXiv: 2604.25340.

\bibitem{H}  J.E. Humphreys, Reflection Groups and Coxeter Groups, Cambridge University Press; 1990.

\bibitem{I} H. Iwaniec, Topics in classical automorphic forms, Graduate Studies in Mathematics, Vol. {\bf 17} the American Mathematical Society 1997.

\bibitem{JL} N. Jacon and C. Lecouvey, {\em Cores of Ariki-Koike algebras}, Doc. Math., {\bf 26} (2021) 103-124.

\bibitem{J} G. James, {\em Some combinatorial results involving Young diagrams}, Proc. Cambridge Philos. Soc., {\bf 83} (1978) 1-10.

\bibitem{Kac} V.~G. Kac, Infinite dimensional Lie algebras, CUP, Cambridge, third~ed., 1994.

\bibitem{LW} C. Lecouvey and D. Wahiche, {\em Macdonald identities, Weyl–Kac denominator
formulas and affine Grassmannian elements}, SIGMA, {\bf 21} (2025) 023, 45 pages.

\bibitem{LQ} Y. Li and X. Qi, {\em Moving vectors I: Representation type of blocks of Ariki-Koike algebras},
 J. London Math. Soc. (2), {\bf 111} (2025): e70169.

\bibitem{LQT} Y. Li, X. Qi and K. M. Tan, {\em Moving vectors and core blocks of Ariki-Koike algebras},
 J. Algebra, {\bf 694} (2026) 497-563.
 



\bibitem{M} I. G. Macdonald, Symmetric functions and Hall polynomials, Oxford Mathematical Monographs, second edition, 1998.

\bibitem{NO} N. Nekrasov and A. Okounkov, Seiberg-Witten theory and random partitions, The unity of mathematics, Progr. Math., vol. 244,
Birkhäuser Boston, Boston, MA, 2006, pp. 525–596.

\bibitem{P} M. Pétréolle, Quelques développements combinatoires autour des groupes de Coxeter et des partitions d’entiers, Theses, Université
Claude Bernard-Lyon I, November 2015.

\bibitem{STW} E. N. Stucky, M. Thiel and N. Williams, {\em Strange expectations in affine Weyl
groups}, arXiv:2309.14481.

\bibitem{TW} M. Thiel and N. Williams, {\em Strange expectations and simultaneous cores}, J. Algebr. Comb., {\bf 46} (2017) 219-261.

\bibitem{U} D. Uglov, {\em Canonical bases of higher level q-deformed Fock spaces and Kazhdan-Lusztig polynomials}, in Physial
Combinatorics (ed. M. Kashiwara, T. Miwa), Progress in Math. 191, Birkhauser (2000).

\bibitem{W} D. Wahiche, {\em Some combinatorial interpretations of the Macdonald identities for affine root systems and Nekrasov-Okounkov type
formulas}, arXiv 2306.08071.

\end{thebibliography}
\end{document}